\DeclareMathOperator{\pv}{pv}
\DeclareMathOperator{\img}{img}
\DeclareMathOperator{\coker}{coker}
\DeclareMathOperator{\ind}{ind}
\DeclareMathOperator{\tind}{t-ind}
\DeclareMathOperator{\supp}{supp}
\DeclareMathOperator{\gr}{gr}
\DeclareMathOperator{\ev}{ev}
\DeclareMathOperator{\rank}{rank}
\DeclareMathOperator\singsupp{sing-supp}
\DeclareMathOperator{\tr}{tr}
\newcommand\goe{\mathfrak g}
\newcommand\poe{\mathfrak p}
\newcommand\noe{\mathfrak n}
\newcommand\zoe{\mathfrak z}
\newcommand\llangle{\langle\!\langle}
\newcommand\rrangle{\rangle\!\rangle}
\newcommand\bbb{|\mkern-2mu|\mkern-2mu|}
\DeclareMathOperator{\Ad}{Ad}
\DeclareMathOperator{\ad}{ad}
\DeclareMathOperator{\id}{id}
\DeclareMathOperator{\Aut}{Aut}
\DeclareMathOperator{\eend}{end}
\newcommand{\DO}{\mathcal D\mathcal O}
\newcommand{\tDO}{\widetilde{\DO}}
\newcommand\Z{\mathbb Z}
\newcommand\N{\mathbb N}
\newcommand\R{\mathbb R}
\newcommand\C{\mathbb C}
\newcommand\ii{\mathbf i}
\newcommand\op{\textrm{op}}
\newcommand\cl{\textrm{classical}}
\newcommand\prop{\textrm{prop}}
\newcommand\loc{\textrm{loc}}
\newcommand\itemref[1]{(\ref{#1})}
\newcounter{ABC}
\theoremstyle{plain}
  \newtheorem{theorem}{Theorem}[section]
  \newtheorem{corollary}[theorem]{Corollary}
  \newtheorem{lemma}[theorem]{Lemma}
  \newtheorem{proposition}[theorem]{Proposition}
  \newtheorem{thm}[ABC]{Theorem}
\theoremstyle{definition}
  \newtheorem{definition}[theorem]{Definition}
  \newtheorem{example}[theorem]{Example}
  \newtheorem{remark}[theorem]{Remark}
\begin{document}

\title[]{Graded hypoellipticity of BGG sequences}

\author{Shantanu Dave}


\address{Shantanu Dave,
         Faculty of Mathematics,
         University of Vienna,
         Oskar-Morgenstern-Platz 1,
         1090 Vienna,
         Austria.}

\email{shantanu.dave@univie.ac.at}

%
%

\author{Stefan Haller}

\address{Stefan Haller, 
         Department of Mathematics,
         University of Vienna,
         Oskar-Morgenstern-Platz 1,
         1090 Vienna,
         Austria.}

\email{stefan.haller@univie.ac.at}


\begin{abstract}
This article studies hypoellipticity on general filtered manifolds.
We extend the Rockland criterion to a pseudodifferential calculus on filtered manifolds, construct a parametrix and describe its precise analytic structure.
We use this result to study Rockland sequences, a notion generalizing elliptic sequences to filtered manifolds.
The main application that we present is to the analysis of the Bernstein--Gelfand--Gelfand (BGG) sequences over regular parabolic geometries. 
We do this by generalizing the BGG machinery to more general filtered manifolds (in a non-canonical way) and show that the generalized BGG sequences are Rockland in a graded sense.
\end{abstract}

\keywords{Filtered manifold; pseudodifferential operator; hypoelliptic operator; Rockland operator; hypoelliptic sequence; Rockland sequence; BGG sequence; Rumin--Seshadri operator; Engel structure; generic rank two distribution in dimension five}

\subjclass[2010]{58J40 (primary) and 58A30, 58A14, 58J10 (secondary)}

\maketitle

\section{Introduction and main results}\label{S:intro}

Elliptic operators and their analysis touches a large number of problems in geometry, analysis, topology, and physics.
At the heart of their wide applicability are three simple reasons:
\begin{enumerate}[(1)]
\item
There is a big supply of natural, invariant elliptic operators such as the Laplace and Dirac operators, which encode Riemannian and other geometry.
\item
Elliptic operators are defined by invertibility of their ``highest order'' term.
However, this suffices to guarantee that they are invertible up to a smoothing error.
This approximate inverse is known as a parametrix and it ensures many of the usual analytic properties of elliptic operators, such as their Fredholmness on compact manifolds.
\item
The heat kernel asymptotic for positive elliptic operators also encodes several important invariants of geometry and topology.
\end{enumerate}

The purpose of this article is to show that in many other geometric situations there is a large supply of natural hypoelliptic operators which admit a nice parametrix in a suitable calculus \cite{M82,CGGP92,EY15v5}.
These operators include the curved Bernstein--Gelfand--Gelfand (BGG) operators on regular parabolic geometries \cite{CSS01} and Rumin's complexes \cite{R90,R94,R99,R01} on Carnot--Carath\'eodory (C-C) manifolds.
The heat kernel asymptotics for positive differential operators in this class resembles the elliptic case, as has been shown in another article \cite{DH17b}.
This class of hypoelliptic operators therefore enjoys the same properties as formulated above for elliptic operators.

Historically, finding geometric hypoelliptic operators has been a dream.
In the few cases where they are known they lead to striking results, for instance the Connes--Moscovici \cite{CM95} index formula for the transverse signature operators on foliations, or Julg--Kasparov's \cite{JK95} proof of the Baum--Connes conjecture for discrete subgroups of $\operatorname{SU}(n,1)$.
In both the examples above, the hypoellipticity was known through the Heisenberg calculus, see \cite{BG88,T84,P08}.
Another substantial source of hypoelliptic operators is H\"ormander's celebrated sum of squares theorem \cite{H67}.

There are many problems in geometry and operator theory, for example the extension of the results by Connes--Moscovici and Julg--Kasparov mentioned above, where the underlying manifold is a filtered manifold and the natural representative of a K-homology class is expected to be a geometric hypoelliptic operator, see \cite{E10a, E10b, Y11, BE14, CCH16} for instance. 
The K-homology class represented by the classical BGG sequences on generalized flag varieties are expected to play a crucial role in extending Julg and Kasparov's approach to higher rank Lie groups.
We will construct similar hypoelliptic sequences for a large class of filtered manifolds.

To obtain precise analytic properties of a hypoelliptic operator, we want its parametrix to be in a suitable pseudodifferential calculus.
This will for instance provide precise maximal hypoellipticity estimates in the corresponding Sobolev spaces.
Such a pseudodifferential calculus for filtered manifold was first described in Melin \cite{M82}.
Melin's original preprint remains unpublished.
A new geometric approach to the calculus was developed in van~Erp and Yuncken \cite{EY15v5}.
We shall follow \cite{EY15v5} for definiteness, although arguably both methods \cite{M82,EY15v5} very likely produce the same calculus.
Van Erp and Yuncken's approach is based on the construction of a Heisenberg tangent groupoid for filtered manifolds \cite{EY16,CP15,HH18,M21}. 
We will refer to the calculus as Heisenberg calculus.
The construction of the Heisenberg calculus follows the geometrical insights of Debord and Skandalis \cite{DS14} into classical pseudodifferential calculus using tangent groupoids \cite{C94,H10,RLM14}.
The operators in the Heisenberg  calculus are classical in the sense that they admit local homogeneous expansions.
Our main analytic result shows that operators satisfying a pointwise Rockland condition admit a parametrix in the Heisenberg calculus.

In the remaining part of this introductory section we will outline the main results and hint at their proofs.

\subsection{Filtered manifolds and their osculating groups}\label{SS:filtered}

A natural structure available on every smooth manifold is its Lie algebra of vector fields.
Various geometric structures on smooth manifolds can be described in terms of a filtration on the tangent bundle which is compatible with the Lie bracket of vector fields.
This compatibility is subsumed in the concept of a filtered manifold.
A filtered manifold is a smooth manifold $M$ together with a filtration of its tangent bundle by smooth subbundles,
\begin{equation}\label{E:filtM}
TM=T^{-r}M\supseteq\cdots\supseteq T^{-2}M\supseteq T^{-1}M\supseteq T^0M=0,
\end{equation}
which is compatible with the Lie bracket of vector fields in the sense that $[X,Y]\in\Gamma^\infty(T^{p+q}M)$ for all $X\in\Gamma^\infty(T^pM)$ and $Y\in\Gamma^\infty(T^qM)$.

To each point $x$ in a filtered manifold $M$ one can assign a simply connected nilpotent Lie group $\mathcal T_xM$, called the osculating group at $x$, which can be regarded as a non-commutative analogue of the tangent space at $x$.
Its Lie algebra is
$$
\mathfrak t_xM:=\gr(T_xM):=\bigoplus_pT_x^pM/T_x^{p+1}M
$$
with the (Levi) bracket induced from the Lie bracket of vector fields.
The osculating algebras combine to form a smooth bundle of graded nilpotent Lie algebras $\mathfrak tM$ over $M$, called the bundle of osculating algebras.
Correspondingly, the osculating groups combine to form a smooth bundle of simply connected nilpotent Lie groups $\mathcal TM$ over $M$, called the bundle of osculating groups. 
These play a crucial role in the analysis on filtered manifolds.

\begin{remark}
There are various conventions on choosing orders/degrees on filtrations and gradings.
Van~Erp and Yuncken \cite{EY15v5,EY16}, for instance, assign positive values to gradings.
We follow the convention prevalent in parabolic geometry, see for example \cite{CS09,M93}.
In this context the choice of negative degree/order is a well established convention.
\end{remark}

To describe simple examples, suppose $\noe=\noe_{-r}\oplus\cdots\oplus\noe_{-1}$ is a graded nilpotent Lie algebra, and let $N$ be a Lie group with Lie algebra $\noe$.
Then the filtration $\noe=\noe^{-r}\supseteq\cdots\supseteq\noe^{-1}\supseteq\noe^0=0$, with $\noe^p:=\bigoplus_{p\leq q}\noe_q$, determines a left invariant filtration of $TN$ which turns $N$ into a filtered manifold with (locally) trivial bundle of osculating algebras and typical fiber $\noe$.

Let us mention a few geometric structures that can be described as filtered manifolds with special osculating algebras.
By Frobenius' theorem, foliated manifolds can equivalently be described as filtered manifolds with abelian, but non-trivially graded osculating algebras.
A contact manifold is just a filtered manifold with osculating algebras isomorphic to the Heisenberg algebra.
According to Darboux's theorem, the filtration on a contact manifold is even locally diffeomorphic to the left invariant filtration on the Heisenberg group.
Engel structures on 4-manifolds provide further examples of filtered manifolds that admit local normal forms, cf.~\cite{P16,V09} and Example~\ref{Ex:Engel}.
A generic rank two distribution in dimension five \cite{C10,BH93,S08,CS09,DH16} is a filtered manifold with osculating algebras isomorphic to a particular (generic) $5$-dimensional graded nilpotent Lie algebra, see Example~\ref{Ex:BGG235}.
Most regular normal parabolic geometries can equivalently be described as filtered manifolds with prescribed osculating algebras, see \cite[Proposition~4.3.1]{CS09}.
These include generic rank two distributions in dimension five, generic rank three distributions in dimension six \cite{B06}, and quaternionic contact structures \cite{B00}.

Heisenberg manifolds \cite{P08} constitute a well studied class of filtered manifolds for which the bundle of osculating algebras need not be locally trivial.
They occur naturally as boundaries of complex manifolds.
Equiregular Carnot--Carath\'eodory (C-C) manifolds \cite{G96} give rise to filtered manifolds which are often assumed to be bracket generating in the sense that $T^{-p}M$ is spanned by iterated Lie brackets of sections in $T^{-1}M$ which are of length at most $p$.

\subsection{Analytic results}\label{SS:analysis}

In this paper we will study operators on filtered manifolds and provide a criterion for their hypoellipticity.
Our main analytic results are based on the Heisenberg calculus for filtered manifolds.
This calculus can be obtained using the Heisenberg tangent groupoid \cite{EY15v5,EY16,CP15,HH18,M21} and the idea of essential homogeneity introduced in \cite{DS14}.
Although the calculus in \cite{EY15v5} is described for scalar operators it can easily be generalized to operators acting between sections of vector bundles.
The goal is to obtain a general Rockland type theorem in such a pseudodifferential calculus.
In addition to using the Heisenberg calculus, the proof of this theorem also builds upon harmonic analysis by Christ, Geller, G{\l}owacki, and Polin \cite{CGGP92} and arguments due to Ponge \cite{P08}.

Let us briefly describe the Heisenberg calculus and the related setup.
For two vector bundles $E$ and $F$ over a filtered manifold $M$, and any complex number $s$, we let $\Psi^s(E,F)$ denote the class of all pseudodifferential operators of Heisenberg order at most $s$.
These are continuous operators $\Gamma^\infty_c(E)\to\Gamma^\infty(F)$ which extend continuously to pseudolocal operators on distributional sections, $\Gamma^{-\infty}_c(E)\to\Gamma^{-\infty}(F)$.
They can be characterized as operators with a Schwartz kernel that admits an extension to the Heisenberg tangent groupoid which is essentially homogeneous of order $s$.
This extends the Heisenberg filtration on differential operators.
More precisely, a differential operator has Heisenberg order at most $k\in\N_0$ if and only if it is contained in $\Psi^k(E,F)$.

An operator $A\in\Psi^s(E,F)$ has a Heisenberg principal cosymbol $\sigma_x^s(A)\in\Sigma^s_x(E,F)$ at every point $x\in M$.
Here $\Sigma^s_x(E,F)$ denotes the space of regular distributional volume densities on the osculating group $\mathcal T_xM$ with values in $\hom(E_x,F_x)$ which are essentially homogeneous of order $s$, modulo smooth volume densities.
This cosymbol extends the Heisenberg principal (co)symbol of differential operators on filtered manifolds.
The basic properties of this operator class and the Heisenberg principal cosymbol are summarized in Proposition~\ref{P:Psi}.

Let $\pi\colon\mathcal T_xM\to U(\mathcal H)$ be a non-trivial irreducible unitary representation of the osculating group on a Hilbert space $\mathcal H$, and let $\mathcal H_\infty$ denote the subspace of smooth vectors.
If $A\in\Psi^s(E,F)$, then $\bar\pi(\sigma_x^s(A))$ is a well defined closed unbounded operator on $\mathcal H$, restricting to a map $\bar\pi(\sigma^s_x(A))\colon\mathcal H_\infty\otimes E_x\to\mathcal H_\infty\otimes F_x$.
The operator $A$ is said to satisfy the Rockland \cite{R78} condition if $\bar\pi(\sigma^s_x(A))$ is injective on $\mathcal H_\infty\otimes E_x$ for all non-trivial irreducible unitary representations $\pi$ of $\mathcal T_xM$ and every $x\in M$.

For left invariant differential operators on graded nilpotent Lie groups, Helffer and Nourrigat \cite{HN79} proved that the harmonic analytic Rockland condition implies hypoellipticity, thus confirming a conjecture of Rockland's \cite{R78} for the Heisenberg group.
Christ, Geller, G{\l}owacki, and Polin \cite{CGGP92} constructed a pseudodifferential operator calculus on graded nilpotent Lie groups and proved that the pointwise Rockland condition implies the existence of a parametrix in their calculus.
For Heisenberg manifolds with varying osculating algebras such a result has been obtained by Ponge \cite{P08}. 
As mentioned already, Melin \cite{M82} constructed a pseudodifferential calculus on general filtered manifolds and used it construct a parametrix for scalar Rockland differential operators. 
We will prove the following general Rockland type result for systems of pseudodifferential operators on general filtered manifolds.


\begin{thm}\label{thmA}
Let $E$ and $F$ be two vector bundles over a filtered manifold $M$ and suppose $A\in\Psi^s(E,F)$ satisfies the Rockland condition.
Then there exists a properly supported left parametrix $B\in\Psi^{-s}_\prop(F,E)$, that is, $BA-\id$ is a smoothing operator.
\end{thm}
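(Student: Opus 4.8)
The plan is to invert the Heisenberg principal cosymbol of $A$ fibrewise, to assemble the fibrewise inverses into a genuine cosymbol of order $-s$, to lift this to an operator in $\Psi^{-s}(F,E)$, and finally to correct this approximate inverse to a left parametrix by an asymptotic Neumann series. Throughout one may replace $A$ by a properly supported representative: this changes $A$ only by a smoothing operator and affects neither the Rockland condition nor the conclusion, and all operators produced below will be kept properly supported.

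Fix a point $x\in M$ and regard the cosymbol $\sigma_x^s(A)\in\Sigma^s_x(E,F)$ as (the class of) a $\hom(E_x,F_x)$-valued kernel of order $s$ on the graded nilpotent Lie group $\mathcal T_xM$, in the sense of the calculus of Christ, Geller, G{\l}owacki and Polin \cite{CGGP92}. By construction, the Rockland condition for $A$ at $x$ is exactly the statement that this kernel satisfies the Rockland condition on $\mathcal T_xM$, namely that $\bar\pi(\sigma_x^s(A))$ is injective on $\mathcal H_\infty\otimes E_x$ for every non-trivial irreducible unitary representation $\pi$. The Rockland theorem of \cite{CGGP92}, in its rectangular $\hom(E_x,F_x)$-valued form, then yields a $\hom(F_x,E_x)$-valued kernel $b_x$ of order $-s$ with $b_x*\sigma_x^s(A)$ equal to $\delta_e$ modulo a smooth density; that is, $b_x\in\Sigma^{-s}_x(F,E)$ is a left inverse of $\sigma_x^s(A)$ for the convolution product of cosymbols.

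The decisive point is to perform this fibrewise construction uniformly in $x$, producing a section $b\in\Sigma^{-s}(F,E)$ whose value at each point is a left inverse of $\sigma_x^s(A)$. Because the osculating groups $\mathcal T_xM$ vary, this demands the quantitative, parameter-dependent version of the harmonic analysis underlying \cite{CGGP92}, combined with the localization arguments of Ponge \cite{P08} adapted to a varying family of graded nilpotent groups. I expect this to be the main obstacle: the representation-theoretic input, i.e.\ the Rockland theorem itself, is classical, but packaging it into a smooth family of cosymbols over the filtered manifold is where the real work lies.

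Granting such a section $b\in\Sigma^{-s}(F,E)$, one uses surjectivity of the principal cosymbol map (Proposition~\ref{P:Psi}) to choose $B_0\in\Psi^{-s}_\prop(F,E)$ with $\sigma_x^{-s}(B_0)=b_x$ for all $x$. Then $B_0A\in\Psi^0_\prop(E,E)$, and by multiplicativity of the Heisenberg principal cosymbol (Proposition~\ref{P:Psi}) its cosymbol at $x$ is $b_x*\sigma_x^s(A)$, which coincides with the cosymbol of $\id$; exactness of the cosymbol sequence gives $R:=\id-B_0A\in\Psi^{-1}_\prop(E,E)$. Since $R^k\in\Psi^{-k}_\prop(E,E)$ with $\Re(-k)\to-\infty$, asymptotic completeness of the calculus furnishes $S\in\Psi^0_\prop(E,E)$ with $S-\sum_{k=0}^{N}R^k\in\Psi^{-N-1}(E,E)$ for every $N$, where $R^0:=\id$. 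The formal identity $S(\id-R)\sim\sum_{k\geq0}R^k-\sum_{k\geq0}R^{k+1}=\id$ shows that $S(\id-R)-\id$ has vanishing asymptotic expansion, hence lies in $\bigcap_N\Psi^{-N}(E,E)$, the space of smoothing operators; that is, $SB_0A-\id$ is smoothing. Therefore $B:=SB_0\in\Psi^{-s}_\prop(F,E)$ is the desired properly supported left parametrix.
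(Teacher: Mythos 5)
Your proposal follows essentially the same route as the paper: pointwise left inversion of the cosymbol via the Rockland theorem of \cite{CGGP92} (the matrix/rectangular form you invoke is supplied by the paper's Lemma~\ref{L:Rockland}, which reduces it to the scalar case by a Gaussian-elimination argument on $A^*A$), local-to-global assembly of the fibrewise inverses by Ponge-style operator-norm estimates on the varying osculating groups (the paper's Lemma~\ref{L:pointwiseinv}, followed by a partition of unity), and then the asymptotic Neumann series of Proposition~\ref{P:Psi}\itemref{P:Psi:parametrix}. The one step you leave as an expectation --- making the fibrewise inverse vary smoothly --- is exactly the content of Lemma~\ref{L:pointwiseinv}, so your outline matches the paper's proof of Theorem~\ref{T:Rockland}.
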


As a consequence, every operator $A\in\Psi^s(E,F)$ satisfying the Rockland condition is hypoelliptic, that is, if $\psi$ is a compactly supported distributional section of $E$ such that $A\psi$ is smooth on an open subset $U$ of $M$, then $\psi$ was smooth on $U$.
Over closed manifolds this implies that $\ker(A)$ is a finite dimensional subspace of $\Gamma^\infty(E)$, see Theorem~\ref{T:Rockland} below.

Combining Theorem~\ref{thmA} with a result of Christ, Geller, G{\l}owacki, and Polin, see \cite[Theorem~6.1]{CGGP92}, we construct, for each complex number $s$, an operator $\Lambda_s\in\Psi^s(E)$ which is invertible mod smoothing operators, see Lemma~\ref{L:Lambda}.
This permits us to introduce a Heisenberg Sobolev scale, see Proposition~\ref{P:Hs}, and allows us to formulate more refined regularity statements, including maximal hypoelliptic estimates, for operators satisfying the Rockland condition, see Corollary~\ref{C:reg}.

We will use Theorem~\ref{thmA} to analyze Rockland sequences.
A sequence of operators,
$$
\cdots\to\Gamma^\infty(E_{i-1})\xrightarrow{A_{i-1}}\Gamma^\infty(E_i)\xrightarrow{A_i}\Gamma^\infty(E_{i+1})\to\cdots,
$$
where $A_i\in\Psi^{s_i}(E_i,E_{i+1})$ will be called Rockland sequence if the corresponding principal symbol sequence is exact in every non-trivial irreducible unitary representation $\pi\colon\mathcal T_xM\to U(\mathcal H)$ at each $x\in M$, that is, the sequence
$$
\cdots\to\mathcal H_\infty\otimes E_{x,i-1}\xrightarrow{\bar\pi(\sigma^{s_{i-1}}_x(A_{i-1}))}\mathcal H_\infty\otimes E_{x,i}\xrightarrow{\bar\pi(\sigma^{s_i}_x(A_i))}\mathcal H_\infty\otimes E_{x,i+1}\to\cdots
$$
is exact.
On trivially filtered manifolds this definition reduces to the well known concept of elliptic sequences.
To study these sequences we consider formal adjoints $A_i^*\in\Psi^{\bar s}(E_{i+1},E_i)$ with respect to standard $L^2$ inner products on $\Gamma^\infty(E_i)$.
Theorem~\ref{thmA} implies that $(A_{i-1}^*,A_i)$ is hypoelliptic, and more refined regularity statements, including maximal hypoelliptic estimates, can be formulated using the Heisenberg Sobolev scale.

On closed manifolds, in case the Rockland sequence forms a complex, i.e.\ $A_iA_{i-1}=0$, it is convenient to use suitable Sobolev adjoints, $A_i^\sharp$ of $A_i$.
These adjoints are constructed such that $A_{i-1}A_{i-1}^\sharp$ and $A_i^\sharp A_i$ have the same Heisenberg order and thus $B_i:=A_{i-1}A_{i-1}^\sharp+A_i^\sharp A_i$ is a Rockland operator.
We obtain a Hodge decomposition
$$
\Gamma^\infty(E_i)=\img(A_{i-1})\oplus\ker(B_i)\oplus\img(A_i^\sharp)
$$
where $\ker(B_i)=\ker(A_{i-1}^\sharp)\cap\ker(A_i)$.
In particular, each cohomology class has a unique harmonic representative, that is, $\ker(A_i)/\img(A_{i-1})=\ker(B_i)$.

For Rockland complexes of differential operators of positive order, one can alternatively follow the Rumin--Seshadri approach \cite{RS12} and consider 
\begin{equation}\label{E:IRS}
\Delta_i:=(A_{i-1}A_{i-1}^*)^{a_{i-1}}+(A_i^*A_i)^{a_i}
\end{equation}
where the numbers $a_i\in\N$ are chosen such that $\kappa:=s_{i-1}a_{i-1}=s_ia_i$.
Then $\Delta_i$ is a differential operator of order at most $2\kappa$ which satisfies the Rockland condition.
We obtain a similar Hodge decomposition, namely,
$$
\Gamma^\infty(E_i)=\img(A_{i-1})\oplus\ker(\Delta_i)\oplus\img(A_i^*),
$$
where $\ker(A_i)/\img(A_{i-1})=\ker(\Delta_i)=\ker(A_{i-1}^*)\cap\ker(A_i)$.

In order to study the generalized BGG sequences we will construct below, the analysis needs to be adapted to a setup where the operators act on sections of filtered vector bundles, cf.\ Rumin's concept of Carnot--Carath\'eodory (C-C) ellipticity in \cite{R99,R01}.
For any two filtered vector bundles $E$ and $F$ we consider a class of operators, $\tilde\Psi^s(E,F)$, which will be called pseudodifferential operators of graded Heisenberg order $s$.
If we identify $E$ and $F$ with the associated graded using splittings of the filtrations, then an operator $A\in\tilde\Psi^s(E,F)$ can be considered as a matrix with entries $A_{qp}\in\Psi^{s+q-p}(\gr_p(E),\gr_q(F))$.
The graded Heisenberg principal cosymbol, $\tilde\sigma^s_x(A)$, can be defined as the matrix obtained by taking the (ordinary) Heisenberg principal cosymbol of each entry, that is, 
$$
\tilde\sigma^s_x(A)=\sum_{p,q}\sigma^{s+q-p}_x(A_{qp}).
$$
Neither the operator class $\tilde\Psi^s(E,F)$, nor the graded Heisenberg principal cosymbol $\tilde\sigma^s_x(A)$ depend on the choice of splittings.

An operator $A\in\tilde\Psi^s(E,F)$ is called graded Rockland operator if $\bar\pi(\tilde\sigma^s_x(A))\colon\mathcal H_\infty\otimes\gr(E_x)\to\mathcal H_\infty\otimes\gr(F_x)$ is injective for all non-trivial irreducible unitary representations $\pi\colon\mathcal T_xM\to U(\mathcal H)$ and all $x\in M$.
Similarly, a graded Rockland sequence is defined to be a sequence of operators such that its graded Heisenberg principal symbol sequence is exact in each non-trivial irreducible unitary representation of $\mathcal T_xM$.
Theorem~\ref{thmA} has a graded analogue, and all the analysis mentioned above generalizes to this graded setup, see Section~\ref{S:grRockland}.
Even if a graded Rockland sequence is made of differential operators, its analysis requires conjugation by a pseudodifferential operator in the calculus, cf.~\cite{R99,R01}.
This is another reason why we need the generality of Theorem~\ref{thmA} to analyze generalized BGG sequences.

\subsection{Construction of Rockland sequences}\label{SS:construction}

In this paper will shall construct several examples of Rockland sequences.
The most basic sequence we will consider is the de~Rham sequence associated with a linear connection $\nabla$ on a filtered vector bundle $E$ over a filtered manifold $M$.
This can be characterized as the unique extension of $\nabla$,
\begin{equation}\label{E:nablaE}
\cdots\to\Omega^{k-1}(M;E)\xrightarrow{d_{k-1}^\nabla}\Omega^k(M;E)\xrightarrow{d_k^\nabla}\Omega^{k+1}(M;E)\to\cdots,
\end{equation}
such that the Leibniz rule $d^\nabla(\alpha\wedge\psi)=d\alpha\wedge\psi+(-1)^k\alpha\wedge d^\nabla\psi$ holds for all $\alpha\in\Omega^k(M)$ and $\psi\in\Omega^*(M;E)$, cf.~\cite[Section~7.14]{GHV2}.
Here we use the notation $\Omega^k(M;E)=\Gamma^\infty(\Lambda^kT^*M\otimes E)$ for the space of $E$-valued differential forms.

We assume that $\nabla$ is filtration-preserving, that is to say, we assume $\nabla_X\psi\in\Gamma^\infty(E^{p+q}M)$ for all $X\in\Gamma^\infty(T^pM)$ and $\psi\in\Gamma^\infty(E^q)$.
Then all operators in the sequence \eqref{E:nablaE} are of graded Heisenberg order at most zero with respect to the induced filtration on the vector bundles $\Lambda^kT^*M\otimes E$.
We will, furthermore, assume that the curvature \cite[Section~7.15]{GHV2} of $\nabla$ is contained in filtration degree one, that is, we assume $F^\nabla_x(X_1,X_2)\psi\in E_x^{p_1+p_2+p+1}$ for all $X_i\in T_x^{p_i}M$ and $\psi\in E^p_x$.
Linear connections of this kind exist on every filtered vector bundle.
If $E$ is trivially filtered, then all linear connections on $E$ satisfy the two assumptions.
In general, using a splitting of the filtration to identify $E$ with its associated graded, $\gr(E)=\bigoplus_pE^p/E^{p+1}$, each linear connection preserving the grading on $\gr(E)$ will satisfy the two assumptions.
Moreover, all tractor bundles associated with regular parabolic geometries come equipped with a natural linear connection satisfying these assumptions, see Section~\ref{SS:BGG} below.

We have the following generalization of a result of Rumin's for the de~Rham complex on C-C manifolds, cf.~\cite[Theorem~5.2]{R01} and \cite[Theorem~3]{R99}.

\begin{thm}\label{thmB}
Let $E$ be a filtered vector bundle over a filtered manifold $M$ and suppose $\nabla$ is a filtration-preserving linear connection on $E$ such that its curvature is contained in filtration degree one.
Then the de~Rham sequence in \eqref{E:nablaE} is a graded Rockland sequence.
\end{thm}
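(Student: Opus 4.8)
The plan is to reduce the statement to a pointwise computation of the graded Heisenberg principal cosymbol sequence of \eqref{E:nablaE} and then to a classical vanishing theorem in Lie algebra cohomology; no use of Theorem~\ref{thmA} is needed, since being a graded Rockland sequence is a condition on the principal symbols alone.

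First I would compute $\tilde\sigma^0_x(d^\nabla_k)$. Working in a local frame of $E$ adapted to the filtration we may write $d^\nabla=d+A$ with $A$ the connection one-form; since the filtration preserving hypothesis makes $A$, viewed as a bundle map $\Lambda^kT^*M\otimes E\to\Lambda^{k+1}T^*M\otimes E$, of graded Heisenberg order at most $0$, while its filtration raising part has order at most $-1$, only the associated graded $a_x:=\gr_x(A)\in\mathfrak t_xM^*\otimes\eend(\gr(E_x))$, a grading compatible endomorphism valued covector, enters the cosymbol. Combining this with the (standard, and directly verifiable via the Heisenberg tangent groupoid) fact that the cosymbol of the scalar de~Rham differential is the left invariant de~Rham differential on the osculating group $\mathcal T_xM$, one obtains that $\tilde\sigma^0_x(d^\nabla_\bullet)$ is the de~Rham complex of $\mathcal T_xM$ with coefficients in the trivial bundle $\mathcal T_xM\times\gr(E_x)$ equipped with the left invariant linear connection $\theta$ determined by $a_x$. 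By the multiplicativity of the cosymbol (Proposition~\ref{P:Psi}), $\tilde\sigma^0_x(d^\nabla_{k+1})\circ\tilde\sigma^0_x(d^\nabla_k)=\tilde\sigma^0_x((d^\nabla)^2)$; the curvature of $\nabla$ lying in filtration degree one forces $(d^\nabla)^2$ to have graded Heisenberg order at most $-1$, so this composition vanishes. Unwinding, this is precisely the flatness of $\theta$, i.e.\ $\theta\colon\mathfrak t_xM\to\eend(\gr(E_x))$ is a Lie algebra homomorphism, making $\gr(E_x)$ a $\mathfrak t_xM$-module; and since $a_x$ is grading compatible, $\theta(X)$ strictly lowers the degree on $\gr(E_x)$ for every homogeneous $X$, so this module is unipotent, i.e.\ a successive extension of trivial $\mathfrak t_xM$-modules.

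Next, fix $x\in M$ and a nontrivial irreducible unitary representation $\pi\colon\mathcal T_xM\to U(\mathcal H)$. Since applying $\bar\pi$ to a left invariant differential operator on $\mathcal T_xM$ amounts to substituting $d\pi$ for the invariant vector fields, the complex
$$
\cdots\to\mathcal H_\infty\otimes\Lambda^k\mathfrak t_xM^*\otimes\gr(E_x)\xrightarrow{\bar\pi(\tilde\sigma^0_x(d^\nabla_k))}\mathcal H_\infty\otimes\Lambda^{k+1}\mathfrak t_xM^*\otimes\gr(E_x)\to\cdots
$$
is exactly the Chevalley--Eilenberg complex computing $H^*\big(\mathfrak t_xM;\mathcal H_\infty\otimes\gr(E_x)\big)$, where $\mathcal H_\infty$ carries the infinitesimal $\pi$-action and $\gr(E_x)$ the $\theta$-action. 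Filtering the unipotent module $\gr(E_x)$ by $\mathfrak t_xM$-submodules with trivial subquotients, the associated long exact sequences reduce the vanishing of this cohomology to the vanishing of $H^*(\mathfrak t_xM;\mathcal H_\infty)$ for nontrivial $\pi$. That I would prove by induction on $\dim\mathcal T_xM$ using the Hochschild--Serre spectral sequence for the centre $\mathfrak z\subseteq\mathfrak t_xM$: if the central character of $\pi$ is nontrivial, some $Z\in\mathfrak z$ acts on $\mathcal H$ as a nonzero imaginary scalar, hence invertibly, so the Koszul complex computing $H^*(\mathfrak z;\mathcal H_\infty)$ is contractible and the spectral sequence degenerates to zero; if the central character is trivial, $\pi$ descends to a nontrivial irreducible unitary representation of the quotient group $\mathcal T_xM/\exp(\mathfrak z)$, to which the inductive hypothesis applies via the same spectral sequence. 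The base case, $\mathfrak t_xM$ abelian and $\pi$ a nontrivial character, follows from the exactness of the Koszul complex of a nonzero covector. This proves exactness of the symbol sequence in every nontrivial irreducible unitary representation at every point, so \eqref{E:nablaE} is a graded Rockland sequence.

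I expect the identification of $\tilde\sigma^0_x(d^\nabla_k)$ with the twisted left invariant de~Rham differential on the osculating group to be the delicate step: it requires keeping track of the filtration induced on $\Lambda^kT^*M\otimes E$ and of the grading conventions, and of exactly which part of the connection survives under passage to the cosymbol. Once this is in place, the flatness and unipotence of the model connection are formal consequences of the curvature hypothesis and of grading compatibility, and the Lie algebra cohomology vanishing is classical.
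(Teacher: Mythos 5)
Your proposal is correct and follows essentially the same route as the paper: identify $\tilde\sigma^0_x(d^\nabla)$ with the left invariant Chevalley--Eilenberg/twisted de~Rham differential on $\mathcal T_xM$ (Lemma~\ref{L:symb}), use the curvature hypothesis to make $\gr(E_x)$ a graded, hence unipotent, $\mathfrak t_xM$-module (Lemma~\ref{L:curv}), reduce by the trivial-subquotient filtration to $H^*(\mathfrak t_xM;\mathcal H_\infty)=0$, and prove that by induction via the Hochschild--Serre spectral sequence for a central subalgebra (Lemma~\ref{L:Hnoe}). The only cosmetic difference is that you handle the nontrivial central character case by a Cartan-homotopy/Koszul contraction rather than by noting the $E_2$-term vanishes, which is the same computation.
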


Essentially, Theorem~\ref{thmB} follows from the fact that the Lie algebra cohomology $H^*(\goe;\mathcal H_\infty)$ vanishes for every finite dimensional nilpotent Lie algebra $\goe$ and its representation on the space of smooth vectors $\mathcal H_\infty$ associated with any non-trivial irreducible unitary representation of the corresponding simply connected nilpotent Lie group on a Hilbert space $\mathcal H$.
This will be established in Section~\ref{SS:linearconnections}, see Proposition~\ref{P:hypo} below.

To construct new sequences, we follow {\v{C}}ap, Slov{\'a}k, and Sou{\v{c}}ek, see \cite{CSS01}, and consider a Kostant type codifferential.
By this we mean a sequence of filtration-preserving vector bundle homomorphisms,
$$
\cdots\leftarrow\Omega^{k-1}(M;E)\xleftarrow{\delta_k}\Omega^k(M;E)\xleftarrow{\delta_{k+1}}\Omega^{k+1}(M;E)\leftarrow\cdots,
$$
satisfying  $\delta_k\delta_{k+1}=0$ and two more conditions formulated in Definition~\ref{D:Kdelta} below.
Assuming $\delta_k$ to have locally constant rank, we obtain smooth vector bundles $\ker(\delta_k)$, $\img(\delta_{k+1})$, and $\mathcal H_k:=\ker(\delta_k)/\img(\delta_{k+1})$, which are filtered in a natural way.
We let $\bar\pi_k\colon\ker(\delta_k)\to\mathcal H_k$ denote the natural vector bundle projection.

Using the BGG machinery \cite{CSS01,CD01,CS12,CS15} we will see that there exist operators analogous to the splitting operators in parabolic geometry, see \cite[Theorem~2.4]{CS12}.
More precisely, there exists a unique differential operator $\bar L_k\colon\Gamma^\infty(\mathcal H_k)\to\Omega^k(M;E)$ such that $\delta_k\bar L_k=0$, $\bar\pi_k\bar L_k=\id$, and $\delta_{k+1}d^\nabla_k\bar L_k=0$.
These operators $\bar L_k$ are of graded Heisenberg order zero and permit defining a sequence of differential operators of graded Heisenberg order zero,
\begin{equation}\label{E:Dbar}
\cdots\to\Gamma^\infty(\mathcal H_{k-1})\xrightarrow{\bar D_{k-1}}\Gamma^\infty(\mathcal H_k)\xrightarrow{\bar D_k}\Gamma^\infty(\mathcal H_{k+1})\to\cdots,
\end{equation}
by setting $\bar D_k:=\bar\pi_{k+1}d^\nabla_k\bar L_k$.

In Section~\ref{SS:subdeRham} we will establish the following result, see Corollary~\ref{C:D}.

\begin{thm}\label{thmC}
The operators in \eqref{E:Dbar} form a graded Rockland sequence.
\end{thm}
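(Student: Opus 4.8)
The plan is to reduce Theorem~\ref{thmC} to Theorem~\ref{thmB} by passing to graded Heisenberg principal symbols and feeding the result into the algebraic part of the BGG machinery, pointwise and in each irreducible unitary representation. Since every $\bar D_k$ has graded Heisenberg order zero, what has to be shown is that for each $x\in M$ and each non-trivial irreducible unitary representation $\pi\colon\mathcal T_xM\to U(\mathcal H)$ the sequence obtained from \eqref{E:Dbar} by applying $\bar\pi\circ\tilde\sigma^0_x$ is exact. Write $\mathsf D_k$, $\mathsf d_k$, $\mathsf L_k$, $\mathsf p_k$, $\mathsf s_k$ for $\bar\pi(\tilde\sigma^0_x(-))$ applied to $\bar D_k$, $d^\nabla_k$, $\bar L_k$, $\bar\pi_k$, $\delta_k$ respectively, and set $\mathcal F^k:=\mathcal H_\infty\otimes\gr(\Lambda^kT^*_xM\otimes E_x)$ and $\mathcal G^k:=\mathcal H_\infty\otimes\gr(\mathcal H_{x,k})$. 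Multiplicativity of the graded Heisenberg principal cosymbol together with the defining formula $\bar D_k=\bar\pi_{k+1}\circ d^\nabla_k\circ\bar L_k$ give $\mathsf D_k=\mathsf p_{k+1}\circ\mathsf d_k\circ\mathsf L_k\colon\mathcal G^k\to\mathcal G^{k+1}$. By Theorem~\ref{thmB} the sequence $(\mathcal F^\bullet,\mathsf d_\bullet)$ is exact; note in particular $\mathsf d_{k+1}\mathsf d_k=0$, which reflects that the symbol of $(d^\nabla)^2$ vanishes since the curvature has filtration degree one.

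Next I would identify the remaining symbol data as an instance of the data entering the BGG construction. Since $\delta_k$ and $\bar\pi_k$ are filtration preserving vector bundle homomorphisms, $\mathsf s_k=\id_{\mathcal H_\infty}\otimes\gr_x(\delta_k)$ and $\mathsf p_k=\id_{\mathcal H_\infty}\otimes\gr_x(\bar\pi_k)$. By the conditions in Definition~\ref{D:Kdelta}, $\gr_x(\delta_k)$ and the fibrewise Chevalley--Eilenberg differential $\partial_x$ of $\mathfrak t_xM$ on $\gr(E_x)$ induced by $\gr(\nabla)$ form a Kostant pair: with respect to suitable fibrewise inner products $\gr_x(\delta_k)$ is the formal adjoint of $\partial_x$, the Kostant Laplacian $\Box_x:=\partial_x\gr_x(\delta)+\gr_x(\delta)\partial_x$ is self-adjoint and non-negative with $\ker\Box_x\cong\gr(\mathcal H_{x,\bullet})$, and $\gr(\Lambda^\bullet T^*_xM\otimes E_x)=\img\partial_x\oplus\ker\Box_x\oplus\img\gr_x(\delta)$. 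Tensoring with $\mathcal H_\infty$, the Kostant Laplacian attached to $(\mathsf d,\mathsf s)$ is $\id_{\mathcal H_\infty}\otimes\Box_x$, invertible away from its kernel $\mathcal G^\bullet$ with Green's operator $\id_{\mathcal H_\infty}\otimes G_x$; in particular $\ker\mathsf s_k=\img\mathsf s_{k+1}\oplus\mathcal G^k$ and $\mathsf p_k$ is the corresponding projection. Applying $\bar\pi\circ\tilde\sigma^0_x$ to the three equations $\delta_k\bar L_k=0$, $\bar\pi_k\bar L_k=\id$, $\delta_{k+1}d^\nabla_k\bar L_k=0$ characterising $\bar L_k$, and invoking the uniqueness of splitting operators (which holds equally for the symbol data), shows that $\mathsf L_k$ is the BGG splitting operator for the data $(\mathsf d,\mathsf s)$; consequently $(\mathcal G^\bullet,\mathsf D_\bullet)$ is precisely the BGG sequence attached to $(\mathsf d,\mathsf s)$.

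It then remains to invoke the homological algebra underlying the BGG construction, cf.\ \cite{CSS01,CS12,CS15} and the analogue established in Section~\ref{SS:subdeRham}: for the above data $\mathsf D_\bullet$ is a complex and the splitting maps induce an isomorphism $H^k(\mathcal G^\bullet,\mathsf D_\bullet)\cong H^k(\mathcal F^\bullet,\mathsf d_\bullet)$ for all $k$. By Theorem~\ref{thmB} the right hand side vanishes, since $\pi$ is non-trivial; hence so does $H^k(\mathcal G^\bullet,\mathsf D_\bullet)$. Thus the graded Heisenberg principal symbol sequence of \eqref{E:Dbar} is exact at every point and in every non-trivial irreducible unitary representation of the osculating group, which is precisely the assertion of Theorem~\ref{thmC}.

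I expect the main point requiring care to be the applicability of the BGG machinery, which is developed over manifolds for finite rank bundles, to the fibrewise data above, which involves the infinite dimensional spaces $\mathcal H_\infty$. This should go through because $\mathcal H_\infty$ enters only as an inert tensor factor --- the codifferential $\mathsf s$, the background differential $\id_{\mathcal H_\infty}\otimes\partial_x$, the Kostant Laplacian and its Green's operator all act as the identity on it --- and because the filtrations of $\gr(\Lambda^\bullet T^*_xM\otimes E_x)$ have finite length, so that the inductive constructions of the splitting operators and of the homotopies implementing the cohomology isomorphism terminate and yield continuous maps on the spaces $\mathcal F^k$. A more routine point is the compatibility of $\bar\pi\circ\tilde\sigma^0_x$ with the relevant compositions and with the defining equations of $\bar L_k$, which rests on the multiplicativity and naturality properties of the graded Heisenberg principal cosymbol recorded in Proposition~\ref{P:Psi} and Section~\ref{S:grRockland}.
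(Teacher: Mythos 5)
Your overall strategy---apply $\bar\pi\circ\tilde\sigma^0_x$ to the whole construction and reduce exactness of the symbol sequence of $\bar D_\bullet$ to exactness of the symbol sequence of $d^\nabla_\bullet$, i.e.\ to Theorem~\ref{thmB}---is the right one and is essentially the paper's. However, the step where you identify the symbol data as a ``Kostant pair'' contains a genuine error. In the generality of Definition~\ref{D:Kdelta} the codifferential is \emph{not} required to be adjoint to the fibrewise Chevalley--Eilenberg differential with respect to any inner product (that holds only for the special construction of Remark~\ref{R:Kdeltaexi} and in the parabolic case); the definition merely demands $\tilde\delta\tilde P=0$ for the spectral projection $\tilde P$ of $\tilde\Box=\gr(\Box)$, which need not be orthogonal, and $\tilde\Box$ need not be diagonalizable. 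More seriously, the Laplacian actually attached to the symbol data, $\mathsf d\mathsf s+\mathsf s\mathsf d=\bar\pi(\tilde\sigma^0_x(\Box_k))$, is \emph{not} $\id_{\mathcal H_\infty}\otimes\tilde\Box_{k,x}$, and its inverse on the complement of the generalized kernel is not of the form $\id_{\mathcal H_\infty}\otimes(\cdot)$: by \eqref{E:ChE} the graded symbol $\tilde\sigma^0_x(d^\nabla)$ has grading-raising components of positive Heisenberg order involving the operators $\pi(X_j)$ acting nontrivially on $\mathcal H_\infty$, so only the grading-preserving part of $\bar\pi(\tilde\sigma^0_x(\Box_k))$ is tensorial in the $\mathcal H_\infty$ factor. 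Your closing justification that ``$\mathcal H_\infty$ enters only as an inert tensor factor'' therefore rests on a false premise, and this is exactly where the substance lies: one must show that $\id\otimes\tilde\Box_x$ plus a filtration-degree-raising (hence nilpotent) perturbation is still invertible where it needs to be, via the terminating Neumann series of Lemmas~\ref{L:inv} and~\ref{L:EP}; uniqueness of the symbol-level splitting operator and the cohomology comparison both hinge on this.

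The paper circumvents precisely this point by running the BGG construction once at the operator level (differential projectors $P_k$ and invertible order-zero conjugators $L_k$ with $L_k^{-1}P_kL_k=\tilde P_k$) and then passing to symbols: by Proposition~\ref{P:DB}(a) the graded symbol of the conjugated de~Rham sequence \emph{decouples} as $\tilde\sigma^0_x(D_k)\oplus\tilde\sigma^0_x(B_k)$, exactness of a direct sum of sequences in every nontrivial irreducible unitary representation forces exactness of each summand, and $\bar D_k$ is conjugate to $D_k$ by the invertible order-zero operators $V_k$ of Proposition~\ref{P:D}(d). This avoids any need for a symbol-level cohomology isomorphism or chain homotopy (the operator-level homotopy of Proposition~\ref{P:DB}(c) is only available when $\nabla$ is flat, whereas $\mathsf d^2=0$ holds only after passing to symbols). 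Your route can be completed, but you must redo the splitting-operator construction and the cohomology comparison for the symbol data in each representation, replacing the claimed tensoriality of the Laplacian by the nilpotent-perturbation argument.
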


A codifferential $\delta$ of maximal rank exists, provided the dimension of the Lie algebra cohomology $H^*(\mathfrak t_xM;\gr(E_x))$ is locally constant in $x$.
Note that the curvature assumption on $\nabla$ implies that $\gr(E_x)$ becomes a graded representation of the graded nilpotent Lie algebra $\mathfrak t_xM$, see Lemma~\ref{L:curv}.
In this case the codifferential $\delta_k$ can be constructed using splittings of the filtrations on the bundles $\Lambda^kT^*M\otimes E\cong\gr(\Lambda^kT^*M\otimes E)=\Lambda^k\mathfrak t^*M\otimes\gr(E)$ and the adjoint of the fiber-wise Chevalley--Eilenberg differential, $\partial_{k-1}\colon\Lambda^{k-1}\mathfrak t^*M\otimes\gr(E)\to\Lambda^k\mathfrak t^*M\otimes\gr(E)$, see Remark~\ref{R:Kdeltaexi} for details.
For this codifferential there exists a (non-canonical) isomorphism of smooth vector bundles $\mathcal H_k\cong H^k(\mathfrak tM;\gr(E))=\ker(\partial_k)/\img(\partial_{k-1})$ where the latter denotes the vector bundle with fibers $H^k(\mathfrak t_xM;\gr(E_x))$.

For tractor bundles associated with regular parabolic geometries, however, there exists a natural choice for $\delta$ which is called \emph{Kostant codifferential} and often denoted by $\partial^*$.
In this case the construction above reduces to the construction of the curved BGG sequences, and the operators $\bar L_k$ coincide with the well known splitting operators, see \cite[Theorem~2.4]{CS12} for instance.
As an immediate corollary of Theorem~\ref{thmC} we thus obtain, cf.\ Corollary~\ref{C:BGG}:

\begin{thm}\label{thmD}
All (curved, torsion free) BGG sequences associated with a regular parabolic geometry are graded Rockland sequences.
\end{thm}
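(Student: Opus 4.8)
The strategy is to realize the BGG data as a particular instance of the construction underlying Theorem~\ref{thmC} (equivalently Corollary~\ref{C:D}) and then to quote that theorem. Recall that a regular parabolic geometry of type $(\goe,P)$ on $M$ canonically equips $M$ with a filtered manifold structure whose bundle of osculating algebras is modeled on the graded nilpotent Lie algebra $\goe_-=\goe/\poe$, cf.\ \cite[Proposition~4.3.1]{CS09}. For a finite dimensional representation $\mathbb V$ of $\goe$, the associated tractor bundle $\mathcal V=\mathcal G\times_P\mathbb V$ inherits from the $P$-invariant filtration of $\mathbb V$ the structure of a filtered vector bundle over $M$, and is equipped with the canonical tractor connection $\nabla^{\mathcal V}$. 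Under the Killing form one has $T^*M\cong\mathcal G\times_P\goe_+$, where $\goe_+$ is the nilradical of $\poe$, so that $\Lambda^kT^*M\otimes\mathcal V\cong\mathcal G\times_P(\Lambda^k\goe_+\otimes\mathbb V)$; the curved BGG sequence of $\mathcal V$ is, by definition, the sequence produced from the twisted de~Rham sequence $\bigl(\Omega^*(M;\mathcal V),d^{\nabla^{\mathcal V}}\bigr)$ together with the Kostant codifferential $\partial^*$ on $\Omega^*(M;\mathcal V)$.

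The plan is then to verify that $(\mathcal V,\nabla^{\mathcal V},\partial^*)$ satisfies the hypotheses required to invoke Theorems~\ref{thmB} and~\ref{thmC}. First, $\nabla^{\mathcal V}$ is filtration preserving: writing $\nabla^{\mathcal V}$ via the Cartan connection and using that $\goe_i$ maps the $i$-th filtrand of $\mathbb V$ into the next, one obtains $\nabla^{\mathcal V}_X\psi\in\Gamma^\infty(\mathcal V^{p+q})$ whenever $X\in\Gamma^\infty(T^pM)$ and $\psi\in\Gamma^\infty(\mathcal V^q)$. Second, the curvature of $\nabla^{\mathcal V}$ is contained in filtration degree one; this is precisely where \emph{regularity} of the parabolic geometry enters. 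Identifying the tractor curvature with the curvature function $\kappa$ of the Cartan connection acting on $\mathbb V$, regularity---which says that $\kappa$ takes values in components of strictly positive homogeneity---translates into $F^{\nabla^{\mathcal V}}_x(X_1,X_2)\psi\in\mathcal V_x^{p_1+p_2+p+1}$ for all $X_i\in T_x^{p_i}M$ and $\psi\in\mathcal V_x^p$, which is exactly the curvature hypothesis of Theorem~\ref{thmB}. By Lemma~\ref{L:curv} the associated graded $\gr(\mathcal V_x)$ thus becomes a graded module over $\mathfrak t_xM\cong\goe_-$, and this recovers the standard $\goe_-$--module structure on $\gr\mathbb V$. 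Third, the Kostant codifferential $\partial^*$ is a Kostant type codifferential in the sense of Definition~\ref{D:Kdelta}: its $P$-equivariance makes it a sequence of filtration preserving vector bundle homomorphisms, $\partial^*\partial^*=0$ is Kostant's identity, and, after passing to the associated graded, $\partial^*$ is the fiberwise adjoint of the Chevalley--Eilenberg differential of $\goe_-$ on $\gr\mathbb V$ with respect to an admissible inner product, so the two remaining conditions of Definition~\ref{D:Kdelta} hold exactly as in Remark~\ref{R:Kdeltaexi}. Finally $\partial^*$ has locally (indeed globally) constant rank, since $\ker(\partial^*)$, $\img(\partial^*)$ and $\mathcal H_k=\ker(\partial^*)/\img(\partial^*)$ are the bundles associated to the $P$-submodules and $P$-subquotients of $\Lambda^k\goe_+\otimes\mathbb V$; by Kostant's version of the Bott--Borel--Weil theorem the fibers of $\mathcal H_k$ are the Lie algebra (co)homology spaces, of dimension independent of $x$.

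With these verifications in hand, the construction preceding Theorem~\ref{thmC} produces operators $\bar L_k\colon\Gamma^\infty(\mathcal H_k)\to\Omega^k(M;\mathcal V)$ and $\bar D_k=\bar\pi_{k+1}d^{\nabla^{\mathcal V}}_k\bar L_k$, and the defining properties $\partial^*\bar L_k=0$, $\bar\pi_k\bar L_k=\id$, $\partial^*d^{\nabla^{\mathcal V}}_k\bar L_k=0$ characterize $\bar L_k$ uniquely. These are precisely the properties characterizing the BGG splitting operators of {\v{C}}ap, Slov{\'a}k, and Sou{\v{c}}ek, so \eqref{E:Dbar} coincides with the curved BGG sequence of $\mathcal V$, cf.\ \cite[Theorem~2.4]{CS12} and \cite{CSS01,CD01,CS15}. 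Theorem~\ref{thmC} then applies verbatim and yields the assertion. I expect the main obstacle to be the second verification above: one has to carefully match the homogeneity grading built into the definition of a regular parabolic geometry with the filtration degree of the tractor curvature demanded by Theorem~\ref{thmB}, keeping track of the index shift coming from the convention $T^pM\leftrightarrow\goe_p$. The torsion free hypothesis in the statement plays no role in the Rockland property itself---Theorem~\ref{thmC} gives exactness of the graded Heisenberg principal symbol sequence whether or not \eqref{E:Dbar} is a complex---and is included only so that ``BGG sequence'' refers to the usual complex; dropping it, the conclusion persists for the (possibly non-complex) sequence \eqref{E:Dbar}.
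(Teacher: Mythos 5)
Your proposal is correct and follows essentially the same route as the paper: Section~\ref{SS:BGG} verifies exactly these hypotheses (tractor connection filtration preserving, curvature in filtration degree one via regularity, the Kostant codifferential $\partial^*$ a locally constant rank codifferential of Kostant type) and then deduces the statement as Corollary~\ref{C:BGG} from Corollary~\ref{C:D}(b). The only point you gloss over is the verification of Definition~\ref{D:Kdelta}(iii) for $\partial^*$, for which the paper invokes a Weyl structure to produce splittings $S$ with $\delta\circ S=S\circ\gr(\delta)$; this is a routine supplement to your argument rather than a gap.
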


To prove Theorem~\ref{thmC}, we shall construct another sequence that, at the principal symbol level, can be combined with the sequence \eqref{E:Dbar} to obtain the de~Rham sequence of Theorem~\ref{thmB}, up to conjugation. 
The Rockland condition for both components then follows from Theorem~\ref{thmB}.
This construction is closely related to the standard BGG machinery and the approach by Rumin \cite{R99,R01}.

More precisely, we consider $\Box_k:=d^\nabla_{k-1}\delta_k+\delta_{k+1}d^\nabla_k$, a differential operator of graded Heisenberg order at most zero on $\Omega^k(M;E)$.
The associated graded vector bundle endomorphism $\tilde\Box_k:=\gr(\Box_k)$ on $\gr(\Lambda^kT^*M\otimes E)$ is analogous to Kostant's box operator.
Using the fiber-wise projection onto the generalized zero eigenspace of $\tilde\Box_k$, we obtain a vector bundle projector $\tilde P_k$ on $\gr(\Lambda^kT^*M\otimes E)$, providing a decomposition of smooth vector bundles
$$
\gr(\Lambda^kT^*M\otimes E)=\img(\tilde P_k)\oplus\ker(\tilde P_k)
$$
such that $\tilde\Box_k$ is nilpotent on $\img(\tilde P_k)$ and invertible on $\ker(\tilde P_k)$.
We will construct two sequences of differential operators of graded Heisenberg order at most zero,
\begin{equation}\label{E:DDD}
\cdots\to\Gamma^\infty(\img(\tilde P_{k-1}))\xrightarrow{D_{k-1}}\Gamma^\infty(\img(\tilde P_k))\xrightarrow{D_k}\Gamma^\infty(\img(\tilde P_{k+1}))\to\cdots
\end{equation}
and
\begin{equation}\label{E:BBB}
\cdots\to\Gamma^\infty(\ker(\tilde P_{k-1}))\xrightarrow{B_{k-1}}\Gamma^\infty(\ker(\tilde P_k))\xrightarrow{B_k}\Gamma^\infty(\ker(\tilde P_{k+1}))\to\cdots,
\end{equation}
as well as invertible differential operators 
$$
L_k\colon\Gamma^\infty(\gr(\Lambda^kT^*M\otimes E))\to\Omega^k(M;E),
$$
such that the graded Heisenberg principal symbols are related by 
$$
\tilde\sigma^0_x(L^{-1}_{k+1}d_k^\nabla L_k)=\tilde\sigma^0_x(D_k)\oplus\tilde\sigma^0_x(B_k)
$$
at each point $x\in M$.
Theorem~\ref{thmB} readily implies that \eqref{E:DDD} and \eqref{E:BBB} are both graded Rockland sequences.
Moreover, we will construct an invertible differential operator of graded Heisenberg order zero, $V_k\colon\Gamma^\infty(\img(\tilde P_k))\to\Gamma^\infty(\mathcal H_k)$, such that $V_{k+1}^{-1}\bar D_kV_k=D_k$, whence Theorem~\ref{thmC}.

The construction of the operators announced in the preceding paragraph is based on the observation that there exists a unique filtration-preserving differential operator
$$
P_k\colon\Omega^k(M;E)\to\Omega^k(M;E)
$$ 
characterized by $P_k\Box_k=\Box_kP_k$, $P_k^2=P_k$ and $\gr(P_k)=\tilde P_k$.
This operator has graded Heisenberg order zero.
Using splittings of the filtrations, $S_k\colon\gr(\Lambda^kT^*M\otimes E)\to\Lambda^kT^*M\otimes E$, we define differential operators of graded Heisenberg order zero, 
$$
L_k:=P_kS_k\tilde P_k+(\id-P_k)S_k(\id-\tilde P_k).
$$
Since $\gr(L_k)=\id$, this differential operator is invertible and its inverse $L^{-1}_k$ is a differential operator of graded Heisenberg order zero too.
Moreover, it conjugates the differential projectors into vector bundle projectors, $L^{-1}_kP_kL_k=\tilde P_k$.
We will verify that the operators $D_k:=\tilde P_{k+1}L_{k+1}^{-1}d^\nabla_kL_k|_{\Gamma^\infty(\img(\tilde P_k))}$, $B_k:=(\id-\tilde P_{k+1})L_{k+1}^{-1}d^\nabla_kL_k|_{\Gamma^\infty(\ker(\tilde P_k))}$, and $V_k:=\bar\pi_kL_k|_{\Gamma^\infty(\img(\tilde P_k))}$ with inverse $V_k^{-1}=L_k^{-1}\bar L_k$ have all the desired properties.
The operator $P_k$ is related to the splitting operator $\bar L_k$ considered above by $\bar L_k\bar\pi_k=P_k|_{\ker(\delta_k)}$.
On regular parabolic geometries $P_k$ coincides with the composition of (5.1) and (5.2) in \cite{CD01}.

Let us now suppose that the linear connection $\nabla$ is flat.
In this case the sequence \eqref{E:nablaE} is known as de~Rham complex, $d^\nabla_kd^\nabla_{k-1}=0$, and computes the cohomology of $M$ with coefficients in the locally constant sheave provided by the flat connection on $E$.
In this case the sequence of operators $L_{k+1}^{-1}d^\nabla_kL_k$ decouples into a Rumin complex and an acyclic subcomplex, cf.\ \cite[Theorem~2.6]{R01} or \cite[Theorem~1]{R99}.
More precisely, we have
$$
L_{k+1}^{-1}d^\nabla_kL_k=D_k\oplus B_k,
$$
and, in particular, $D_kD_{k-1}=0$ as well as $B_kB_{k-1}=0$.
In this situation the sequences in \eqref{E:Dbar} and \eqref{E:DDD} will be called Rumin complexes, for they essentially coincide with complexes on contact \cite{R90,R94,R00} and more general Carnot--Carath\'eodory \cite{R99, R01} manifolds which have been introduced by Rumin.
We will show that the sequence $B_k$ is conjugate to an acyclic tensorial complex.
More precisely, we will see that there exist invertible differential operators of graded Heisenberg order at most zero, $G_k$ acting on $\Gamma^\infty(\ker(\tilde P_k))$, such that 
$$
G_{k+1}^{-1}B_kG_k=\partial_k|_{\Gamma^\infty(\ker(\tilde P_k))}
$$ 
where the right hand side denotes the restriction of the Chevalley--Eilenberg differential on $\gr(\Lambda^kT^*M\otimes E)=\Lambda^k\mathfrak t^kM\otimes\gr(E)$ to the invariant acyclic subbundle $\ker(\tilde P_k)$, see Theorem~\ref{T:D}.
Summarizing, we obtain:

\begin{thm}\label{thmE}
If the linear connection $\nabla$ is a flat, then there exist invertible differential operators of graded Heisenberg order zero, $W_k\colon\Gamma^\infty(\mathcal H_k\oplus\ker(\tilde P_k))\to\Omega^k(M;E)$, such that
$$
W^{-1}_{k+1}d^\nabla_kW_k=\bar D_k\oplus(\partial_k|_{\Gamma^\infty(\ker(\tilde P_k))}).
$$
\end{thm}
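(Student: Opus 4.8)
The plan is to build $W_k$ by composing invertible differential operators that have already been constructed, so that Theorem~\ref{thmE} becomes a purely formal consequence of the flat-case decoupling $L_{k+1}^{-1}d^\nabla_kL_k=D_k\oplus B_k$ together with the intertwining identities for the operators $V_k$ and $G_k$. Recall the ingredients: the splittings $S_k$ yield invertible differential operators of graded Heisenberg order zero $L_k\colon\Gamma^\infty(\gr(\Lambda^kT^*M\otimes E))\to\Omega^k(M;E)$ with $\gr(L_k)=\id$, and for flat $\nabla$ one has the operator identity $L_{k+1}^{-1}d^\nabla_kL_k=D_k\oplus B_k$ with respect to the splitting $\gr(\Lambda^kT^*M\otimes E)=\img(\tilde P_k)\oplus\ker(\tilde P_k)$. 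One also has the invertible operator $V_k\colon\Gamma^\infty(\img(\tilde P_k))\to\Gamma^\infty(\mathcal H_k)$ of graded Heisenberg order zero (with inverse $V_k^{-1}=L_k^{-1}\bar L_k$) satisfying $V_{k+1}^{-1}\bar D_kV_k=D_k$, and, by Theorem~\ref{T:D}, invertible differential operators $G_k$ on $\Gamma^\infty(\ker(\tilde P_k))$ of graded Heisenberg order at most zero, whose inverses again have graded Heisenberg order at most zero, satisfying $G_{k+1}^{-1}B_kG_k=\partial_k|_{\Gamma^\infty(\ker(\tilde P_k))}$.

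Next I would set
$$
W_k:=L_k\circ\bigl(V_k^{-1}\oplus G_k\bigr)\colon\Gamma^\infty\bigl(\mathcal H_k\oplus\ker(\tilde P_k)\bigr)\to\Omega^k(M;E),
$$
noting that the block-diagonal operator $V_k^{-1}\oplus G_k$ maps $\Gamma^\infty(\mathcal H_k\oplus\ker(\tilde P_k))$ to $\Gamma^\infty(\img(\tilde P_k)\oplus\ker(\tilde P_k))=\Gamma^\infty(\gr(\Lambda^kT^*M\otimes E))$, so that the composition with $L_k$ is legitimate. Being a composition of invertible differential operators of graded Heisenberg order at most zero all of whose inverses share this property, $W_k$ is again an invertible differential operator of graded Heisenberg order at most zero with invertible associated graded, that is, of graded Heisenberg order zero, with inverse $W_k^{-1}=(V_k\oplus G_k^{-1})\circ L_k^{-1}$. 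It then remains to compute
$$
W_{k+1}^{-1}d^\nabla_kW_k=\bigl(V_{k+1}\oplus G_{k+1}^{-1}\bigr)\bigl(L_{k+1}^{-1}d^\nabla_kL_k\bigr)\bigl(V_k^{-1}\oplus G_k\bigr)=\bigl(V_{k+1}D_kV_k^{-1}\bigr)\oplus\bigl(G_{k+1}^{-1}B_kG_k\bigr),
$$
where the second equality uses the decoupling $L_{k+1}^{-1}d^\nabla_kL_k=D_k\oplus B_k$ together with the fact that $D_k$ maps $\img(\tilde P_k)$-sections to $\img(\tilde P_{k+1})$-sections while $B_k$ maps $\ker(\tilde P_k)$-sections to $\ker(\tilde P_{k+1})$-sections, so that conjugation by the block-diagonal operators keeps the two blocks separate. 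Finally $V_{k+1}D_kV_k^{-1}=\bar D_k$ by the intertwining relation for $V_\bullet$, and $G_{k+1}^{-1}B_kG_k=\partial_k|_{\Gamma^\infty(\ker(\tilde P_k))}$ by Theorem~\ref{T:D}, which yields exactly $W_{k+1}^{-1}d^\nabla_kW_k=\bar D_k\oplus\bigl(\partial_k|_{\Gamma^\infty(\ker(\tilde P_k))}\bigr)$.

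I expect no genuine obstacle in this argument itself; the real difficulty lies upstream, in constructing $P_k$, $L_k$, $V_k$, $D_k$, $B_k$, in upgrading, under the flatness hypothesis, the principal-symbol identity $\tilde\sigma^0_x(L^{-1}_{k+1}d_k^\nabla L_k)=\tilde\sigma^0_x(D_k)\oplus\tilde\sigma^0_x(B_k)$ to the genuine operator identity $L_{k+1}^{-1}d^\nabla_kL_k=D_k\oplus B_k$, and, above all, in Theorem~\ref{T:D}, namely producing the operators $G_k$ that conjugate $B_k$ to the tensorial Chevalley--Eilenberg differential on the acyclic subbundle $\ker(\tilde P_k)$. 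Granting all of that, the only care needed here is the bookkeeping of the direct-sum conventions and the verification that conjugation by block-diagonal invertible operators preserves block structure.
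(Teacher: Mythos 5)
Your proposal is correct and follows exactly the route the paper intends: Theorem~\ref{thmE} is obtained by combining the flat-case decoupling $L_{k+1}^{-1}d^\nabla_kL_k=D_k\oplus B_k$ and the conjugation $G_{k+1}^{-1}B_kG_k=\partial_k|_{\Gamma^\infty(\ker(\tilde P_k))}$ from Theorem~\ref{T:D}(b) with the intertwining $V_{k+1}^{-1}\bar D_kV_k=D_k$ from Corollary~\ref{C:D}(d), and the operator $W_k=L_k\circ(V_k^{-1}\oplus G_k)$ is precisely the one implicit in the paper's construction. The bookkeeping of block-diagonal conjugation and the order-zero invertibility argument are handled correctly.
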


On a contact manifold, the Rumin complex \eqref{E:Dbar} is Rockland in the ungraded sense.
Hypoellipticity of this complex has been established by Rumin in \cite[Section~3]{R94} using results of Helffer and Nourrigat \cite{HN85}.
For generic rank two distributions in dimension five, the Rumin complex is Rockland in the ungraded sense too, see Example~\ref{Ex:BGG235}.
In general, the Rumin complex will only be Rockland in the graded sense, and the graded analysis in Section~\ref{S:grRockland} may be used to study them.
For instance, the Rumin complex associated with an Engel structure will only be Rockland in the graded sense, see Example~\ref{Ex:Engel}.
On C-C manifolds, Rumin has used the concept of C-C ellipticity, see \cite[Definition~5.1]{R01} or \cite[Section~2]{R99}, to show that the Rumin complexes are hypoelliptic, see \cite[Theorem~5.2]{R01} or \cite[Theorem~3]{R99}.

\subsection{Motivation and outlook}\label{SS:motivation}

The work in this paper provides a framework to study filtered manifolds by exploring the analogies to the elliptic case.
Classically, the relation between geometry and topology has been successfully studied by analyzing elliptic operators that arise naturally.
We hope that the hypoellipticity of the operators considered in this paper will allow relating the geometry of filtered manifolds to global topological properties.
We will now mention some directions which have been motivating our investigations.

By hypoellipticity, Rockland operators on closed filtered manifolds are Fredholm and there is a clear candidate for the index formula.
To be more specific, suppose $E$ and $F$ are two vector bundles over a closed filtered manifold, and consider $A\in\Psi^s(E,F)$ such that $A$ and $A^t$ both satisfy the Rockland condition.
In this situation, the analysis mentioned above implies that $A$ induces a Fredholm operator between appropriate Heisenberg Sobolev spaces, see Corollary~\ref{C:Fredholm}.
We expect that the index of this operator can be computed by an index formula similar to van~Erp's in the contact case, see \cite{E10a} and \cite{C94}.
More precisely, the Rockland condition should guarantee that the Heisenberg principal symbol of $A$ represents a $K$-theory class on the non-commutative cotangent bundle, $[\sigma^s(A)]\in K_0(C^*(\mathcal TM))$, and we expect the index formula $\ind(A)=\tind(\psi([\sigma^s(A)]))$ where $\psi\colon K_0(C^*(\mathcal TM))\to K^0(T^*M)$ denotes the abstract Connes--Thom isomorphism \cite{C81} and $\tind\colon K^0(T^*M)\to\Z$ is the topological index map of Atiyah and Singer \cite{AS68}.
More generally, the Heisenberg principal symbol sequence of every Rockland complex should, in a natural way, represent an element in $K_0(C^*(\mathcal TM))$ which is mapped to the Euler characteristics of the Rockland complex via $\tind\circ\psi\colon K_0(C^*(\mathcal TM))\to\Z$.
We expect explicit index formulas for various parabolic geometries, similar to van~Erp's formula on contact manifolds, see \cite{E10b}.

It seems promising to try to extend the Weitzenb\"ock formula for the Rumin complex on contact manifolds \cite{R90,R94} to other filtered manifolds $M$ and combine them with the Hodge decomposition, cf.\ Corollaries~\ref{C:Hodge} and \ref{C:grHodge}, to obtain analogues of Bochner's vanishing result.
Assuming non-negative curvature, a Weitzenb\"ock formula should imply that every harmonic section of $\mathcal H_k$ is parallel.
Over closed connected manifolds, the Hodge decomposition would thus yield a bound on the $k$-th Betti number, $b_k(M)\leq\rank(\mathcal H_k)$.
If, moreover, the curvature is strictly positive at one point, one would expect $b_k(M)=0$.

Let us specialize the above remarks to a particular $5$-dimensional Cartan geometry and formulate a precise conjecture.
To this end, consider a $5$-manifold $M$ equipped with a generic rank two distribution \cite{C10,BH93,S08,DH16}.
More precisely, suppose $T^{-1}M\subseteq TM$ is a distribution of rank two with growth vector $(2,3,5)$, that is, Lie brackets of sections of $T^{-1}M$ span a rank three subbundle $T^{-2}M$ of $TM$ and triple brackets of sections of $T^{-1}M$ span all of $TM$.
Such a filtered manifold can equivalently be described as a regular normal parabolic geometry of type $(G,P)$ where $G$ is the split real form of the exceptional Lie group $G_2$ and $P$ is a particular parabolic subgroup.
Cartan \cite{C10} constructed a curvature tensor $\kappa\in\Gamma^\infty(S^4(T^{-1}M)^*)$ which is a complete obstruction to local flatness.
More precisely, $\kappa$ vanishes if and only if the filtration is locally diffeomorphic to the flat model $G/P$.
Regarding the curvature $\kappa_x$ as a fourth order polynomial on $T^{-1}_xM$, we call $\kappa_x$ non-negative and write $\kappa_x\geq0$, if $\kappa_x(X,Y,X,Y)\geq0$ for all $X,Y\in T_x^{-1}M$.
Since the corresponding Rumin complex for the trivial flat line bundle has $\rank(\mathcal H_1)=2$, see Example~\ref{Ex:BGG235} below, we conjecture the following to hold true:
If $M$ is closed, connected, and $\kappa\geq0$, then the first Betti number is bounded by $b_1(M)\leq2$. 
If, moreover, $\kappa_x>0$ in at least one point $x$, then $b_1(M)=0$.

Another application we have in mind concerns the extension of Ponge's \cite{P08} spectral analysis on Heisenberg manifolds to more general filtered manifolds.
In \cite{DH17b}, building on the analytic results presented here, the heat kernel expansion has been established for formally selfadjoint, non-negative Rockland differential operators on general closed filtered manifolds.
As an immediate application of this result, one obtains the detailed structure of complex powers of these operators, and a Weyl's law for the growth of their eigenvalues.
Other applications include a McKean--Singer index formula for Rockland differential operators, and the construction of a non-commutative residue on the algebra of Heisenberg pseudodifferential operators, see \cite{DH17b}.
Moreover, this analysis permits to generalize \cite{H21} the Rumin--Seshadri analytic torsion \cite{RS12} to closed filtered manifolds which give rise to ungraded Rumin complexes.
Due to the rich structure available on regular parabolic geometries it appears feasible to work out explicit anomaly formulas for the Rumin--Seshadri analytic torsion, expressing to what extent this analytic torsion depends on the $L^2$ inner product used to define the formal adjoints, see~\eqref{E:IRS}.
We hope that the decomposition of the de~Rham complex in Theorem~\ref{thmE} will prove helpful in establishing a comparison result relating the Rumin--Seshadri analytic torsion of the Rumin complex with the Ray--Singer torsion \cite{RS71} of the full de~Rham complex.

The existence of a regular parabolic geometry of a particular type on a given manifold can often be described equivalently in terms of a differential relation.
Formally, these can be solved in terms of homotopy theory.
The subtle question is to what extent Gromov's h-principle \cite{G86} holds true for regular parabolic geometries.
We anticipate that the proposed generalization of the Rumin--Seshadri analytic torsion has the potential to detect a possible failure of the h-principle.
In particular, this might lead to topological obstructions to the existence of regular parabolic geometries on closed manifolds \cite{DH16}, and it might provide a sufficiently strong tool to show that formally homotopic regular parabolic geometries need not be homotopic in general, see \cite{P16}.

\subsection{Organization of the remaining part of the paper}\label{SS:outline}

In Section~\ref{S:hesequences} we begin by considering differential operators on filtered manifolds.
We present a result which states the existence of a parametrix for Rockland differential operators, and discuss several immediate consequences.
The Rockland type theorem will be proved in Section~\ref{S:PDO} which contains a more general form of this result.
There we recall the Heisenberg pseudodifferential calculus and complement it with a general Rockland type theorem asserting that Rockland pseudodifferential operators admit a parametrix in this calculus.
As an application, we introduce a Heisenberg Sobolev scale, and formulate maximal hypoelliptic estimates.
We extend the BGG machinery to filtered manifolds in Section~\ref{S:grhesequences} and show that the BGG sequences are Rockland in a graded sense.
Section~\ref{S:grRockland} then provides hypoellipticity for graded Rockland operators by reducing it to the results in Section~\ref{S:PDO}.
This completes the proof of the claim that BGG sequences are hypoelliptic.

\section{Hypoelliptic sequences of differential operators}\label{S:hesequences}

In the present section we are considering differential operators on filtered manifolds.
We present a result which states the existence of a parametrix for Rockland differential operators.
Since our main goal is to provide analysis for general BGG type operators, we will note here that the result of this section is inadequate for this purpose in spite of the fact that these sequences are made of differential operators.
This is because the vector bundles underlying the BGG sequences are graded and hence they only satisfy a graded version of the pointwise Rockland condition, as shown in Section~\ref{S:grhesequences}.
The purpose of this section is to set up notation and provide background for readers not familiar with pseudodifferential operators.

\subsection{Differential operators on filtered manifolds}\label{SS:DO}

Let $M$ be a filtered manifold, cf.~\eqref{E:filtM}, and consider the quotient bundle $\mathfrak t^pM:=T^pM/T^{p+1}M$ with fibers $\mathfrak t_x^pM=T^p_xM/T^{p+1}_xM$.
Recall that the Lie bracket of vector fields induces a tensorial (Levi) bracket $\mathfrak t^pM\otimes\mathfrak t^qM\to\mathfrak t^{p+q}M$ which turns the associated graded $\mathfrak tM:=\bigoplus_p\mathfrak t^pM$ into a bundle of graded nilpotent Lie algebras called the \emph{bundle of osculating algebras}.
Each fiber $\mathfrak t_xM=\bigoplus_p\mathfrak t^p_xM$ is a graded nilpotent Lie algebra which will be referred to as the \emph{osculating algebra at $x$}.
The Lie algebra structure depends smoothly on $x$ but is not assumed to be locally trivial, that is, different fibers may be non-isomorphic as Lie algebras.
In the literature $\mathfrak t_xM$ is also known as the symbol algebra of $M$ at $x$, see \cite{M93,M02,N10}.

A filtration on $M$ induces a (Heisenberg) filtration on differential operators.
If $E$ and $F$ are smooth vector bundles over $M$, we let $\DO(E,F)$ denote the class of all differential operators mapping section of $E$ to sections of $F$.
A differential operator in $\DO(E,F)$ is said to be of \emph{Heisenberg order at most $k$} if, locally, it can be written as a finite linear combination of operators of the form $\Phi\nabla_{X_m}\cdots\nabla_{X_1}$ where $\Phi\in\Gamma^\infty(\hom(E,F))$, $\nabla$ is a linear connection on $E$, and $X_i\in\Gamma^\infty(T^{p_i}M)$ are vector fields such that $-k\leq p_m+\cdots+p_1$.
Denoting the space of these differential operators by $\DO^k(E,F)$, we obtain a filtration on $\DO(E,F)$,
$$
\Gamma^\infty(\hom(E,F))=\DO^0(E,F)\subseteq\DO^1(E,F)\subseteq\DO^2(E,F)\subseteq\cdots,
$$
which is compatible with composition and transposition.
More explicitly, if $G$ is another vector bundle over $M$, $A\in\DO^k(E,F)$ and $B\in\DO^l(F,G)$, then $BA\in\DO^{l+k}(E,G)$ and $A^t\in\DO^k(F',E')$.
Here $A^t$ denotes the transpose (differential) operator characterized by
\begin{equation}\label{E:At}
\langle\phi,A\psi\rangle=\langle A^t\phi,\psi\rangle,
\end{equation} 
for all $\psi\in\Gamma^\infty_c(E)$ and $\phi\in\Gamma^\infty_c(F')$, with respect to the canonical pairings $\Gamma^\infty_c(E')\times\Gamma^\infty(E)\to\C$ and $\Gamma^\infty_c(F')\times\Gamma^\infty(F)\to\C$.
Here we are using the notation $E':=E^*\otimes|\Lambda|_M$ where $E^*$ denotes the dual bundle and $|\Lambda|_M$ denotes the bundle of $1$-densities on $M$.
Note that $(A^t)^t=A$, up to the canonical isomorphism of vector bundles $E''=E$.

\begin{remark}[The spaces $\Gamma^r(E)$]\label{R:GammarE}
For $r\in\mathbb N_0$ we let $\Gamma^r(E)$ denotes the Heisenberg analogue of the space of $r$ times continuously differentiable sections of $E$.
More precisely, $\Gamma^r(E)$ denotes the space of all $\psi\in\Gamma^{-\infty}(E)$ such that $A\psi\in\Gamma(F)$ for all differential operators $A\in\DO^r(E,F)$ of Heisenberg order at most $r$ and all vector bundles $F$.
Here $\Gamma(F)\subseteq\Gamma^{-\infty}(F)$ denotes the space of continuous sections equipped with the topology of uniform convergence on compact subsets.
We equip $\Gamma^r(E)$ with the coarsest topology such that the maps $A\colon\Gamma^r(E)\to\Gamma(F)$ are continuous for all $A\in\DO^r(E,F)$.
If $r-k\geq0$, then each $A\in\DO^k(E,F)$ induces a continuous operator, $A\colon\Gamma^r(E)\to\Gamma^{r-k}(F)$.
Note that we have continuous inclusions $\cdots\subseteq\Gamma^2(E)\subseteq\Gamma^1(E)\subseteq\Gamma^0(E)$ and topological isomorphisms $\Gamma^0(E)=\Gamma(E)$ as well as $\bigcap_r\Gamma^r(E)=\Gamma^\infty(E)$.
We will denote the compactly supported analogue by $\Gamma^r_c(E)$.
\end{remark}

\begin{remark}[Universal differential operators]\label{R:jk}
Consider a vector bundle $E$ over $M$ and let $J^kE\to E$ denote the bundle of Heisenberg $k$-jets of sections of $E$.
This is a smooth vector bundle whose fiber over $x\in M$ coincides with the vector space of Heisenberg $k$-jets at $x$ of sections of $E$.
Recall that two sections $\psi_1,\psi_2\in\Gamma^\infty(E)$ are said to represent the same Heisenberg $k$-jet at $x$ if $A(\psi_2-\psi_1)(x)=0$ for all differential operators $A\in\DO^k(E,F)$.
Assigning to a section of $E$ its Heisenberg $k$-jet, we obtain a differential operator $j^k\colon\Gamma^\infty(E)\to\Gamma^\infty(J^kE)$.
In fact, $j^k\in\DO^k(E,J^kE)$, and this differential operator is universal in the following sense: For every $A\in\DO^k(E,F)$ there exists a unique smooth vector bundle homomorphism $\alpha\colon J^kE\to F$ such that $A=\alpha\circ j^k$.
We refer to \cite[Section~3.1]{M02}, \cite[Section~1.2.6]{N10} or \cite{N09} for details.
\end{remark}

A differential operator $A\in\DO^k(E,F)$ has a \emph{Heisenberg principal cosymbol} at each $x\in M$,
$$
\sigma_x^k(A)\in\mathcal U_{-k}(\mathfrak t_xM)\otimes\hom(E_x,F_x),
$$
where $\mathcal U_{-k}(\mathfrak t_xM)$ denotes the degree $-k$ part of the universal enveloping algebra of the graded Lie algebra $\mathfrak t_xM=\bigoplus_p\mathfrak t_x^pM$.
More explicitly, $\mathcal U_{-k}(\mathfrak t_xM)$ can be described as the linear subspace of $\mathcal U(\mathfrak t_xM)$ spanned by all elements of the form $X_m\cdots X_1$ where $X_i\in\mathfrak t_x^{p_i}M$ and $-k=p_m+\cdots+p_1$.
The Heisenberg principal cosymbol provides a short exact sequence
$$
0\to\DO^{k-1}(E,F)\to\DO^k(E,F)\xrightarrow{\sigma^k}\Gamma^\infty\bigl(\mathcal U_{-k}(\mathfrak tM)\otimes\hom(E,F)\bigr)\to0
$$
where $\mathcal U_{-k}(\mathfrak tM):=\bigsqcup_{x\in M}\mathcal U_{-k}(\mathfrak t_xM)$ is a smooth vector bundle of finite rank according to the Poincar\'e--Birkhoff--Witt theorem.
Details may be found in \cite[Section~1.2.5]{N10}.

If $A\in\DO^k(E,F)$ and $B\in\DO^l(F,G)$ where $G$ is another vector bundle over $M$, then
\begin{equation}\label{E:sABAt}
\sigma_x^{l+k}(BA)=\sigma_x^l(B)\sigma^k_x(A)
\qquad\text{and}\qquad
\sigma_x^k(A^t)=\sigma_x^k(A)^t.
\end{equation}
To explain the second equation in \eqref{E:sABAt}, we extend $-\id\colon\mathfrak t_xM\to\mathfrak t_xM$ to an anti-automorphism of $\mathcal U(\mathfrak t_xM)$, $\mathbf X\mapsto\mathbf X^t$.
Hence, $(\mathbf X^t)^t=\mathbf X$ and $(\mathbf X\mathbf Y)^t=\mathbf Y^t\mathbf X^t$ for all $\mathbf X,\mathbf Y\in\mathcal U(\mathfrak t_xM)$.
This \emph{antipode} preserves the grading components $\mathcal U_{-k}(\mathfrak t_xM)$.
We extend this further to a transposition $\mathcal U(\mathfrak t_xM)\otimes\hom(E_x,F_x)\to\mathcal U(\mathfrak t_xM)\otimes\hom(F_x',E_x')$ characterized by $(\mathbf X\otimes\Phi)^t:=\mathbf X^t\otimes\Phi^t\otimes\id_{|\Lambda|_{M,x}}$ for all $\Phi\in\hom(E_x,F_x)$ and $\mathbf X\in\mathcal U(\mathfrak t_xM)$ where $\Phi^t\in\hom(F_x^*,E_x^*)$ denotes the linear map dual to $\Phi$.
This is the transposition used in $\sigma_x^k(A)^t$, see~\eqref{E:sABAt}.

If $\nabla$ is a linear connection on $E$ and $X\in\Gamma^\infty(T^{-k}M)$ then $\nabla_X\in\DO^k(E)$ and
\begin{equation}\label{E:snablaX}
\sigma^k(\nabla_X)=[X]\otimes\id_E\in\Gamma^\infty\bigl(\mathcal U_{-k}(\mathfrak tM)\otimes\eend(E)\bigr)
\end{equation}
where $[X]$ denotes the section of $\mathfrak t^{-k}M=T^{-k}M/T^{-k+1}M$ represented by $X$.
This property, together with the multiplicativity in \eqref{E:sABAt} and the requirement $\sigma^0(A)=A$ for all $A\in\DO^0(E,F)=\Gamma^\infty(\hom(E,F))=\Gamma^\infty(\mathcal U_0(\mathfrak tM)\otimes\hom(E,F))$, characterizes the Heisenberg principal symbol uniquely.

\begin{remark}[Formal adjoints]\label{R:sA*}
Suppose $A\in\DO^k(E,F)$ and let $A^*$ denote the formal adjoint with respect to $L^2$ inner products associated with a smooth volume density $dx$ on $M$ and smooth fiber-wise Hermitian inner products $h_E$ and $h_F$ on the vector bundles $E$ and $F$, respectively.
Hence, $A^*$ is characterized by 
\begin{equation}\label{E:A*}
\llangle A^*\phi,\psi\rrangle_{L^2(E)}=\llangle\phi,A\psi\rrangle_{L^2(F)}
\end{equation}
for all $\phi\in\Gamma^\infty_c(F)$ and $\psi\in\Gamma^\infty_c(E)$, where
\begin{equation}\label{E:llrr}
\llangle\psi_1,\psi_2\rrangle_{L^2(E)}=\int_Mh_E(\psi_1(x),\psi_2(x))dx
=\langle(h_E\otimes dx)\psi_1,\psi_2\rangle
\end{equation}
for $\psi_1,\psi_2\in\Gamma_c^\infty(E)$ and similarly for $F$.
Here we consider $h_E\otimes dx\colon \bar E\to E'$ as a vector bundle isomorphism.
In terms of the transpose we have
\begin{equation}\label{E:A*At}
A^*=(h_E\otimes dx)^{-1}\circ A^t\circ(h_F\otimes dx).
\end{equation}
In particular, $A^*\in\DO^k(F,E)$ and
\begin{equation}\label{E:sA*}
\sigma_x^k(A^*)=\sigma^k_x(A)^*.
\end{equation}
The involution $\mathcal U(\mathfrak t_xM)\otimes\hom(E_x,F_x)\to\mathcal U(\mathfrak t_xM)\otimes\hom(F_x,E_x)$ used on the right hand side can be characterized by $(\mathbf X\otimes\Phi)^*=\mathbf X^t\otimes\Phi^*$ for all $\Phi\in\hom(E_x,F_x)$ and $\mathbf X\in\mathcal U(\mathfrak t_xM)$ where $\Phi^*\in\hom(F_x,E_x)$ denotes the adjoint of $\Phi$ with respect to the inner products $h_{E,x}$ and $h_{F,x}$.
Equation \eqref{E:sA*} follows from \eqref{E:A*At} and \eqref{E:sABAt}.
\end{remark}

A graded Lie algebra has a natural group of \emph{dilation automorphisms.}
Thus, for $\lambda>0$ we let $\dot\delta_\lambda\in\Aut(\mathfrak tM)$ denote the bundle automorphism given by multiplication with $\lambda^{-p}$ on the grading component $\mathfrak t^pM$.
For each $x\in M$, this restricts to an automorphism $\dot\delta_{\lambda,x}\in\Aut(\mathfrak t_xM)$ of the osculating algebra such that $\lim_{\lambda\to0}\dot\delta_{\lambda,x}=0$.
Clearly, $\dot\delta_{\lambda_1\lambda_2}=\dot\delta_{\lambda_1}\dot\delta_{\lambda_2}$ for all $\lambda_1,\lambda_2>0$.
Extending $\dot\delta_{\lambda,x}$ to an automorphism of $\mathcal U(\mathfrak t_xM)$, we can characterize the grading by
\begin{equation}\label{E:Uk}
\mathcal U_{-k}(\mathfrak t_xM)=\bigl\{\mathbf X\in\mathcal U(\mathfrak t_xM):\textrm{$\dot\delta_{\lambda,x}(\mathbf X)=\lambda^k\mathbf X$ for all $\lambda>0$}\bigr\}.
\end{equation}

We let $\mathcal TM\to M$ denote the \emph{bundle of osculating groups.}
For each $x\in M$, the fiber $\mathcal T_xM$ is a simply connected nilpotent Lie group, called the \emph{osculating group at $x$}, with Lie algebra $\mathfrak t_xM$.
The fiber-wise exponential map, $\exp\colon\mathfrak tM\to\mathcal TM$, provides an isomorphism of smooth fiber bundles.
The Lie algebra automorphisms $\dot\delta_{\lambda,x}$ integrate to group automorphisms $\delta_{\lambda,x}\in\Aut(\mathcal T_xM)$ which assemble to a smooth bundle automorphism $\delta_\lambda\in\Aut(\mathcal TM)$ such that $\exp\circ\dot\delta_\lambda=\delta_\lambda\circ\exp$.
Clearly, $\delta_{\lambda_1\lambda_2}=\delta_{\lambda_1}\delta_{\lambda_2}$, for all $\lambda_1,\lambda_2>0$.

Since the universal enveloping algebra of $\mathfrak t_xM$ can be identified with the algebra of left invariant differential operators on $\mathcal T_xM$, and in view of \eqref{E:Uk}, the Heisenberg principal symbol of $A\in\DO^k(E,F)$ can equivalently be regarded as a \emph{left invariant differential operator,}
\begin{equation}\label{E:skA}
\sigma_x^k(A)\colon C^\infty(\mathcal T_xM,E_x)\to C^\infty(\mathcal T_xM,F_x),
\end{equation}
which is \emph{homogeneous} of degree $k$, that is,
\begin{equation}\label{E:skAlg}
\sigma^k_x(A)\circ l_g^*=l_g^*\circ\sigma^k_x(A)
\qquad\text{and}\qquad
\sigma^k_x(A)\circ\delta_{\lambda,x}^*=\lambda^k\cdot\delta_{\lambda,x}^*\circ\sigma^k_x(A)
\end{equation}
for all $g\in\mathcal T_xM$ and $\lambda>0$.
Here $l_g^*$ denotes pull back along the left translation, $l_g\colon\mathcal T_xM\to\mathcal T_xM$, $l_g(h):=gh$, and $\delta_{\lambda,x}^*$ denotes pull back along the dilation discussed above.

\begin{remark}
If the filtration on $M$ is trivial, that is to say, if $T^{-1}M=TM$, then the filtration on differential operators is the usual one.
In this case $\mathcal T_xM=T_xM$ is an Abelian Lie group and the principal cosymbol $\sigma^k_x(A)$ of a differential operator $A$ is a translation invariant (constant coefficient) differential operator on $T_xM$.
\end{remark}

\subsection{Parametrices}\label{SS:paraDO}

As we have seen above, the Heisenberg principal symbols of a differential operator can be described by homogeneous left invariant operators on the osculating Lie groups.
This is the primary reason why the osculating groups and their representation theory, and particularly the Rockland condition, become relevant to the analysis of these operators.
We shall now briefly recall some facts from representation theory necessary to formulate the Rockland condition for differential operators, see Definition~\ref{D:rockland} below, and state the corresponding Rockland type theorem, see Theorem~\ref{T:para}.

Let $G$ be a Lie group with Lie algebra $\goe$. 
Suppose $\pi\colon G\to U(\mathcal H)$ is a \emph{unitary representation} of $G$ on a Hilbert space $\mathcal H$.
These representations will always be assumed to be \emph{strongly continuous}, that is, the map $G\to\mathcal H$, $g\mapsto\pi(g)v$, is assumed to be continuous for every vector $v\in\mathcal H$.
For unitary representations, this is actually equivalent to \emph{weak continuity} which only asserts that the function $G\to\C$, $g\mapsto\llangle\pi(g)v,w\rrangle_{\mathcal H}$, is continuous for any two vectors $v,w\in\mathcal H$, see \cite[Theorem~1 in Appendix~V]{K04}.
Rarely will the representations we shall encounter be continuous with respect to the norm topology on $U(\mathcal H)$.

Recall that $v\in\mathcal H$ is called \emph{smooth vector} if the map $G\to\mathcal H$, $g\mapsto\pi(g)v$, is (strongly) smooth.
According to \cite[Theorem~3 in Appendix~V]{K04} this is equivalent to the weak assumption: the function $G\to\C$, $g\mapsto\llangle\pi(g)v,w\rrangle_{\mathcal H}$, is smooth for all vectors $w\in\mathcal H$.
We will denote the subspace of smooth vectors by $\mathcal H_\infty$.
This is a dense subspace in $\mathcal H$ which is invariant under the operators $\pi(g)$ for all $g\in G$, see \cite[Theorem~4(1) in Appendix~V]{K04}.
For each $X\in\goe$ we may define, see \cite[Theorem~4(2) in Appendix~V]{K04}, 
$$
\pi(X)\colon\mathcal H_\infty\to\mathcal H_\infty,\qquad\pi(X)v:=\tfrac\partial{\partial t}\big|_{t=0}\pi(\exp(tX))v,
$$
where $v\in\mathcal H_\infty$.
By unitarity, $\llangle\pi(X)v,w\rrangle_{\mathcal H}=\llangle v,\pi(-X)w\rrangle_{\mathcal H}$ for all $v,w\in\mathcal H_\infty$.
Hence, $\pi(X)$ has a densely defined adjoint, $\pi(X)^*=\pi(-X)$, and, in particular, $\pi(X)$ is closeable, see \cite[Theorem~4(2) in Appendix~V]{K04}.
Clearly, $\pi([X,Y])=\pi(X)\pi(Y)-\pi(Y)\pi(X)$ for any two $X,Y\in\goe$.
Extending the definition of $\pi$ to the universal enveloping algebra of $\goe$, we obtain $\pi(\mathbf X)\colon\mathcal H_\infty\to\mathcal H_\infty$ for $\mathbf X\in\mathcal U(\goe)$ such that 
\begin{equation}\label{E:piXY}
\pi(\mathbf X)\pi(\mathbf Y)=\pi(\mathbf X\mathbf Y)
\end{equation}
for all $\mathbf X,\mathbf Y\in\mathcal U(\goe)$.
We let $\mathbf X\mapsto\mathbf X^t$ denote the antipode of $\mathcal U(\goe)$ obtained by extending $-\id\colon\goe\to\goe$ to the universal enveloping algebra.
Hence, $(\mathbf X^t)^t=\mathbf X$ and $(\mathbf X\mathbf Y)^t=\mathbf Y^t\mathbf X^t$ for all $\mathbf X,\mathbf Y\in\mathcal U(\goe)$.
For each $\mathbf X\in\mathcal U(\goe)$ we thus have 
\begin{equation}\label{E:piX*}
\pi(\mathbf X)^*=\pi(\mathbf X^t)
\end{equation} 
as operators on $\mathcal H_\infty$.

If $E_0$ and $F_0$ are two finite dimensional vector spaces and $a\in\mathcal U(\goe)\otimes\hom(E_0,F_0)$ we let 
$$
\pi(a)\colon\mathcal H_\infty\otimes E_0\to\mathcal H_\infty\otimes F_0
$$ 
denote the linear operator obtained by linearly extending the definition $\pi(\mathbf X\otimes\Phi):=\pi(\mathbf X)\otimes\Phi$ for all $\Phi\in\hom(E_0,F_0)$ and $\mathbf X\in\mathcal U(\goe)$.
Equivalently, using bases of $E_0$ and $F_0$ to identify $a$ with a matrix with entries in $\mathcal U(\goe)$, the operator $\pi(a)$ corresponds to a matrix of the same size whose entries are operators on $\mathcal H_\infty$ obtained by applying $\pi$ to the corresponding entry of $a$.
The multiplicativity in \eqref{E:piXY} immediately implies
\begin{equation}\label{E:pBA}
\pi(ba)=\pi(b)\pi(a)
\end{equation}
for all $a\in\mathcal U(\goe)\otimes\hom(E_0,F_0)$ and $b\in\mathcal U(\goe)\otimes\hom(F_0,G_0)$ where $G_0$ is another finite dimensional vector space.
If, moreover, $E_0$ and $F_0$ are equipped with Hermitian inner products, then \eqref{E:piX*} leads to 
\begin{equation}\label{E:pA*}
\pi(a)^*=\pi(a^*)
\end{equation}
as operators $\mathcal H_\infty\otimes F_0\to\mathcal H_\infty\otimes E_0$.
Here the adjoint on the left hand side of \eqref{E:pA*} is with respect to the inner products on $\mathcal H_\infty\otimes E_0$ and $\mathcal H_\infty\otimes F_0$ induced by inner products on $E_0$ and $F_0$ and the restriction of the inner product on $\mathcal H$.
On the right hand side of \eqref{E:pA*}, $a^*\in\mathcal U(\goe)\otimes\hom(F_0,E_0)$ is defined by linear extension of $(\mathbf X\otimes\Phi)^*:=\mathbf X^t\otimes\Phi^*$ for all $\mathbf X\in\mathcal U(\goe)$ and $\Phi\in\hom(E_0,F_0)$ where $\Phi^*\in\hom(F_0,E_0)$ denotes the adjoint of $\Phi$.
Equivalently, using orthogonal bases of $E_0$ and $F_0$ to identify $a$ with a matrix with entries in $\mathcal U(\goe)$, $a^*$ corresponds to the matrix obtained by taking the transpose conjugate of $a$ and applying the antipode $\mathbf X\mapsto\mathbf X^t$ to each entry.

\begin{definition}[Rockland condition]\label{D:rockland}
Let $E$ and $F$ be vector bundles over a filtered manifold $M$.
A differential operator $A\in\DO^k(E,F)$ of Heisenberg order at most $k$ is said to satisfy the \emph{Rockland condition} if $\pi(\sigma_x^k(A))\colon\mathcal H_\infty\otimes E_x\to\mathcal H_\infty\otimes F_x$ is injective for every point $x\in M$ and every non-trivial irreducible unitary representation $\pi\colon\mathcal T_xM\to U(\mathcal H)$ of the osculating group $\mathcal T_xM$ on a Hilbert space $\mathcal H$.
Here $\mathcal H_\infty$ denotes the subspace of smooth vectors in $\mathcal H$.
\end{definition}

We let $\mathcal O(E,F)$ denote the space of operators $\Gamma^\infty_c(E)\to\Gamma^{-\infty}(F)$ corresponding to Schwartz kernels with wave front set contained in the conormal bundle of the diagonal.
These are precisely the operators whose kernel is smooth away from the diagonal and which map $\Gamma^\infty_c(E)$ continuously into $\Gamma^\infty(F)$.
If $A\in\mathcal O(E,F)$, then $A^t\in\mathcal O(F',E')$, cf.~\eqref{E:At}.
The transpose permits extending $A$ continuously to distributional sections, $A\colon\Gamma^{-\infty}_c(E)\to\Gamma^{-\infty}(F)$, such that $\langle A^t\phi,\psi\rangle=\langle\phi,A\psi\rangle$ for all $\psi\in\Gamma^{-\infty}_c(E)$ and $\phi\in\mathcal D(F):=\Gamma^\infty_c(F')$, and this extension is pseudolocal, i.e.\ $\singsupp(A\psi)\subseteq\singsupp(\psi)$
for all $\psi\in\Gamma^{-\infty}_c(E)$.
Recall that an operator with Schwartz kernel $k$ is called \emph{properly supported} if the two projections $M\times M\to M$ both restrict to proper maps on the support of $k$.
If $B\in\mathcal O(F,G)$ and at least one of $A$ or $B$ is properly supported, then $BA\in\mathcal O(E,G)$ and $(BA)^t=A^tB^t$.
We refer to \cite{T67,D96} for details.

We have the following vector valued analogue of a result due to Melin \cite[Theorem~7.2]{M82}:

\begin{theorem}[Left parametrix]\label{T:para}
Let $E$ and $F$ be vector bundles over a filtered manifold $M$ and suppose $A\in\DO^k(E,F)$ is a differential operator of Heisenberg order at most $k$ which satisfies the Rockland condition, see Definition~\ref{D:rockland}.
Then there exists a properly supported left parametrix $B\in\mathcal O_\prop(F,E)$ such that $BA-\id$ is a smoothing operator.
\end{theorem}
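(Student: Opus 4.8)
The plan is to deduce Theorem~\ref{T:para} from its pseudodifferential counterpart, Theorem~\ref{thmA}, and then to indicate how the latter is proved, since that is the genuine content of Section~\ref{S:PDO}. First I would observe that every differential operator $A\in\DO^k(E,F)$ belongs to the class of pseudodifferential operators of Heisenberg order $k$ for the calculus extended to vector bundles, $\DO^k(E,F)\subseteq\Psi^k(E,F)$, and that under this inclusion the Heisenberg principal cosymbol of $A$ as a differential operator agrees with its principal cosymbol as a pseudodifferential operator (cf.\ Proposition~\ref{P:Psi}). Concretely, viewing $\sigma_x^k(A)\in\mathcal U_{-k}(\mathfrak t_xM)\otimes\hom(E_x,F_x)$ as the left invariant homogeneous differential operator on $\mathcal T_xM$ described in \eqref{E:skA}, its Schwartz kernel is a distribution supported at the identity of $\mathcal T_xM$, essentially homogeneous of order $k$, hence representing a class in $\Sigma_x^k(E,F)$; and in every irreducible unitary representation $\pi\colon\mathcal T_xM\to U(\mathcal H)$ one has $\bar\pi(\sigma_x^k(A))=\pi(\sigma_x^k(A))$ as operators on $\mathcal H_\infty\otimes E_x$. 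Thus the Rockland condition of Definition~\ref{D:rockland} is exactly the Rockland condition for $A$ regarded as an element of $\Psi^k(E,F)$, and Theorem~\ref{thmA} furnishes a properly supported left parametrix $B\in\Psi^{-k}_\prop(F,E)\subseteq\mathcal O_\prop(F,E)$ with $BA-\id$ smoothing.

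It remains to sketch the proof of Theorem~\ref{thmA}. The first step is to reduce parametrix construction to a fiberwise problem on the osculating groups. For $x\in M$ the principal cosymbol $\sigma_x^k(A)$ acts as a left invariant operator on $\mathcal T_xM$, homogeneous of order $k$, namely convolution by a $\hom(E_x,F_x)$-valued density essentially homogeneous of order $k$; the Rockland condition asserts exactly that this operator is injective on smooth vectors in every non-trivial irreducible unitary representation. By the Rockland theorem for graded nilpotent Lie groups, established in the pseudodifferential framework by Christ, Geller, G\l owacki and Polin \cite{CGGP92} and, for varying osculating algebras in the Heisenberg case, by Ponge \cite{P08}, such an operator admits a left parametrix inside the group calculus: there is a $\hom(F_x,E_x)$-valued density on $\mathcal T_xM$, essentially homogeneous of order $-k$, whose convolution action left-inverts $\sigma_x^k(A)$ modulo convolution by a Schwartz density.

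The delicate point, which I expect to be the main obstacle, is to assemble these fiberwise parametrices into a single properly supported operator $B_0\in\Psi^{-k}_\prop(F,E)$ whose principal cosymbol at each $x$ is the chosen left inverse. Because the bundle of osculating groups need not be locally trivial, there is no naive gluing; instead one uses the Heisenberg tangent groupoid \cite{EY15,EY16}, whose smooth structure encodes precisely the limiting relation between operators on $M$ and their osculating symbols, together with the fact --- traceable to the uniformity in the Christ--Geller--G\l owacki--Polin construction, see \cite[Theorem~6.1]{CGGP92} and Lemma~\ref{L:Lambda} --- that the fiberwise parametrices may be chosen to depend smoothly on $x$. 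Lifting the resulting smooth family of order $-k$ symbols to an essentially homogeneous extension of a Schwartz kernel on the tangent groupoid produces such a $B_0\in\Psi^{-k}(F,E)$; multiplying its Schwartz kernel by a cutoff supported near the diagonal makes it properly supported at the cost of a smoothing error. By construction the principal cosymbol of $\id-B_0A$ vanishes, so $R:=\id-B_0A\in\Psi^{-1}_\prop(E)$. Using $\Psi^{-j}\cdot\Psi^{-1}\subseteq\Psi^{-j-1}$ and the asymptotic completeness of the calculus, I would sum the Neumann series $S\sim\sum_{j\ge0}R^j$ to an operator $S\in\Psi^0_\prop(E)$ with $SB_0A-\id$ smoothing, and set $B:=SB_0\in\Psi^{-k}_\prop(F,E)\subseteq\mathcal O_\prop(F,E)$. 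Everything past the construction of $B_0$ is the standard parametrix bootstrap; the smooth-dependence-on-$x$ step and its encoding via the tangent groupoid is the technical heart, exactly as in the work of van~Erp--Yuncken and, in the Heisenberg case, Ponge.
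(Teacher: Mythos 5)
Your proposal is correct and follows essentially the same route as the paper: reduce to the pseudodifferential statement via $\DO^k(E,F)\subseteq\Psi^k(E,F)$ with matching cosymbols (Proposition~\ref{P:Psi}\itemref{P:Psi:DO}), invert the cosymbol pointwise by the matrix Rockland theorem of Christ--Geller--G{\l}owacki--Polin (Lemma~\ref{L:Rockland}), upgrade to a smoothly varying local left inverse and glue with a partition of unity, and finish with the Neumann-series bootstrap of Proposition~\ref{P:Psi}\itemref{P:Psi:parametrix}. The step you rightly single out as the technical heart is exactly Lemma~\ref{L:pointwiseinv}, proved by a Neumann series in the algebra of order-zero cosymbols using the Folland--Stein/Ponge operator-norm estimates uniform on compacta; only note that invoking Lemma~\ref{L:Lambda} for this smooth dependence would be circular, since that lemma itself rests on the local invertibility statement.
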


Melin \cite{M82} considers the scalar case and shows that the parametrix may be chosen to be a pseudodifferential operator of Heisenberg order $-k$ in the calculus constructed in said paper.
In Section~\ref{S:PDO} we will formulate and prove a generalization of this result for pseudodifferential operators of any order, see Theorem~\ref{T:Rockland} below.
This will allow us to refine the subsequent hypoellipticity statements, see Section~\ref{SS:sobolev}, and to extend them to the graded setup required for the analysis of (generalized) BGG sequences, see Section~\ref{S:grRockland}.

\begin{remark}
Let us point out that several special cases of this result are well know.
To begin with, for trivially filtered manifolds, i.e., $TM=T^{-1}M$, this reduces to the classical, elliptic case.
In this situation all irreducible unitary representations of the (abelian) osculating group are one dimensional, and the scalar Rockland condition at $x\in M$ becomes the familiar condition that the principal symbol of the operator is invertible at every $0\neq\xi\in T_x^*M$.

Another well studied class are the contact and (more generally) Heisenberg manifolds.
For Heisenberg manifolds, a pseudodifferential calculus has been developed independently by Beals--Greiner \cite{BG88} and Taylor \cite{T84}, see also \cite{P08}.
Special cases of Theorem~\ref{T:para} for Heisenberg manifolds can be found in \cite[Theorem~8.4]{BG88} or \cite[Theorem~5.4.1]{P08}.
These investigations can be traced back to the work of Kohn \cite{K65}, Boutet de Monvel \cite{B74}, and Folland--Stein \cite{FS74} on CR manifolds.
For more historical comments we refer to the introduction of \cite{BG88}.

If the filtration on $M$ is locally diffeomorphic to that on a graded nilpotent Lie group, then the scalar version of Theorem~\ref{T:para} can be found in \cite[Theorem~2.5(d)]{CGGP92}. 
This suffices to study the flat models in parabolic geometry given by the homogeneous spaces $G/P$, as well as topologically stable \cite{P16} structures like contact and Engel manifolds.
\end{remark}

\begin{corollary}[Hypoellipticity]\label{C:hypo}
Let $E$ and $F$ be vector bundles over a filtered manifold $M$ and suppose $A\in\DO^k(E,F)$ is a differential operator of Heisenberg order at most $k$ which satisfies the Rockland condition.
Then $A$ is hypoelliptic, that is, if $\psi$ is a compactly supported distributional section of $E$ and $A\psi$ is smooth on an open subset $U$ of $M$, then $\psi$ was smooth on $U$.
If, moreover, $M$ is closed, then $\ker(A)$ is a finite dimensional subspace of\/ $\Gamma^\infty(E)$.
\end{corollary}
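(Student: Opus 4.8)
The plan is to deduce everything formally from the left parametrix furnished by Theorem~\ref{T:para}. Applying that theorem to $A$, we obtain a properly supported operator $B\in\mathcal O_\prop(F,E)$ such that $R:=BA-\id$ is a smoothing operator. Since $A$ is a differential operator and $B$ is properly supported, both are properly supported, hence $BA\in\mathcal O_\prop(E)$; as $\id\in\DO^0(E)\subseteq\mathcal O_\prop(E)$ as well, we get $R\in\mathcal O^{-\infty}_\prop(E)$. Consequently $R$ maps $\Gamma^{-\infty}_c(E)$ continuously into $\Gamma^\infty_c(E)$, and $B$ restricts to a continuous map $\Gamma^\infty_c(F)\to\Gamma^\infty_c(E)$.

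For the hypoellipticity statement, let $\psi\in\Gamma^{-\infty}_c(E)$ with $A\psi\in\Gamma^\infty(F)$. Because the Schwartz kernel of the differential operator $A$ is supported along the diagonal, $\supp(A\psi)\subseteq\supp(\psi)$ is compact, so in fact $A\psi\in\Gamma^\infty_c(F)$. Hence $B(A\psi)\in\Gamma^\infty_c(E)$, while $R\psi\in\Gamma^\infty_c(E)$ since $R$ is a properly supported smoothing operator. Reading off the identity $R=BA-\id$ applied to $\psi$ gives
$$
\psi=B(A\psi)-R\psi\in\Gamma^\infty_c(E),
$$
which shows that $\psi$ was smooth.

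Now suppose $M$ is closed, so that $\Gamma^{-\infty}(E)=\Gamma^{-\infty}_c(E)$ and every operator is automatically properly supported. If $\psi\in\Gamma^{-\infty}(E)$ satisfies $A\psi=0$, then $A\psi$ is trivially smooth, hence $\psi\in\Gamma^\infty(E)$ by the hypoellipticity just established; thus $\ker(A)\subseteq\Gamma^\infty(E)$. For such $\psi$ the identity above becomes $\psi=-R\psi$. Since $A$ is continuous as a map $L^2(E)\to\Gamma^{-\infty}(F)$, the subspace $\ker(A)$ is closed in the Hilbert space $L^2(E)$. The smoothing operator $R$ has a smooth, hence square integrable, Schwartz kernel on the closed manifold $M\times M$, so $R$ is Hilbert--Schmidt, in particular a compact operator on $L^2(E)$. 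As $-R$ acts as the identity on the closed subspace $\ker(A)\subseteq L^2(E)$, the unit ball of $\ker(A)$ is relatively compact, whence $\ker(A)$ is finite dimensional.

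I do not anticipate a genuine obstacle here: given Theorem~\ref{T:para}, this is the standard parametrix bootstrap. The only points needing a little care are bookkeeping with proper supports, so that all compositions are defined and preserve the relevant function spaces, and the passage to $L^2(E)$ for the finiteness claim, where one invokes compactness of smoothing operators on a closed manifold.
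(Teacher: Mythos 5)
Your proof is correct and follows essentially the same route as the paper: a parametrix bootstrap $\psi=B(A\psi)-(BA-\id)\psi$ for hypoellipticity, and the observation that the identity on $\ker(A)$ is the restriction of a compact smoothing operator on $L^2(E)$ for finite dimensionality. The only cosmetic difference is that you invoke the Hilbert--Schmidt property of a smooth kernel on a closed manifold where the paper cites Arzel\`a--Ascoli; both yield compactness.
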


\begin{proof}
We recall a standard argument.
Let $B\in\mathcal O_\prop(F,E)$ be a left parametrix as in The\-o\-rem~\ref{T:para}.
Hence, $BA-\id$ is a smoothing operator and $BA\psi-\psi$ is a smooth section of $E$.
Moreover, $BA\psi$ is a smooth on $U$, for $B$ is pseudolocal.
Consequently, $\psi$ is smooth on $U$.

Assume $M$ to be closed.
By hypoellipticity, $\ker(A)\subseteq\Gamma^\infty(E)\subseteq L^2(E)$.
Since $BA-\id$ is a smoothing operator, the identical map on $\ker(A)$ coincides with the restriction of a smoothing operator.
The latter induces a compact operator on $L^2(E)$ according to the theorem of Arzel\`a--Ascoli.
Hence, every bounded subset of $\ker(A)$ is precompact in $L^2(E)$.
Consequently, $\ker(A)$ has to be finite dimensional.
\end{proof}

\begin{corollary}[Hodge decomposition]\label{C:smooth-Hodge-decomposition}
Let $E$ be a vector bundle over a closed filtered manifold $M$.
Suppose $A\in\DO^k(E)$ satisfies the Rockland condition and is formally selfadjoint, $A^*=A$, with respect to an $L^2$ inner product of the form \eqref{E:llrr}.
Moreover, let $Q$ denote the orthogonal projection onto the (finite dimensional) subspace $\ker(A)\subseteq\Gamma^\infty(E)$.
Then $A+Q$ is invertible with inverse $(A+Q)^{-1}\in\mathcal O(E)$.
Consequently, we have topological isomorphisms and Hodge type decompositions:
\begin{align*}
A+Q\colon\Gamma^\infty(E)&\xrightarrow\cong\Gamma^\infty(E),&\Gamma^\infty(E)&=\ker(A)\oplus A(\Gamma^\infty(E)),\\
A+Q\colon\Gamma^{-\infty}(E)&\xrightarrow\cong\Gamma^{-\infty}(E),&\Gamma^{-\infty}(E)&=\ker(A)\oplus A(\Gamma^{-\infty}(E)).
\end{align*}
\end{corollary}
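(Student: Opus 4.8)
The plan is to promote the left parametrix of Theorem~\ref{T:para} to a two-sided one, to check that the formally selfadjoint operator $A+Q$ is injective on distributional sections, and then to combine these two facts by elementary Fredholm theory, reading off the Hodge decompositions at the end.

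First I would note that, since $A$ satisfies the Rockland condition, so does its formal adjoint $A^*=A$; applying Theorem~\ref{T:para} to $A^*$ and taking formal adjoints of the resulting left parametrix, using \eqref{E:A*At}, produces a properly supported right parametrix of $A$. In the usual way a left and a right parametrix combine to a two-sided parametrix $B\in\mathcal O_\prop(E)$, so that $BA-\id$ and $AB-\id$ are both smoothing. Since $Q$ is a smoothing operator---its range $\ker(A)$ is a finite dimensional subspace of $\Gamma^\infty(E)$ by Corollary~\ref{C:hypo}, so $Q$ has a smooth kernel---the same $B$ is also a two-sided parametrix of $A+Q\in\mathcal O_\prop(E)$: both $B(A+Q)-\id$ and $(A+Q)B-\id$ are smoothing.

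Next I would show that $A+Q$ is injective on $\Gamma^{-\infty}(E)=\Gamma^{-\infty}_c(E)$. If $(A+Q)\psi=0$, then $A\psi=-Q\psi\in\Gamma^\infty(E)$, hence $\psi\in\Gamma^\infty(E)$ by hypoellipticity, see Corollary~\ref{C:hypo}. Pairing $(A+Q)\psi=0$ against an arbitrary $\phi\in\ker(A)$ and using $A^*=A$ gives $\llangle Q\psi,\phi\rrangle_{L^2(E)}=-\llangle\psi,A\phi\rrangle_{L^2(E)}=0$; since $Q\psi\in\ker(A)=\img(Q)$ this forces $Q\psi=0$, whence $A\psi=0$, so $\psi\in\ker(A)$ and therefore $\psi=Q\psi=0$. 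Running the identical argument for the complex conjugate operator $\bar A+\bar Q$---noting that $\bar A$ still satisfies the Rockland condition because the set of non-trivial irreducible unitary representations is invariant under complex conjugation, and that $\bar Q$ is the orthogonal projection onto $\ker(\bar A)=\overline{\ker(A)}$---shows, via \eqref{E:A*At} which identifies the transpose $(A+Q)^t$ with a vector bundle isomorphism conjugate of $\bar A+\bar Q$, that $(A+Q)^t$ is injective on $\Gamma^{-\infty}(E')$.

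Finally I would appeal to standard Fredholm theory. The two-sided parametrix shows that $A+Q$ is a Fredholm operator on $\Gamma^{-\infty}(E)$: its kernel lies in $\Gamma^\infty(E)$ and is finite dimensional (the smoothing remainders being compact-like by the Arzel\`a--Ascoli argument used for Corollary~\ref{C:hypo}), and its range is closed of finite codimension with $\coker(A+Q)\cong\ker\bigl((A+Q)^t\bigr)$ by the perfect duality between $\Gamma^{-\infty}(E)$ and $\Gamma^\infty(E')$. Both kernels vanish by the previous step, so $A+Q\colon\Gamma^{-\infty}(E)\to\Gamma^{-\infty}(E)$ is a continuous bijection, hence a topological isomorphism; from $B(A+Q)=\id+S$ with $S$ smoothing one obtains $(A+Q)^{-1}=B-S(A+Q)^{-1}\in\mathcal O(E)$. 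Hypoellipticity then gives that $A+Q$ also restricts to a topological isomorphism of $\Gamma^\infty(E)$. For $\psi\in\Gamma^\infty(E)$ (resp.\ $\psi\in\Gamma^{-\infty}(E)$) the identity $\psi=A\bigl((A+Q)^{-1}\psi\bigr)+Q\bigl((A+Q)^{-1}\psi\bigr)$, together with the observation that $A(\,\cdot\,)$ always lands in the $L^2$-orthogonal complement of $\ker(A)=\img(Q)$ because $A^*=A$, yields the asserted splittings $\Gamma^\infty(E)=\ker(A)\oplus A(\Gamma^\infty(E))$ and $\Gamma^{-\infty}(E)=\ker(A)\oplus A(\Gamma^{-\infty}(E))$, with $Q(A+Q)^{-1}$ the continuous projection onto the first summand. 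The main obstacle is precisely this passage from parametrix to genuine inverse, i.e.\ establishing the closed-range/Fredholm property and the cokernel identification directly on the spaces $\Gamma^{\pm\infty}(E)$ rather than on a Hilbert space; alternatively one could carry out this last step in the Heisenberg Sobolev scale of Section~\ref{SS:sobolev}, where formal selfadjointness makes $A+Q$ a Fredholm map of index zero, and then transfer the conclusion back.
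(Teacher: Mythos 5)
Your overall architecture is close to the paper's and most individual steps are sound: producing a two-sided, properly supported parametrix from Theorem~\ref{T:para} and formal selfadjointness; proving injectivity of $A+Q$ on $\Gamma^{-\infty}(E)$ via hypoellipticity plus the orthogonality argument $Q\psi\perp\ker(A)$; recovering $(A+Q)^{-1}\in\mathcal O(E)$ from $(A+Q)^{-1}=B-S(A+Q)^{-1}$; and reading off the Hodge splittings from $\img(A)\perp\ker(A)$. The genuine gap is exactly the step you flag yourself: passing from ``injective and invertible modulo smoothing'' to ``surjective''. Neither of the two routes you sketch for it works as stated. Running Fredholm theory directly on $\Gamma^{-\infty}(E)$ requires Riesz--Schauder theory and the open mapping theorem on a non-normable DFS space, together with a closed-range/annihilator argument to identify $\coker(A+Q)$ with $\ker((A+Q)^t)$; all of this is true but none of it is ``standard Fredholm theory'', and you supply none of it. The fallback via the Heisenberg Sobolev scale is worse: it is circular at this point in the paper, since the invertible $\Lambda_s$ of Lemma~\ref{L:Lambda} on closed manifolds, and hence the Hilbert space structure on $H^s(E)$ in Proposition~\ref{P:Hs}, rest on Corollary~\ref{C:PsiinvA}, whose proof proceeds ``exactly as in'' the present corollary.

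The paper closes this gap by transferring the Fredholm argument to the \emph{classical} Sobolev spaces $H^s_\cl(E)$, which are available independently of the Heisenberg calculus. It first symmetrizes the parametrix to $P=\tfrac12(B+B^*)$, notes that $P$ is hypoelliptic with finite dimensional kernel, and corrects it by the orthogonal projection onto $\ker(P)$ so that $P=P^*$ is injective. Then $G:=(A+Q)P$ satisfies $G-\id\in\mathcal O^{-\infty}(E)$, hence is a compact perturbation of the identity on each $H^s_\cl(E)$, hence Fredholm of index zero there; injectivity of $G$ (which needs injectivity of \emph{both} factors, whence the correction of $P$) gives invertibility on each $H^s_\cl(E)$ and, by the classical Sobolev embedding, on $\Gamma^\infty(E)$. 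Applying the same to $G^*=P(A+Q)$ and using that $(G^*)^{-1}$ is the formal adjoint of $G^{-1}$ extends $G^{-1}$ to $\Gamma^{-\infty}(E)$; finally $G^{-1}-\id=-(G-\id)G^{-1}$ is smoothing and $(A+Q)^{-1}=PG^{-1}\in\mathcal O(E)$. If you replace your third step by this reduction to Hilbert spaces, the remainder of your argument goes through as written.
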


\begin{proof}
We recall the classical argument \cite{G84} which will be referred to in the proof of Corollary~\ref{C:PsiinvA}.
According to Theorem~\ref{T:para} there exists $B\in\mathcal O(E)$ such that $BA-\id$ is a smoothing operator.
Since $A$ is formally selfadjoint, $AB^*-\id$ is a smoothing operator too.
We conclude that $B$ and $B^*$ differ by a smoothing operator.
Hence $P^*=P:=\tfrac12(B+B^*)\in\mathcal O(E)$ is a formally selfadjoint parametrix such that $PA-\id$ and $AP-\id$ are both smoothing operators.

Note that $Q=Q^*$, the orthogonal projection onto $\ker(A)$, is a smoothing operator.
In particular, $A+Q$ is hypoelliptic.
Moreover, $\ker(A+Q)=0$ in view of $A^*=A$.
Since $AP-\id$ is a smoothing operator, arguing as in the proof of Corollary~\ref{C:hypo} shows that $P$ is hypoelliptic and $\ker(P)$ is a finite dimensional subspace of $\Gamma^\infty(E)$.
Adding the orthogonal projection onto $\ker(P)$ to $P=P^*$, we may furthermore assume $\ker(P)=0$.

Consider $G:=(A+Q)P\in\mathcal O(E)$.
Since $G-\id$ is a smoothing operator, it induces a compact operator on every classical Sobolev space $H^s_\cl(E)$.
Hence, $G$ induces a Fredholm operator with vanishing index on $H^s_\cl(E)$ for all real numbers $s$.
By construction, $G$ is injective, whence invertible with bounded inverse on $H^s_\cl(E)$.
Using the classical Sobolev embedding theorem, we conclude that $G$ is invertible on $\Gamma^\infty(E)$ with continuous inverse, $G^{-1}\colon\Gamma^\infty(E)\to\Gamma^\infty(E)$.
Using $G^*=P(A+Q)$, the same argument shows that $G^*$ is invertible on $\Gamma^\infty(E)$ with continuous inverse, $(G^*)^{-1}\colon\Gamma^\infty(E)\to\Gamma^\infty(E)$.
Since $(G^*)^{-1}$ is the formal adjoint of $G^{-1}$, we conclude that $G^{-1}$ extends continuously to distributional sections, $G^{-1}\colon\Gamma^{-\infty}(E)\to\Gamma^{-\infty}(E)$.
Thus, according to the Schwartz kernel theorem, $G^{-1}$ is given by a (distributional) kernel we will denote by $G^{-1}$ too.
The obvious relation $G^{-1}-\id=-(G-\id)G^{-1}$ implies that $G^{-1}-\id$ is a smoothing operator and, consequently, $G^{-1}\in\mathcal O(E)$.
We conclude $(A+Q)^{-1}=PG^{-1}\in\mathcal O(E)$.
The remaining assertions follow at once.
\end{proof}

\subsection{Rockland sequences}\label{SS:hesDO}

Let us now generalize the concept of elliptic sequences of differential operators to filtered manifolds.
In subsequent sections we will generalize further to the graded setup, see Definition~\ref{D:graded_hypoelliptic_seq}, and pseudodifferential operators, see Definition~\ref{D:gRs}.

\begin{definition}[Rockland sequences of differential operators]\label{def.Hypo-seq}
Let $E_i$ be smooth vector bundles over a filtered manifold $M$.
A sequence of differential operators,
\begin{equation}\label{E:hes}
\cdots\to\Gamma^\infty(E_{i-1})\xrightarrow{A_{i-1}}\Gamma^\infty(E_i)\xrightarrow{A_i}\Gamma^\infty(E_{i+1})\to\cdots,
\end{equation}
with $A_i\in\DO^{k_i}(E_i,E_{i+1})$, is called \emph{Rockland sequence} if
for every $x\in M$ and all non-trivial irreducible unitary representation of the osculating group, $\pi\colon\mathcal T_xM\to U(\mathcal H)$, the sequence
\begin{equation}\label{E:Hisigma}
\cdots\to
\mathcal H_\infty\otimes E_{i-1,x}\xrightarrow{\pi(\sigma^{k_{i-1}}_x(A_{i-1}))}
\mathcal H_\infty\otimes E_{i,x}\xrightarrow{\pi(\sigma^{k_i}_x(A_i))}
\mathcal H_\infty\otimes E_{i+1,x}\to\cdots
\end{equation}
is weakly exact, i.e., the image of the left arrow is contained and dense in the kernel of the right arrow.
Here $\mathcal H_\infty$ denotes the subspace of smooth vectors in the Hilbert space $\mathcal H$.
\end{definition}

\begin{remark}\label{R:weakexactness}
For every Rockland sequence of differential operators, the sequence~\eqref{E:Hisigma} is actually exact in the strict (algebraic) sense.
This follows from Lemma~\ref{L:rockseq} and Remark~\ref{R:AtRock}.
\end{remark}

\begin{remark}\label{R:transposedseq}
If a sequence of differential operators as in \eqref{E:hes} is a Rockland sequence, then so are the transposed sequence,
$$
\cdots\leftarrow\Gamma^\infty(E_{i-1}')\xleftarrow{A_{i-1}^t}\Gamma^\infty(E_i')\xleftarrow{A_i^t}\Gamma^\infty(E_{i+1}')\leftarrow\cdots,
$$
and the sequence of formal adjoints with respect to $L^2$ inner products of the form \eqref{E:llrr}.
\end{remark}

\begin{remark}
Consider the case $E_i=0$ for all $i\neq1,2$.
In other words, consider a sequence with a single differential operator of Heisenberg order at most $k_1$,
$$
0\to\Gamma^\infty(E_1)\xrightarrow{A_1}\Gamma^\infty(E_2)\to0.
$$
Such a sequence is hypoelliptic in the sense of Definition~\ref{def.Hypo-seq} iff, for all points $x\in M$,
$$
\pi(\sigma^{k_1}_x(A_1))\colon\mathcal H_\infty\otimes E_{1,x}\to\mathcal H_\infty\otimes E_{2,x}
$$ 
is injective with dense image for all non-trivial irreducible unitary representations $\pi\colon\mathcal T_xM\to U(\mathcal H)$.
Equivalently, $A_1$ and $A_1^t$ both satisfy the Rockland condition, see Definition~\ref{D:rockland}.
\end{remark}

Consider a Rockland sequence of differential operators as in \eqref{E:hes}.
To study this sequence we shall introduce certain additional structures and operators in analogy with the standard elliptic sequences.
Fix a smooth volume density $dx$ on $M$, let $h_i$ be smooth fiber-wise Hermitian inner products on $E_i$, and consider the associated $L^2$ inner products on $\Gamma^\infty_c(E_i)$,
\begin{equation}\label{E:llrrEi}
\llangle\psi_1,\psi_2\rrangle_{L^2(E_i)}
=\int_Mh_i(\psi_1(x),\psi_2(x))dx
=\langle(h_i\otimes dx)\psi_1,\psi_2\rangle
\end{equation}
where $\psi_1,\psi_2\in\Gamma_c^\infty(E_i)$.
Moreover, let $A_i^*\in\DO^{k_i}(E_{i+1},E_i)$ denote the corresponding formal adjoint, that is, $\llangle A_i^*\phi,\psi\rrangle_{L^2(E_i)}=\llangle\phi,A_i\psi\rrangle_{L^2(E_{i+1})}$ for $\psi\in\Gamma^\infty_c(E_i)$ and $\phi\in\Gamma_c^\infty(E_{i+1})$.

Assume $k_i\geq1$, choose positive integers $a_i$ such that 
\begin{equation}\label{E:RSai}
k_{i-1}a_{i-1}=k_ia_i=:\kappa,
\end{equation}
and consider the differential operator $\Delta_i\in\DO^{2\kappa}(E_i)$,
\begin{equation}\label{E:RSDelta}
\Delta_i:=(A_{i-1}A_{i-1}^*)^{a_{i-1}}+(A_i^*A_i)^{a_i}.
\end{equation}
We will refer to these operators as \emph{Rumin--Seshadri operators} since they generalize the fourth order Laplacians associated with the Rumin complex in \cite[Section~2.3]{RS12}.

\begin{lemma}\label{L:rockseq}
The Rumin--Seshadri operators $\Delta_i$ satisfy the Rockland condition.
\end{lemma}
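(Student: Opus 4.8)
The plan is to deduce the Rockland condition for each $\Delta_i$ from the weak exactness of the symbol sequence \eqref{E:Hisigma} at the $i$-th spot, by transplanting the usual Hodge-theoretic argument to the common invariant domain of smooth vectors. Fix $x\in M$ and a non-trivial irreducible unitary representation $\pi\colon\mathcal T_xM\to U(\mathcal H)$, and abbreviate $a:=\pi(\sigma^{k_{i-1}}_x(A_{i-1}))$ and $b:=\pi(\sigma^{k_i}_x(A_i))$, regarded as operators between $\mathcal H_\infty\otimes E_{i-1,x}$, $\mathcal H_\infty\otimes E_{i,x}$ and $\mathcal H_\infty\otimes E_{i+1,x}$. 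Combining the multiplicativity of the Heisenberg principal cosymbol and the identity $\sigma^k_x(A^*)=\sigma^k_x(A)^*$, see \eqref{E:sABAt} and \eqref{E:sA*}, the linearity of $\sigma^{2\kappa}_x$ on $\DO^{2\kappa}(E_i)$, and the facts that $\pi$ is multiplicative and intertwines the $*$-operations, see \eqref{E:pBA} and \eqref{E:pA*}, I would first record the identity
\[
\pi\bigl(\sigma^{2\kappa}_x(\Delta_i)\bigr)=(aa^*)^{a_{i-1}}+(b^*b)^{a_i}
\]
of operators on $\mathcal H_\infty\otimes E_{i,x}$, where $\kappa$ and the $a_j\in\N$ are as in \eqref{E:RSai} and $\Delta_i$ is as in \eqref{E:RSDelta}.

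Next I would take $v\in\mathcal H_\infty\otimes E_{i,x}$ with $\pi(\sigma^{2\kappa}_x(\Delta_i))v=0$ and pair it with $v$ in the $\mathcal H$-inner product. On smooth vectors $a^*$ is the formal adjoint of $a$ and $b^*$ that of $b$, so $aa^*$ and $b^*b$ are non-negative and formally self-adjoint on the respective tensor spaces, and a short positivity check shows that $\llangle(aa^*)^{a_{i-1}}v,v\rrangle_{\mathcal H}$ and $\llangle(b^*b)^{a_i}v,v\rrangle_{\mathcal H}$ are both non-negative; since their sum equals $\llangle\pi(\sigma^{2\kappa}_x(\Delta_i))v,v\rrangle_{\mathcal H}=0$, both vanish. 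From $\llangle(aa^*)^{a_{i-1}}v,v\rrangle_{\mathcal H}=0$ an elementary descent, repeatedly invoking $\|T^{j-1}w\|_{\mathcal H}^2=\llangle T^jw,T^{j-2}w\rrangle_{\mathcal H}$ for formally self-adjoint $T$, yields $aa^*v=0$ and therefore $\|a^*v\|_{\mathcal H}^2=\llangle aa^*v,v\rrangle_{\mathcal H}=0$, i.e.\ $a^*v=0$; the same reasoning applied to the second summand gives $bv=0$.

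Finally I would combine the two conclusions: $a^*v=0$ says that $v$ is orthogonal to $\img(a)$ in $\mathcal H\otimes E_{i,x}$, while $bv=0$ says $v\in\ker(b)$. By Definition~\ref{def.Hypo-seq} the image of $a$ is dense in $\ker(b)$ in the topology of $\mathcal H_\infty\otimes E_{i,x}$, hence a fortiori in the Hilbert space topology since $\mathcal H_\infty\hookrightarrow\mathcal H$ is continuous; consequently $v$ is orthogonal to $\ker(b)$, and being itself an element of $\ker(b)$ it must vanish. Thus $\pi(\sigma^{2\kappa}_x(\Delta_i))$ is injective on $\mathcal H_\infty\otimes E_{i,x}$, and since $x$ and $\pi$ were arbitrary, $\Delta_i$ satisfies the Rockland condition of Definition~\ref{D:rockland}.

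The only real subtlety is bookkeeping: one must make sure every manipulation stays inside the invariant domain $\mathcal H_\infty\otimes E_{i,x}$, where the adjunction identities behind \eqref{E:pA*} hold literally, and one must handle exponents $a_{i-1},a_i>1$, for which the naive identity $\llangle aa^*v,v\rrangle_{\mathcal H}=\|a^*v\|^2_{\mathcal H}$ has to be supplemented by the elementary descent above, together with a parity case distinction (according to whether the exponent is even or odd) in order to see non-negativity and to initiate the descent. I expect that descent to be the main, if minor, technical point; everything else is a transcription of the classical elliptic Hodge argument. Note that no use is made of the sequence \eqref{E:hes} being a complex.
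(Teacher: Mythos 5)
Your proposal is correct and follows essentially the same route as the paper's proof: the same symbol identity $\pi(\sigma^{2\kappa}_x(\Delta_i))=(aa^*)^{a_{i-1}}+(b^*b)^{a_i}$, the same positivity/descent argument reducing the kernel to $\ker(a^*)\cap\ker(b)$, and the same use of weak exactness to conclude $\ker(b)\perp\ker(a^*)$ forces this intersection to vanish. The descent you flag as the main technical point is exactly what the paper compresses into the phrase ``due to positivity.''
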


\begin{proof}
Consider $x\in M$ and let $\pi\colon\mathcal T_xM\to U(\mathcal H)$ be a non-trivial irreducible unitary representation.
We equip $\mathcal H_\infty\otimes E_{i,x}$ with the Hermitian inner product provided by the restriction of the scalar product of the Hilbert space $\mathcal H$ and the inner product $h_{i,x}$ on $E_{i,x}$.
Using \eqref{E:sABAt}, \eqref{E:sA*}, \eqref{E:pBA} and \eqref{E:pA*}, we obtain:
$$
\pi(\sigma^\kappa_x(\Delta_i)))
=(B_{i-1}B_{i-1}^*)^{a_{i-1}}+(B_i^*B_i)^{a_i}
$$
where we abbreviate $B_i:=\pi(\sigma_x^{k_i}(A_i))\colon\mathcal H_\infty\otimes E_{i,x}\to\mathcal H_\infty\otimes E_{i+1,x}$ and we consider $B_i^*=\pi(\sigma_x^{k_i}(A_i^*))\colon\mathcal H_\infty\otimes E_{i+1,x}\to\mathcal H_\infty\otimes E_{i,x}$.
Due to positivity,
\begin{align*}
\ker(\pi(\sigma_x^{2\kappa}(\Delta_i)))
&=\ker\bigl((B_{i-1}B_{i-1}^*)^{a_{i-1}}\bigr)
\cap\ker\bigl((B_i^*B_i)^{a_i}\bigr),
\\
\ker\bigl((B_{i-1}B_{i-1}^*)^{a_{i-1}}\bigr)
&=\ker(B_{i-1}^*)\textrm{, and}
\\
\ker\bigl((B_i^*B_i)^{a_i}\bigr)
&=\ker(B_i).
\end{align*}
Since $A_i$ is a Rockland sequence, we also have $\ker(B_i)\subseteq\overline{\img(B_{i-1})}\perp\ker(B_{i-1}^*)$ and thus $\ker(B_i)\cap\ker(B_{i-1}^*)=0$.
Combining this with the preceding equalities, we obtain $\ker(\pi(\sigma_x^{2\kappa}(\Delta_i)))=0$, i.e., $\Delta_i$ satisfies the Rockland condition.
\end{proof}

Combining Corollary~\ref{C:hypo} and Lemma~\ref{L:rockseq} we see that each Rumin--Seshadri operator is hypoelliptic.
For Rockland sequences this immediately implies:

\begin{corollary}\label{C:RShypo}
The differential operator $(A_{i-1}^*,A_i)\colon\Gamma^\infty(E_i)\to\Gamma^\infty(E_{i-1}\oplus E_{i+1})$ is hypoelliptic, that is, if $\psi$ is a distributional section of $E_i$ such that $A_{i-1}^*\psi$ and $A_i\psi$ are both smooth on an open subset $U$ of $M$, then $\psi$ was smooth on $U$.
Moreover,
\begin{multline}\label{E:kerDAA}
\ker(\Delta_i|_{\Gamma^{-\infty}_c(E_i)})
=\ker(A_{i-1}^*|_{\Gamma^{-\infty}_c(E_i)})\cap\ker(A_i|_{\Gamma^{-\infty}_c(E_i)})
\\=\ker(A_{i-1}^*|_{\Gamma^\infty_c(E_i)})\cap\ker(A_i|_{\Gamma^\infty_c(E_i)}).
\end{multline}
\end{corollary}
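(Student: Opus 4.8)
The plan is to reduce everything to the hypoellipticity of the Rumin--Seshadri operator $\Delta_i$, which has already been recorded above: by Lemma~\ref{L:rockseq} it satisfies the Rockland condition, so Corollary~\ref{C:hypo} applies, and since the left parametrix furnished by Theorem~\ref{T:para} is properly supported, $\Delta_i$ is in fact hypoelliptic on all of $\Gamma^{-\infty}(E_i)$, not merely on compactly supported distributional sections.

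For the first assertion I would take $\psi\in\Gamma^{-\infty}(E_i)$ with $A_{i-1}^*\psi$ and $A_i\psi$ smooth and note, using $a_i\ge1$ together with \eqref{E:RSDelta}, that
\[
\Delta_i\psi=(A_{i-1}A_{i-1}^*)^{a_{i-1}-1}A_{i-1}\bigl(A_{i-1}^*\psi\bigr)+(A_i^*A_i)^{a_i-1}A_i^*\bigl(A_i\psi\bigr).
\]
Since differential operators send smooth sections to smooth sections, $\Delta_i\psi$ is smooth, and hypoellipticity of $\Delta_i$ then forces $\psi\in\Gamma^\infty(E_i)$.

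For the kernel identities in \eqref{E:kerDAA} I would first treat smooth compactly supported sections. Put $T:=A_{i-1}A_{i-1}^*$ and $T':=A_i^*A_i$; both are formally selfadjoint, with $\llangle T\psi,\psi\rrangle_{L^2(E_i)}=\|A_{i-1}^*\psi\|^2\ge0$ and $\llangle T'\psi,\psi\rrangle_{L^2(E_i)}=\|A_i\psi\|^2\ge0$, and $\Delta_i=T^{a_{i-1}}+T'^{a_i}$. Each power is a ``square'': $T^{a_{i-1}}=C^*C$ with $C=T^{a_{i-1}/2}$ or $C=A_{i-1}^*T^{(a_{i-1}-1)/2}$ according to the parity of $a_{i-1}$, and likewise $T'^{a_i}=C'^*C'$. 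Hence if $\psi\in\Gamma^\infty_c(E_i)$ and $\Delta_i\psi=0$, then $0=\llangle\Delta_i\psi,\psi\rrangle_{L^2(E_i)}=\|C\psi\|^2+\|C'\psi\|^2$, so $C\psi=C'\psi=0$ and therefore $T^{a_{i-1}}\psi=C^*C\psi=0$ and $T'^{a_i}\psi=0$. A one-line induction removes the exponents: from $T^a\psi=0$ one gets $\|T^{\lceil a/2\rceil}\psi\|^2=\llangle T^{2\lceil a/2\rceil}\psi,\psi\rrangle_{L^2(E_i)}=0$, hence $T^{\lceil a/2\rceil}\psi=0$ with strictly smaller exponent, so eventually $T\psi=0$, and then $\|A_{i-1}^*\psi\|^2=\llangle T\psi,\psi\rrangle_{L^2(E_i)}=0$ gives $A_{i-1}^*\psi=0$; similarly $A_i\psi=0$. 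Thus $\ker(\Delta_i|_{\Gamma^\infty_c(E_i)})=\ker(A_{i-1}^*|_{\Gamma^\infty_c(E_i)})\cap\ker(A_i|_{\Gamma^\infty_c(E_i)})$, the opposite inclusion being immediate from the definition of $\Delta_i$.

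Finally, I would upgrade to distributional sections: hypoellipticity of $\Delta_i$ shows that every $\psi\in\Gamma^{-\infty}_c(E_i)$ annihilated by $\Delta_i$ already lies in $\Gamma^\infty_c(E_i)$, so $\ker(\Delta_i|_{\Gamma^{-\infty}_c(E_i)})=\ker(\Delta_i|_{\Gamma^\infty_c(E_i)})$; combining this with the obvious chain $\ker(A_{i-1}^*|_{\Gamma^\infty_c})\cap\ker(A_i|_{\Gamma^\infty_c})\subseteq\ker(A_{i-1}^*|_{\Gamma^{-\infty}_c})\cap\ker(A_i|_{\Gamma^{-\infty}_c})\subseteq\ker(\Delta_i|_{\Gamma^{-\infty}_c})$ pins down all of \eqref{E:kerDAA}. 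There is no serious obstacle; the only point needing a little care is that the $L^2(E_i)$ pairing may be used only after $\psi$ has been shown to be a genuine smooth compactly supported section, and the exponents $a_i>1$ are then disposed of by the elementary iteration above, in the spirit of the proof of Lemma~\ref{L:rockseq}.
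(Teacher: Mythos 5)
Your proposal is correct and follows the same route the paper intends: reduce everything to hypoellipticity of the Rumin--Seshadri operator $\Delta_i$ (Lemma~\ref{L:rockseq} plus Corollary~\ref{C:hypo}, with the properly supported parametrix from Theorem~\ref{T:para} removing the compact-support restriction), and then extract the kernel identities by the same $L^2$ positivity argument already used in the proof of Lemma~\ref{L:rockseq}. The paper states this as an immediate consequence without writing out the details; your factorizations $T^{a}=C^*C$ and the iteration removing the exponents are exactly the omitted steps, and they are carried out correctly.
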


Over closed manifolds Corollary~\ref{C:smooth-Hodge-decomposition} applies to $\Delta_i=\Delta_i^*$, hence $\ker(\Delta_i)$ is a finite dimensional subspace of $\Gamma^\infty(E_i)$, and we get Hodge type decompositions as in Corollary~\ref{C:smooth-Hodge-decomposition} for the Rumin--Seshadri operators.
For Rockland complexes over closed manifolds this implies:

\begin{corollary}\label{C:Hodge}
If $M$ is closed and $A_iA_{i-1}=0$, then we have Hodge type decompositions
\begin{align*}
\Gamma^\infty(E_i)&=A_{i-1}(\Gamma^\infty(E_{i-1}))\oplus\ker(\Delta_i)\oplus A_i^*(\Gamma^\infty(E_{i+1})),
\\
\Gamma^{-\infty}(E_i)&=A_{i-1}(\Gamma^{-\infty}(E_{i-1}))\oplus\ker(\Delta_i)\oplus A_i^*(\Gamma^{-\infty}(E_{i+1})),
\end{align*}
and
\begin{align*}
\ker(A_i|_{\Gamma^\infty(E_i)})&=A_{i-1}(\Gamma^\infty(E_{i-1}))\oplus\ker(\Delta_i),
\\
\ker(A_i|_{\Gamma^{-\infty}(E_i)})&=A_{i-1}(\Gamma^{-\infty}(E_{i-1}))\oplus\ker(\Delta_i).
\end{align*}
In particular, every cohomology class admits a unique harmonic representative:
$$
\frac{\ker(A_i|_{\Gamma^{-\infty}(E_i)})}{\img(A_{i-1}|_{\Gamma^{-\infty}(E_{i-1})})}
=\frac{\ker(A_i|_{\Gamma^\infty(E_i)})}{\img(A_{i-1}|_{\Gamma^\infty(E_{i-1})})}
=\ker(\Delta_i)
=\ker(A_{i-1}^*)\cap\ker(A_i).
$$
\end{corollary}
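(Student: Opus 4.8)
The plan is to deduce everything from Corollary~\ref{C:smooth-Hodge-decomposition} applied to the Rumin--Seshadri operator $\Delta_i$, which is formally selfadjoint and, by Lemma~\ref{L:rockseq}, satisfies the Rockland condition, together with the identity $\ker(\Delta_i)=\ker(A_{i-1}^*)\cap\ker(A_i)$ supplied by~\eqref{E:kerDAA}. Observe at the outset that $\ker(\Delta_i)$ is a fixed finite dimensional space of smooth sections, the same whether computed on $\Gamma^\infty(E_i)$ or on $\Gamma^{-\infty}(E_i)$, by hypoellipticity of $\Delta_i$; this is what will let the smooth and distributional versions be proved by literally the same argument.

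First I would establish the three term decomposition $\Gamma^\infty(E_i)=\img(A_{i-1})\oplus\ker(\Delta_i)\oplus\img(A_i^*)$ and its distributional counterpart. For surjectivity, write $\psi=\psi_0+\Delta_i\phi$ with $\psi_0\in\ker(\Delta_i)$ via Corollary~\ref{C:smooth-Hodge-decomposition}, and factor $(A_{i-1}A_{i-1}^*)^{a_{i-1}}\phi=A_{i-1}\bigl((A_{i-1}^*A_{i-1})^{a_{i-1}-1}A_{i-1}^*\phi\bigr)$ and $(A_i^*A_i)^{a_i}\phi=A_i^*\bigl(A_i(A_i^*A_i)^{a_i-1}\phi\bigr)$, so that the two summands of $\Delta_i\phi$ land in $\img(A_{i-1})$ and $\img(A_i^*)$ respectively. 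For directness I would use that $A_iA_{i-1}=0$ implies $A_{i-1}^*A_i^*=0$, and check the three pairwise intersections: an element of $\img(A_{i-1})\cap\img(A_i^*)$ satisfies $A_i\psi=0=A_{i-1}^*\psi$, hence lies in $\ker(A_i)\cap\ker(A_{i-1}^*)=\ker(\Delta_i)$ by~\eqref{E:kerDAA} and is therefore smooth; and a \emph{smooth} $\psi$ in $\ker(\Delta_i)\cap\img(A_{i-1})$ (write $\psi=A_{i-1}\alpha$, use $A_{i-1}^*\psi=0$) or in $\ker(\Delta_i)\cap\img(A_i^*)$ (write $\psi=A_i^*\beta$, use $A_i\psi=0$) pairs to zero with itself, $\llangle\psi,\psi\rrangle=0$, whence $\psi=0$. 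Finally, if $a+b+c=0$ with $a\in\img(A_{i-1})$, $b\in\ker(\Delta_i)$, $c\in\img(A_i^*)$, then applying $\Delta_i$ and using $\Delta_iA_{i-1}=A_{i-1}(A_{i-1}^*A_{i-1})^{a_{i-1}}$ and $\Delta_iA_i^*=A_i^*(A_iA_i^*)^{a_i}$ (both immediate from $A_iA_{i-1}=0$) yields $\Delta_ia+\Delta_ic=0$ with summands back in $\img(A_{i-1})$ and $\img(A_i^*)$, so $\Delta_ia=\Delta_ic=0$, hence $a,c\in\ker(\Delta_i)$ and then $a=c=0$ by the pairwise statements, whence $b=0$. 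The distributional decomposition follows by the same steps, using the distributional halves of Corollary~\ref{C:smooth-Hodge-decomposition} and~\eqref{E:kerDAA}.

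Next I would read off the kernel decomposition. The inclusion $\img(A_{i-1})\oplus\ker(\Delta_i)\subseteq\ker(A_i)$ is immediate from $A_iA_{i-1}=0$ and~\eqref{E:kerDAA}, and directness was already shown. Conversely, given $\psi\in\ker(A_i)$, decompose $\psi=a+b+c$ as above; since $a,b\in\ker(A_i)$ we get $c=\psi-a-b\in\ker(A_i)$, and $c=A_i^*\beta$ gives $A_{i-1}^*c=A_{i-1}^*A_i^*\beta=0$, so $c\in\ker(A_i)\cap\ker(A_{i-1}^*)=\ker(\Delta_i)$, hence $c\in\ker(\Delta_i)\cap\img(A_i^*)=0$ and $\psi=a+b$. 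This proves $\ker(A_i)=\img(A_{i-1})\oplus\ker(\Delta_i)$ in both the smooth and the distributional settings; dividing by $\img(A_{i-1})$ identifies both cohomologies with the same space $\ker(\Delta_i)$, and $\ker(\Delta_i)=\ker(A_{i-1}^*)\cap\ker(A_i)$ by~\eqref{E:kerDAA}, giving the last displayed chain of equalities.

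The only point requiring genuine care — the main obstacle — is making the distributional statement come out of the same argument as the smooth one: every time a pairing $\llangle\psi,\psi\rrangle$ is invoked to conclude vanishing, the section $\psi$ must be smooth, and this is arranged precisely by first showing it lies in $\ker(\Delta_i)$, which consists of smooth sections by hypoellipticity; the extension of the formal adjoint relation \eqref{E:A*} to $\Gamma^{-\infty}\times\Gamma^\infty$ is then legitimate. Everything else is routine algebra with $A_i$, $A_i^*$ subject to $A_iA_{i-1}=0$.
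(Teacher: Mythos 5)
Your proposal is correct and follows essentially the same route as the paper: everything is deduced from Corollary~\ref{C:smooth-Hodge-decomposition} applied to the formally selfadjoint Rockland operator $\Delta_i$ together with the identity \eqref{E:kerDAA}, the paper simply leaving this deduction implicit. The details you supply (factoring $\Delta_i\phi$ through $\img(A_{i-1})$ and $\img(A_i^*)$, and routing every positivity argument through $\ker(\Delta_i)$ so that the sections involved are smooth before pairing) are exactly the intended ones and are carried out correctly.
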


In the subsequent section these regularity statements will be refined by maximal hypoelliptic estimates.
We postpone these more elaborate results because their formulation requires a pseudodifferential calculus for filtered manifolds.

\section{A Rockland theorem for the Heisenberg calculus}\label{S:PDO}

The aim of this section is to prove Theorem~\ref{T:para} above which lies at the core of the hypoellipticity results discussed in Sections~\ref{SS:paraDO} and \ref{SS:hesDO}.
Combining the Heisenberg calculus \cite{M82,EY15v5} with harmonic analysis due to Christ, Geller, G{\l}owacki and Polin \cite{CGGP92}, and using arguments of Ponge \cite{P08}, we obtain a more general Rockland type theorem for pseudodifferential operators on filtered manifolds, see Theorem~\ref{T:Rockland} below.
In Section~\ref{SS:sobolev} we use this to introduce a Heisenberg Sobolev scale, see Proposition~\ref{P:Hs}, and improve upon Corollaries~\ref{C:RShypo} and \ref{C:Hodge} by establishing maximal hypoelliptic estimates, see Corollaries~\ref{C:regrockseq} and \ref{C:HsHodge-seq} below.

\subsection{The Heisenberg pseudodifferential calculus}\label{SS:calculus}

The pseudodifferential calculus on a filtered manifold can be approached via the Heisenberg tangent groupoid \cite{EY16,CP15,HH18,M21}.
A key feature of the Heisenberg tangent groupoid is an action of $\R_+$, the so-called zoom action.
The kernels in the Heisenberg calculus can be characterized as the Schwartz kernels that extend across the Heisenberg tangent groupoid in an essentially homogeneous fashion with respect to the zoom action.
As already mentioned, this is inspired by a characterization of classical pseudodifferential operators due to Debord and Skandalis \cite{DS14}.
Our first task is to describe the cosymbol space of the Heisenberg calculus and relate it to the harmonic analysis. 
We will outline the results needed from the Heisenberg calculus and refer the reader to \cite{EY15v5} for details. 
A more self-contained exposition of this part can be found in \cite[Section~3]{DH17}.

For two vector bundles $E$ and $F$ over a filtered manifold $M$, and any complex number $s$, we let $\Psi^s(E,F)$ denote the class of \emph{pseudodifferential operators} of Heisenberg order at most $s$, mapping sections of $E$ to sections of $F$, cf.~\cite[Definition~19]{EY15v5}.
There is a \emph{principal cosymbol map} 
$$
\sigma^s\colon\Psi^s(E,F)\to\Sigma^s(E,F),
$$
cf.~\cite[Definition~35]{EY15v5}, where $\Sigma^s(E,F)$ denotes the space \emph{principal cosymbols of order $s$.}

To describe the space of cosymbols, we let $\Omega_\pi$ denote the (trivializable) line bundle over $\mathcal TM$ obtained by applying the representation $|\det|^{-1}$ of the general linear group to the frame bundle of the vertical bundle $\ker(T\pi)$ of the submersion $\pi\colon\mathcal TM\to M$.
Hence, the restriction of $\Omega_\pi$ to the fiber $\mathcal T_xM$ identifies canonically with the bundle of volume densities on $\mathcal T_xM$, that is, $\Omega_\pi|_{\mathcal T_xM}=|\Lambda|_{\mathcal T_xM}$.

We let $\mathcal K(\mathcal TM;E,F)$ denote the space of all distributions $k\in\Gamma^{-\infty}(\hom(\pi^*E,\pi^*F)\otimes\Omega_\pi)$ whose wave front set is contained in the conormal of the identical section $M\subseteq\mathcal TM$. 
In particular, these $k$ are assumed to be smooth on $\mathcal TM\setminus M$. 
Equivalently, these can be characterized as families of regular distributional volume densities on the fibers $\mathcal T_xM$ with values in $\hom(E_x,F_x)$, smoothly parametrized by $x\in M$.
We also introduce the notation $\mathcal K^\infty(\mathcal TM;E,F):=\Gamma^\infty(\hom(\pi^*E,\pi^*F)\otimes\Omega_\pi)$ for the subspace of smooth cosymbols.
A cosymbol $k$ is called \emph{properly supported} if $\pi$ restricts to proper map on the support of $k$.

The regular representation of $k\in\mathcal K(\mathcal TM;E,F)$ at $x\in M$ provides a right invariant operator $C^\infty_c(\mathcal T_xM,E_x)\to C^\infty(\mathcal T_xM,F_x)$ on the group $\mathcal T_xM$ with matrix valued convolution kernel $k^x\in\Gamma^{-\infty}(|\Lambda|_{\mathcal T_xM})\otimes\hom(E_x,F_x)$.
The kernels in $\mathcal K(\mathcal TM;E,F)$ which are supported on the space of units $M\subseteq\mathcal TM$ correspond precisely to differential operators $\Gamma^\infty(\pi^*E)\to\Gamma^\infty(\pi^*F)$ which are vertical, i.e., commute with functions in the image of the homomorphism $\pi^*\colon C^\infty(M)\to C^\infty(\mathcal TM)$, and restrict to right invariant operators on each fiber $\mathcal T_xM$.

We denote the space of \emph{complete cosymbols} by
$$
\Sigma(E,F):=\frac{\mathcal K_\prop(\mathcal TM;E,F)}{\mathcal K_\prop^\infty(\mathcal TM;E,F)}
=\frac{\mathcal K(\mathcal TM;E,F)}{\mathcal K^\infty(\mathcal TM;E,F)},
$$
where the subscript indicates properly supported kernels.
The fiber-wise convolution product and inversion induce an associative multiplication and a compatible transposition,
$$
\Sigma(F,G)\times\Sigma(E,F)\xrightarrow*\Sigma(E,G),\qquad\Sigma(E,F)\xrightarrow{t}\Sigma(F',E').
$$
More explicitly, we have $(l*k)^t=k^t*l^t$ and $(k^t)^t=k$, for $k\in\Sigma(E,F)$ and $l\in\Sigma(F,G)$.

The scaling automorphism $\delta_\lambda$ acts on $\Sigma(E,F)$ in a way compatible with multiplication and transposition.
A cosymbol $k\in\mathcal K(\mathcal TM;E,F)$ is called \emph{essentially homogeneous of order $s$} if $(\delta_\lambda)_*k-k\in\mathcal K^\infty(\mathcal TM;E,F)$, for $\lambda>0$.
The space of \emph{principal cosymbols of order $s$} is
$$
\Sigma^s(E,F)
:=\left\{k\in\frac{\mathcal K(\mathcal TM;E,F)}{\mathcal K^\infty(\mathcal TM;E,F)}:\textrm{$(\delta_\lambda)_*k=\lambda^sk$ for all $\lambda>0$}\right\},
$$
cf.\ \cite[Definition~34]{EY15v5}.
Convolution and transposition are compatible with homogeneity, i.e.\
$$
\Sigma^{s_2}(F,G)\times\Sigma^{s_1}(E,F)\xrightarrow*\Sigma^{s_1+s_2}(E,G),\qquad\Sigma^s(E,F)\xrightarrow{t}\Sigma^s(F',E').
$$
For $k\in\N_0$ there is a canonical inclusion,
\begin{equation}\label{E:incUS}
\Gamma^\infty(\mathcal U_{-k}(\mathfrak tM)\otimes\hom(E,F))\subseteq\Sigma^k(E,F)
\end{equation}
provided by regarding both sides as right invariant vertical operators on $\mathcal T^\op M$.
\footnote{The opposite groupoid $\mathcal T^\op M$ mediates between two conflicting, yet common, conventions we are following: The Lie algebra of a Lie group is usually defined by restricting the Lie bracket to \emph{left invariant} vector fields, while the Lie algebroid of a smooth groupoid is defined using \emph{right invariant} vector fields.}

The following basic properties of the Heisenberg calculus have been established in \cite{EY15v5} for scalar valued operators and integral $s$. 
It is straight forward to extend this to the slightly more general setup we are considering here, see also \cite{M82,M83}.
Hence, we have:

\begin{proposition}\label{P:Psi}
Let $E$, $F$ and $G$ be vector bundles over a filtered manifold $M$ and let $s$ be any complex number.
Then the following hold true:
\begin{enumerate}[(a)]
\item\label{P:Psi:O}
We have $\Psi^s(E,F)\subseteq\mathcal O(E,F)$, the operators with conormal kernels.
\item\label{P:Psi:symbsequ}
We have $\Psi^{s-1}(E,F)\subseteq\Psi^s(E,F)$, and the following sequence is exact:
$$
0\to\Psi^{s-1}(E,F)\to\Psi^s(E,F)\xrightarrow{\sigma^s}\Sigma^s(E,F)\to0
$$
\item\label{P:Psi:smooth}
$\bigcap_{k\in\N}\Psi^{s-k}(E,F)=\mathcal O^{-\infty}(E,F)$, the smoothing operators.
\item\label{P:Psi:mult}
If $A\in\Psi^{s_1}(E,F)$, $B\in\Psi^{s_2}(F,G)$, and at least one of the two is properly supported, then $BA\in\Psi^{s_2+s_1}(E,G)$ and $\sigma^{s_2+s_1}(BA)=\sigma^{s_2}(B)\sigma^{s_1}(A)$.
\item\label{P:Psi:trans}
If $A\in\Psi^s(E,F)$, then $A^t\in\Psi^s(F',E')$ and $\sigma^s(A^t)=\sigma^s(A)^t$.
\item\label{P:Psi:DO}
$\DO^k(E,F)=\DO(E,F)\cap\Psi^k(E,F)$ for all $k\in\N_0$, and the principal symbol considered here extends the one for differential operators via the canonical inclusion \eqref{E:incUS}.
\item\label{P:Psi:parametrix}
Let $A\in\Psi^s(E,F)$ and assume that there exists $b\in\Sigma^{-s}(F,E)$ such that $b\,\sigma^s(A)=1$.
Then there exists a left parametrix $B\in\Psi^{-s}_\prop(F,E)$ such that $\sigma^{-s}(B)=b$ and $BA-\id$ is a smoothing operator. 
An analogous statement involving right parametrices holds true.
\end{enumerate}
\end{proposition}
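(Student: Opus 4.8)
The plan is to deduce everything from the scalar calculus of van~Erp and Yuncken \cite{EY15} by observing that all the constructions in Section~\ref{SS:Groupoids} are intrinsic. Over a chart of $M$ trivializing $E$, $F$ and $G$, a kernel in $\mathcal K(\mathbb TM;\mathbb E,\mathbb F)$ becomes an ordinary matrix of scalar kernels --- the density twists $\Omega_\tau$ and $\iota^*\Omega_\sigma$ being scalar line bundles --- and the zoom action, the evaluations $\ev_t^*$, the convolution product, the transpose through $E'=E^*\otimes\iota^*\Omega_\sigma$, and the principal cosymbol are all computed entrywise. Thus each of \itemref{P:Psi:O}--\itemref{P:Psi:parametrix} will follow from its scalar counterpart in \cite{EY15} once a handful of matrix-level manipulations are checked, the local results being glued together by a partition of unity.

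For \itemref{P:Psi:O} I would note that essential homogeneity forces the groupoid extension $\K$, hence its restriction $k=\ev_1^*\K$, to be conormal to the units by the entrywise version of \cite[Proposition~41]{EY15}, see Remark~\ref{R:conormal}; the stated mapping properties are then those of $\mathcal O(E,F)$ recorded in Section~\ref{SS:prelim}. For \itemref{P:Psi:symbsequ} the crucial point is that, as $\delta_\lambda(x,y,t)=(x,y,\lambda^{-1}t)$, the zoom action multiplies the deformation parameter $t$ by $\lambda$, so $t\,\K$ is essentially homogeneous of order $s$ whenever $\K$ is essentially homogeneous of order $s-1$ and extends the same Schwartz kernel; this gives the inclusion, while exactness at $\Sigma^s(E,F)$ is the entrywise existence of essentially homogeneous extensions from \cite[Section~4]{EY15}, and exactness at $\Psi^s(E,F)$ follows since $\ev_0^*(t\K)=0$ and, conversely, if $\sigma^s(A)=0$ one subtracts a smooth groupoid kernel to arrange $\ev_0^*\K=0$ and then $t^{-1}\K$ is a conormal kernel, essentially homogeneous of order $s-1$, extending $k$. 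Part \itemref{P:Psi:smooth} is the asymptotic completeness of the calculus, taken entrywise from \cite[Section~4]{EY15} (for the easy inclusion, a smooth kernel on $M\times M$ extends by a smooth kernel on $\mathbb TM$, which is essentially homogeneous of every order). For \itemref{P:Psi:mult} and \itemref{P:Psi:trans} I would take essentially homogeneous extensions $\K$ and $\L$ of the kernels of $A$ and $B$ and observe that $\L*\K$ is defined once one of them is properly supported, restricts under $\ev_1^*$ to the kernel of $BA$, and is essentially homogeneous of order $s_1+s_2$ since the zoom action is a groupoid automorphism and convolution with a properly supported smooth kernel is smooth; applying $\nu^*\ev_0^*$ yields the symbol identity, and the transpose is treated analogously, commuting with $(\delta_\lambda)_*$ because it is built from the inversion.

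For \itemref{P:Psi:DO}, which is local, I would trivialize $E$ and $F$ and write a differential operator of Heisenberg order at most $k$ as a $\Gamma^\infty(\hom(E,F))$-combination of iterated covariant derivatives $\nabla_{X_m}\cdots\nabla_{X_1}$ with $X_i\in\Gamma^\infty(T^{-p_i}M)$ and $p_1+\cdots+p_m\le k$; each $\nabla_X$ with $X\in\Gamma^\infty(T^{-p}M)$ lies in $\Psi^p(E)$ because $X$ lifts to a section of the Lie algebroid $\mathcal A(\mathbb TM)$ via \eqref{E:ATTM} and one invokes \eqref{E:DOnabla}, just as in the scalar discussion of \cite[Section~7]{EY15}; multiplicativity and a cut-off then give $\DO^k(E,F)\subseteq\DO(E,F)\cap\Psi^k(E,F)$, and the reverse inclusion follows by restricting an order-$k$ extension along $\ev_0$ to the osculating groupoid and comparing with \eqref{E:incUS} and \eqref{E:snablaX}. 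Finally, for \itemref{P:Psi:parametrix} I would run the usual Neumann series: pick $B_0\in\Psi^{-s}_\prop(F,E)$ with $\sigma^{-s}(B_0)=b$ using surjectivity of $\sigma^{-s}$ and a cut-off, so that $R:=\id-B_0A\in\Psi^{-1}_\prop(E)$ by \itemref{P:Psi:symbsequ} and hence $R^j\in\Psi^{-j}_\prop(E)$; choose $B_1\in\Psi^0_\prop(E)$ asymptotic to $\sum_jR^j$ and set $B:=B_1B_0$, so that $BA-\id=(B_1-\sum_{j=0}^N R^j)(\id-R)-R^{N+1}\in\Psi^{-N-1}_\prop(E)$ for every $N$, whence $BA-\id\in\mathcal O^{-\infty}(E)$ by \itemref{P:Psi:smooth}; the right-parametrix variant is the same argument applied to $A^t$ using \itemref{P:Psi:trans}.

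The one step that requires genuinely new work is the identification in \itemref{P:Psi:DO}, where the groupoid-theoretic notion of Heisenberg order --- homogeneity of the $\mathbb TM$-extension under the zoom action --- has to be matched with the explicit one coming from filtration-adapted covariant derivatives; this is where the interplay between the Lie algebroid $\mathcal A(\mathbb TM)$, the splittings of the filtration, and the grading must be used carefully. Everything else is either an entrywise application of \cite{EY15} or a formal consequence of the groupoid formalism set up in Section~\ref{SS:Groupoids}.
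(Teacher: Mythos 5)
Your proposal is correct and follows essentially the same route as the paper: the paper likewise treats the proposition as a straightforward generalization of the scalar arguments in van~Erp--Yuncken (citing their Lemma~50, Corollaries~53 and 73, Proposition~49, Section~7.3, and Theorem~76 for the respective parts), with the convolution/transpose/zoom manipulations of essentially homogeneous extensions for \itemref{P:Psi:mult}--\itemref{P:Psi:trans}, the lift $\mathbb X(x,t)=t^kX(x)$ of $\nabla_X$ through the Lie algebroid for \itemref{P:Psi:DO}, and the standard asymptotic Neumann series for \itemref{P:Psi:parametrix}. The only cosmetic difference is that you phrase the reduction as ``entrywise in local trivializations plus a partition of unity,'' whereas the paper runs the same arguments directly in the globally defined $\hom(\sigma^*E,\tau^*F)$-valued kernel calculus set up in Section~\ref{SS:Groupoids}, which avoids any gluing.
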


Strictly speaking, the statement about the transposed in item \eqref{P:Psi:trans} above has not been addressed in \cite{EY15v5}. 
However, given the characterization of the caluclus in terms of the tangent groupoid, this is a trivial consequence, see \cite[Proposition~3.4(e)]{DH17}.
An equivalent statement for formal adjoints can be found in \cite[Theorem~5.10]{M82}.

\begin{remark}
For Heisenberg manifolds, a statement similar to Proposition~\ref{P:Psi}\itemref{P:Psi:parametrix} can be found in \cite[Proposition~3.3.1]{P08}.
\end{remark}

\begin{remark}[Formal adjoints]\label{R:Psi:adjoint}
If $A\in\Psi^s(E,F)$ and $A^*$ denotes the formal adjoint with respect to inner products of the form \eqref{E:llrr}, then $A^*\in\Psi^{\bar s}(F,E)$ and $\sigma^s(A^*)=\sigma^{\bar s}(A)^*$.
Indeed, in view of \eqref{E:A*At} this follows immediately from the assertions \itemref{P:Psi:mult}, \itemref{P:Psi:trans}, and \itemref{P:Psi:DO} of Proposition~\ref{P:Psi}.
Here the adjoint of the cosymbol, $\sigma^s(A)^*$, is understood as follows:
If $k\in\mathcal K(\mathcal TM;E,F)$, then $k^*\in\mathcal K(\mathcal TM;F,E)$ is defined by $k^*(g)=k(g^{-1})^*$ where $g
\in\mathcal T_xM$, and the right hand side denotes the adjoint of $k(g^{-1})$ with respect to the inner products $h_{E,x}$ and $h_{F,x}$ on $E_x$ and $F_x$, respectively.
Since this star preserves the subspace $\mathcal K^\infty(\mathcal TM;E,F)$ and commutes with $\delta_\lambda$, it induces an involution $\Sigma^{\bar s}(F,E)\xrightarrow{*}\Sigma^s(E,F)$, for each complex $s$.
For $s\in\N_0$ this extends the involution in Remark~\ref{R:sA*} via the inclusion \eqref{E:incUS}.
\end{remark}

\begin{remark}[Asymptotic expansion in exponential coordinates]\label{R:expcoor}
Let us use exponential coordinates as in \cite{EY15v5} to identify an open neighborhood $U$ of the zero section in $\mathcal TM$ with an open neighborhood $V$ of the diagonal in $M\times M$,
$$
\mathcal TM\supseteq U\xrightarrow\varphi V\subseteq M\times M.
$$
This diffeomorphism is obtained by restricting the composition
$$
\mathcal TM\xleftarrow{\exp}\mathfrak tM\xrightarrow{-S}TM\supseteq U'\xrightarrow{(p,\exp^\nabla)}M\times M.
$$
Here $\exp$ denotes the fiber-wise exponential map;
$S$ is a splitting of the filtration;
$\exp^\nabla$ denotes the exponential map associated with a linear connection on the tangent bundle which preserves the grading $TM=\bigoplus_pS(\mathfrak t^pM)$; 
$U'$ is an open neighborhood of the zero section in $TM$ on which $\exp^\nabla$ is defined and a diffeomorphism onto its image; and
$p\colon TM\to M$ denotes the canonical projection, cf.\ \cite[Section~3.2]{EY15v5}.
Let $\pi\colon\mathcal TM\to M$ denote the canonical projection and, after possibly shrinking $U$, fix an isomorphism of vector bundles $\tilde\varphi$ over $\varphi$, 
$$
\xymatrix{
\bigl(\pi^*\hom(E,F)\otimes\Omega_\pi\bigr)|_U\ar[d]\ar[r]^-{\tilde\varphi}_-\cong&(F\boxtimes E')|_V\ar[d]
\\
U\ar[r]^-\varphi_\cong&V,
}
$$ 
which restricts to the tautological identification over the zero section/diagonal. 
For every Schwartz kernel $k\in\Gamma^{-\infty}(F\boxtimes E')$ we obtain $\tilde\varphi^*(k|_V)\in\Gamma^{-\infty}((\pi^*\hom(E,F)\otimes\Omega_\pi)|_U)$.
If $k$ is the kernel of an operator $A\in\Psi^s(E,F)$, then we have an asymptotic expansion of the form
\begin{equation}\label{E:asexp}
\tilde\varphi^*(k|_V)\sim k_0+k_1+k_2+\cdots,
\end{equation}
where $k_j\in\Sigma^{s-j}(E,F)$ and $\sigma^s(A)=k_0$.
More precisely, for every $r\in\N$ there exists $N\in\N$ such that $\tilde\varphi^*(k|_V)-\sum_{j=0}^Nk_j$ is of class $C^r$ on $U$.
Conversely, if a Schwartz kernel $k$ is smooth away from the diagonal and admits an asymptotic expansion of the form \eqref{E:asexp}, then the corresponding operator is in $\Psi^s(E,F)$, see \cite[Theorem~59]{EY15v5}.
The calculus is asymptotically complete in the sense that any sequence $k_j$ can be realized by an operator in $\Psi^s(E,F)$.
\end{remark}

\begin{remark}\label{R:CGGP}
In the flat case, that is to say, if the filtration on $M$ is locally diffeomorphic to the left invariant filtration on a graded nilpotent Lie group, the calculus described above coincides with the calculus of Christ, Geller, G{\l}owacki, and Polin \cite{CGGP92}.
On Heisenberg manifolds it specializes to the classical Heisenberg calculus, see \cite{BG88,P08,T84}, which builds upon work of Boutet de Monvel \cite{B74}, Folland--Stein \cite{FS74} and Dynin \cite{D75,D76}, see also \cite{BGH75}, \cite{EM00}, \cite{G76}, \cite{H67}, and \cite{RS76}.
The equivalence with the Heisenberg calculus follows from \cite[Theorems~15.39 and 15.49]{BG88}, see also \cite[Proposition~3.1.15]{P08}, for the exponential coordinates used in Remark~\ref{R:expcoor} are clearly privileged coordinates in the sense of \cite[Definition~2.1.10]{P08}.
For trivially filtered manifolds, that is $TM=T^{-1}M$, we recover classical pseudodifferential operators, see \cite[Section~11]{EY15v5} and \cite{DS14}.
Since the coordinates used by Melin, obtained by integrating \cite[Proposition~2.9]{M82} connection maps \cite[Definition~2.7]{M82}, include the exponential coordinates used in Remark~\ref{R:expcoor}, his calculus should coincide with the one sketched above.
\end{remark}

Let us now link the principal cosymbols considered above with the principal cosymbols used by Christ, Geller, G{\l}owacki, and Polin in \cite{CGGP92}.
For every complex number $s$, put 
$$
\mathcal P^s(\mathcal TM;E,F):=\bigl\{k\in\mathcal K^\infty(\mathcal TM;E,F):\textrm{$(\delta_\lambda)_*k=\lambda^sk$ for all $\lambda>0$}\bigr\}.
$$ 
Clearly, $\mathcal P^s(\mathcal TM;E,F)=0$ if $-s-n\notin\mathbb N_0$ where 
\begin{equation}\label{E:hdim}
n:=-\sum_pp\cdot\rank(\mathfrak t^pM)=-\sum_pp\cdot\rank(T^pM/T^{p+1}M)
\end{equation} 
denotes the \emph{homogeneous dimension} of $M$.
For $-s-n\in\mathbb N_0$, by Taylor's theorem, this is the space of smooth kernels which restrict to (matrices of) polynomial volume densities of homogeneous degree $s$ on each fiber $\mathcal T_xM$, using the exponential map to canonically identify $\mathcal T_xM$ with the graded vector space $\mathfrak t_xM$.
Equivalently, these can be characterized as smooth families of polynomial volume densities of homogeneous degree $s$ on the fibers $\mathcal T_xM=\mathfrak t_xM$, smoothly parametrized by $x\in M$.
We have the following classical fact, cf.~\cite[Lemma~3.8]{DH17}:

\begin{lemma}\label{L:EYvsCGGP}
The identical map on $\mathcal K(\mathcal TM;E,F)$ induces a canonical identification
$$
\Sigma^s(E,F)=\left\{k\in\frac{\mathcal K(\mathcal TM;E,F)}{\mathcal P^s(\mathcal TM;E,F)}:\text{$(\delta_\lambda)_*k=\lambda^sk$ for all $\lambda>0$}\right\}.
$$
Moreover, with respect to a homogeneous norm $|-|$ on $\mathfrak tM$, and using the fiber-wise exponential map, $\exp\colon\mathfrak tM\to\mathcal TM$,
every kernel $k\in\mathcal K(\mathcal TM;E,F)$ which is essentially homogeneous of order $s$ can be written in the form
\begin{equation}\label{E:EYvsCGGP}
k=k_\infty+k_s+p_s\log|\exp^{-1}(-)|
\end{equation}
where $k_\infty\in\mathcal K^\infty(\mathcal TM;E,F)$, $k_s\in\mathcal K(\mathcal TM;E,F)$ homogeneous of order $s$, that is, $(\delta_\lambda)_*k_s=\lambda^sk_s$ for all $\lambda>0$, and $p_s\in\mathcal P^s(\mathcal TM;E,F)$.
If $-s-n\notin\mathbb N_0$, then $p_s=0$ and the decomposition in \eqref{E:EYvsCGGP} is unique.
If $-s-n\in\mathbb N_0$, then the decomposition in \eqref{E:EYvsCGGP} is unique up to adding a kernel in $\mathcal P^s(\mathcal TM;E,F)$ to $k_s$ and subtracting it from $k_\infty$ in turn.
\end{lemma}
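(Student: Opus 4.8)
The two assertions are tightly linked, and the plan is to establish the decomposition \eqref{E:EYvsCGGP} first and then deduce the identification of $\Sigma^s(E,F)$ from it. Throughout I would work fibre wise over $M$, using the fibre wise exponential map to identify each $\mathcal T_xM$ with the graded vector space $\mathfrak t_xM$ and to trivialize densities, and only at the end remark that all constructions depend smoothly, and locally uniformly, on $x$. The key tool is the infinitesimal generator of the zoom action: writing $T_\lambda:=(\delta_\lambda)_*$ for $\lambda>0$, this is a one parameter group on $\mathcal K(\mathcal TM;E,F)$, and on any kernel $k$ for which $\lambda\mapsto T_\lambda k-\lambda^sk$ is a smooth curve of smooth kernels --- in particular on every essentially homogeneous kernel of order $s$ --- the derivative $\mathcal Ek:=\frac{d}{d\lambda}\big|_{\lambda=1}T_\lambda k$ is a well defined conormal kernel. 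On $\mathcal K^\infty(\mathcal TM;E,F)$, $\mathcal E$ is the Lie derivative along the vector field on $\mathcal TM$ generating the flow $\delta_{e^t}$; in fibre coordinates it acts on a weighted homogeneous polynomial density of weighted degree $j$ by the scalar $-j-n$, so $\mathcal E-s$ is invertible on such densities exactly when $j\neq m:=-s-n$. One checks immediately that $k$ is essentially homogeneous of order $s$ if and only if $(\mathcal E-s)k\in\mathcal K^\infty(\mathcal TM;E,F)$, and --- crucially --- that $(\mathcal E-s)\bigl(p_s\log|\exp^{-1}(-)|\bigr)=-p_s$ for every $p_s\in\mathcal P^s(\mathcal TM;E,F)$, using $|\exp^{-1}(\delta_\lambda g)|=\lambda\,|\exp^{-1}(g)|$. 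This last identity is what forces a logarithmic term into the decomposition in the critical case.

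For the decomposition I would start from $k$ essentially homogeneous of order $s$, so $\rho:=(\mathcal E-s)k\in\mathcal K^\infty$. Taylor expanding $\rho$ along the zero section in the fibre directions gives an asymptotic series $\rho\sim\sum_{j\geq0}\rho_{(j)}$ with $\rho_{(j)}$ a weighted degree $j$ polynomial density depending smoothly on the base point. Set $p_s:=-\rho_{(m)}\in\mathcal P^s(\mathcal TM;E,F)$, which vanishes unless $-s-n\in\mathbb N_0$. Inverting $\mathcal E-s$ term by term for $j\neq m$ and Borel summing near the zero section yields $\tilde k_\infty\in\mathcal K^\infty$ with $(\mathcal E-s)\tilde k_\infty-\rho-p_s$ flat along the units. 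On smooth density kernels that are flat along the units, $\mathcal E-s$ is an isomorphism, inverted by an explicit integral over the zoom flow that converges precisely because of the flatness; applying this inverse to the remaining flat defect produces a correction $k'_\infty\in\mathcal K^\infty$ such that $k_s:=k-\tilde k_\infty+k'_\infty-p_s\log|\exp^{-1}(-)|$ satisfies $(\mathcal E-s)k_s=0$, hence $T_\lambda k_s=\lambda^sk_s$; conormality of $k_s$ is inherited from $k$ and from $\log|\exp^{-1}(-)|$. With $k_\infty:=\tilde k_\infty-k'_\infty$ this is \eqref{E:EYvsCGGP}. For uniqueness, if two such decompositions coincide then their difference is a smooth kernel $w$ with $(\mathcal E-s)w=-(p_s-p'_s)$; comparing Taylor components along the units --- the left hand side has vanishing weighted degree $m$ component since $\mathcal E-s$ annihilates it, while $-(p_s-p'_s)$ is purely of weighted degree $m$ --- forces $p_s=p'_s$ and $(\mathcal E-s)w=0$, so $w$ is a smooth genuinely homogeneous density and therefore a polynomial density in $\mathcal P^s(\mathcal TM;E,F)$. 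This is exactly the stated ambiguity, and it disappears when $-s-n\notin\mathbb N_0$ since then $\mathcal P^s(\mathcal TM;E,F)=0$.

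For the identification, the inclusion $\mathcal P^s(\mathcal TM;E,F)\subseteq\mathcal K^\infty(\mathcal TM;E,F)$ induces a natural map from $\{k\bmod\mathcal P^s:\,T_\lambda k=\lambda^sk\ \text{mod}\ \mathcal P^s\}$ into $\Sigma^s(E,F)$. It is surjective: given an essentially homogeneous representative $k$, the decomposition just constructed shows $k\equiv k_s+p_s\log|\exp^{-1}(-)|\ \text{mod}\ \mathcal K^\infty$, and $T_\lambda\bigl(k_s+p_s\log|\exp^{-1}(-)|\bigr)-\lambda^s\bigl(k_s+p_s\log|\exp^{-1}(-)|\bigr)=-\lambda^s(\log\lambda)\,p_s\in\mathcal P^s$. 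For injectivity I must show that $k\in\mathcal K^\infty$ with $T_\lambda k-\lambda^sk\in\mathcal P^s$ for all $\lambda$ already lies in $\mathcal P^s$: differentiating at $\lambda=1$ gives $(\mathcal E-s)k\in\mathcal P^s$, and matching weighted degree $j$ Taylor components along the units forces all of them to vanish for $j\neq m$ and forces $(\mathcal E-s)k=0$; thus $k$ is smooth, genuinely homogeneous of order $s$, and flat along the units apart from its weighted degree $m$ part, and the homogeneity relation $h(\delta_\mu v)=\mu^m h(v)$ for the density coefficient together with flatness kills that flat remainder, leaving $k\in\mathcal P^s(\mathcal TM;E,F)$.

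The step I expect to be the main obstacle is the resolvent construction for $\mathcal E-s$ in the proof of \eqref{E:EYvsCGGP}: one must reconcile the purely local Taylor/Borel inversion near the zero section with the global homogeneity required of $k_s$ on the non-compact fibres $\mathcal T_xM$, and carry the entire argument out smoothly and locally uniformly in the base point. The identity $(\mathcal E-s)\bigl(p_s\log|\exp^{-1}(-)|\bigr)=-p_s$ and the convergence of the flat-kernel resolvent integral are the technical points that make this possible; everything else is bookkeeping with the dilation weights and the density bundle $\Omega_\tau$.
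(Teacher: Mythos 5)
Your overall architecture is a genuinely different route from the paper's: you work directly on the kernel side with the infinitesimal generator $\mathcal E=\tfrac{d}{d\lambda}\big|_{\lambda=1}(\delta_\lambda)_*$ of the zoom action, Taylor expansion along the units, and a resolvent for flat kernels, whereas the paper passes to the fiber wise Fourier transform, invokes \cite[Lemma~58]{EY15} to produce an exactly homogeneous symbol $f$ away from the zero section, and then extends $f$ to a global distribution by the classical regularization of homogeneous functions (subtracting Taylor polynomials of the test section and adding the $\mu(\psi_{-s}f)$ correction, following \cite{CGGP92} and \cite{FS74}); the logarithmic term arises there from the residue distribution $\Delta$ supported on the zero section. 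Your computation $(\mathcal E-s)\bigl(p_s\log|\exp^{-1}(-)|\bigr)=-p_s$, the eigenvalue $-j-n$ of $\mathcal E$ on weighted degree $j$ polynomial densities, and the uniqueness argument via Taylor components are all correct and match the paper's final step (smooth kernels homogeneous mod $\mathcal P^s$ lie in $\mathcal P^s$).

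There is, however, one genuine gap, and it sits at the foundation of your method: you need $(\mathcal E-s)k\in\mathcal K^\infty(\mathcal TM;E,F)$ for every essentially homogeneous $k$, i.e.\ you need the defect $r_\lambda:=(\delta_\lambda)_*k-\lambda^sk$ to be differentiable at $\lambda=1$ \emph{as a curve of smooth kernels}. The definition of essential homogeneity only gives $r_\lambda\in\mathcal K^\infty$ for each fixed $\lambda$, together with continuity of $\lambda\mapsto(\delta_\lambda)_*k$ into distributions; since $\mathcal K^\infty$ is not closed in the space of conormal distributions, the difference quotients $(r_\lambda-r_1)/(\lambda-1)$ converge only in distributions and their limit need not a priori be smooth. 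The cocycle identity $r_{\lambda\mu}=(\delta_\lambda)_*r_\mu+\mu^sr_\lambda$ reduces the problem to differentiability at $\lambda=1$ but does not resolve it. You assert this regularity ``in particular on every essentially homogeneous kernel'' without proof, and it is precisely the point that the paper's Fourier route is designed to circumvent: \cite[Lemma~58]{EY15} constructs the homogeneous approximation by a limit along the dilation orbit of the (everywhere smooth) symbol $\hat k$, never differentiating in $\lambda$. A secondary, smaller point: your ``flat resolvent'' $\int_1^\infty\lambda^{-s-1}(\delta_\lambda)_*w\,d\lambda$ does converge for $w$ flat along the units (since $\delta_{1/\lambda}g\to e$ for every $g$ as $\lambda\to\infty$), but you should verify that it converges in all $C^r$-norms locally uniformly in the base point so that the correction $k_\infty'$ is genuinely a smooth kernel; this is routine but is exactly the kind of locally-uniform-in-$x$ control you flag at the end. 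If you supply a proof of the differentiability of $\lambda\mapsto r_\lambda$ (or replace the derivative by a limit construction as in the paper), the rest of your argument goes through.
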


\begin{proposition}\label{P:Psimap}
Let $E$ and $F$ be vector bundles over a filtered manifold $M$ of homogeneous dimension $n$, see \eqref{E:hdim}.
Consider $A\in\Psi^s(E,F)$ where $s$ is some complex number, and let $k\in\Gamma^{-\infty}(F\boxtimes E')$ denote the corresponding Schwartz kernel.
Then the following hold true:
\begin{enumerate}[(a)]
\item\label{P:Psimap:L2cont}
If $\Re s\leq0$, then $A$ induces a continuous operator $A\colon L^2_c(E)\to L^2_\loc(F)$.
\item\label{P:Psimap:compact}
If $\Re s<0$, then $A$ induces a compact operator $A\colon L^2_c(E)\to L^2_\loc(F)$.
\item\label{P:Psimap:kL2}
If $\Re s<-n/2$, then $A$ induces a continuous operator $A\colon L^2_c(E)\to\Gamma(F)$, cf.~Remark~\ref{R:GammarE}.
\item\label{P:Psimap:cont}
If $\Re s<-n$, then the kernel $k$ is continuous.
If, moreover, $M$ is closed and $E=F$, then $A\colon L^2(E)\to L^2(E)$ is trace class and
$$
\tr_{L^2(E)}(A)=\int_M\tr_E(\iota^*k)=\int_{x\in M}\tr_{E_x}(k(x,x))
$$
where $\iota^*k\in\Gamma^\infty(\eend(E)\otimes|\Lambda|_M)$ denotes the restriction of the kernel to the diagonal.
\end{enumerate}
\end{proposition}

\begin{proof}
To show \itemref{P:Psimap:L2cont}, suppose $\Re s\leq0$.
The symbol estimate in \cite[Corollary~45]{EY15v5} implies that the full symbol of $A$, i.e.\ the fiber-wise Fourier transform of the full cosymbol, with respect to exponential coordinates as in Remark~\ref{R:expcoor}, is in the standard class $\mathcal S^{s/m}_{1/m,0}$ where $m$ is such that $\mathfrak tM=\bigoplus_{j=1}^m\mathfrak t^{-j}M$, cf.\ \cite[Proposition~10.22]{BG88}.
This symbol class is not invariant under general coordinate change.
Nevertheless, boundedness on $L^2$ is a well known consequence, see for instance \cite[Theorem~6.1]{S01}.

To show \itemref{P:Psimap:compact} suppose $\Re s<0$.
Using Remark~\ref{R:expcoor} and Lemma~\ref{L:EYvsCGGP} we see that the kernel provides a family $k(x,-)\in L^1_\loc(F_x\otimes E')$, smoothly parametrized by $x\in M$; and the same holds true for the transposed kernel, see Proposition~\ref{P:Psi}\itemref{P:Psi:trans}.
In particular, given two compact subsets $K$ and $L$ of $M$, there exists a constant $C\geq0$ such that $\sup_{x\in L}\int_K|k(x,y)|dy\leq C$ and $\sup_{y\in K}\int_L|k(x,y)|dx\leq C$.
Hence, according to Schur's lemma, see \cite[Lemma~9.1]{S01} or \cite[Lemma~15.2]{FS74} for instance, the operator norm of the composition
\begin{equation}\label{E:AKL}
L^2_K(E)\subseteq L^2_c(E)\xrightarrow{A}L^2_\loc(F)\to L^2_L(F)
\end{equation}
is bounded by $C$, i.e., $\|A\psi\|_{L^2_L(F)}\leq C\|\psi\|_{L^2_K(E)}$ for all $\psi\in L^2_K(E)$.
Writing $k=\chi k+(1-\chi)k$, where $\chi\in C^\infty(M\times M,[0,1])$ and $\chi\equiv1$ in a neighborhood of the diagonal, we obtain a decomposition $A=A'+R$ where $R$ is a smoothing operator with kernel $(1-\chi)k$ and $A'\in\Psi^s(E,F)$ has kernel $\chi k$.
Given $\varepsilon>0$, we may choose $\chi$ such that $\sup_{x\in L}\int_K|\chi k(x,y)|dy\leq\varepsilon$ and $\sup_{y\in K}\int_L|\chi k(x,y)|dx\leq\varepsilon$ and, consequently, $\|A'\psi\|_{L^2_L(F)}\leq\varepsilon\|\psi\|_{L^2_K(E)}$.
We conclude that the composition in \eqref{E:AKL} can be approximated by smoothing operators.
Since the latter are compact, we conclude that the composition in \eqref{E:AKL} is  compact too.

To show \itemref{P:Psimap:kL2} we suppose $\Re s<-n/2$.
Using Remark~\ref{R:expcoor} and Lemma~\ref{L:EYvsCGGP} we see that the kernel provides a smooth family $k(x,-)\in L^2_\loc(F_x\otimes E')$, parametrized by $x\in M$.
In particular, given two compact subsets $K$ and $L$ of $M$, there exists a constant $C\geq0$ such that $\sup_{x\in L}\int_K|k(x,y)|^2dy\leq C^2$.
Using the Cauchy--Schwarz inequality, we obtain $\sup_{x\in L}|(A\psi)(x)|\leq C\|\psi\|_{L^2_K(E)}$ for all $\psi\in L^2_K(E)$.
Hence, $A$ maps $L^2_c(E)$ continuously into $\Gamma(F)$.

To show \itemref{P:Psimap:cont}, we suppose $\Re s<-n$.
Using Remark~\ref{R:expcoor} and Lemma~\ref{L:EYvsCGGP} we see that the kernel provides a family $k(x,-)\in\Gamma(F_x\otimes E')$, smoothly parametrized by $x\in M$.
Clearly, this implies that $k$ is continuous.
The remaining assertions are now obvious.
\end{proof}

For Melin's calculus, Proposition~\ref{P:Psimap}\itemref{P:Psimap:L2cont} can be found in \cite[Corollary~6.14]{M82}.

\subsection{Parametrices and Rockland condition}\label{SS:para}

In this section we establish a Rockland type result characterizing (left) invertible cosymbols and the existence of (left) parametrices in terms of irreducible unitary representations of the osculating groups, see Theorem~\ref{T:Rockland}.
As an application, we construct operators of arbitrary (complex) order which are invertible in the Heisenberg calculus, up to smoothing operators.
The latter play a crucial role in the construction of the Heisenberg Sobolev scale.

For every $x\in M$ we let
$$
\Sigma_x^s(E,F):=
\left\{k\in\frac{\mathcal K(\mathcal T_xM;E_x,F_x)}{\mathcal K^\infty(\mathcal T_xM;E_x,F_x)}:\textrm{$(\delta_\lambda)_*k=\lambda^sk$ for all $\lambda>0$}\right\}
$$
denote the space principal cosymbols of order $s$ at $x$.
Restriction provides a linear map $\ev_x\colon\Sigma^s(E,F)\to\Sigma^s_x(E,F)$ which is compatible with convolution and transposition.
Composing this with the principal symbol map, we obtain 
$$
\sigma^s_x\colon\Psi^s(E,F)\to\Sigma^s_x(E,F).
$$
We will refer to $\sigma^s_x(A)$ as the principal cosymbol of $A\in\Psi^s(E,F)$ at $x$.

Following \cite{CGGP92} and \cite[Section~3.3.2]{P08}, we will now formulate a (matrix) Rockland type condition for cosymbols in $\Sigma_x^s(E,F)$.
Let $\mathcal P(\mathcal T_xM)=\bigoplus_{j=0}^\infty\mathcal P^{-n-j}(\mathcal T_xM)$ denote the space of polynomial volume densities on $\mathcal T_xM$, where $n$ denotes the homogeneous dimension of $M$, see~\eqref{E:hdim}.
Recall that $\mathcal P(\mathcal T_xM)$ is invariant under translation and inversion.
For a finite dimensional vector space $E_0$, we let $\mathcal S_0(\mathcal T_xM;E_0)$ denote the subspace of all $f$ in the $E_0$-valued Schwartz space $\mathcal S(\mathcal T_xM;E_0)$ such that $\int_{\mathcal T_xM}pf=0$ for all $p\in\mathcal P(\mathcal T_xM)$.

In view of Lemma~\ref{L:EYvsCGGP}, every cosymbol $a\in\Sigma_x^s(E,F)$ can be represented in the form
\begin{equation}\label{E:akplog}
a=k+p\log|\exp^{-1}(-)|
\end{equation}
where $k\in\mathcal K(\mathcal T_xM;E_x,F_x)$ is homogeneous of order $s$, that is $(\delta_\lambda)_*k=\lambda^sk$ for all $\lambda>0$, and $p\in\mathcal P^s(\mathcal T_xM;E_x,F_x)$.
If $-s-n\notin\mathbb N_0$, then $p=0$ and $k$ is unique.
If $-s-n\in\mathbb N_0$, then $p$ is unique but $k$ comes with an ambiguity in $\mathcal P^s(\mathcal T_xM;E_x,F_x)$.
Hence, the restriction of the left regular representation,
\begin{equation}\label{E:aS0}
\mathcal S_0(\mathcal T_xM;E_x)\to\mathcal S_0(\mathcal T_xM;F_x),\qquad f\mapsto a*f,
\end{equation}
does not depend on the representative for $a\in\Sigma^s_x(E,F)$, provided it is of the form \eqref{E:akplog}.

Suppose $\pi\colon\mathcal T_xM\to U(\mathcal H)$ is a non-trivial irreducible unitary representation of the osculating group $\mathcal T_xM$ on a Hilbert space $\mathcal H$.
Let $\mathcal H_0$ denote the subspace of $\mathcal H$ spanned by elements of the form $\pi(fdg)v$ where $v\in\mathcal H$, $f\in\mathcal S_0(\mathcal T_xM)$, $dg$ denotes a (left) invariant volume density on $\mathcal T_xM$, and $\pi(fdg):=\int_{\mathcal T_xM}\pi(g)f(g)dg\in\mathcal K(\mathcal H)$.
Since $\mathcal H_0$ non-trivial and invariant under $\pi(g)$ for all $g\in\mathcal T_xM$, the subspace $\mathcal H_0$ is dense in $\mathcal H$.
Note that $\mathcal H_0\otimes E_x$ is spanned by vectors of the form $\pi(fdg)v$ where $v\in\mathcal H$, $f\in\mathcal S_0(\mathcal T_xM;E_x)$, and $\pi(fdg):=\int_{\mathcal T_xM}\pi(g)f(g)dg\in\mathcal K(\mathcal H,\mathcal H\otimes E_x)$.

Still assuming a representative of the form \eqref{E:akplog}, we define an unbounded operator $\pi(a)$ from $\mathcal H\otimes E_x$ to $\mathcal H\otimes F_x$ by
$$
\pi(a)\colon\mathcal H_0\otimes E_x\to\mathcal H_0\otimes F_x,\qquad\pi(a)\pi(fdg)v:=\pi(a*fdg)v,
$$ 
for all $f\in\mathcal S_0(\mathcal T_xM;E_x)$ and $v\in\mathcal H$, cf.~\eqref{E:aS0}.
As explained in \cite[Section~2]{CGGP92}, this definition of $\pi(a)$ is unambiguous.
Moreover, $\pi(a)$ is closeable, for $\pi(a^*)$ is a densely defined adjoint.
We denote its closure by $\bar\pi(a)$.
It is well know, see \cite{CGGP92}, \cite{G91}, or \cite[Proposition~3.3.6]{P08}, that the domain of definition of $\bar\pi(a)$ contains the space of smooth vectors, $\mathcal H_\infty\otimes E_x$, and this subspace is mapped into $\mathcal H_\infty\otimes F_x$ by $\bar\pi(a)$.
Furthermore, on $\mathcal H_\infty\otimes E_x$, we have
\begin{equation}\label{E:piab}
\bar\pi(ba)=\bar\pi(b)\bar\pi(a)
\end{equation}
whenever $a\in\Sigma^s_x(E,F)$ and $b\in\Sigma^{\tilde s}_x(F,G)$.
Moreover, with respect to inner products on $E_x$ and $F_x$ and the associated inner products on $\mathcal H\otimes E_x$ and $\mathcal H\otimes F_x$, we have 
\begin{equation}\label{E:pia*}
\bar\pi(a)^*=\bar\pi(a^*)
\end{equation}
on $\mathcal H_\infty\otimes F_x$.
If $a$ is the cosymbol of a differential operator then, on $\mathcal H_\infty\otimes E_x$, the operator $\bar\pi(a)$ coincides with $\pi(a)$ considered in Section~\ref{SS:paraDO}, see \cite[Remark~3.3.7]{P08}, and thus the following definition is consistent with Definition~\ref{D:rockland}.

\begin{definition}[Matrix Rockland condition]\label{D:Rockland}
A principal cosymbol $a\in\Sigma_x^s(E,F)$ at $x\in M$ is said to satisfy the \emph{Rockland condition} if, for every non-trivial irreducible unitary representation $\pi\colon\mathcal T_xM\to U(\mathcal H)$, the unbounded operator $\bar\pi(a)$ is injective on $\mathcal H_\infty\otimes E_x$. 
An operator $A\in\Psi^s(E,F)$ is said to satisfy the Rockland condition if its principal cosymbol, $\sigma^s_x(A)\in\Sigma^s_x(E,F)$, satisfies the Rockland condition at each point $x\in M$.
\end{definition}

We will need the following matrix version of a Rockland \cite{R78} type theorem due to Christ, Geller, G{\l}owacki, and Polin, see \cite[Theorem~6.2]{CGGP92} or \cite{G91} for the order zero case.
For differential operators such a statement can be found in van~Erp's thesis \cite{E05}.

\begin{lemma}[Christ--Geller--G{\l}owacki--Polin]\label{L:Rockland}
A principal cosymbol $a\in\Sigma_x^s(E,F)$ at $x\in M$ satisfies the Rockland condition iff it admits a left inverse $b\in\Sigma_x^{-s}(F,E)$, that is, $ba=1$.
\end{lemma}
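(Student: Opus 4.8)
The plan is to deduce the lemma from the corresponding Rockland theorem of Christ--Geller--G{\l}owacki--Polin \cite[Theorem~6.2]{CGGP92} (and \cite{G91} in the order zero case) applied to the graded nilpotent Lie group $\mathcal T_xM$, the only genuinely new points being the passage to matrix (vector bundle) coefficients and the reduction of the order. The ``if'' direction is purely formal and carries no harmonic analysis: if $ba=1$ with $b\in\Sigma^{-s}_x(F,E)$, then for every non-trivial irreducible unitary representation $\pi\colon\mathcal T_xM\to U(\mathcal H)$ the multiplicativity \eqref{E:piab} gives $\bar\pi(b)\bar\pi(a)=\bar\pi(1)=\id$ on $\mathcal H_\infty\otimes E_x$, whence $\bar\pi(a)$ is injective on $\mathcal H_\infty\otimes E_x$ and $a$ satisfies the Rockland condition.

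\textbf{The nontrivial direction.} Suppose $a\in\Sigma^s_x(E,F)$ satisfies the Rockland condition. First I would reduce to square coefficients by passing to $a^*a\in\Sigma^{2\Re s}_x(E)$: this cosymbol is formally self-adjoint, and by \eqref{E:pia*} and \eqref{E:piab} one has $\bar\pi(a^*a)=\bar\pi(a)^*\bar\pi(a)$ on $\mathcal H_\infty\otimes E_x$, an operator which is non-negative; since $\bar\pi(a)$ is injective on $\mathcal H_\infty\otimes E_x$, so is $\bar\pi(a^*a)$, i.e.\ $a^*a$ again satisfies the Rockland condition, and any left inverse $c\in\Sigma^{-2\Re s}_x(E)$ of $a^*a$ produces the desired left inverse $b:=ca^*\in\Sigma^{-s}_x(F,E)$ of $a$, because $ba=c(a^*a)=1$. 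Next I would normalize the order: by \cite[Theorem~6.1]{CGGP92} applied to $\mathcal T_xM$ there exists, for every real $r$, an invertible scalar cosymbol $\Lambda_r$ of Heisenberg order $r$ on $\mathcal T_xM$, with inverse $\Lambda_{-r}$; tensoring with $\id_{E_x}$ and composing on the left one passes from $a^*a$ to $\Lambda_{-2\Re s}\otimes\id_{E_x}\cdot(a^*a)\in\Sigma^0_x(E)$, which still satisfies the Rockland condition because composing with an invertible cosymbol preserves injectivity of $\bar\pi(\cdot)$ on $\mathcal H_\infty\otimes E_x$ (again by \eqref{E:piab}), and from whose left inverse one immediately recovers a left inverse of $a^*a$, hence of $a$. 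This reduces the statement to the order zero, square matrix case on a single graded nilpotent Lie group, which is precisely the content of \cite{G91} (see also \cite[Theorem~6.2]{CGGP92}); applying it to $\mathcal T_xM$ concludes the proof.

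\textbf{Main obstacle and a minor check.} The substantive content lies entirely in the last step: the implication ``Rockland condition $\Rightarrow$ existence of a left inverse'' for order zero convolution cosymbols on a graded nilpotent Lie group is the Helffer--Nourrigat--Rockland phenomenon, and its proof (the boundedness-below estimates obtained via representation-theoretic induction) is the hard harmonic analysis of \cite{HN79,CGGP92,G91}, which I would invoke rather than reprove. Everything else---the ``if'' direction, the reduction $a\mapsto a^*a$, and the order reduction via $\Lambda_r$---is routine bookkeeping with Heisenberg orders and the formal identities \eqref{E:piab} and \eqref{E:pia*}. The only small point requiring attention is that the Rockland condition used in \cite{CGGP92,G91} for $\mathcal T_xM$ agrees with the one in Definition~\ref{D:Rockland}; this is immediate once one recalls that cosymbols are always represented in the normalized form \eqref{E:akplog}, that $\bar\pi(a)$ is the closure of an operator defined on a dense $\pi$-invariant subspace, and that $\mathcal H_\infty\otimes E_x$ lies in its domain and is mapped into $\mathcal H_\infty\otimes F_x$.
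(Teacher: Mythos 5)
Your proposal is correct, but it reaches the conclusion by a different reduction than the paper. Both proofs start the same way: the ``if'' direction is formal via \eqref{E:piab}, and the first move in the converse is to pass to $a^*a$, whose Rockland property follows from positivity of $\bar\pi(a)^*\bar\pi(a)$ and which yields a left inverse of $a$ from one of $a^*a$. From there the routes diverge. The paper keeps the order $s$ arbitrary and instead reduces the \emph{matrix} case to the \emph{scalar} case: it writes $A^*A$ as a matrix of scalar cosymbols, inverts the $(1,1)$ entry $\sum_j A_{j1}^*A_{j1}$ (which is scalar and Rockland) using \cite[Theorem~6.2]{CGGP92}, and then runs a Gaussian-elimination induction on the matrix size to produce $B$ with $BA=I_m$. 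You instead keep the coefficients as a block and reduce the \emph{order} to zero, composing with an invertible cosymbol $\Lambda_{-2\Re s}\otimes\id_{E_x}$ supplied by \cite[Theorem~6.1]{CGGP92}, and then invoke the order-zero matrix theorem of \cite{G91}. Your argument is shorter but rests on a stronger external input (the matrix version at order zero), whereas the paper's elimination argument needs only the scalar theorem at arbitrary order --- which is exactly why the paper calls its proof ``elementary'' and why the matrix manipulation is the genuinely new content there. Since the paper itself attributes the order-zero matrix case to \cite{G91}, your citation is legitimate, and your closing remarks on the compatibility of the two Rockland conditions and on the domain $\mathcal H_\infty\otimes E_x$ of $\bar\pi(a)$ cover the only points that need checking.
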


\begin{proof}
The necessity of the Rockland condition is obvious, see \eqref{E:piab}.
To prove the non-trivial implication, we will present an elementary argument, reducing the statement to the well known scalar case due to Christ--Geller--G{\l}owacki--Polin, see \cite[Theorem~6.2]{CGGP92}.
For $s=0$ this has been proved in \cite{G91}.

With respect to bases of $E_x$ and $F_x$, the cosymbol $a\in\Sigma^s_x(E,F)$ corresponds to a matrix $A$ with entries in $\Sigma^s_x$, the space of principal cosymbols of order $s$ at $x$ for scalar operators.
Moreover, $\bar\pi(a)$ corresponds to the matrix $\bar\pi(A)$ obtained by applying the representation $\pi$ to each entry of $A$, that is, $(\bar\pi(A))_{ij}=\bar\pi(A_{ij})$.
Hence, $a$ satisfies the Rockland condition in Definition~\ref{D:Rockland} iff the matrix $\bar\pi(A)$ acts injectively on $(\mathcal H_\infty)^m$.
Here $m:=\dim(E_x)$ is the number of columns of $A$.
Using induction on $m$, we will show that there exists a matrix $B$ with entries in $\Sigma^{-s}_x$ such that $BA=I_m$ where $I_m$ denotes the $m\times m$ unit matrix.

Clearly, $A^*A$ is an $(m\times m)$-matrix with entries in $\Sigma^{2s}_x$ which satisfies the Rockland condition, see \eqref{E:piab} and \eqref{E:pia*}.
Let $y:=\sum_j(A_{j1})^*A_{j1}\in\Sigma^{2s}_x$ denote the entry in the upper left corner of $A^*A$.
Clearly, $y$ satisfies the (scalar) Rockland condition.
Hence, according to \cite[Theorem~6.2]{CGGP92}, there exists $z\in\Sigma^{-2s}_x$ such that $zy=1$.
Since $y^*=y$, we also have $yz=1$, whence $z$ is invertible.
Hence the diagonal matrix $D:=zI_m$ is invertible and, thus, $DA^*A$ satisfies the Rockland condition. 
The matrix $DA^*A$ has entries in $\Sigma^0_x$ and, by construction, its entry in the upper left corner is 1. 
Performing elementary row operations, we find an invertible matrix $L$ with entries in $\Sigma^0_x$ such that
$$
LDA^*A=\begin{pmatrix}1&*\\0&Y\end{pmatrix}
$$
where $Y$ is an $(m-1)\times(m-1)$-matrix with entries in $\Sigma^0_x$. 
Since $L$ is invertible, $LDA^*A$ satisfies the Rockland condition and, thus, $Y$ satisfies the Rockland condition too. 
By induction, there exists an $(m-1)\times(m-1)$-matrix $Z$ with entries in $\Sigma^0_x$ such that $ZY=I_{m-1}$. 
The matrix $C:=\begin{pmatrix}1&0\\0&Z\end{pmatrix}$ has entries in $\Sigma^0_x$ and, by construction, 
$$
CLDA^*A=\begin{pmatrix}1&*\\0&I_{m-1}\end{pmatrix}.
$$
Performing further elementary row operations, we find an invertible matrix $U$ with entries in $\Sigma^0_x$ such that $UCLDA^*A=I_m$. 
Hence, the matrix $B:=UCLDA^*$ has entries in $\Sigma^{-s}_x$ and satisfies $BA=I_m$.
\end{proof}

\begin{lemma}\label{L:pointwiseinv}
Let $E$ and $F$ be two vector bundles over a filtered manifold $M$.
Consider $a\in\Sigma^s(E,F)$ and suppose $\ev_{x_0}(a)$ is left invertible at some point $x_0\in M$, that is, there exists $b_{x_0}\in\Sigma_{x_0}^{-s}(F,E)$ such that $b_{x_0}\ev_{x_0}(a)=1$.
Then there exists an open neighborhood $U$ of $x_0$ and $b\in\Sigma^{-s}(F|_U,E|_U)$ such that $ba|_U=1$.
Moreover, a similar statement involving right inverses holds true.
\end{lemma}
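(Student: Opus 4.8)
The plan is to reduce the statement to inverting an order‑zero cosymbol that differs from the identity only by something vanishing at $x_0$, and then to sum a Neumann series inside the cosymbol algebra. First I would extend the pointwise left inverse $b_{x_0}\in\Sigma^{-s}_{x_0}(F_{x_0},E_{x_0})$ to a neighbourhood. By Lemma~\ref{L:EYvsCGGP}, on the Fourier side such a cosymbol is a symbol on $\mathfrak t^*_{x_0}M$ which is smooth off the origin and homogeneous of order $-s$ there, together with an explicitly prescribed singular part when $-s-n\in\mathbb N_0$; extending the restriction of this symbol to the unit covectors smoothly across nearby fibres, re‑homogenizing, and letting the coefficients of the singular part vary smoothly in the base point produces $b_0\in\Sigma^{-s}(F|_V,E|_V)$ over a neighbourhood $V$ of $x_0$ with $\ev_{x_0}(b_0)=b_{x_0}$. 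Using the cosymbol multiplication of Proposition~\ref{P:Psi}\itemref{P:Psi:mult} I then set
\[
R:=1-b_0\,a|_V\in\Sigma^0(E|_V,E|_V),
\]
so that $\ev_{x_0}(R)=b_{x_0}\,\ev_{x_0}(a)-1=0$ by hypothesis.

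Next I would invert $1-R$ over a smaller neighbourhood. Fix fibrewise Hermitian metrics on $E$ and $F$. Since at order $0$ there are neither logarithmic terms nor polynomial corrections (Lemma~\ref{L:EYvsCGGP}; note $\mathcal P^0(\mathcal T_xM;E_x,E_x)=0$ as $-n\notin\mathbb N_0$), the cosymbol $R$ has a representative $\tilde R$ which is genuinely homogeneous of order $0$, depends smoothly on the base point, and satisfies $\ev_{x_0}(\tilde R)=0$. The operator norm of the regular representation of $\ev_x(R)$ on $L^2(\mathcal T_xM)\otimes E_x$ is controlled by finitely many of the symbol seminorms of $\tilde R$ — this is the estimate of \cite[Corollary~67]{EY15} used in the proof of Proposition~\ref{P:Psimap}\itemref{P:Psimap:L2cont} — hence depends continuously on $x$ and vanishes at $x_0$. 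So after shrinking $V$ to a neighbourhood $U\ni x_0$ I may assume $\|\ev_x(R)\|\le\tfrac12$ for all $x\in U$, and the Neumann series $\sum_{n\ge0}R^n$ converges fibrewise, in operator norm, to $(1-\ev_x(R))^{-1}$. By spectral invariance of the order‑zero cosymbol algebra — Głowacki \cite{G91}, in the matrix form underlying \cite[Theorem~6.2]{CGGP92} — each fibrewise inverse is again the regular representation of a cosymbol in $\Sigma^0_x(E_x,E_x)$; since the relevant estimates are uniform over the smooth family of osculating groups $\mathcal T_xM$, $x\in U$, these inverses depend smoothly on $x$ and assemble to a two‑sided inverse $(1-R)^{-1}\in\Sigma^0(E|_U,E|_U)$ of $1-R$. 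Then $b:=(1-R)^{-1}b_0\in\Sigma^{-s}(F|_U,E|_U)$ satisfies $b\,a|_U=(1-R)^{-1}(1-R)=1$, as required. The right‑inverse statement is obtained by running the same argument with $R':=1-a|_V\,c_0\in\Sigma^0(F|_V,F|_V)$, where $c_0$ extends a right inverse $c_{x_0}$ of $\ev_{x_0}(a)$; this produces $c:=c_0(1-R')^{-1}$ with $a|_U\,c=1$.

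The hard part will be the passage, in the second paragraph, from fibrewise invertibility of $1-\ev_x(R)$ to a genuine cosymbol inverse over the whole neighbourhood $U$: one needs spectral invariance of the order‑zero cosymbol algebra together with estimates that are uniform in the base point and sharp enough to control smoothness in $x$. Over a fixed graded nilpotent Lie group this is precisely \cite[Theorem~6.2]{CGGP92}/\cite{G91}; the extra difficulty here is that the osculating group $\mathcal T_xM$ genuinely varies with $x$, and I would deal with this following the scheme of Ponge's analysis \cite{P08} on Heisenberg manifolds. Everything else — the extension of the pointwise cosymbol and the continuity in $x$ of the $L^2$ operator norm — is routine given Lemma~\ref{L:EYvsCGGP} and the symbol estimates of \cite{EY15}.
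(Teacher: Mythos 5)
Your proposal is correct and follows essentially the same route as the paper: extend the pointwise inverse to a cosymbol $\tilde b$ near $x_0$, reduce to inverting $c=\tilde b\,a=1-k$ with $\ev_{x_0}(k)=0$, and sum the Neumann series after bounding the fibrewise $L^2$ operator norm $\bbb k\bbb$ by Folland--Stein, with Ponge's uniformity over compact sets handling the varying osculating groups. The only difference is in how the ``hard part'' you flag is executed: rather than invoking spectral invariance abstractly, the paper proves convergence of $\sum_i k^i$ directly in the jet norms $\|-\|_{L,r}$ on the unit sphere of $\mathcal T_xM$, via the submultiplicative estimate and the resulting spectral-radius bound $\lim_i\|k^i\|_{L,r}^{1/i}\leq\bbb k\bbb_L$, which yields smoothness of the inverse in $x$ in one stroke.
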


\begin{proof}
If the bundle of osculating algebras is locally trivial, then this follows from \cite{CGGP92}, at least in the scalar case.
To handle general bundles of osculating algebras, we will proceed as in \cite[Section~3.3.3]{P08} where Ponge considers (in general non-contact) Heisenberg manifolds with varying osculating algebras, see also \cite[Section~6]{M82}.

Choose $\tilde b\in\Sigma^{-s}(F,E)$ such that $\ev_{x_0}(\tilde b)=b_{x_0}$.
Putting $c:=\tilde ba$, we have $c\in\Sigma^0(E)$ and $\ev_{x_0}(c)=1$.
It suffices to find an open neighborhood $U$ of $x_0$ in $M$ such that $c|_U$ is invertible in $\Sigma^0(E|_U)$, for then $b:=c|_U^{-1}\tilde b$ is the desired local left inverse of $a$.

We will identify $\Sigma^0(E)=\hat\Sigma^0(E)$ where $\hat\Sigma^0(E)$ denotes the space of all $k\in\mathcal K(\mathcal TM;E,E)$ which are strictly homogeneous of order zero, that is, $(\delta_\lambda)_*k=k$ for all $\lambda>0$, see Lemma~\ref{L:EYvsCGGP}.
Convolution with $k\in\hat\Sigma^0_x(E)$ gives rise to a bounded operator on $L^2(\mathcal T_xM)\otimes E_x$, see \cite[Theorem~6.19]{FS82}.
We fix an auxiliary fiber-wise Hermitian inner product on $E$, as well as a smooth family of invariant volume densities on the osculating groups $\mathcal T_xM$, and let $\bbb k\bbb_x$ denote the operator norm with respect to the associated Hermitian inner product on $L^2(\mathcal T_xM)\otimes E_x$.

We fix a fiber-wise homogeneous norm $|-|$ on $\mathcal TM$ which is smooth on $\mathcal TM\setminus M$.
This permits decomposing each $k\in\hat\Sigma^0_x(E)$ uniquely in the form $k=c_x(k)\delta_x+\pv_x(k)$ where $c_x(k)\in\eend(E_x)$, and $\pv_x(k)\in\hat\Sigma^0_x(E)$ is the principal value distribution
$$
\langle\pv_x(k),\psi\rangle:=\lim_{\varepsilon\to0}\int_{\{g\in\mathcal T_xM:|g|\geq\varepsilon\}}k\psi,
$$
see \cite[Proposition~6.13]{FS82}.
For each $r\in\N_0$ we consider the norm
$$
\|k\|_{x,r}:=|c_x(k)|+\left(\int_{\mathcal S_xM}|j^r(k|_{\mathcal S_xM})|^2\mu_x\right)^{1/2},
$$
where $k\in\hat\Sigma^0_x(E)$.
Here $j^r(k|_{\mathcal S_xM})$ denotes the $r$-jet of the restriction of $k$ to the sphere $\mathcal S_xM:=\{g\in\mathcal T_xM:|g|=1\}$, and we use smooth fiber-wise Hermitian metric on the $r$-jet bundle $J^r(\mathcal S_xM,E_x)$ which depends smoothly on $x$, and we are using a smooth volume density $\mu_x$ on the sphere $\mathcal S_xM$ which depends smoothly on $x$.

There exists $r_0\in\N_0$ and constants $C_x\geq0$ such that $\bbb k\bbb_x\leq C_x\|k\|_{x,r_0}$ for all $k\in\hat\Sigma^0_x(E)$.
This follows from a result due to Folland and Stein, see \cite[Theorem~6.19]{FS82}, and the Sobolev embedding theorem.
Ponge observed, see \cite[Lemma~3.3.13]{P08}, that the proof of Folland and Stein allows choosing the constants $C_x$ uniformly over compact subsets of $M$.
To make this more precise, we put, for every compact subset $L$ of $M$ and each $k\in\hat\Sigma^0(E)$,
$$
\bbb k\bbb_L:=\sup_{x\in L}\bbb\ev_x(k)\bbb_x\qquad\text{and}\qquad \|k\|_{L,r}:=\sup_{x\in L}\|\ev_x(k)\|_{x,r}.
$$
Then there exists a constant $C_L\geq0$ such that
\begin{equation}\label{E:bbb}
\bbb k\bbb_L\leq C_L\|k\|_{L,r_0}
\end{equation}
holds for all $k\in\hat\Sigma^0(E)$.\footnote{Actually, $r_0=0$ appears to be sufficient, see \cite{C88} and \cite[Remark~3.3.14]{P08}, but we won't need that.}
Moreover, see \cite[Lemma~3.3.15]{P08} and \cite[Lemma~5.7]{CGGP92}, for every $r\geq r_0$ there exists a constant $C_{L,r}\geq0$ such that 
$$
\|k_2*k_1\|_{L,r}\leq C_{L,r}\Bigl(\|k_2\|_{L,r}\cdot\bbb k_1\bbb_L+\bbb k_2\bbb_L\cdot\|k_1\|_{L,r}\Bigr)
$$
holds for all $k_1,k_2\in\hat\Sigma^0(E)$.
As in \cite[Lemma~4]{C88}, this gives 
$$
\|k^{2i}\|^{1/2i}_{L,r}\leq(2C_{L,r})^{1/2i}\sqrt{\bbb k^i\bbb^{1/i}_L}\sqrt{\|k^i\|_{L,r}^{1/i}}\leq(2C_{L,r})^{1/2i}\sqrt{\bbb k\bbb}_L\sqrt{\|k^i\|_{L,r}^{1/i}}
$$ 
and passing to the limit, we obtain
\begin{equation}\label{E:geom}
\lim_{i\to\infty}\|k^i\|^{1/i}_{L,r}\leq\bbb k\bbb_L
\end{equation}
for all $r\geq r_0$ and all $k\in\hat\Sigma^0(E)$.

To invert $c$ we write $c=1-k$ where $k\in\hat\Sigma^0(E)$.
Then $\ev_{x_0}(k)=0$, and there exists a compact neighborhood $L$ of $x_0$ such that $\bbb k\bbb_L<1$, see \eqref{E:bbb}.
In view of \eqref{E:geom}, the Neumann series $\sum_{i=0}^\infty k^i$ converges with respect to the norm $\|-\|_{L,r}$ for all $r\geq r_0$.
For each $x\in L$ we conclude that $\ev_x(c)$ is invertible in $\hat\Sigma^0_x(E)$ with inverse $\ev_x(c)^{-1}=\sum_{i=0}^\infty\ev_x(k)^i$.
This also shows that $\ev_x(c)^{-1}$ depends continuously on $x\in L$.
Proceeding as in the proof of \cite[Proposition~3.3.11]{P08}, we see that $\ev_x(c)^{-1}$ actually depends smoothly on $x$ in the interior $U$ of $L$, see also \cite[Proposition~5.10]{CGGP92}.
Hence, $c|_U$ is invertible in $\hat\Sigma^0(E|_U)$ with inverse $c|_U^{-1}=\sum_{i=0}^\infty k|_U^i$.
\end{proof}

Combining these results, we obtain the following Rockland \cite{R78} type theorem generalizing Melin's result for scalar differential operators \cite[Theorem~7.1]{M82}, see also \cite[Theorem~2.5(d)]{CGGP92}, \cite[Theorem~0.1]{HN79} and \cite[Theorem~3.3.10 and 5.4.1]{P08}.

\begin{theorem}\label{T:Rockland}
Let $E$ and $F$ be vector bundles over a filtered manifold $M$. 
Let $s$ be a complex number, and suppose $A\in\Psi^s(E,F)$ satisfies the Rockland condition.
Then there exists a left parametrix $B\in\Psi^{-s}_\prop(F,E)$ such that $BA-\id$ is a smoothing operator.
In particular, $A$ is hypoelliptic. 
If, moreover, $M$ is closed, then $\ker(A)$ is a finite dimensional subspace of\/ $\Gamma^\infty(E)$.
\end{theorem}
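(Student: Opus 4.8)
The plan is to reduce the statement to the pointwise Rockland Lemma~\ref{L:Rockland}, to globalise the resulting left inverses of the principal cosymbol by means of Lemma~\ref{L:pointwiseinv} and a partition of unity, and then to invoke the parametrix construction in Proposition~\ref{P:Psi}\itemref{P:Psi:parametrix}; the hypoellipticity and finiteness assertions are then deduced exactly as in the proof of Corollary~\ref{C:hypo}. Since $A$ satisfies the Rockland condition, its principal cosymbol $\sigma^s_x(A)\in\Sigma^s_x(E,F)$ satisfies the matrix Rockland condition at every point $x\in M$, see Definition~\ref{D:Rockland}. By Lemma~\ref{L:Rockland} it admits a left inverse in $\Sigma^{-s}_x(F,E)$ at each $x$, and Lemma~\ref{L:pointwiseinv} upgrades this to local data: every point of $M$ has an open neighbourhood $U$ carrying an element $b_U\in\Sigma^{-s}(F|_U,E|_U)$ with $b_U\,(\sigma^s(A)|_U)=1$. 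Choosing a locally finite cover $\{U_i\}$ of $M$ by such neighbourhoods, with associated left inverses $b_i$, together with a subordinate smooth partition of unity $\{\chi_i\}$, each $\chi_ib_i$ extends by zero to a global element of $\Sigma^{-s}(F,E)$ (this space being a module over $C^\infty(M)$), and $b:=\sum_i\chi_ib_i\in\Sigma^{-s}(F,E)$. Because a principal cosymbol is a $\sigma$-vertical right invariant operator on the osculating groupoid $\mathcal TM$, multiplication by a function pulled back from $M$ commutes with convolution, whence $(\chi_ib_i)\,\sigma^s(A)=\chi_i\,\bigl(b_i\,(\sigma^s(A)|_{U_i})\bigr)=\chi_i$; summing over $i$ gives $b\,\sigma^s(A)=\sum_i\chi_i=1$. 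Proposition~\ref{P:Psi}\itemref{P:Psi:parametrix} now yields a properly supported $B\in\Psi^{-s}_\prop(F,E)$ with $\sigma^{-s}(B)=b$ and $BA-\id\in\mathcal O^{-\infty}(E)$.

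For the hypoellipticity, suppose $\psi\in\Gamma^{-\infty}_c(E)$ with $A\psi\in\Gamma^\infty(F)$. As $B$ is properly supported it maps smooth sections to smooth sections, so $B(A\psi)\in\Gamma^\infty(E)$; moreover $(BA-\id)\psi\in\Gamma^\infty(E)$ since $BA-\id$ is smoothing. Subtracting, $\psi=B(A\psi)-(BA-\id)\psi\in\Gamma^\infty(E)$. If in addition $M$ is closed, then $\Gamma^{-\infty}_c(E)=\Gamma^{-\infty}(E)$, hence $\ker(A)\subseteq\Gamma^\infty(E)\subseteq L^2(E)$ by what was just proved; on $\ker(A)$ we have $\psi=-(BA-\id)\psi$, so the identity map of $\ker(A)$ coincides with the restriction of the smoothing operator $-(BA-\id)$, which induces a compact operator on $L^2(E)$ by the Arzel\`a--Ascoli theorem. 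Consequently every bounded subset of $\ker(A)$ is precompact in $L^2(E)$, so $\ker(A)$ is finite dimensional.

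The analytically substantial content is entirely imported: Lemma~\ref{L:Rockland}, the matrix version of the theorem of Christ, Geller, G{\l}owacki and Polin, and Lemma~\ref{L:pointwiseinv}, the smooth-in-$x$ local invertibility of cosymbols over filtered manifolds with possibly varying osculating algebras. Granting these, the main obstacle in the argument above is the bookkeeping in the globalisation step, namely verifying that composition of principal cosymbols is $C^\infty(M)$-bilinear so that the partition-of-unity patching of local left inverses again produces a global left inverse; this is exactly where the $\sigma$-verticality of cosymbols on the osculating groupoid enters.
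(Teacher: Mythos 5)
Your proposal follows the paper's own proof exactly: pointwise left inverses from Lemma~\ref{L:Rockland}, local invertibility via Lemma~\ref{L:pointwiseinv}, patching by a partition of unity to get a global left inverse of the cosymbol, and then Proposition~\ref{P:Psi}(g) for the parametrix, with the hypoellipticity and finite-dimensionality arguments taken from Corollary~\ref{C:hypo}. The only difference is that you spell out the $C^\infty(M)$-bilinearity justification for the patching step, which the paper leaves implicit.
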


\begin{proof}
According to Lemma~\ref{L:Rockland} the principal cosymbol $\sigma^s_x(A)$ admits a left inverse at each point $x\in M$.
Hence, in view of Lemma~\ref{L:pointwiseinv}, we see that the principal cosymbol $\sigma^s(A)$ locally admits left inverses.
Using a smooth partition of unity on $M$, we obtain a global left inverse $b\in\Sigma^{-s}(F,E)$ such that $b\sigma^s(A)=1$.
Applying Proposition~\ref{P:Psi}\itemref{P:Psi:parametrix}, we obtain $B\in\Psi^{-s}_\prop(F,E)$ such that $BA-\id$ is a smoothing operator.
The remaining assertions are now obvious, cf.\ the proof of Corollary~\ref{C:hypo} above.
\end{proof}

Note that Theorem~\ref{T:para} follows immediately from Theorem~\ref{T:Rockland}.

\begin{corollary}\label{C:PsiinvA}
Let $E$ be a vector bundle over a closed filtered manifold $M$.
Suppose $A\in\Psi^s(E)$ satisfies the Rockland condition and is formally selfadjoint, $A^*=A$, with respect to an $L^2$ inner product of the form \eqref{E:llrr}, and let $Q$ denote the orthogonal projection onto the (finite dimensional) subspace $\ker(A)\subseteq\Gamma^\infty(E)$.
Then $A+Q$ is invertible with inverse $(A+Q)^{-1}\in\Psi^{-s}(E)$.
\end{corollary}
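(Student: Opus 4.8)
The plan is to transcribe the proof of Corollary~\ref{C:smooth-Hodge-decomposition} essentially verbatim, but to work inside van~Erp and Yuncken's calculus rather than merely inside $\mathcal O(E)$, so that the Heisenberg orders can be tracked; the only new input beyond Corollary~\ref{C:smooth-Hodge-decomposition} is Theorem~\ref{T:Rockland}, which, unlike Theorem~\ref{T:para}, delivers a parametrix of Heisenberg order $-s$. Throughout I would use that on a closed manifold every operator is automatically properly supported. First I would observe that $Q$ is a finite rank operator with smooth Schwartz kernel, hence $Q\in\mathcal O^{-\infty}(E)=\bigcap_k\Psi^{t-k}(E)\subseteq\Psi^t(E)$ for every complex $t$; in particular $A+Q\in\Psi^s(E)$ with $\sigma^s_x(A+Q)=\sigma^s_x(A)$, so $A+Q$ again satisfies the Rockland condition, is hypoelliptic, and $(A+Q)^*=A^*+Q^*=A+Q$.

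The first step is to produce a formally selfadjoint, injective, two-sided parametrix still of order $-s$. By Theorem~\ref{T:Rockland} choose a left parametrix $B\in\Psi^{-s}_\prop(E)$ with $BA-\id$ smoothing; taking formal adjoints and using $A^*=A$ gives $AB^*-\id$ smoothing, and the computation $B\equiv B(AB^*)=(BA)B^*\equiv B^*$ modulo smoothing operators shows $B-B^*\in\mathcal O^{-\infty}(E)\subseteq\Psi^{-s}(E)$; hence $B^*$ (a priori of order $-\bar s$) also lies in $\Psi^{-s}(E)$, and $P_0:=\tfrac12(B+B^*)\in\Psi^{-s}(E)$ satisfies $P_0^*=P_0$ together with $P_0A-\id$, $AP_0-\id\in\mathcal O^{-\infty}(E)$. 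Exactly as in the proof of Corollary~\ref{C:hypo}, $AP_0-\id$ being smoothing forces $P_0$ to be hypoelliptic with $\ker(P_0)$ a finite dimensional subspace of $\Gamma^\infty(E)$; adding to $P_0$ the orthogonal projection onto $\ker(P_0)$ — a selfadjoint smoothing operator — produces $P\in\Psi^{-s}(E)$ with $P=P^*$, $PA-\id$, $AP-\id$ still smoothing, and $\ker(P)=0$ (here one uses $\img(P_0)\perp\ker(P_0)$, which holds since $P_0$ is selfadjoint, together with the hypoellipticity of the modified operator). I would also record that $A+Q$ is injective: since $A^*=A$ we have $\img(A)\perp\ker(A)=\img(Q)$, so $(A+Q)\psi=0$ forces both $A\psi=0$ and $Q\psi=0$, whence $\psi\in\ker(A)$ and $\psi=Q\psi=0$; hypoellipticity of $A+Q$ lets one assume $\psi$ smooth from the outset.

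The second step analyses $G:=(A+Q)P\in\Psi^0(E)$. From the previous step, $G-\id=(AP-\id)+QP$ and $G^*-\id=(PA-\id)+PQ$ are both smoothing, and $G$ is injective: $G\psi=0$ gives $(A+Q)(P\psi)=0$, so by hypoellipticity $P\psi$ is smooth and hence $P\psi\in\ker(A+Q)=0$, whence $\psi\in\ker(P)=0$. Now $G-\id$ being smoothing makes $G$ a compact perturbation of the identity on every classical Sobolev space $H^t_{\cl}(E)$, $t\in\R$, hence a Fredholm operator of index zero there; combined with injectivity this makes $G$ invertible on each $H^t_{\cl}(E)$. A standard argument, identical to the one in the proof of Corollary~\ref{C:smooth-Hodge-decomposition} (classical Sobolev embedding, the formal adjoint of $G^{-1}$, the Schwartz kernel theorem, and the identity $G^{-1}-\id=-(G-\id)G^{-1}$), then shows that $G^{-1}$ has a distributional kernel with $G^{-1}-\id\in\mathcal O^{-\infty}(E)$, whence $G^{-1}\in\Psi^0(E)$. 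Finally $(A+Q)(PG^{-1})=GG^{-1}=\id$ exhibits $PG^{-1}$ as a right inverse of $A+Q$, and injectivity of $A+Q$ promotes it to a two-sided inverse; by Proposition~\ref{P:Psi}\itemref{P:Psi:mult} we conclude $(A+Q)^{-1}=PG^{-1}\in\Psi^{-s}(E)$, since $P\in\Psi^{-s}(E)$ and $G^{-1}\in\Psi^0(E)$.

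The main obstacle is conceptual rather than computational: the raw left parametrix supplied by Theorem~\ref{T:Rockland} is neither formally selfadjoint nor injective, and both defects must be repaired by modifications with smoothing operators — which, crucially, do not change the Heisenberg order — before the Fredholm-theoretic argument for invertibility of $G$ can be run; one also has to keep careful track of which of the spaces $L^2(E)$, $H^t_{\cl}(E)$, $\Gamma^\infty(E)$, $\Gamma^{-\infty}(E)$ each operator acts on when transferring invertibility back and forth. Apart from this, the argument is a line-by-line translation of the proof of Corollary~\ref{C:smooth-Hodge-decomposition} with $\mathcal O$ replaced by the appropriate $\Psi$-classes.
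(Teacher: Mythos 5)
Your proof is correct and follows essentially the same route as the paper: both arguments rerun the proof of Corollary~\ref{C:smooth-Hodge-decomposition} with the left parametrix now supplied by Theorem~\ref{T:Rockland} in $\Psi^{-s}(E)$, note that the symmetrization and injectivity corrections are smoothing and hence do not leave $\Psi^{-s}(E)$, and conclude from $G^{-1}-\id\in\mathcal O^{-\infty}(E)$ that $(A+Q)^{-1}=PG^{-1}\in\Psi^{-s}(E)$. The additional details you supply (why $B^*\in\Psi^{-s}(E)$ despite a priori lying in $\Psi^{-\bar s}(E)$, and the injectivity of $A+Q$) are accurate and consistent with what the paper leaves implicit.
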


\begin{proof}
Proceeding exactly as in the proof of Corollary~\ref{C:smooth-Hodge-decomposition}, we start with a left parametrix $B\in\Psi^{-s}(E)$, see Theorem~\ref{T:Rockland}, and observe that the injective and formally selfadjoint parametrix $P$ constructed there is contained in $\Psi^{-s}(E)$.
As explained in the proof of Corollary~\ref{C:smooth-Hodge-decomposition}, $G:=(A+Q)P\in\Psi^0(E)$ is invertible, and $G^{-1}-\id$ is a smoothing operator.
Clearly, this implies $G^{-1}\in\Psi^0(E)$, whence $(A+Q)^{-1}=PG^{-1}\in\Psi^{-s}(E)$.
\end{proof}

Combining these observations with a result from \cite{CGGP92} permits constructing operators of arbitrary order which are invertible up to smoothing operators.
More precisely, we have:

\begin{lemma}\label{L:Lambda}
Let $E$ be a vector bundle over a filtered manifold $M$.
Then, for every complex number $s$, there exist $\Lambda\in\Psi^s_\prop(E)$ and $\Lambda'\in\Psi^{-s}_\prop(E)$ 
such that $\Lambda\Lambda'-\id$ and $\Lambda'\Lambda-\id$ are both smoothing operators.
Moreover, $\Lambda$ and $\Lambda'$ may be chosen so that they act injectively on $\Gamma^{-\infty}_c(E)$.
For closed $M$, there exist $\Lambda\in\Psi^s(E)$ and $\Lambda'\in\Psi^{-s}(E)$ such that $\Lambda\Lambda'=\id=\Lambda'\Lambda$.
\end{lemma}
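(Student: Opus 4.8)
The plan is to reduce the lemma to the construction of a single cosymbol $a\in\Sigma^s(E)$ whose value $\ev_x(a)\in\Sigma^s_x(E)$ is two-sided invertible at every $x\in M$; granting this, the rest is supplied by the apparatus already in place. If $\ev_x(a)$ is two-sided invertible for all $x$, then Lemma~\ref{L:pointwiseinv} produces, around each point, local left and right inverses of $a$, and a partition of unity on $M$ glues the local left inverses to a global $a'\in\Sigma^{-s}(E)$ with $a'a=1$; gluing the local right inverses gives a global right inverse $a''$, and $a'=a'(a\,a'')=(a'a)a''=a''$, so $a'a=aa'=1$. By Proposition~\ref{P:Psi}\itemref{P:Psi:symbsequ} I would pick a properly supported $\Lambda\in\Psi^s_\prop(E)$ with $\sigma^s(\Lambda)=a$. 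Its principal cosymbol is left and right invertible, so $\Lambda$ and $\Lambda^*$ both satisfy the Rockland condition; Theorem~\ref{T:Rockland}, applied to $\Lambda$ and to its transpose (cf.\ the right-parametrix clause of Proposition~\ref{P:Psi}\itemref{P:Psi:parametrix}), then yields $\Lambda'\in\Psi^{-s}_\prop(E)$ such that $\Lambda'\Lambda-\id$ and $\Lambda\Lambda'-\id$ are smoothing.

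To produce $a$ I would invoke \cite[Theorem~6.1]{CGGP92}, which furnishes, on a graded nilpotent Lie group, invertible homogeneous operators of arbitrary complex order, realized as complex powers of a positive Rockland operator which can be chosen to depend smoothly on the graded Lie algebra. The task is to carry this out in a smooth family over $M$. Every filtered manifold carries a formally self-adjoint, non-negative, properly supported $L\in\DO^{2m}(E)$ (with $m$ a common multiple of $1,\dots,r$) satisfying the Rockland condition: take a filtration-compatible linear connection $\nabla$ on $E$, locally a frame $X_1,\dots,X_N$ of $TM$ adapted to the filtration with weights $w_i$ (so $X_i\in\Gamma^\infty(T^{-w_i}M)$), put $L_\alpha:=\sum_i(\nabla_{X_i}^*\nabla_{X_i})^{m/w_i}$, and patch with $\rho_\alpha\geq0$, $\sum_\alpha\rho_\alpha=1$; non-negativity is obvious and the Rockland property survives because $\langle\pi(\sigma_x^{2m}(L))u,u\rangle=\sum_\alpha\rho_\alpha(x)\langle\pi(\sigma_x^{2m}(L_\alpha))u,u\rangle$ vanishes only for $u=0$. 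Fiberwise, $\sigma^{2m}_x(L)$ is a positive Rockland cosymbol on the osculating group $\mathcal T_xM$ (Lemma~\ref{L:Rockland}, with $L^*=L$), so \cite[Theorem~6.1]{CGGP92} produces its complex power of order $s$; since that construction depends only on the smoothly varying bracket of $\mathfrak t_xM$ and on $\nabla$, one obtains a smooth family $a\in\Sigma^s(E)$ with $\ev_x(a)$ two-sided invertible, to which the first paragraph applies. On a closed manifold one may moreover take the genuine complex power: Corollary~\ref{C:PsiinvA} makes $L+Q$ invertible with inverse in $\Psi^{-2m}(E)$ ($Q$ the orthogonal projection onto $\ker L$), and $(L+Q)^{s/2m}$, $(L+Q)^{-s/2m}$ are mutual inverses lying in $\Psi^s(E)$, $\Psi^{-s}(E)$ by the harmonic analysis of \cite{CGGP92}, for varying osculating algebras along the lines of \cite{P08}.

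For the injectivity clause, hypoellipticity (Theorem~\ref{T:Rockland}) gives $\ker(\Lambda|_{\Gamma^{-\infty}_c(E)})\subseteq\Gamma^\infty_c(E)$; since $R:=\Lambda'\Lambda-\id$ is a properly supported smoothing operator, a kernel element supported in a compact $K\subseteq M$ satisfies $\psi=-R\psi$, and a standard compactness argument on $L^2$ of a large compact set shows the space of such $\psi$ is finite dimensional. A routine exhaustion argument then produces a properly supported smoothing operator which, added to $\Lambda$, removes its kernel without altering $\sigma^s(\Lambda)$ or the parametrix relations; one treats $\Lambda'$ the same way. On a closed manifold $\ker\Lambda$ is finite dimensional and $\Lambda$ is Fredholm of index zero, so a finite-rank smoothing perturbation renders $\Lambda$ invertible, and then $\Lambda^{-1}\in\Psi^{-s}(E)$ as in the proof of Corollary~\ref{C:PsiinvA}.

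The step I expect to be the main obstacle is the globalization in the second paragraph: extracting from the group-by-group construction of \cite{CGGP92} a genuine smooth family $a\in\Sigma^s(E)$ when the bundle of osculating algebras is not locally trivial, together with the verification that the complex powers at hand depend smoothly on $x$ and really belong to the Heisenberg calculus with the correct complex Heisenberg order. This is exactly where one must lean on the uniform-in-$x$ harmonic-analytic estimates underlying Lemma~\ref{L:pointwiseinv}, due to \cite{CGGP92} and \cite{P08}.
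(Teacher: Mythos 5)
Your first paragraph (pointwise invertible cosymbols, Lemma~\ref{L:pointwiseinv}, partition of unity, lift via Proposition~\ref{P:Psi}, parametrix via Theorem~\ref{T:Rockland}) is sound and close in spirit to the paper's argument. The problem is the step you yourself flag as the main obstacle: producing the smooth global cosymbol $a$. Routing through complex powers of the symbol of a globally constructed positive Rockland differential operator requires showing that the CGGP complex powers depend smoothly on $x$ over a bundle of osculating algebras that need not be locally trivial; you do not supply this, and it is a substantial analytic input (it is essentially what Ponge has to work hard for on Heisenberg manifolds). The paper avoids it entirely: it invokes \cite[Theorem~6.1]{CGGP92} only at each \emph{individual} point to get an invertible $a_x\in\Sigma^{s/4}_x(E)$, extends each one \emph{arbitrarily} to a smooth cosymbol $\tilde a_x\in\Sigma^{s/4}(E)$, and glues the non-negative squares, $a:=\sum_x\lambda_x\,\tilde a_x^*\tilde a_x$. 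At any point $y$ some summand has $\lambda_x(y)>0$ with $\ev_y(\tilde a_x)$ invertible, so by positivity $\ev_y(a)$ satisfies the Rockland condition; being self-adjoint it is then two-sided invertible by Lemma~\ref{L:Rockland}, and Lemma~\ref{L:pointwiseinv} upgrades this to invertibility of $a$ itself. No smooth family of complex powers is ever needed. You should replace your second paragraph by this gluing-of-squares device.

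The injectivity clause is the second gap. On a non-compact $M$, the kernel of $\Lambda$ on $\Gamma^{-\infty}_c(E)$ is finite dimensional only after restricting to sections supported in a fixed compact set ($\psi=-R\psi$ with $R$ properly supported smoothing gives compactness there); the full kernel can be infinite dimensional, and your ``routine exhaustion argument'' producing a properly supported smoothing perturbation that kills it while preserving proper support, the principal cosymbol, and the parametrix relations is not routine and is not given. The paper builds injectivity into the construction instead: with $A$, $A'$ mutual parametrices and $R:=A'A-\id$, it sets $\Lambda:=A^*A+R^*R$ and $\Lambda':=A'(A')^*+RR^*$, so that $\llangle\Lambda\psi,\psi\rrangle=\|A\psi\|^2+\|R\psi\|^2$ forces $\ker(\Lambda|_{\Gamma^{-\infty}_c(E)})\subseteq\ker(A)\cap\ker(R)\subseteq\ker(A'A)=\ker(\id)=0$, and symmetrically for $\Lambda'$. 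Your closed-manifold discussion is fine, but on an open manifold your argument does not close.
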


\begin{proof}
For each $x\in M$ we fix an invertible cosymbol $a_x\in\Sigma^{s/4}_x(E)$ at $x$, see \cite[Theorem~6.1]{CGGP92}.
We extend these to cosymbols $\tilde a_x\in\Sigma^{s/4}(E)$ such that $\ev_x(\tilde a_x)=a_x$.
By Lemma~\ref{L:pointwiseinv}, for each $x\in M$, there exists an open neighborhood $U_x$ of $x$ such that $\tilde a_x|_{U_x}$ is invertible in $\Sigma^{s/4}(E|_{U_x})$.
Fix a smooth partition of unity $\lambda_x$, $x\in M$, such that $\supp(\lambda_x)\subseteq U_x$ for each $x\in M$.
With respect to a fiber-wise Hermitian metric on $E$, we consider 
$$
a:=\sum_{x\in M}\lambda_x\tilde a_x^*\tilde a_x\in\Sigma^{s/2}(E).
$$
We claim that $a$ is an invertible cosymbol.
To see this note first that, in view of Lemma~\ref{L:pointwiseinv}, it suffices to show that $\ev_y(a)$ admits an inverse in $\Sigma^{-s/2}_y(E)$ for each $y\in M$.
For fixed $y\in M$, there exists $x\in M$ such that $\lambda_x(y)>0$ and, by construction, $\ev_y(\tilde a_x)$ is invertible.
Using \eqref{E:piab} and \eqref{E:pia*} we conclude that $\ev_y(a)$ satisfies the Rockland condition and, thus, admits a left inverse in $\Sigma^{-s/2}_y(E)$, see Lemma~\ref{L:Rockland}.
Actually, $\ev_y(a)$ is invertible since $\ev_y(a)=\ev_y(a)^*$.
This shows that $a$ is invertible, hence there exists $a'\in\Sigma^{-s/2}(E)$ such that $aa'=1=a'a$.

According to Proposition~\ref{P:Psi}\itemref{P:Psi:symbsequ} there exist $A\in\Psi^{s/2}_\prop(E)$ such that $\sigma^{s/2}(A)=a$.
Using Proposition~\ref{P:Psi}\itemref{P:Psi:parametrix} we obtain $A'\in\Psi^{-s/2}_\prop(E)$ such that $R:=A'A-\id$ is a smoothing operator.
Proposition~\ref{P:Psi}\itemref{P:Psi:parametrix} also gives $A''\in\Psi_\prop^{-s/2}(E)$ such that $AA''-\id$ is a smoothing operator.
Since $A''$ differs from $A'$ by a smoothing operator, $AA'-\id$ is a smoothing operator too.

Consider $\Lambda\in\Psi^s_\prop(E)$ and $\Lambda'\in\Psi^{-s}_\prop(E)$ defined by 
$$
\Lambda:=A^*A+R^*R\qquad\text{and}\qquad\Lambda':=A'(A')^*+RR^*,
$$ 
where the adjoints are with respect to an $L^2$ inner product of the form \eqref{E:llrr}.
One readily verifies that $\Lambda'\Lambda-\id$ and $\Lambda\Lambda'-\id$ are both smoothing operators.
In particular, $\Lambda$ is hypoelliptic and, thus, every distributional section in the kernel of $\Lambda$ has to be smooth.
Using~\eqref{E:A*}, we conclude, $\ker(\Lambda|_{\Gamma^{-\infty}_c(E)})\subseteq\ker(A)\cap\ker(R)\subseteq\ker(\id)=0$.
Analogously, $\ker(\Lambda'|_{\Gamma^{-\infty}_c(E)})\subseteq\ker((A')^*)\cap\ker(R^*)\subseteq\ker(\id)=0$ in view of $R^*=A^*(A')^*-\id$.
If the underlying manifold is closed, then $\Lambda^{-1}\in\Psi^{-s}(E)$ according to Corollary~\ref{C:PsiinvA}.
\end{proof}

\begin{remark}[Right parametrix]\label{R:rightpara}
If $A\in\Psi^s(E,F)$ and $A^t$ satisfies the Rockland condition, then there exists a right parametrix $B\in\Psi_\prop^{-s}(F,E)$ such that $AB-\id$ is a smoothing operator.
Indeed, by Theorem~\ref{T:Rockland} there exists $B'\in\Psi_\prop^{-s}(E',F')$ such that $B'A^t-\id$ is a smoothing operator.
Hence, $B:=(B')^t$ is the desired right parametrix for $A$.
\end{remark}

\begin{corollary}\label{R:AAtRock}
If $A\in\Psi^s(E,F)$ is such that $A$ and $A^t$ both satisfy the Rockland condition, then there exists a parametrix $B\in\Psi_\prop^{-s}(F,E)$ such that $AB-\id$ and $BA-\id$ are both smoothing operators.
In particular, the principal cosymbol $\sigma^s(A)$ is invertible.
\end{corollary}

\begin{proof}
This follows immediately from Theorem~\ref{T:Rockland} and Remark~\ref{R:rightpara} since every left pa\-ra\-me\-trix differs from any right parametrix by a smoothing operator.
\end{proof}

\begin{remark}\label{R:AtRock}
For $A\in\Psi^s(E,F)$ and $x\in M$ the following are equivalent:
\begin{enumerate}[(a)]
\item $A^t$ satisfies the Rockland condition at $x$.
\item $A^*$ satisfies the Rockland condition at $x$.
\item $\bar\pi(\sigma_x^s(A))\colon\mathcal H_\infty\otimes E_x\to\mathcal H_\infty\otimes F_x$ has dense image, for all non-trivial irreducible unitary representations $\pi\colon\mathcal T_xM\to U(\mathcal H)$.
\item $\bar\pi(\sigma_x^s(A))\colon\mathcal H_\infty\otimes E_x\to\mathcal H_\infty\otimes F_x$ is onto, for all non-trivial irreducible unitary representations $\pi\colon\mathcal
 T_xM\to U(\mathcal H)$.
\end{enumerate}
Indeed, the equivalence (a)$\Leftrightarrow$(b) is clear in view of \eqref{E:A*At}.
The equivalence (b)$\Leftrightarrow$(c) follows from $\bar\pi(\sigma^s_x(A))^*=\bar\pi(\sigma^{\bar s}_x(A^*))$, see \eqref{E:pia*} and Remark~\ref{R:Psi:adjoint}.
To see the implication (a)$\Rightarrow$(d) suppose $A^t$ satisfies the Rockland condition at $x$.
According to Lemma~\ref{L:Rockland}, there exists $b\in\Sigma^{-s}_x(E',F')$ such that $b\sigma^s_x(A^t)=1$.
Transposing this equation, we obtain $\sigma^s_x(A)b^t=1$, see Proposition~\ref{P:Psi}\itemref{P:Psi:trans}.
In view of \eqref{E:piab}, this implies (d), for $\bar\pi(b^t)$ maps $\mathcal H_\infty\otimes F_x$ into $\mathcal H_\infty\otimes E_x$.
\end{remark}

\subsection{The Heisenberg Sobolev scale}\label{SS:sobolev}

The properties of the operator class $\Psi^s(E,F)$ discussed above permit introducing a Heisenberg Sobolev scale on filtered manifolds which can be used to refine the hypoellipticity results in Section~\ref{SS:hesDO}, see Corollaries~\ref{C:regrockseq} and \ref{C:HsHodge-seq} at the end of this section.
The main properties of this Sobolev scale are summarized in Proposition~\ref{P:Hs} below, a refined regularity statement including maximal hypoelliptic estimates can be found in Corollary~\ref{C:reg}.

For non-degenerate CR manifolds the origins of this Sobolev scale can be traced back to a paper of Folland and Stein \cite{FS74} where $L^p$ Sobolev spaces for integral $s$ are constructed using differential operators, see \cite[Section~15]{FS74}, and maximal hypoelliptic estimates for Kohn's Laplacian are established, see \cite[Theorem~16.6]{FS74} and also \cite[Theorem~16.7]{FS74}.
For Heisenberg manifolds satisfying only the bracket generating condition $H+[H,H]=TM$, a full Sobolev scale has been constructed by Ponge using complex powers of subLaplacians, see \cite[Section~5.5]{P08} and \cite[Propositions~5.5.9 ad 5.5.14]{P08}.
Maximal hypoelliptic estimates can also be found in \cite{HN85} and the work of Beals and Greiner, see \cite[Theorem~18.31]{BG88}.
A full Sobolev scale for stratified Lie groups has been constructed in \cite{F75}.

For every real $s$ we let $H^s_\loc(E)$ denote the space of all distributional sections $\psi\in\Gamma^{-\infty}(E)$ such that $A\psi\in L^2_\loc(F)$ for all $A\in\Psi^s_\prop(E,F)$ and all vector bundles $F$.
We equip $H^s_\loc(E)$ with the coarsest topology such that the maps $A\colon H^s_\loc(E)\to L^2_\loc(F)$ are continuous, for all $A\in\Psi_\prop^s(E,F)$.
Analogously, let $H^s_c(E)$ denote the space of compactly supported distributional sections $\psi\in\Gamma_c^{-\infty}(E)$ such that $A\psi\in L^2_\loc(F)$ for all $A\in\Psi^s(E,F)$, and equip $H^s_c(E)$ with the coarsest topology such that the corresponding maps $A\colon H^s_c(E)\to L^2_\loc(F)$ are continuous, for all $A\in\Psi^s(E,F)$ and all vector bundles $F$.

\begin{proposition}\label{P:Hs}
Let $E$ and $F$ be vector bundles over a filtered manifold $M$.
\begin{enumerate}[(a)]
\item\label{P:Hs:locfilt}
$H^s_\loc(E)$ is a complete locally convex vector space, and we have continuous inclusions
$$
\Gamma^\infty(E)\subseteq H^{s_2}_\loc(E)\subseteq H^{s_1}_\loc(E)\subseteq\Gamma^{-\infty}(E)
$$
whenever $s_1\leq s_2$.
Moreover, $H^0_\loc(E)=L^2_\loc(E)$ as locally convex spaces.
\item\label{P:Hs:cfilt}
$H^s_c(E)$ is a complete locally convex vector space, and we have continuous inclusions
$$
\Gamma^\infty_c(E)\subseteq H^{s_2}_c(E)\subseteq H^{s_1}_c(E)\subseteq\Gamma^{-\infty}_c(E)
$$
whenever $s_1\leq s_2$.
Moreover, $H^0_c(E)=L^2_c(E)$ as locally convex spaces.
\item\label{P:Hs:pairing}
The canonical pairing $\Gamma^\infty_c(E')\times\Gamma^\infty(E)\to\C$ extends to a pairing 
$$
H^{-s}_c(E')\times H^s_\loc(E)\to\C
$$ 
inducing linear bijections $H^s_\loc(E)^*=H^{-s}_c(E')$ and $H^{-s}_c(E')^*=H^s_\loc(E)$.
If, moreover, $M$ is closed, then $H^s_c(E)=H^s_\loc(E)$ is a Hilbert space we denote by $H^s(E)$, and the pairing induces an isomorphism of Hilbert spaces, $H^s(E)^*=H^{-s}(E')$.
\item\label{P:Hs:operators}
Each $A\in\Psi^r(E,F)$ restricts to continuous operator $A\colon H^s_c(E)\to H_\loc^{s-\Re(r)}(F)$.
On a closed manifold we obtain a bounded operator $A\colon H^s(E)\to H^{s-\Re(r)}(F)$.
\item\label{P:Hs:innerproduct}
Assume $M$ closed and let $\Lambda_s\in\Psi^s(E)$ be invertible with $\Lambda_s^{-1}\in\Psi^{-s}(E)$, see Lemma~\ref{L:Lambda}.
If $\llangle-,-\rrangle_{L^2(E)}$ is any Hermitian inner product generating the topology on $L^2(E)$, then
\begin{equation}\label{E:llrrSobolev}
\llangle\psi_1,\psi_2\rrangle_{H^s(E)}:=\llangle\Lambda_s\psi_1,\Lambda_s\psi_2\rrangle_{L^2(E)},
\end{equation}
$\psi_1,\psi_2\in H^s(E)$, is a Hermitian inner product generating the topology on $H^s(E)$.
\item\label{P:Hs:compact}
If $s_1<s_2$, then the inclusions $H^{s_2}_\loc(E)\subseteq H^{s_1}_\loc(E)$ and $H^{s_2}_c(E)\subseteq H^{s_1}_c(E)$ are compact.
For closed $M$ we obtain a compact inclusion $H^{s_2}(E)\subseteq H^{s_1}(E)$.
\item\label{P:Hs:Sobolevemb}
If $M$ has homogeneous dimension $n$, see~\eqref{E:hdim}, then we have continuous Sobolev embeddings $H^s_\loc(E)\subseteq\Gamma^r(E)$ and $H^s_c(E)\subseteq\Gamma^r_c(E)$ for all $r\in\mathbb N_0$ with $r<s-n/2$, see Remark~\ref{R:GammarE}.
In particular, we obtain isomorphisms of locally convex spaces,
$$
\Gamma^\infty(E)=\bigcap_s H^s_\loc(E)
\qquad\text{and}\qquad
\Gamma_c^\infty(E)=\bigcap_s H^s_c(E),
$$ 
as well as $\Gamma^{-\infty}(E)=\bigcup_sH^s_\loc(E)$ and\/ $\Gamma^{-\infty}_c(E)=\bigcup_sH^s_c(E)$.
\end{enumerate}
\end{proposition}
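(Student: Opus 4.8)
The plan is to reduce all seven assertions to the mapping properties of the classes $\Psi^s$ collected in Propositions~\ref{P:Psi} and \ref{P:Psimap}, by systematically conjugating with the operators $\Lambda_s\in\Psi^s_\prop(E)$ and $\Lambda'_s\in\Psi^{-s}_\prop(E)$ supplied by Lemma~\ref{L:Lambda}, which satisfy $\Lambda'_s\Lambda_s=\id+R_s$ and $\Lambda_s\Lambda'_s=\id+R'_s$ with $R_s,R'_s\in\mathcal O^{-\infty}_\prop(E)$. The preliminary point is that $\mathcal O^{-\infty}(E,F)=\bigcap_k\Psi^{s-k}(E,F)\subseteq\Psi^s(E,F)$ for every complex $s$ by Proposition~\ref{P:Psi}\itemref{P:Psi:smooth}, so that for every $A\in\Psi^s_\prop(E,F)$ the identity $A=(A\Lambda'_s)\Lambda_s-AR_s$ exhibits $A$, modulo a smoothing operator, as $A\Lambda'_s\in\Psi^0_\prop(E,F)$ composed with $\Lambda_s$; by Proposition~\ref{P:Psimap}\itemref{P:Psimap:L2cont} the factor $A\Lambda'_s$ is bounded on $L^2_\loc$. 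Dually one has $\psi=\Lambda'_s\Lambda_s\psi-R_s\psi$. From these two identities, and the fact that $R_s\in\Psi^s_\prop(E)$ is among the operators defining the topology, one deduces that $\psi\in H^s_\loc(E)$ if and only if $\Lambda_s\psi\in L^2_\loc(E)$, and that the topology on $H^s_\loc(E)$ is the coarsest one making both $\Lambda_s\colon H^s_\loc(E)\to L^2_\loc(E)$ and the inclusion $H^s_\loc(E)\hookrightarrow\Gamma^{-\infty}(E)$ continuous; in particular $H^s_\loc(E)$ and its topology do not depend on the choice of $\Lambda_s$. Since $\Lambda_s$ is properly supported, it preserves compact supports, and the analogous description of $H^s_c(E)$ follows verbatim.

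With this identification, \itemref{P:Hs:locfilt} and \itemref{P:Hs:cfilt} become routine: $H^s_\loc(E)$ is, via $\psi\mapsto(\psi,\Lambda_s\psi)$, the graph of the continuous map $\Lambda_s\colon\Gamma^{-\infty}(E)\to\Gamma^{-\infty}(E)$ intersected with $\Gamma^{-\infty}(E)\times L^2_\loc(E)$, and any Cauchy net converges in $\Gamma^{-\infty}(E)$ with its image converging in $L^2_\loc(E)$, the two limits agreeing by continuity of $\Lambda_s$; the chain $\Gamma^\infty(E)\subseteq H^{s_2}_\loc(E)\subseteq H^{s_1}_\loc(E)\subseteq\Gamma^{-\infty}(E)$ for $s_1\le s_2$ follows from $\Lambda_{s_1}\psi=(\Lambda_{s_1}\Lambda'_{s_2})(\Lambda_{s_2}\psi)-(\Lambda_{s_1}R_{s_2})\psi$ together with Proposition~\ref{P:Psimap}\itemref{P:Psimap:L2cont} applied to $\Lambda_{s_1}\Lambda'_{s_2}\in\Psi^{s_1-s_2}_\prop(E)$, whose order has non-positive real part; and $H^0_\loc(E)=L^2_\loc(E)$ is seen by taking $\Lambda_0=\id$. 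For \itemref{P:Hs:pairing} one uses $\psi=\Lambda'_s\Lambda_s\psi-R_s\psi$ to rewrite the canonical pairing $\langle\phi,\psi\rangle$ as $\langle(\Lambda'_s)^t\phi,\Lambda_s\psi\rangle-\langle R_s^t\phi,\psi\rangle$, the first term being the pairing $L^2_c(E')\times L^2_\loc(E)\to\C$ evaluated on $(\Lambda'_s)^t\phi\in L^2_c(E')$ (Proposition~\ref{P:Psi}\itemref{P:Psi:trans}) and $\Lambda_s\psi\in L^2_\loc(E)$, the second the canonical pairing of $\Gamma^\infty_c(E')$ with $\Gamma^{-\infty}(E)$; bijectivity of the induced maps is then reduced to the known self-duality $L^2_\loc(E)^*=L^2_c(E')$ together with density of smooth sections. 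Statement \itemref{P:Hs:operators} follows by applying the same decomposition to $\Lambda^F_{s-\Re r}\,A\,\Lambda'^E_s\in\Psi^{\mathbf i\Im r}(E,F)$, which is $L^2$-bounded by Proposition~\ref{P:Psimap}\itemref{P:Psimap:L2cont}. Finally, on a closed manifold Corollary~\ref{C:PsiinvA} upgrades $\Lambda_s$ to a genuinely invertible operator with $\Lambda_s^{-1}\in\Psi^{-s}(E)$, so $\Lambda_s\colon H^s(E)\to L^2(E)$ is a topological isomorphism of Banach spaces; pulling back any Hermitian inner product generating the $L^2$-topology yields \eqref{E:llrrSobolev}, i.e.\ \itemref{P:Hs:innerproduct}, and the self-duality $H^s(E)^*=H^{-s}(E')$ follows from $L^2(E)^*=L^2(E')$.

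For \itemref{P:Hs:compact}, given $s_1<s_2$ the composition $\Lambda_{s_1}\Lambda'_{s_2}\in\Psi^{s_1-s_2}_\prop(E)$ has order with strictly negative real part and hence is compact on $L^2_\loc$ by Proposition~\ref{P:Psimap}\itemref{P:Psimap:compact}; combining this with $\Lambda_{s_1}=(\Lambda_{s_1}\Lambda'_{s_2})\Lambda_{s_2}+(\text{smoothing})$ and the identifications above gives compactness of the inclusions $H^{s_2}_\loc(E)\subseteq H^{s_1}_\loc(E)$ and $H^{s_2}_c(E)\subseteq H^{s_1}_c(E)$, and on closed $M$ the Rellich inclusion $H^{s_2}(E)\subseteq H^{s_1}(E)$ since $\Lambda_{s_1}$ and $\Lambda_{s_2}^{-1}$ are isomorphisms. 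For \itemref{P:Hs:Sobolevemb}, let $\psi\in H^s_\loc(E)$ and $A\in\DO^r(E,F)\subseteq\Psi^r_\prop(E,F)$ (Proposition~\ref{P:Psi}\itemref{P:Psi:DO}); part \itemref{P:Hs:operators} gives $A\psi\in H^{s-r}_\loc(F)$, and since $s-r>n/2$ the identity $A\psi=\Lambda'^F_{s-r}(\Lambda^F_{s-r}A\psi)-R^F_{s-r}(A\psi)$ together with Proposition~\ref{P:Psimap}\itemref{P:Psimap:kL2} applied to $\Lambda'^F_{s-r}\in\Psi^{-(s-r)}_\prop(F)$ shows $A\psi\in\Gamma(F)$; hence $\psi\in\Gamma^r(E)$, and continuity of the embedding is built into this chain of continuous maps, see Remark~\ref{R:GammarE}. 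The identities $\Gamma^\infty(E)=\bigcap_sH^s_\loc(E)$ and $\Gamma^\infty_c(E)=\bigcap_sH^s_c(E)$ then follow from $\bigcap_r\Gamma^r(E)=\Gamma^\infty(E)$, and $\Gamma^{-\infty}(E)=\bigcup_sH^s_\loc(E)$, $\Gamma^{-\infty}_c(E)=\bigcup_sH^s_c(E)$ from a partition of unity argument using that a (compactly supported) distributional section is of finite Sobolev order.

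I expect the chief technical point to be the equivalence of the ``coarsest topology'' definition of $H^s_\loc(E)$ with the manageable graph topology attached to a single $\Lambda_s$ — this is where one must carefully track properly supported versus merely pseudolocal operators and invoke $\mathcal O^{-\infty}\subseteq\Psi^s$ — and, within \itemref{P:Hs:pairing}, showing that the extended pairing induces \emph{bijections} rather than merely injections in the non-compact case, which requires density of $\Gamma^\infty(E)$ in $H^s_\loc(E)$ (obtained from density in $L^2_\loc$ via $\Lambda'_s$) and a boundedness estimate transported from the $L^2$ duality; everything else is a bookkeeping exercise in composition orders.
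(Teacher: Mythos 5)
Your proposal is correct and follows essentially the same route as the paper's own proof: both reduce everything to Lemma~\ref{L:Lambda} via the identities $\id=\Lambda'_s\Lambda_s-R_s$ and $A=(A\Lambda'_s)\Lambda_s-AR_s$, invoke Proposition~\ref{P:Psimap} for $L^2$-boundedness, compactness, and the kernel estimates, decompose the pairing as $\langle(\Lambda'_s)^t\phi,\Lambda_s\psi\rangle-\langle\phi,R_s\psi\rangle$ for part~\itemref{P:Hs:pairing}, and use $\Lambda_{s_1}\Lambda'_{s_2}\in\Psi^{s_1-s_2}_\prop(E)$ for parts~\itemref{P:Hs:locfilt} and~\itemref{P:Hs:compact}. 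The only differences are cosmetic (you make the single-$\Lambda_s$ characterization of the topology explicit and route part~\itemref{P:Hs:Sobolevemb} through part~\itemref{P:Hs:operators} on $F$ rather than applying $D\Lambda'_s$ directly on $E$).
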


\begin{proof}
The proof is a routine extension, cf.\ for instance \cite[Section~\S7]{S01} for the classical case, of the results established in the preceding sections.
For each complex number $s$ we choose operators $\Lambda_s\in\Psi^s_\prop(E)$, $\Lambda'_s\in\Psi^{-s}_\prop(E)$, and $R_s\in\mathcal O^{-\infty}_\prop(E)$ such that, see Lemma~\ref{L:Lambda},
$$
\Lambda_s'\Lambda_s=\id+R_s.
$$

We have a continuous inclusion $\Gamma^\infty(E)\subseteq H^s_\loc(E)$ since every operator $A\in\Psi^s_\prop(E,F)$ induces a continuous map $A\colon\Gamma^\infty(E)\to\Gamma^\infty(F)$, see Proposition~\ref{P:Psi}\itemref{P:Psi:O}, and the inclusion $\Gamma^\infty(F)\subseteq L^2_\loc(F)$ is continuous.
Using Proposition~\ref{P:Psi}\itemref{P:Psi:O} we see that the composition
$H^s_\loc(E)\xrightarrow{\Lambda_s}L^2_\loc(E)\subseteq\Gamma^{-\infty}(E)\xrightarrow{\Lambda'_s}\Gamma^{-\infty}(E)$
is continuous.
Moreover, $\mathcal O^{-\infty}_\prop(E)\subseteq\Psi^s_\prop(E)$, see Proposition~\ref{P:Psi}\itemref{P:Psi:smooth}, and thus the composition
$H^s_\loc(E)\xrightarrow{R_s}L^2_\loc(E)\subseteq\Gamma^{-\infty}(E)$ is continuous.
We conclude that $\id=\Lambda_s'\Lambda_s-R_s$ induces a continuous map $H^s_\loc(E)\subseteq\Gamma^{-\infty}(E)$.
To see that we have continuous inclusions $H^{s_2}_\loc(E)\subseteq H^{s_1}_\loc(E)$ for all $s_1\leq s_2$, we have to show that each $A\in\Psi^{s_1}_\prop(E,F)$ induces a continuous operator $A\colon H^{s_2}_\loc(E)\to L^2_\loc(E)$.
To achieve that,
note that $A=\Lambda'_{s_2-s_1}\Lambda_{s_2-s_1}A-R_{s_2-s_1}A$.
Moreover, $\Lambda_{s_2-s_1}A\in\Psi^{s_2}_\prop(E,F)$, see Proposition~\ref{P:Psi}\itemref{P:Psi:mult}, and thus $\Lambda_{s_2-s_1}A\colon H^{s_2}_\loc(E)\to L^2_\loc(F)$ is continuous.
Moreover, $\Lambda'_{s_2-s_1}$ induces a continuous operator $L^2_\loc(F)\to L^2_\loc(F)$ since $s_1-s_2\leq0$, see Proposition~\ref{P:Psimap}\itemref{P:Psimap:L2cont}.
Furthermore, $R_{s_2-s_1}A\in\mathcal O^{-\infty}_\prop(E,F)\subseteq\Psi^{s_2}_\prop(E,F)$ in view of Proposition~\ref{P:Psi}\itemref{P:Psi:smooth} and thus $R_{s_2-s_1}A\colon H^{s_2}_\loc(E)\to L^2_\loc(F)$ is continuous.
We conclude that $A=\Lambda'_{s_2-s_1}\Lambda_{s_2-s_1}A-R_{s_2-s_1}A$ induces a continuous map $H^{s_2}_\loc(E)\to L^2_\loc(F)$, whence the continuous inclusion $H^{s_2}_\loc(E)\subseteq H^{s_1}_\loc(E)$.
The completeness of $H^s_\loc(E)$ follows from the continuity of the inclusion $H^s_\loc(E)\subseteq\Gamma^{-\infty}(E)$, the completeness of the spaces $\Gamma^{-\infty}(E)$, the fact that each $A\in\Psi^s_\prop(E,F)$ induces a continuous operator $A\colon\Gamma^{-\infty}(E)\to\Gamma^{-\infty}(F)$, see Proposition~\ref{P:Psi}\itemref{P:Psi:O}, and the completeness of the spaces $L^2_\loc(F)$.
We have a continuous inclusion $H^0_\loc(E)\subseteq L^2_\loc(E)$ since $\id\in\Psi^0_\prop(E)$, see Proposition~\ref{P:Psi}\itemref{P:Psi:DO}.
By Proposition~\ref{P:Psimap}\itemref{P:Psimap:L2cont}, we also have the converse continuous inclusion $L^2_\loc(E)\subseteq H^0_\loc(E)$.
This shows $H^0_\loc(E)=L^2_\loc(E)$ and completes the proof of \itemref{P:Hs:locfilt}.
Part \itemref{P:Hs:cfilt} can be proved analogously.

To see \itemref{P:Hs:pairing},
note that the canonical pairing can be written as
\begin{equation}\label{E:pairHs}
\langle\phi,\psi\rangle
=\langle(\Lambda'_s)^t\phi,\Lambda_s\psi\rangle
-\langle\phi,R_s\psi\rangle,
\end{equation}
where $\phi\in\Gamma^\infty_c(E')$, $\psi\in\Gamma^\infty_c(E)$, and
$(\Lambda'_s)^t\in\Psi^{-s}_\prop(E')$ according to Proposition~\ref{P:Psi}\itemref{P:Psi:trans}.
Recall that the canonical pairing extends to a pairing $L^2_c(E')\times L^2_\loc(E)\to\C$.
Since $\Lambda_s\colon H^s_\loc(E)\to L^2_\loc(E)$ and $(\Lambda_s')^t\colon H^{-s}_c(E')\to L^2_c(E')$ are continuous, we see that the term $\langle(\Lambda'_s)^t\phi,\Lambda_s\psi\rangle$ extends to a separately continuous bilinear map $H^{-s}_c(E')\times H^s_\loc(E)\to\C$.
Since $R_s$ induces a continuous operator $R_s\colon\Gamma^{-\infty}_\loc(E)\to\Gamma^\infty_\loc(E)$, the term $\langle\phi,R_s\psi\rangle$ actually extends to a separately continuous bilinear map $\Gamma^{-\infty}_c(E')\times\Gamma^{-\infty}_\loc(E)\to\C$.
Using \itemref{P:Hs:locfilt}, \itemref{P:Hs:cfilt} and \eqref{E:pairHs} we conclude that the canonical pairing extends to a separately continuous bilinear map $H^{-s}_c(E')\times H^s_\loc(E)\to\C$.
Let us now verify that the induced linear map
\begin{equation}\label{E:HsHslin}
H^s_\loc(E)\to H^{-s}_c(E')^*
\end{equation}
is bijective.
Since the inclusion $\mathcal D(E)=\Gamma^\infty_c(E')\subseteq H^{-s}_c(E')$ is continuous, we obtain a continuous map $H^{-s}_c(E')^*\to\mathcal D(E)^*=\Gamma^{-\infty}(E)$ which, when composed with \eqref{E:HsHslin}, yields the canonical inclusion $H^s_\loc(E)\subseteq\Gamma^{-\infty}(E)$. This immediately implies that \eqref{E:HsHslin} is injective.
To see that it is onto, suppose $\alpha\in H^{-s}_c(E')^*$.
The preceding considerations show that there exists $\psi\in\Gamma^{-\infty}(E)$ such that $\langle\phi,\psi\rangle=\alpha(\phi)$ for all $\phi\in\mathcal D(E)$.
It remains to show that $\psi\in H^s_\loc(E)$.
To check this, let $A\in\Psi^s_\prop(E,F)$.
Then $\langle\tilde\phi,A\psi\rangle=\langle A^t\tilde\phi,\psi\rangle=(\alpha\circ A^t)(\tilde\phi)$ extends continuously to all $\tilde\phi\in L^2_c(F')$.
Since the pairing $L^2_c(F')\times L^2_\loc(F)\to\C$ induces a linear bijection $L^2_\loc(F)=L^2_c(F')^*$, we conclude $A\psi\in L^2_\loc(F)$, whence $\psi\in H^s_\loc(E)$.
Analogously, one can verify that the induced linear map $H^{-s}_c(E')\to H^s_\loc(E)^*$ is a bijection.
If $M$ is closed, then $\Lambda_s$ may be assumed to be invertible with inverse $\Lambda_s^{-1}\in\Psi^{-s}(E)$, see Lemma~\ref{L:Lambda}, hence $\Lambda_s$ induces a topological isomorphism $H^s(E)\cong L^2(E)$, whence $H^s(E)$ is a Hilbert space.
This also implies that the canonical pairing induces an isomorphism of Hilbert spaces, $H^s(E)^*=H^{-s}(E')$, for we have $\langle\phi,\psi\rangle=\langle(\Lambda_s^{-1})^t\phi,\Lambda_s\psi\rangle$ and the canonical pairing induces an isomorphism of Hilbert spaces $L^2(E)^*=L^2(E')$.

The mapping properties in \itemref{P:Hs:operators} are immediate consequences of Proposition~\ref{P:Psi}\itemref{P:Psi:mult}, provided $r$ is real.
For complex $r$, we use Lemma~\ref{L:Lambda} and Proposition~\ref{P:Psimap}\itemref{P:Psimap:L2cont}.

The statement in \itemref{P:Hs:innerproduct} is now obvious, for $\Lambda_s\colon H^s(E)\to L^2(E)$ is a topological isomorphism with inverse induced by $\Lambda_s^{-1}\colon L^2(E)=H^0(E)\to H^s(E)$, see \itemref{P:Hs:operators}.

To prove \itemref{P:Hs:compact}, note that $\Lambda_{s_1}\Lambda_{s_2}'\in\Psi^{s_1-s_2}_\prop(E)$ and $s_1-s_2<0$, hence $\Lambda_{s_1}\Lambda_{s_2}'$ induces a compact operator on $L^2_\loc(E)$, see Proposition~\ref{P:Psimap}\itemref{P:Psimap:compact}.
Hence, the operator $\Lambda_{s_1}'\Lambda_{s_1}\Lambda_{s_2}'\Lambda_{s_2}\colon H^{s_2}_\loc(E)\to H^{s_1}_\loc(E)$ is compact.
The latter operator differs from the canonical inclusion by a properly smoothing operator for we have $\Lambda_{s_1}'\Lambda_{s_1}\Lambda_{s_2}'\Lambda_{s_2}=\id+R_{s_1}+R_{s_2}+R_{s_1}R_{s_2}$.
Moreover, properly supported smoothing operators induce compact operators on each Sobolev space $H^s_\loc(E)$ for the continuous inclusion $\Gamma^\infty(E)\subseteq H^s_\loc(E)$ is compact in view of the Heine--Borel property of $\Gamma^\infty(E)$ which asserts that the identical map on $\Gamma^\infty(E)$ is compact (Arzel\`a--Ascoli).
We conclude that the inclusion $H^{s_2}_\loc(E)\subseteq H^{s_1}_\loc(E)$ is compact.
Similarly, one shows that the inclusion $H^{s_2}_c(E)\subseteq H^{s_1}_c(E)$ is compact.

To prove \itemref{P:Hs:Sobolevemb} we have to show that every differential operator $D\in\DO^r(E,F)$ of Heisenberg order at most $r<s-n/2$ induces a continuous operator $H^s_\loc(E)\to\Gamma(F)$.
By Proposition~\ref{P:Psimap}\itemref{P:Psimap:kL2}, $D\Lambda_s'\in\Psi_\prop^{r-s}(E,F)$ induces a continuous operator $D\Lambda_s'\colon L^2_\loc(E)\to\Gamma(F)$ and, thus, $D\Lambda_s'\Lambda_s\colon H^s_\loc(E)\to\Gamma(F)$ is continuous.
Since the inclusion $\Gamma^\infty(E)\subseteq\Gamma(E)$ is continuous, $DR_s\in\mathcal O^{-\infty}_\prop(E,F)$ induces a continuous operator $DR_s\colon L^2(E)\to\Gamma(F)$.
Using $D=D\Lambda_s'\Lambda_s+DR_s$, we conclude that $D\colon H^s_\loc(E)\to\Gamma(F)$ is indeed continuous, whence the continuous inclusion $H^s_\loc(E)\subseteq\Gamma^r(E)$.
Analogously, one verifies the continuous inclusion $H^s_c(E)\subseteq\Gamma_c^r(E)$, provided $r<s-n/2$.
\end{proof}


\begin{remark}
For non-degenerate CR manifolds a boundedness statement as in Proposition~\ref{P:Hs}\itemref{P:Hs:operators} can be traced back to \cite[Theorem~15.19]{FS74}.
For Heisenberg manifolds satisfying the bracket generating condition $H+[H,H]=TM$ a similar statement has been established in \cite[Proposition~5.5.8]{P08}.
\end{remark}

\begin{remark}\label{R:CrHr}
Given $r\in\N_0$ one might wonder if the space $H^r_\loc(E)$ can be characterized as the space of all $\psi\in\Gamma^{-\infty}(E)$ such that $D\psi\in L^2_\loc(F)$ for all differential operators $D\in\DO^r(E,F)$ of Heisenberg order at most $r$; and if these differential operators suffice to generate the topology on $H^r_\loc(E)$.
Moreover, one might ask if $\Gamma^r(E)$ includes (continuously) in $H^r_\loc(E)$, see Remark~\ref{R:GammarE}.
For general $M$ and $r$ neither of these properties will hold true.
For instance, if $M=\mathbb R\times N$ is filtered such that $TM=T^{-2}M\supseteq T^{-1}M=\mathbb R\times TN$, then differential operators of Heisenberg order $1$ do not suffice to characterize $H^1_\loc(E)$ since they do not capture any derivative in the $\mathbb R$ direction.
Filtered manifolds of this type occur naturally when studying heat operators $\partial_t+\Delta$.
However, if $r$ is a common multiple of $\{1,2,\dotsc,m\}$ where $m$ is such that $\mathfrak tM=\bigoplus_{j=1}^m\mathfrak t^{-j}M$, then the universal differential operator $j^r\in\DO^r(E,J^rE)$, see Remark~\ref{R:jk}, is Rockland and we obtain affirmative answers to all the questions above.
If $T^{-1}M$ is bracket generating, this holds true for all $r$, cf.~\cite{FS74,BG88,P08}.
\end{remark}

This Sobolev scale permits us to refine the hypoellipticity statement in Theorem~\ref{T:Rockland}, providing a common generalization of well known results, see for instance \cite[Propositions~5.5.9 and 5.5.14]{P08},
\cite[Theorem~16.6]{FS74}, \cite{HN85}, and \cite[Theorem~18.31]{BG88}.

\begin{corollary}[Regularity and maximal hypoelliptic estimate]\label{C:reg}
Let $E$ and $F$ be vector bundles over a filtered manifold $M$, and suppose $A\in\Psi^k(E,F)$ satisfies the Rockland condition where $k$ is some complex number.
If $\psi\in\Gamma^{-\infty}_c(E)$ and $A\psi\in H_\loc^{s-\Re(k)}(F)$ for some real number $s$, then $\psi\in H^s_c(E)$.
If, moreover, $M$ is closed and $\tilde s\leq s$, then there exists a constant $C=C_{A,s,\tilde s}\geq0$ such that the maximal hypoelliptic estimate
$$
\|\psi\|_{H^s(E)}\leq C\left(\|\psi\|_{H^{\tilde s}(E)}+\|A\psi\|_{H^{s-\Re(k)}(F)}\right)
$$
holds for all $\psi\in H^s(E)$.
Here we are using any norms generating the Hilbert space topologies on the corresponding Heisenberg Sobolev spaces.
If, moreover, $Q$ denotes the orthogonal projection, with respect to an inner product of the form \eqref{E:llrr}, onto the (finite dimensional) subspace $\ker(A)\subseteq\Gamma^\infty(E)$, then there exists a constant $C=C_{A,s}\geq0$ such that the maximal hypoelliptic estimate
$$
\|\psi\|_{H^s(E)}\leq C\left(\|Q\psi\|_{\ker(A)}+\|A\psi\|_{H^{s-\Re(k)}(F)}\right)
$$
holds for all $\psi\in H^s(E)$. Here $\|-\|_{\ker(A)}$ denotes any norm on $\ker(A)$.
\end{corollary}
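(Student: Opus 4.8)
The plan is to reduce every assertion to the existence of a properly supported left parametrix provided by Theorem~\ref{T:Rockland} together with the mapping properties of the Heisenberg Sobolev scale collected in Proposition~\ref{P:Hs}. First I would fix $B\in\Psi^{-k}_\prop(F,E)$ with $R:=BA-\id$ a smoothing operator, as in Theorem~\ref{T:Rockland}, and record the identity $\psi=BA\psi-R\psi$, valid for every $\psi\in\Gamma^{-\infty}_c(E)$.

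For the regularity statement, if $A\psi\in H^{s-\Re(k)}_\loc(F)$ then, since $B$ is properly supported of Heisenberg order $-k$, Proposition~\ref{P:Hs}\itemref{P:Hs:operators} yields $BA\psi\in H^s_\loc(E)$ --- here, and throughout, it is $\Re(k)$ that governs the shift of the Sobolev exponent --- while $R\psi\in\Gamma^\infty(E)$ because $R$ maps distributional sections to smooth ones. Hence $\psi\in H^s_\loc(E)$, and as $\psi$ is compactly supported this gives $\psi\in H^s_c(E)$ by Proposition~\ref{P:Hs}\itemref{P:Hs:cfilt}.

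For the maximal estimates, assume $M$ closed, so that $B$ and $R$ are automatically properly supported, all the spaces $H^s(E)$ are Hilbert spaces (Proposition~\ref{P:Hs}\itemref{P:Hs:pairing}), and any generating norms may be used by Proposition~\ref{P:Hs}\itemref{P:Hs:innerproduct}. Applying the triangle inequality to $\psi=BA\psi-R\psi$ together with the boundedness of $B\colon H^{s-\Re(k)}(F)\to H^s(E)$ and of the smoothing operator $R\colon H^{\tilde s}(E)\to H^s(E)$, see Proposition~\ref{P:Hs}\itemref{P:Hs:operators}, gives the first estimate with $C$ the sum of the two operator norms. For the estimate involving $Q$, I would argue by contradiction: if no such constant existed there would be $\psi_n\in H^s(E)$ with $\|\psi_n\|_{H^s(E)}=1$ and $\|Q\psi_n\|_{\ker(A)}+\|A\psi_n\|_{H^{s-\Re(k)}(F)}\to0$; fixing any $\tilde s<s$ and using the compact inclusion $H^s(E)\subseteq H^{\tilde s}(E)$ of Proposition~\ref{P:Hs}\itemref{P:Hs:compact}, a subsequence is Cauchy in $H^{\tilde s}(E)$, and then the first estimate applied to the differences $\psi_n-\psi_m$ (using $A\psi_n\to0$) shows this subsequence is Cauchy in $H^s(E)$, converging to some $\psi$ with $\|\psi\|_{H^s(E)}=1$; continuity of $A\colon H^s(E)\to H^{s-\Re(k)}(F)$ forces $A\psi=0$, hence $Q\psi=\psi$, whereas continuity of the smoothing operator $Q$ into the finite dimensional space $\ker(A)$ gives $\|Q\psi\|_{\ker(A)}=\lim_n\|Q\psi_n\|_{\ker(A)}=0$, so $\psi=0$, a contradiction. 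Since $\ker(A)\subseteq\Gamma^\infty(E)$ is finite dimensional, the particular norm $\|-\|_{\ker(A)}$ is irrelevant.

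The arguments are essentially routine once Theorem~\ref{T:Rockland} and Proposition~\ref{P:Hs} are available; the points demanding some care are the bookkeeping with the possibly complex Heisenberg order $k$, the distinction between properly supported and general operators in the non-compact regularity statement, and --- the one genuinely non-formal step --- verifying in the $Q$-estimate that the limit produced by the compactness argument lies in $H^s(E)$ rather than merely in $H^{\tilde s}(E)$, which is precisely where the first maximal estimate is fed back in.
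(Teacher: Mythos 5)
Your proposal is correct, and for the regularity statement and the first maximal estimate it follows exactly the paper's route: a properly supported left parametrix $B\in\Psi^{-k}_\prop(F,E)$ from Theorem~\ref{T:Rockland}, the identity $\psi=BA\psi-R\psi$, and the mapping properties in Proposition~\ref{P:Hs}.

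For the second estimate, the one involving $Q$, you take a genuinely different route. The paper does not argue by contradiction: it observes that $A^*A\in\Psi^{2\Re(k)}(E)$ satisfies the Rockland condition with $\ker(A^*A)=\ker(A)$, invokes Corollary~\ref{C:PsiinvA} to invert $A^*A+Q$ inside the calculus, and sets $B:=(A^*A+Q)^{-1}A^*\in\Psi^{-k}(F,E)$, so that $\id=Q+BA$ holds \emph{exactly}, with no smoothing remainder. The estimate then drops out of the triangle inequality with an explicit constant (the operator norms of $Q$ and $B$), and the same identity is reusable elsewhere (e.g.\ for the Hodge decompositions). Your compactness-and-contradiction argument is the classical Peetre/Rellich scheme: it is more elementary in that it needs only the first estimate, the compact inclusion $H^s(E)\subseteq H^{\tilde s}(E)$ of Proposition~\ref{P:Hs}\itemref{P:Hs:compact}, and the regularity statement (to conclude that an $H^s$-limit $\psi$ with $A\psi=0$ lies in the finite dimensional $\ker(A)\subseteq\Gamma^\infty(E)$ and hence satisfies $Q\psi=\psi$), but it is non-constructive and yields no control on $C$. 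Both arguments are sound; you correctly identified the one genuinely non-formal step, namely feeding the first estimate back into the Cauchy argument to upgrade convergence from $H^{\tilde s}$ to $H^s$.
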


\begin{proof}
For the first part use a left parametrix as in Theorem~\ref{T:Rockland} and Proposition~\ref{P:Hs}\itemref{P:Hs:locfilt}\&\itemref{P:Hs:operators}.
To see the second hypoelliptic estimate, observe that $A^*A\in\Psi^{2\Re(k)}(E)$ satisfies the Rockland condition and $\ker(A^*A)=\ker(A)$.
Hence, $B:=(A^*A+Q)^{-1}A^*\in\Psi^{-k}(F,E)$, see Corollary~\ref{C:PsiinvA}, and $\id=Q+BA$.
\end{proof}

\begin{remark}
Let us complement the regularity statement above with the following description of the topologies on the Heisenberg Sobolev spaces.
Suppose $A\in\Psi^k_\prop(E,F)$ satisfies the Rockland condition and put $s=\Re(k)$.
Then the topologies on $H^s_\loc(E)$ and $H^s_c(E)$ coincide with the topologies induced from the embeddings
$$
H^s_\loc(E)\xrightarrow{(A,\id)}L^2_\loc(F)\times\Gamma^{-\infty}(E)
\qquad\text{and}\qquad
H^s_c(E)\xrightarrow{(A,\id)}L^2_c(F)\times\Gamma^{-\infty}_c(E),
$$
respectively.
Moreover, if $B\in\Psi^{-r}_\prop(F,E)$ is a left parametrix such that $R:=BA-\id$ is a smoothing operator, see Theorem~\ref{T:Rockland}, then the topologies on $H^s_\loc(E)$ and $H^s_c(E)$ coincide with the topologies induced from the embeddings
$$
H^s_\loc(E)\xrightarrow{(A,R)}L^2_\loc(F)\times\Gamma^\infty(E)
\qquad\text{and}\qquad
H^s_c(E)\xrightarrow{(A,R)}L^2_c(F)\times\Gamma^\infty_c(E),
$$
respectively.
Indeed, $B-R$ provides a continuous left inverse for the first two inclusions; and $B-\id$ provides a continuous left inverse for the other two inclusions.
\end{remark}

Accordingly, the Hodge type decomposition for formally selfadjoint Rockland operators on closed filtered manifolds in Corollary~\ref{C:smooth-Hodge-decomposition} admits the following refinement:

\begin{corollary}[Hodge decomposition]\label{C:HsHodge}
Let $E$ be a vector bundle over a closed filtered manifold $M$.
Suppose $A\in\Psi^k(E)$ satisfies the Rockland condition and is formally selfadjoint, $A^*=A$, with respect to an $L^2$ inner product of the form~\eqref{E:llrr}.
Moreover, let $Q$ denote the orthogonal projection onto the (finite dimensional) subspace $\ker(A)\subseteq\Gamma^\infty(E)$.
Then, for every real number $s$, we have a topological isomorphism and a Hodge type decomposition
$$
A+Q\colon H^{s+\Re(k)}(E)\xrightarrow\cong H^s(E),
\qquad H^s(E)=\ker(A)\oplus A\bigl(H^{s+\Re(k)}(E)\bigr).
$$
\end{corollary}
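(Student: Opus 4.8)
The plan is to obtain the statement as a purely formal consequence of Corollary~\ref{C:PsiinvA}, the mapping properties of the Heisenberg Sobolev scale established in Proposition~\ref{P:Hs}, and the distributional Hodge decomposition in Corollary~\ref{C:smooth-Hodge-decomposition}. Since $A=A^*$ satisfies the Rockland condition, Corollary~\ref{C:PsiinvA} applies and yields that $A+Q$ is invertible with inverse $(A+Q)^{-1}\in\Psi^{-k}(E)$; moreover $Q$ is a smoothing operator, whence $A+Q\in\Psi^k(E)$.

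First I would establish the isomorphism. By Proposition~\ref{P:Hs}\itemref{P:Hs:operators}, the operator $A+Q$ restricts to a bounded operator $A+Q\colon H^{s+\Re(k)}(E)\to H^s(E)$, while $(A+Q)^{-1}\in\Psi^{-k}(E)$ restricts to a bounded operator $H^s(E)\to H^{s+\Re(k)}(E)$, for every real number $s$. These two restrictions are mutually inverse, since $A+Q$ and $(A+Q)^{-1}$ are already mutually inverse as operators on $\Gamma^{-\infty}(E)\supseteq H^s(E)$, see Proposition~\ref{P:Hs}. Hence $A+Q\colon H^{s+\Re(k)}(E)\xrightarrow\cong H^s(E)$ is a topological isomorphism.

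Next I would deduce the decomposition. Given $\psi\in H^s(E)$, put $\xi:=(A+Q)^{-1}\psi\in H^{s+\Re(k)}(E)$, so that $\psi=A\xi+Q\xi$ with $Q\xi\in\ker(A)$ and $A\xi\in A\bigl(H^{s+\Re(k)}(E)\bigr)$; this proves $H^s(E)=\ker(A)+A\bigl(H^{s+\Re(k)}(E)\bigr)$. The sum is direct, for $\ker(A)\subseteq\Gamma^{-\infty}(E)$ and $A\bigl(H^{s+\Re(k)}(E)\bigr)\subseteq A\bigl(\Gamma^{-\infty}(E)\bigr)$, and the sum $\ker(A)\oplus A\bigl(\Gamma^{-\infty}(E)\bigr)$ is direct by Corollary~\ref{C:smooth-Hodge-decomposition}. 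Finally, the associated projection onto the first summand is $\psi\mapsto Q(A+Q)^{-1}\psi$, which maps $H^s(E)$ continuously into $\Gamma^\infty(E)\subseteq H^s(E)$ by the isomorphism just established together with Proposition~\ref{P:Hs}\itemref{P:Hs:operators}; hence both summands are closed and the decomposition is topological.

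I do not expect a genuine obstacle here: the whole argument reduces to the fact, recorded in Corollary~\ref{C:PsiinvA}, that $(A+Q)^{-1}$ is again a pseudodifferential operator, of Heisenberg order $-k$, combined with the continuity of pseudodifferential operators on the Heisenberg Sobolev spaces from Proposition~\ref{P:Hs}. The only place where the hypothesis $A^*=A$ genuinely enters is in guaranteeing the directness of $\ker(A)\oplus A(\cdots)$, which is imported from Corollary~\ref{C:smooth-Hodge-decomposition}.
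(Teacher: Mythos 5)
Your proposal is correct and follows exactly the route the paper takes: the paper's own proof of this corollary is the one-line observation that it follows from Corollary~\ref{C:PsiinvA} (invertibility of $A+Q$ with inverse in $\Psi^{-k}(E)$) together with the mapping properties in Proposition~\ref{P:Hs}\itemref{P:Hs:operators}. Your additional details — mutual inverses on the Sobolev scale, the explicit projection $Q(A+Q)^{-1}$, and importing directness from Corollary~\ref{C:smooth-Hodge-decomposition} — are all sound fillings-in of that same argument.
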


\begin{proof}
This follows from Corollary~\ref{C:PsiinvA} and Proposition~\ref{P:Hs}\itemref{P:Hs:operators}.
\end{proof}

\begin{corollary}[Fredholm operators and index]\label{C:Fredholm}
Let $E$ and $F$ be a vector bundles over a closed filtered manifold $M$.
Suppose $A\in\Psi^k(E,F)$ is such that $A$ and $A^t$ both satisfy the Rockland condition, cf.\ Remark~\ref{R:AtRock}.
Then, for every real number $s$, we have an induced Fredholm operator $A\colon H^s(E)\to H^{s-\Re(k)}(F)$ whose index is independent of $s$ and can be expressed as
$$
\ind(A)=\dim\ker(A)-\dim\ker(A^t).
$$
Moreover, this index only depends on the Heisenberg principal cosymbol $\sigma^k(A)\in\Sigma^k(E,F)$.
\end{corollary}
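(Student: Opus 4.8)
The plan is to combine the two-sided parametrix furnished by Remark~\ref{R:AAtRock} with the mapping and duality properties of the Heisenberg Sobolev scale in Proposition~\ref{P:Hs}, and then to pin down the kernel and cokernel of $A$ using the hypoellipticity in Theorem~\ref{T:Rockland}.

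First I would apply Remark~\ref{R:AAtRock} to obtain a properly supported parametrix $B\in\Psi^{-k}_\prop(F,E)$ with $AB-\id$ and $BA-\id$ smoothing. By Proposition~\ref{P:Hs}\itemref{P:Hs:operators}, $A$ induces a bounded operator $A\colon H^s(E)\to H^{s-\Re(k)}(F)$ and $B$ a bounded operator in the opposite direction; and on a closed manifold a smoothing operator maps every $H^s$ continuously into $\Gamma^\infty$, which includes compactly into each Sobolev space (Arzel\`a--Ascoli, cf.\ the proof of Proposition~\ref{P:Hs}\itemref{P:Hs:compact}). Hence $BA-\id$ and $AB-\id$ act compactly on $H^s(E)$ and $H^{s-\Re(k)}(F)$, respectively, so $A\colon H^s(E)\to H^{s-\Re(k)}(F)$ is Fredholm by Atkinson's theorem.

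For the index, hypoellipticity (Theorem~\ref{T:Rockland}) gives $\ker\bigl(A\colon H^s(E)\to H^{s-\Re(k)}(F)\bigr)=\ker(A)\subseteq\Gamma^\infty(E)$, which is independent of $s$. Since $A$ has closed range, $\coker(A)$ is in duality with the kernel of the dual operator $H^{s-\Re(k)}(F)^*\to H^s(E)^*$; using the canonical pairings $H^{s-\Re(k)}(F)^*=H^{-(s-\Re(k))}(F')$ and $H^s(E)^*=H^{-s}(E')$ from Proposition~\ref{P:Hs}\itemref{P:Hs:pairing} together with the defining relation $\langle A^t\phi,\psi\rangle=\langle\phi,A\psi\rangle$, this dual operator is exactly $A^t\colon H^{-(s-\Re(k))}(F')\to H^{-s}(E')$, which is bounded because $A^t\in\Psi^k(F',E')$ and $-(s-\Re(k))-\Re(k)=-s$, cf.\ Proposition~\ref{P:Psi}\itemref{P:Psi:trans}. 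As $A^t$ satisfies the Rockland condition, Theorem~\ref{T:Rockland} applied to $A^t$ yields $\ker\bigl(A^t\colon H^{-(s-\Re(k))}(F')\to H^{-s}(E')\bigr)=\ker(A^t)\subseteq\Gamma^\infty(F')$, again independent of $s$. Therefore $\ind(A)=\dim\ker(A)-\dim\ker(A^t)$ for every real $s$.

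Finally, to see that this integer depends only on $\sigma^k(A)$, suppose $A'\in\Psi^k(E,F)$ with $\sigma^k(A')=\sigma^k(A)$. Then $\sigma^k((A')^t)=\sigma^k(A')^t=\sigma^k(A^t)$, so the Rockland condition (which depends only on the principal cosymbol) holds for $A'$ and $(A')^t$, and $A'$ is Fredholm on the Sobolev scale as above. Moreover $A-A'\in\Psi^{k-1}(E,F)$ by Proposition~\ref{P:Psi}\itemref{P:Psi:symbsequ}, hence $A-A'\colon H^s(E)\to H^{s-\Re(k)+1}(F)\hookrightarrow H^{s-\Re(k)}(F)$ factors through the compact inclusion of Proposition~\ref{P:Hs}\itemref{P:Hs:compact} and is compact; stability of the Fredholm index under compact perturbations gives $\ind(A')=\ind(A)$. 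The only genuinely delicate point I anticipate is the bookkeeping in the cokernel step: one must use the bilinear duality $H^s(E)^*=H^{-s}(E')$ (not a sesquilinear $L^2$ adjoint) so that the relevant adjoint is the honest transpose $A^t$, and one must check that hypoellipticity for $A^t$ lands its kernel in $\Gamma^\infty(F')$ so that the cokernel dimension is $s$-independent; everything else is a routine assembly of Remark~\ref{R:AAtRock}, Proposition~\ref{P:Hs}, and Theorem~\ref{T:Rockland}.
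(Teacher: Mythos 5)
Your proposal is correct and follows essentially the same route as the paper: two-sided parametrix from Remark~\ref{R:AAtRock}, compactness of smoothing operators via the compact inclusion $\Gamma^\infty(E)\subseteq H^s(E)$, identification of the cokernel with $\ker(A^t)$ through the duality $H^s(E)^*=H^{-s}(E')$ of Proposition~\ref{P:Hs}, smoothness of both kernels by hypoellipticity, and stability of the index under the compact perturbation $A'-A\in\Psi^{k-1}(E,F)$. The only difference is that you spell out the cokernel--transpose duality in more detail than the paper, which simply cites Proposition~\ref{P:Hs}(c); this is a harmless (and arguably helpful) elaboration.
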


\begin{proof}
According to Remark~\ref{R:AAtRock} there exists a parametrix $B\in\Psi^{-k}(F,E)$ such that $BA-\id$ and $AB-\id$ are both smoothing operators.
Since the inclusion $\Gamma^\infty(E)\subseteq H^s(E)$ is compact, see Proposition~\ref{P:Hs}\itemref{P:Hs:compact}, smoothing operators are compact on the Sobolev spaces $H^s(E)$ and $H^{s-\Re(k)}(F)$, respectively.
Hence, $B\colon H^{s-\Re(k)}(F)\to H^s(E)$ provides an inverse of $A$ mod compact operators.
Consequently, $A\colon  H^s(E)\to H^{s-\Re(k)}(F)$ is Fredholm.
Moreover, the canonical pairing induces a canonical isomorphism $\coker(A)=\ker(A^t)$, see Proposition~\ref{P:Hs}\itemref{P:Hs:pairing}, whence the index formula above.
By regularity, $\ker(A)$ and $\ker(A^t)$ consist of smooth sections and, thus, do not depend on $s$.
If $A'\in\Psi^k(E,F)$ is another operator with the same Heisenberg principal cosymbol, $\sigma^k(A')=\sigma^k(A)$, then $A'-A\in\Psi^{k-1}(E,F)$ in view of Proposition~\ref{P:Psi}\itemref{P:Psi:symbsequ}, hence $A'-A\colon H^s(E)\to H^{s-\Re(k)}(F)$ is a compact operator according to Proposition~\ref{P:Hs}\itemref{P:Hs:operators}\&\itemref{P:Hs:compact}, and thus $\ind(A')=\ind(A)$.
\end{proof}

Let us now apply these results to the Rockland sequences considered in Section~\ref{SS:hesDO}.
Recall that the Rumin--Seshadri operators $\Delta_i$ satisfy the Rockland condition, see Lemma~\ref{L:rockseq}.
Hence, the refined regularity statement in Corollary~\ref{C:reg} applies to $\Delta_i$.
Moreover, since $\Delta_i$ is formally selfadjoint, we have Hodge type decompositions and maximal hypoelliptic estimates for $\Delta_i$ as in Corollary~\ref{C:HsHodge}, provided the underlying manifold is closed.
For the Rockland sequences, we immediately obtain the following refinement of Corollary~\ref{C:RShypo}.

\begin{corollary}[Regularity and maximal hypoelliptic estimate]\label{C:regrockseq}
In the situation of Corollary~\ref{C:RShypo}, if $\psi\in\Gamma^{-\infty}(E_i)$ is such that $A_i\psi\in H_\loc^{s-k_i}(E_{i+1})$ and $A_{i-1}^*\in H_\loc^{s-k_{i-1}}(E_{i-1})$ for some real number $s$, then $\psi\in H^s_\loc(E_i)$.
Moreover, if $M$ is closed, then there exists a constant $C=C_{A_i,s}\geq0$ such that the maximal hypoelliptic estimate
$$
\|\psi\|_{H^s(E_i)}\leq C\Bigl(\|A_{i-1}^*\psi\|_{H^{s-k_{i-1}}(E_{i-1})}+\|Q_i\psi\|_{\ker(\Delta_i)}+\|A_i\psi\|_{H^{s-k_i}(E_{i+1})}\Bigr)
$$
holds for all $\psi\in H^s(E_i)$.
Here $\|-\|_{H^s(E_i)}$ are any norms generating the Hilbert space topology on the Heisenberg Sobolev spaces $H^s(E_i)$, $Q_i$ denotes the orthogonal projection onto the (finite dimensional) subspace $\ker(\Delta_i)=\ker(A_{i-1}^*)\cap\ker(A_i)\subseteq\Gamma^\infty(E_i)$, and $\|-\|_{\ker(\Delta_i)}$ denotes any norm on $\ker(\Delta_i)$.
\end{corollary}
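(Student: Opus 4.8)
The plan is to reduce both assertions to the corresponding statements for the Rumin--Seshadri operator $\Delta_i\in\DO^{2\kappa}(E_i)$, where $\kappa$ is as in \eqref{E:RSai}; recall that $\Delta_i$ satisfies the Rockland condition by Lemma~\ref{L:rockseq} and is formally selfadjoint. The key observation is that the hypotheses on $A_i\psi$ and $A_{i-1}^*\psi$ already control $\Delta_i\psi$. Indeed, since $A_{i-1}^*$ sits on the right in each factor of $(A_{i-1}A_{i-1}^*)^{a_{i-1}}$ and $A_i$ on the right in each factor of $(A_i^*A_i)^{a_i}$, the definition \eqref{E:RSDelta} can be rewritten as
\[
\Delta_i\psi=\bigl((A_{i-1}A_{i-1}^*)^{a_{i-1}-1}A_{i-1}\bigr)(A_{i-1}^*\psi)+\bigl((A_i^*A_i)^{a_i-1}A_i^*\bigr)(A_i\psi),
\]
and, counting Heisenberg orders with the help of \eqref{E:RSai}, the operator $(A_{i-1}A_{i-1}^*)^{a_{i-1}-1}A_{i-1}$ lies in $\DO^{2\kappa-k_{i-1}}(E_{i-1},E_i)$ while $(A_i^*A_i)^{a_i-1}A_i^*$ lies in $\DO^{2\kappa-k_i}(E_{i+1},E_i)$.

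For the regularity statement I would argue as follows. Given $A_i\psi\in H^{s-k_i}_\loc(E_{i+1})$ and $A_{i-1}^*\psi\in H^{s-k_{i-1}}_\loc(E_{i-1})$, the displayed identity together with the mapping properties of differential operators on the Heisenberg Sobolev scale, see Proposition~\ref{P:Hs}\itemref{P:Hs:operators}, yields $\Delta_i\psi\in H^{s-2\kappa}_\loc(E_i)$. Choosing a properly supported left parametrix $B\in\Psi^{-2\kappa}_\prop(E_i)$ for $\Delta_i$ as in Theorem~\ref{T:Rockland}, so that $R:=B\Delta_i-\id$ is a (properly supported) smoothing operator, we have $\psi=B\Delta_i\psi-R\psi$. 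Here $R\psi\in\Gamma^\infty(E_i)\subseteq H^s_\loc(E_i)$, while $B\Delta_i\psi\in H^s_\loc(E_i)$ by Proposition~\ref{P:Hs}\itemref{P:Hs:locfilt}\&\itemref{P:Hs:operators}; hence $\psi\in H^s_\loc(E_i)$. Using a properly supported parametrix here is precisely what keeps the argument valid for $\psi$ that need not be compactly supported; alternatively one localises Corollary~\ref{C:reg} by a cutoff, commuting $\Delta_i$ past it and bootstrapping.

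For the maximal hypoelliptic estimate assume $M$ closed and $\psi\in H^s(E_i)$. Applying the last estimate of Corollary~\ref{C:reg} to $A=\Delta_i$ (with $\Re(k)=2\kappa$) produces a constant $C$ with $\|\psi\|_{H^s(E_i)}\leq C\bigl(\|Q_i\psi\|_{\ker(\Delta_i)}+\|\Delta_i\psi\|_{H^{s-2\kappa}(E_i)}\bigr)$, where $Q_i$ is the orthogonal projection onto $\ker(\Delta_i)=\ker(A_{i-1}^*)\cap\ker(A_i)$, cf.\ Corollary~\ref{C:RShypo}. By the displayed factorisation of $\Delta_i\psi$ and the boundedness of the two differential operators above between the relevant Heisenberg Sobolev spaces, see Proposition~\ref{P:Hs}\itemref{P:Hs:operators}, one gets $\|\Delta_i\psi\|_{H^{s-2\kappa}(E_i)}\leq C'\bigl(\|A_{i-1}^*\psi\|_{H^{s-k_{i-1}}(E_{i-1})}+\|A_i\psi\|_{H^{s-k_i}(E_{i+1})}\bigr)$. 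Combining the two inequalities gives the asserted estimate, the freedom in the choice of norms being justified by Proposition~\ref{P:Hs}\itemref{P:Hs:innerproduct}. The only points that need care are the order bookkeeping in the factorisation of $\Delta_i$ and, in the first part, keeping the argument local; neither is a genuine obstacle once the factorisation above is in hand.
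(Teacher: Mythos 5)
Your proof is correct and follows exactly the route the paper intends: Corollary~\ref{C:regrockseq} is stated as an immediate consequence of applying Corollary~\ref{C:reg} to the Rumin--Seshadri operator $\Delta_i$, and your factorisation of $\Delta_i\psi$ through $A_{i-1}^*\psi$ and $A_i\psi$, together with the order count $2\kappa-k_{i-1}$ and $2\kappa-k_i$ from \eqref{E:RSai}, supplies precisely the details the paper leaves implicit. Your use of a properly supported left parametrix to treat distributional sections that need not be compactly supported is the right way to pass from the compactly supported regularity statement of Corollary~\ref{C:reg} to the local statement asserted here.
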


For Rockland complexes over closed manifolds we immediately obtain the following Hodge decomposition, refining the statement in Corollary \ref{C:Hodge}:

\begin{corollary}[Hodge decomposition]\label{C:HsHodge-seq}
In the situation of Corollary~\ref{C:Hodge} we have
$$
H^s(E_i)=A_{i-1}(H^{s+k_{i-1}}(E_{i-1}))\oplus\ker(\Delta_i)\oplus A_i^*(H^{s+k_i}(E_{i+1}))
$$
and $\ker(A_i|_{H^s(E_i)})=A_{i-1}(H^{s+k_{i-1}}(E_{i-1}))\oplus\ker(\Delta_i)$, for every real number $s$.
\end{corollary}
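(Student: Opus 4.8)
The plan is to derive both decompositions from the Hodge decomposition for the single Rumin--Seshadri operator $\Delta_i\in\DO^{2\kappa}(E_i)$, which is available because $\Delta_i$ is formally selfadjoint and satisfies the Rockland condition, see Lemma~\ref{L:rockseq}. First I would invoke Corollaries~\ref{C:PsiinvA} and \ref{C:HsHodge}: writing $Q_i$ for the smoothing orthogonal projection onto the finite dimensional space $\ker(\Delta_i)\subseteq\Gamma^\infty(E_i)$, the operator $\Delta_i+Q_i\colon H^{s+2\kappa}(E_i)\to H^s(E_i)$ is a topological isomorphism with inverse in $\Psi^{-2\kappa}(E_i)$, and I would introduce the Green operator $G_i:=(\Delta_i+Q_i)^{-1}-Q_i\in\Psi^{-2\kappa}(E_i)$, so that $\Delta_iG_i=G_i\Delta_i=\id-Q_i$, $Q_iG_i=G_iQ_i=0$, and $Q_i+\Delta_iG_i=\id$.

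The algebraic heart of the argument is the observation that $U:=(A_{i-1}A_{i-1}^*)^{a_{i-1}}$ and $V:=(A_i^*A_i)^{a_i}$ satisfy $UV=VU=0$; this is immediate from $A_iA_{i-1}=0$ and its adjoint $A_{i-1}^*A_i^*=0$. Consequently $U$ and $V$ commute with each other, with $\Delta_i=U+V$, with $(\Delta_i+Q_i)^{-1}$, and with $Q_i$ (both annihilate $\ker(\Delta_i)=\ker(A_{i-1}^*)\cap\ker(A_i)$ and are formally selfadjoint), hence with $G_i$. Multiplying $\Delta_iG_i=\id-Q_i$ on the left by $U$, respectively $V$, and using $UV=0=UQ_i=VQ_i$, yields $U^2G_i=U$ and $V^2G_i=V$; therefore $Q_i$, $P^{\mathrm{e}}_i:=UG_i$ and $P^{\mathrm{c}}_i:=VG_i$ are mutually orthogonal idempotents in $\Psi^0(E_i)$ summing to $\id$. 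By Proposition~\ref{P:Hs}\itemref{P:Hs:operators} they restrict to continuous projections on $H^s(E_i)$, which gives the topological direct sum $H^s(E_i)=\img(Q_i)\oplus\img(P^{\mathrm{e}}_i)\oplus\img(P^{\mathrm{c}}_i)$, and $\img(Q_i|_{H^s})=\ker(\Delta_i)$ by hypoellipticity of $\Delta_i$.

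It then remains to identify $\img(P^{\mathrm{e}}_i)=A_{i-1}(H^{s+k_{i-1}}(E_{i-1}))$ and $\img(P^{\mathrm{c}}_i)=A_i^*(H^{s+k_i}(E_{i+1}))$. Using the factorizations $U=A_{i-1}(A_{i-1}^*A_{i-1})^{a_{i-1}-1}A_{i-1}^*$ and $V=A_i^*(A_iA_i^*)^{a_i-1}A_i$ together with the Sobolev mapping properties in Proposition~\ref{P:Hs}\itemref{P:Hs:operators} (recall $2\kappa=2k_{i-1}a_{i-1}=2k_ia_i$), one checks that $P^{\mathrm{e}}_i$ factors through $A_{i-1}\colon H^{s+k_{i-1}}(E_{i-1})\to H^s(E_i)$ and $P^{\mathrm{c}}_i$ through $A_i^*\colon H^{s+k_i}(E_{i+1})\to H^s(E_i)$, giving the inclusions ``$\subseteq$''. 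For the reverse inclusions I would establish the intertwining relations $A_iG_i=G_{i+1}A_i$ and $A_{i-1}^*G_i=G_{i-1}A_{i-1}^*$; these follow from $A_i\Delta_i=\Delta_{i+1}A_i$ and $A_{i-1}^*\Delta_i=\Delta_{i-1}A_{i-1}^*$ (again consequences of $A_\bullet A_{\bullet-1}=0$) together with $A_iQ_i=0=Q_{i+1}A_i$ and $A_{i-1}^*Q_i=0=Q_{i-1}A_{i-1}^*$, and the invertibility of the operators $\Delta_\bullet+Q_\bullet$ on the Sobolev scale. With these relations in hand, $Q_i$ and $P^{\mathrm{c}}_i$ annihilate $A_{i-1}(H^{s+k_{i-1}}(E_{i-1}))$ while $Q_i$ and $P^{\mathrm{e}}_i$ annihilate $A_i^*(H^{s+k_i}(E_{i+1}))$, so $P^{\mathrm{e}}_i$ acts as the identity on the first space and $P^{\mathrm{c}}_i$ on the second, completing the first decomposition. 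For the second, given $\psi\in\ker(A_i|_{H^s(E_i)})$ I would apply $A_i$ to $\psi=Q_i\psi+P^{\mathrm{e}}_i\psi+P^{\mathrm{c}}_i\psi$: the harmonic term is killed since $A_iQ_i=0$, the exact term since $A_iU=A_iA_{i-1}(\cdots)=0$, and $P^{\mathrm{c}}_i\psi=A_i^*(A_iA_i^*)^{a_i-1}G_{i+1}(A_i\psi)=0$; hence $\psi\in\ker(\Delta_i)\oplus A_{i-1}(H^{s+k_{i-1}}(E_{i-1}))$, with the reverse inclusion immediate from $A_iA_{i-1}=0$ and $A_i\ker(\Delta_i)=0$.

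The computations with Heisenberg Sobolev orders and with the algebra of the three idempotents are routine; the only delicate point is assembling the chain of commutation and intertwining identities, but each of them is a formal consequence of the complex property $A_\bullet A_{\bullet-1}=0$ and of Corollaries~\ref{C:PsiinvA} and \ref{C:HsHodge} already at hand, so I do not anticipate a real obstacle here.
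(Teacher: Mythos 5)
Your proof is correct and is essentially the argument the paper intends: the corollary is stated as an immediate consequence of applying the Sobolev Hodge decomposition (Corollaries~\ref{C:PsiinvA} and \ref{C:HsHodge}) to the Rumin--Seshadri operator $\Delta_i$ and then splitting $\Delta_i=(A_{i-1}A_{i-1}^*)^{a_{i-1}}+(A_i^*A_i)^{a_i}$ using $A_iA_{i-1}=0$. Your Green-operator/idempotent formalism ($Q_i$, $UG_i$, $VG_i$) together with the intertwining relations is a clean and complete way of making that ``immediately obtain'' precise, with no gaps.
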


\begin{remark}
For the $\bar\partial$ complex on a non-degenerate CR manifold the preceding statement can be found in \cite[Theorem~17.1]{FS74} for $s=0$.
\end{remark}

The preceding two corollaries remain true for Rockland sequences of pseudodifferential operators.
We will formulate this precisely and more generally in Section~\ref{S:grRockland} below.

\section{Graded hypoelliptic sequences}\label{S:grhesequences}

The de~Rham differentials on a filtered manifold will in general have Heisenberg order strictly larger than one, and the de~Rham complex will in general not be Rockland in the sense of Definition~\ref{def.Hypo-seq}.
The de~Rham complex is, however, Rockland in an appropriate graded sense.
More generally, this remains true for the sequence obtained by extending a linear connection to vector bundle valued differential forms, provided the curvature satisfies an algebraic condition, see Proposition~\ref{P:hypo} below.
Regarding these de~Rham sequences as graded Rockland sequences, allows extracting (splitting off) graded Rockland sequences of (higher order) differential operators acting between vector bundles of smaller rank, see Theorem~\ref{T:D} and Corollary~\ref{C:D} below.
This construction generalizes a construction for the de Rham complex on contact manifolds \cite{R90,R94} and more general C-C spaces \cite{R99,R01} due to Rumin.
Since all curved Bernstein--Gelfand--Gelfand (BGG) sequences \cite{CSS01,CD01} of regular parabolic geometries appear in this way, the latter are all graded Rockland, see Corollary~\ref{C:BGG} below.
In some cases, the sequences thus obtain are even Rockland in the ungraded sense of Section~\ref{SS:hesDO}.
In particular, this happens for the BGG sequences associated with a generic rank two distribution in dimension five, see Example~\ref{Ex:BGG235} below.

\subsection{Filtered vector bundles and differential operators}\label{SS:fVBDO}

In this section we introduce the filtration by graded Heisenberg order on differential operators acting between sections of filtered vector bundles over filtered manifolds. 
We discuss the corresponding graded Heisenberg principal symbol and establish some of its basic properties.
For trivially filtered vector bundles these concepts reduce to the filtration by Heisenberg order and the principal Heisenberg symbol discussed in Section~\ref{SS:DO} above.

Let $M$ be a filtered manifold.
Suppose $E$ is a filtered vector bundle over $M$, i.e., a smooth vector bundle which comes equipped with a filtration by smooth subbundles,
$$
\cdots\supseteq E^{p-1}\supseteq E^p\supseteq E^{p+1}\supseteq\cdots.
$$ 
We will always assume that the filtration is full, that is, $E=\bigcup_pE^p$ and $\bigcap_pE^p=0$.
Put $\gr_p(E):=E^p/E^{p+1}$ and let $\gr(E):=\bigoplus_p\gr_p(E)$ denote the associated graded vector bundle equipped with the filtration by the subbundles $\gr^q(E):=\bigoplus_{q\leq p}\gr_p(E)$.
By a \emph{splitting of the filtration} on $E$ we mean a filtration-preserving vector bundle isomorphism $S_E\colon\gr(E)\to E$ which induces the identity on the level of associated graded, $\gr(S_E)=\id_{\gr(E)}$.
More explicitly, $S_E$ maps $\gr_p(E)$ into $E^p$ such that the composition with the projection $E^p\to E^p/E^{p+1}=\gr_p(E)$ is the identity.
Such splittings always exist, in fact the space of all splittings is convex, hence contractible.

Suppose $F$ is another filtered vector bundle over $M$, and let $S_F\colon\gr(F)\to F$ be a splitting for its filtration.
A differential operator $A\in\DO(E,F)$ is said to have \emph{graded Heisenberg order} at most $k$ if the operator $S_F^{-1}AS_E\in\DO(\gr(E),\gr(F))$ has the following property:
The component $(S_F^{-1}AS_E)_{q,p}\in\DO(\gr_p(E),\gr_q(F))$ in the decomposition according to the gradings, $S_F^{-1}AS_E=\sum_{p,q}(S_F^{-1}AS_E)_{q,p}$, has Heisenberg order at most $k+q-p$.
Consider the Heisenberg principal symbols of these components, 
$$
\sigma_x^{k+q-p}((S_F^{-1}AS_E)_{q,p})\colon C^\infty(\mathcal T_xM,\gr_p(E_x))\to C^\infty(\mathcal T_xM,\gr_q(F_x)),
$$ 
see Section~\ref{SS:DO}, and define the \emph{graded Heisenberg principal symbol}
$$
\tilde\sigma_x^k(A)\colon C^\infty(\mathcal T_xM,\gr(E_x))\to C^\infty(\mathcal T_xM,\gr(F_x))
$$
by
$$
\tilde\sigma^k_x(A):=\sum_{p,q}\sigma^{k+q-p}_x\bigl((S_F^{-1}AS_E)_{q,p}\bigr).
$$
This is a left invariant differential operator which is \emph{homogeneous of degree $k$ in the graded sense}, that is, $\tilde\sigma^k_x(A)\circ l_g^*=l_g^*\circ\tilde\sigma_x^k(A)$ and
$$
\tilde\sigma^k_x(A)\circ\delta^{E_x}_\lambda\circ\delta_{\lambda,x}^*
=\lambda^k\cdot\delta^{F_x}_\lambda\circ\delta_{\lambda,x}^*\circ\tilde\sigma^k_x(A)
$$
for all $g\in\mathcal T_xM$ and $\lambda>0$, see \eqref{E:skAlg}.
Here $\delta^{E_x}_\lambda\in\Aut(\gr(E_x))$ denotes the isomorphism given by multiplication with $\lambda^p$ on the component $\gr_p(E_x)$.
Equivalently, the graded Heisenberg principal symbol at $x$ can be considered as an element of the degree $-k$ component of $\mathcal U(\mathfrak t_xM)\otimes\hom(\gr(E_x),\gr(F_x))$, that is,
$$
\tilde\sigma^k_x(A)\in
\bigl(\mathcal U(\mathfrak t_xM)\otimes\hom(\gr(E_x),\gr(F_x))\bigr)_{-k}
:=
\bigoplus_{p,q}\mathcal U_{-k+q-p}(\mathfrak t_xM)\otimes\hom(\gr_p(E_x),\gr_q(F_x)).
$$
Note that any two splittings of a filtered vector bundle differ (multiplicatively) by a filtration-preserving vector bundle isomorphism inducing the identity on the associated graded.
Thus, the filtration of differential operators by graded Heisenberg order does not depend on the choice of splittings $S_E$ and $S_F$, and the graded principal Heisenberg symbol is independent of this choice too.
We obtain a short exact sequence
$$
0\to\tDO^{k-1}(E,F)\to\tDO^k(E,F)\xrightarrow{\tilde\sigma^k}\Gamma^\infty\Bigl(\bigl(\mathcal U(\mathfrak tM)\otimes\hom(\gr(E),\gr(F))\bigr)_{-k}\Bigr)\to0
$$
where $\tDO^k(E,F)$ denotes the space of differential operators in $\DO(E,F)$ which are of graded Heisenberg order at most $k$.

If $G$ is another filtered vector bundle, and $B\in\DO(F,G)$ is a differential operator of graded Heisenberg order at most $l$, then the composition $BA\in\DO(E,G)$ has graded Heisenberg order at most $l+k$ and 
\begin{equation}\label{E:tsAB}
\tilde\sigma^{l+k}_x(BA)=\tilde\sigma^l_x(B)\tilde\sigma_x^k(A).
\end{equation}
This follows readily from \eqref{E:sABAt}.
Moreover, the transposed operator $A^t\in\DO(F',E')$ is of graded Heisenberg order at most $k$ and
\begin{equation}\label{E:grsAt}
\tilde\sigma^k_x(A^t)=\tilde\sigma_x^k(A)^t.
\end{equation}
Here the filtration on the bundle $E'=E^*\otimes|\Lambda|_M=\hom(E,|\Lambda|_M)$ is defined such that a section of $E'$ is in filtration degree $p$ iff it pairs $E^q$ into the $(p+q)$-th filtration subspace of the trivially filtered line bundle $|\Lambda|_M=|\Lambda|_M^0\supseteq|\Lambda|_M^1=0$, i.e.\ iff it vanishes on $E^{-p+1}$.
Note that the canonical isomorphism $\gr_p(E')=\hom(E^{-p}/E^{-p+1},|\Lambda|_M)=\gr_{-p}(E)'$ provides a canonical isomorphism of filtered vector bundles $\gr(E')=\gr(E)'$.
\footnote{The degree $p$ filtration subbundle of $\gr(E)$ is $\gr(E)^p=\bigoplus_{q\geq p}\gr_q(E)$ and thus the filtration on $\gr(E)'$ canonically identifies to $(\gr(E)')^p=\bigoplus_{q\leq-p}\gr_q(E)'=\bigoplus_{q\leq-p}\gr_{-q}(E')=\gr(E')^p$.}
If $S_E\colon\gr(E)\to E$ is a splitting of the filtration on $E$, then $S_E^t\colon E'\to\gr(E)'=\gr(E')$ is filtration-preserving and $S_{E'}:=(S_E^t)^{-1}\colon\gr(E')\to E'$ is a splitting of the filtration on $E'$.
Moreover, $(S_{E'}^{-1}A^tS_{F'})_{p,q}=((S_F^{-1}AS_E)_{-q,-p})^t=((S_F^{-1}AS_E)^t)_{p,q}$ and thus \eqref{E:grsAt} follows at once from \eqref{E:sABAt}.

\begin{remark}\label{R:grsigma}
Every differential operator $A\in\tDO^k(E,F)$ of graded Heisenberg order at most $k$ maps $\Gamma^\infty(E^p)$ into $\Gamma^\infty(F^{p-k})$. 
Indeed, $(S_F^{-1}AS_E)_{q,p}=0$ for $k+q-p<0$ since there are no non-trivial differential operators of negative order.
Moreover, the induced operator $\gr_k(A)\colon\Gamma^\infty(\gr_p(E))\to\Gamma^\infty(\gr_{p-k}(F))$ is tensorial and the corresponding vector bundle homomorphism $\gr_k(A)\colon\gr_p(E)\to\gr_{p-k}(F)$ coincides with the corresponding component of the graded principal Heisenberg symbol, $(\tilde\sigma^k(A))_{p-k,p}$.
If $A$ is of graded Heisenberg order at most $0$, then the associated graded will also be denoted by $\tilde A:=\gr(A):=\gr_0(A)$.
\end{remark}

Generalizing Definition~\ref{def.Hypo-seq} we have:

\begin{definition}[Graded Rockland sequences of differential operators]\label{D:graded_hypoelliptic_seq}
Let $E_i$ be filtered vector bundles over a filtered manifold $M$.
A sequence of differential operators
$$
\cdots\to\Gamma^\infty(E_{i-1})\xrightarrow{A_{i-1}}\Gamma^\infty(E_i)\xrightarrow{A_i}\Gamma^\infty(E_{i+1})\to\cdots
$$ 
which are of graded Heisenberg order at most $k_i$, respectively, is said to be \emph{graded Rockland sequence} if, for each $x\in M$ and every non-trivial irreducible unitary representation $\pi\colon\mathcal T_xM\to U(\mathcal H)$ of the osculating group $\mathcal T_xM$ the sequence
$$
\cdots\to
\mathcal H_\infty\otimes\gr(E_{i-1,x})\xrightarrow{\pi(\tilde\sigma^{k_{i-1}}_x(A_{i-1}))}
\mathcal H_\infty\otimes\gr(E_{i,x})\xrightarrow{\pi(\tilde\sigma^{k_i}_x(A_i))}
\mathcal H_\infty\otimes\gr(E_{i+1,x})\to\cdots
$$
is weakly exact, i.e., the image of each arrow is contained and dense in the kernel of the subsequent arrow.
Here $\mathcal H_\infty$ denotes the subspace of smooth vectors in the Hilbert space $\mathcal H$.
\end{definition}

\subsection{Differential projectors}\label{SS:DP}

In this section we consider a sequence of differential operators acting between sections of filtered vector bundles over a filtered manifold.
Following the BGG machinery \cite{CSS01,CD01,CS15} we will present a construction which permits us to extract (split off) sequences of differential operators acting between sections of certain subbundles.
If the original sequence was graded Rockland, then so is the new one, see Proposition~\ref{P:DB}(b) below.
For the de Rham complex a similar construction can be found in \cite{R99,R01}.

The construction is based on the following simple fact which will be used repeatedly below.
A similar argument can be found in the construction of curved BGG sequences, see \cite{CSS01} or \cite[Theorem~5.2]{CD01}, and in Rumin's work, see \cite[Lemma~2.5]{R01} or \cite[Lemma~1]{R99}.

\begin{lemma}\label{L:inv}
Let $E$ and $F$ be filtered vector bundles over a filtered manifold $M$.
Suppose $A\colon\Gamma^\infty(E)\to\Gamma^\infty(F)$ is a differential operator of graded Heisenberg order at most zero and suppose the induced vector bundle homomorphism, $\tilde A\colon\gr(E)\to\gr(F)$, is invertible, cf.\ Remark~\ref{R:grsigma}.
Then $A$ is invertible and its inverse, $A^{-1}\colon\Gamma^\infty(F)\to\Gamma^\infty(E)$, is a differential operator of graded Heisenberg order at most zero.
\end{lemma}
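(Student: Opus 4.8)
The plan is to show that the hypothesis $\tilde A$ invertible forces $A$ to be invertible with an inverse of graded Heisenberg order at most zero, by reducing to the case where the associated graded of $A$ is the identity and then inverting a unipotent-type perturbation via a finite (nilpotent) Neumann series. First I would fix splittings $S_E\colon\gr(E)\to E$ and $S_F\colon\gr(F)\to F$ and, using Remark~\ref{R:grsigma}, observe that $\gr_0(A)=\tilde A\colon\gr(E)\to\gr(F)$ is the $0$-th graded component of $A$, which is a genuine vector bundle homomorphism. Since $\tilde A$ is invertible, $\tilde A^{-1}\colon\gr(F)\to\gr(E)$ is a smooth filtration preserving vector bundle homomorphism, hence a differential operator of graded Heisenberg order at most zero. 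Composing, $B_0:=S_E\tilde A^{-1}S_F^{-1}\in\tDO^0(F,E)$ by \eqref{E:tsAB} (or rather its underlying composition statement), and $\gr_0(B_0A)=\tilde A^{-1}\tilde A=\id_{\gr(E)}$, so it suffices to treat an operator whose associated graded is the identity; replacing $A$ by $B_0A$ we may assume $E=F$ and $\gr_0(A)=\id_{\gr(E)}$.

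Next I would write $A=\id+N$ where $N:=A-\id\in\DO(E)$. I claim $N$ has graded Heisenberg order at most zero and, more importantly, $\gr_0(N)=0$, i.e.\ $N$ strictly lowers the grading: in terms of the splitting, $(S_E^{-1}NS_E)_{q,p}$ has Heisenberg order at most $q-p$ and vanishes for $q=p$, so it is non-zero only for $q>p$. Because the filtration on $E$ is full and of finite length (the filtration index ranges over a finite set since $E$ has finite rank and $E=\bigcup_pE^p$, $\bigcap_pE^p=0$), the operator $N$ is nilpotent in a strong sense: $N$ maps $\Gamma^\infty(E^p)$ to $\Gamma^\infty(E^{p+1})$ for the trivially re-indexed splitting components — more precisely, writing $N=\sum_{q>p}N_{qp}$, each iterate $N^m$ decomposes into components raising the grading index by at least $m$, hence $N^{m}=0$ once $m$ exceeds the length of the filtration on $E$. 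Then $A^{-1}=(\id+N)^{-1}=\sum_{m=0}^{M-1}(-1)^mN^m$ is a \emph{finite} sum, hence a differential operator. Finally, each $N^m\in\tDO^{-m}(E)\subseteq\tDO^0(E)$ because composition adds graded Heisenberg orders by \eqref{E:tsAB}'s underlying composition rule, and $N\in\tDO^{0}(E)$ with $\gr_0(N)=0$ means $N\in\tDO^{-1}(E)$ actually; in any case the finite sum lies in $\tDO^0(E)$ since $\tDO^{-m}(E)\subseteq\tDO^0(E)$. Undoing the initial reduction, the inverse of the original $A$ is $A^{-1}=(B_0A)^{-1}B_0=\bigl(\sum_{m}(-1)^mN^m\bigr)B_0\in\tDO^0(F,E)$.

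The one point needing care — and the main (mild) obstacle — is the nilpotency bookkeeping: I must make precise that ``$\gr_0(N)=0$'' combined with the finite length of the filtrations genuinely forces $N^M=0$ as an operator, not merely that $\gr_0(N^M)=0$. The clean way is to introduce the increasing filtration index set, say the filtration degrees occurring in $E$ lie in $\{p_{\min},\dots,p_{\max}\}$, and to verify by induction that $N^m$ maps $\Gamma^\infty(E^{p})$ into $\Gamma^\infty(E^{p+m})$ — equivalently, in the split picture, $S_E^{-1}N^mS_E$ has only components $(S_E^{-1}N^mS_E)_{q,p}$ with $q\ge p+m$ — using that the split components of $N$ satisfy $(S_E^{-1}NS_E)_{q,p}=0$ for $q\le p$ together with the composition of differential operators. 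Once $m>p_{\max}-p_{\min}$ there are no admissible index pairs, so $N^m=0$. Everything else is a routine application of \eqref{E:tsAB} and the fact (noted in the discussion preceding Remark~\ref{R:grsigma}) that there are no non-trivial differential operators of negative Heisenberg order. I would also remark that when the vector bundles are trivially filtered, $\gr(E)=E$, $\tilde A=A$, and the statement reduces to the obvious fact that a pointwise-invertible bundle homomorphism has a smooth inverse; the lemma is genuinely about propagating invertibility from the associated graded through the nilpotent ``off-diagonal'' part.
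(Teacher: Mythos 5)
Your proof is correct and follows essentially the same route as the paper: compose $A$ with $S_E\tilde A^{-1}S_F^{-1}$ to reduce to an operator inducing the identity on the associated graded, observe that the difference from the identity is nilpotent because it lowers the (finite-length) filtration, and invert via the finite Neumann series, which visibly has graded Heisenberg order at most zero. The only cosmetic difference is that the paper composes on the right (working on $\Gamma^\infty(F)$) while you compose on the left, and you spell out the nilpotency bookkeeping that the paper leaves implicit.
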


\begin{proof}
Choose splittings of the filtrations $S_E\colon\gr(E)\to E$ and $S_F\colon\gr(F)\to F$.
Then the differential operator 
$$
\id-AS_E\tilde A^{-1}S_F^{-1}\colon\Gamma^\infty(F)\to\Gamma^\infty(F)
$$ 
is nilpotent for it induces zero on the associated graded.
Hence, the inverse of $A$ can be expressed using the finite Neumann series,
\begin{equation}\label{E:invneumann}
A^{-1}=S_E\tilde A^{-1}S_F^{-1}\sum_{n=0}^\infty\Bigl(\id-AS_E\tilde A^{-1}S_F^{-1}\Bigr)^n.
\end{equation}
In view of this formula, $A^{-1}$ has graded Heisenberg order at most zero.
\end{proof}

\begin{lemma}\label{L:EP}
Consider a filtered vector bundle $E$ over a filtered manifold $M$.
Suppose $\Box\colon\Gamma^\infty(E)\to\Gamma^\infty(E)$ is a differential operator of graded Heisenberg order at most zero, and let $\tilde\Box\colon\gr(E)\to\gr(E)$ denote the associated graded vector bundle endomorphism, see Remark~\ref{R:grsigma}.
For each $x\in M$ let $\tilde P_x\in\eend(\gr(E_x))$ denote the spectral projection onto the generalized zero eigenspace of\/ $\tilde\Box_x\in\eend(\gr(E_x))$.
Assume that the rank of $\tilde P_x$ is locally constant in $x\in M$.

\begin{enumerate}[(a)]
\item 
Then $\tilde P\colon\gr(E)\to\gr(E)$ is a smooth vector bundle homomorphism,
$\tilde P^2=\tilde P$, $\tilde P\tilde\Box=\tilde\Box\tilde P$, and we obtain a decomposition of graded vector bundles,
\begin{equation}\label{E:tpdeco}
\gr(E)=\img(\tilde P)\oplus\ker(\tilde P),
\end{equation}
invariant under $\tilde\Box$, and such that $\tilde\Box$ is nilpotent on $\img(\tilde P)$ and invertible on $\ker(\tilde P)$.

\item
There exists a unique filtration-preserving differential operator $P\colon\Gamma^\infty(E)\to\Gamma^\infty(E)$ such that $P^2=P$, $P\Box=\Box P$, and $\gr(P)=\tilde P$.
This operator $P$ has graded Heisenberg order at most zero and provides a decomposition of filtered vector spaces,
\begin{equation}\label{E:Pdeco}
\Gamma^\infty(E)=\img(P)\oplus\ker(P),
\end{equation}
which is invariant under $\Box$ and such that $\Box$ is nilpotent on $\img(P):=P(\Gamma^\infty(E))$ and invertible on $\ker(P):=\{\psi\in\Gamma^\infty(E):\Box\psi=0\}$.

\item
Let $S\colon\gr(E)\to E$ be a splitting of the filtration.
Then
\begin{equation}\label{E:tL}
L\colon\Gamma^\infty(\gr(E))\to\Gamma^\infty(E),\qquad
L:=PS\tilde P+(\id-P)S(\id-\tilde P),
\end{equation}
is an invertible differential operator of graded Heisenberg order at most zero such that $\gr(L)=\id$ and $L^{-1}PL=\tilde P$.
Hence, $L$ induces filtration-preserving isomorphisms
$$
L\colon\Gamma^\infty(\img(\tilde P))\xrightarrow\cong\img(P)
\qquad\text{and}\qquad
L\colon\Gamma^\infty(\ker(\tilde P))\xrightarrow\cong\ker(P).
$$
Moreover, $L^{-1}\Box L$ is a differential operator of graded Heisenberg order at most zero satisfying $\gr(L^{-1}\Box L)=\tilde\Box$.
Furthermore, $L^{-1}\Box L$ preserves the decomposition \eqref{E:tpdeco}, its restriction to $\Gamma^\infty(\img(\tilde P))$ is nilpotent, and its restriction to $\Gamma^\infty(\ker(\tilde P))$ is invertible.
\end{enumerate}
\end{lemma}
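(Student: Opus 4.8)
The plan is to verify the three bundled claims — invertibility of $L$ with $\gr(L)=\id$ and $L^{-1}PL=\tilde P$; that $L$ restricts to the two filtration preserving isomorphisms; and the statements about $L^{-1}\Box L$ — by direct computation, leaning on parts (a) and (b) and on Lemma~\ref{L:inv} for the invertibility. First I would record that $L$, being a sum of compositions of $P$, $\id-P$ (differential operators of graded Heisenberg order at most zero by (b)), the filtration preserving bundle isomorphism $S$, and the bundle homomorphisms $\tilde P$, $\id-\tilde P$ (of graded Heisenberg order at most zero since $\gr(E)=\gr(E)$ carries the grading), is a differential operator of graded Heisenberg order at most zero by \eqref{E:tsAB}. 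Next I would compute $\gr(L)$: since $\gr(S)=\id$, $\gr(P)=\tilde P$ and $\gr(\id-P)=\id-\tilde P$ by (b), passing to the associated graded gives $\gr(L)=\tilde P\id\tilde P+(\id-\tilde P)\id(\id-\tilde P)=\tilde P^2+(\id-\tilde P)^2=\tilde P+(\id-\tilde P)=\id$, using $\tilde P^2=\tilde P$ from (a). Hence Lemma~\ref{L:inv} applies: $L$ is invertible with $L^{-1}$ a differential operator of graded Heisenberg order at most zero.

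For the intertwining relation $L^{-1}PL=\tilde P$, equivalently $PL=L\tilde P$, I would compute both sides directly from \eqref{E:tL}. Using $P^2=P$ and $P(\id-P)=0$ one gets $PL=PS\tilde P$, while $L\tilde P=PS\tilde P^2+(\id-P)S(\id-\tilde P)\tilde P=PS\tilde P$, using $\tilde P^2=\tilde P$ and $(\id-\tilde P)\tilde P=0$ from (a); so the two agree. From $L^{-1}PL=\tilde P$ it follows formally that $L$ carries $\ker(\tilde P)=\ker(L^{-1}PL)$ isomorphically onto $\ker(P)$ and $\img(\tilde P)=\img(L^{-1}PL)$ onto $\img(P)$; more carefully, $L$ maps $\Gamma^\infty(\img(\tilde P))=\tilde P(\Gamma^\infty(\gr(E)))$ onto $PL(\Gamma^\infty(\gr(E)))=P(\Gamma^\infty(E))=\img(P)$, and similarly $L(\Gamma^\infty(\ker(\tilde P)))=L(\id-\tilde P)(\Gamma^\infty(\gr(E)))=(\id-P)L(\Gamma^\infty(\gr(E)))=\ker(P)$; injectivity is automatic since $L$ is invertible, and these restrictions are filtration preserving because $L$ is of graded Heisenberg order at most zero, hence filtration preserving by Remark~\ref{R:grsigma}, as are $\img(\tilde P)$, $\ker(\tilde P)$, $\img(P)$, $\ker(P)$ (the latter two by (b), the former by the smoothness in (a) together with the block form respecting the grading).

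Finally, for $L^{-1}\Box L$: it is a composition of differential operators of graded Heisenberg order at most zero, hence of that order by \eqref{E:tsAB}, and $\gr(L^{-1}\Box L)=\gr(L)^{-1}\gr(\Box)\gr(L)=\id\cdot\tilde\Box\cdot\id=\tilde\Box$, using $\gr(L)=\id$ and $\gr(\Box)=\tilde\Box$ from (b). Since $L^{-1}PL=\tilde P$ and $P\Box=\Box P$ (from (b)), we get $\tilde P(L^{-1}\Box L)=L^{-1}P\Box L=L^{-1}\Box P L=(L^{-1}\Box L)\tilde P$, so $L^{-1}\Box L$ commutes with $\tilde P$ and therefore preserves the decomposition \eqref{E:tpdeco}. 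Its restriction to $\Gamma^\infty(\img(\tilde P))$ is conjugate, via the isomorphism $L\colon\Gamma^\infty(\img(\tilde P))\xrightarrow\cong\img(P)$, to the restriction of $\Box$ to $\img(P)$, which is nilpotent by (b); likewise its restriction to $\Gamma^\infty(\ker(\tilde P))$ is conjugate to $\Box|_{\ker(P)}$, which is invertible by (b). I do not anticipate a serious obstacle here — the content is entirely in (a) and (b), which are assumed; the only point requiring a little care is making sure the "restricts to an isomorphism onto $\img(P)$/$\ker(P)$" claims are phrased at the level of the full spaces $\Gamma^\infty(\img(\tilde P))$ etc.\ rather than just fibrewise, and that one really has $\img(P)=P(\Gamma^\infty(E))$ as a space of sections of the subbundle $L(\img(\tilde P))$, which follows from $L$ being filtration preserving of graded order zero.
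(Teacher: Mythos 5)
Your argument for part (c) is essentially the paper's: compute $\gr(L)=\id$ from $\gr(P)=\tilde P$ and $\gr(S)=\id$, invoke Lemma~\ref{L:inv} for invertibility, check $PL=L\tilde P$ algebraically from $P^2=P$ and $\tilde P^2=\tilde P$, and read off the restriction isomorphisms and the properties of $L^{-1}\Box L$. Those computations are all correct as far as they go.

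There are, however, two genuine gaps. First, you prove only (c): parts (a) and (b) are taken for granted, and (b) is where the real content of the lemma sits --- the existence (and uniqueness) of the differential projector $P$ with $P^2=P$, $P\Box=\Box P$, $\gr(P)=\tilde P$. The paper constructs $P$ by a holomorphic functional calculus argument, $P=\frac1{2\pi\ii}\oint_{|z|=\varepsilon}(z-\Box)^{-1}dz$, using Lemma~\ref{L:inv} to make sense of $(z-\Box)^{-1}$ as a differential operator of graded Heisenberg order at most zero for small $|z|\neq0$, and then patches the locally defined operators together; nothing in your plan supplies this. Second, and more subtly, there is a circularity in the order of your dependencies: you justify that $L^{-1}\Box L$ is nilpotent on $\Gamma^\infty(\img(\tilde P))$ and invertible on $\Gamma^\infty(\ker(\tilde P))$ by conjugating back to $\Box|_{\img(P)}$ and $\Box|_{\ker(P)}$ and citing (b) for their nilpotency and invertibility --- but in the paper precisely these clauses of (b) are \emph{deduced from} (c). The issue is not cosmetic: $\ker(P)$ is a filtered subspace of $\Gamma^\infty(E)$, not visibly the section space of a subbundle, so Lemma~\ref{L:inv} cannot be applied to $\Box|_{\ker(P)}$ directly; one first needs $L$ to identify $\ker(P)$ with $\Gamma^\infty(\ker(\tilde P))$. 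The correct direction is therefore the reverse of yours: from $\gr(L^{-1}\Box L)=\tilde\Box$ and part (a), conclude via Lemma~\ref{L:inv} that $L^{-1}\Box L$ is invertible on $\Gamma^\infty(\ker(\tilde P))$ and (from nilpotency of $\tilde\Box$ on $\img(\tilde P)$ and the finiteness of the filtration) nilpotent on $\Gamma^\infty(\img(\tilde P))$, and only then transport these statements to $\Box$ on $\ker(P)$ and $\img(P)$. With that reversal, and the uniqueness of $P$ checked using invertibility of $\Box$ on $\ker(P)$, your plan becomes the paper's proof.
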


\begin{proof}
Part (a) is well known.
To prove the existence of an operator $P$ as in (b) we assume, for a moment, that there exists $\varepsilon>0$ such that all non-trivial eigenvalues of $\tilde\Box_x$ lie outside the disk of radius $2\varepsilon$ centered at the origin in the complex plane for all $x\in M$.
Hence, the vector bundle homomorphism $z-\tilde\Box$ is invertible, for all $0<|z|<2\varepsilon$.
According to Lemma~\ref{L:inv}, $(z-\Box)^{-1}$ is a family of differential operators of graded Heisenberg order at most zero depending rationally on $z$ for $|z|<2\varepsilon$.
Hence,
\begin{equation}\label{E:P}
P:=\frac1{2\pi\ii}\oint_{|z|=\varepsilon}(z-\Box)^{-1}dz,
\end{equation}
defines a differential operator of graded Heisenberg order at most zero.
Clearly, $P^2=P$ and $\Box P=P\Box$.
Moreover, $P$ is filtration-preserving and $\gr(P)=\tilde P$ in view of $\tilde P_x=\frac1{2\pi\mathbf i}\oint_{|z|=\varepsilon}(z-\tilde \Box_x)^{-1}dz$.
In particular, $P$ gives rise to a decomposition of filtered vector spaces which is invariant under $\Box$ as indicated in \eqref{E:Pdeco}.
In general, the $\varepsilon$ used above, will only exist locally.
However, since the operator in \eqref{E:P} does not depend on the choice of $\varepsilon$, these locally defined differential operators match up and give rise to a globally defined differential operator $P$ with said properties.
Using $P^2=P$ and $\tilde P^2=\tilde P$, we immediately obtain $PL=L\tilde P$.
In view of $\gr(P)=\tilde P$ and $\gr(S)=\id$, we have $\gr(L)=\id$, hence $L$ is invertible according to Lemma~\ref{L:inv} and $L^{-1}$ is a differential operator of graded Heisenberg order at most zero.
Since $\Box$ preserves the decomposition \eqref{E:Pdeco}, $L^{-1}\Box L$ preserves the decomposition \eqref{E:tpdeco}.
From $\gr(L)=\id$, we get $\gr(L^{-1}\Box L)=\tilde\Box$.
Using the latter it is easy to see that $\Box$ is nilpotent on $\img(P)$ and invertible on $\ker(P)$.
Indeed, since $\tilde\Box$ is nilpotent on $\img(\tilde P)$, we conclude that $L^{-1}\Box L$ is nilpotent on $\Gamma^\infty(\img(\tilde P))$, and thus $\Box$ is nilpotent on $\img(P)$.
Furthermore, since $\tilde\Box$ is invertible on $\ker(\tilde P)$, Lemma~\ref{L:inv} implies that $L^{-1}\Box L$ is invertible on $\Gamma^\infty(\ker(\tilde P))$, and thus $\Box$ is invertible on $\ker(P)$.
Using the latter property one readily checks the uniqueness assertion in (b).
\end{proof}

After these preparations let us now turn to the construction of the sequences mentioned at the beginning of this section.
Let $E_i$ be filtered vector bundles over a filtered manifold $M$, and consider a sequence of differential operators,
\begin{equation}\label{E:EAE}
\cdots\to\Gamma^\infty(E_{i-1})\xrightarrow{A_{i-1}}\Gamma^\infty(E_i)\xrightarrow{A_i}\Gamma^\infty(E_{i+1})\to\cdots,
\end{equation}
such that $A_i$ is of graded Heisenberg order at most $k_i$.
Suppose
\begin{equation}\label{E:EdelE}
\cdots\leftarrow\Gamma^\infty(E_{i-1})\xleftarrow{\delta_i}\Gamma^\infty(E_i)\xleftarrow{\delta_{i+1}}\Gamma^\infty(E_{i+1})\leftarrow\cdots
\end{equation}
is a sequence of differential operators such that $\delta_i$ is of graded Heisenberg order at most $-k_{i-1}$.
Then the differential operators
\begin{equation}\label{E:EboxE}
\Box_i\colon\Gamma^\infty(E_i)\to\Gamma^\infty(E_i),\qquad
\Box_i:=A_{i-1}\delta_i+\delta_{i+1}A_i,
\end{equation}
are of graded Heisenberg order at most zero, see Section~\ref{SS:fVBDO}.
\footnote{In subsequent sections we will restrict our attention to the case when $\delta_i$ are (tensorial) vector bundle homomorphisms satisfying $\delta_i\delta_{i+1}=0$, but these restrictions would not be helpful here.}

We let $\tilde\Box_i\colon\gr(E_i)\to\gr(E_i)$, $\tilde\Box_i:=\gr(\Box_i)$, denote the associated graded vector bundle homomorphism, see Remark~\ref{R:grsigma}.
Moreover, for each $x\in M$, we let $\tilde P_{x,i}\colon\gr(E_{x,i})\to \gr(E_{x,i})$ denote the spectral projection onto the generalized zero eigenspace of $\tilde\Box_{x,i}\in\eend(\gr(E_{x,i}))$.
Note that the projectors $\tilde P_{x,i}$ preserve the grading on $\gr(E_{x,i})$.
We assume that the rank of $\tilde P_{x,i}$ is locally constant in $x$. 
Consequently, these fiber-wise projectors provide a smooth vector bundle projector $\tilde P_i\colon\gr(E_i)\to\gr(E_i)$ and we obtain a decomposition of graded vector bundles, 
\begin{equation}\label{E:decogrE}
\gr(E_i)=\img(\tilde P_i)\oplus\ker(\tilde P_i),
\end{equation}
which is invariant under $\tilde\Box_i$, see Lemma~\ref{L:EP}(a).
By construction, $\tilde\Box_i$ is nilpotent on $\img(\tilde P_i)$ and invertible on $\ker(\tilde P_i)$.
We let $\tilde A_i\colon\gr_*(E_i)\to\gr_{*-k_i}(E_{i+1})$ and $\tilde\delta_i\colon\gr_*(E_i)\to\gr_{*+k_{i-1}}(E_{i-1})$ denote the associated graded vector bundle homomorphisms of $A_i$ and $\delta_i$, respectively, see Remark~\ref{R:grsigma}.
Clearly, $\tilde\Box_i=\tilde A_{i-1}\tilde\delta_i+\tilde\delta_{i+1}\tilde A_i$, see \eqref{E:EboxE}.
If $\tilde A_i\tilde A_{i-1}=0$ for all $i$, then $\tilde\Box_{i+1}\tilde A_i=\tilde A_i\tilde\Box_i$, $\tilde P_{i+1}\tilde A_i=\tilde A_i\tilde P_i$, and, thus, $\tilde A_i$ preserves the decompositions \eqref{E:decogrE}.
Similarly, if $\tilde\delta_{i-1}\tilde\delta_i=0$ for all $i$, then $\tilde\Box_{i-1}\tilde\delta_i=\tilde\delta_i\tilde\Box_i$, $\tilde P_{i-1}\tilde\delta_i=\tilde\delta_i\tilde P_i$, and $\tilde\delta_i$ preserves the decompositions in \eqref{E:decogrE}.

According to Lemma~\ref{L:EP}(b), there exists a unique filtration-preserving differential operator 
$$
P_i\colon\Gamma^\infty(E_i)\to\Gamma^\infty(E_i)
$$ 
such that $P_i^2=P_i$, $P_i\Box_i=\Box_iP_i$, and $\gr(P_i)=\tilde P_i$.
These projectors are of graded Heisenberg order at most zero and provide a decomposition $\Gamma^\infty(E_i)=\img(P_i)\oplus\ker(P_i)$ such that $\Box_i$ is nilpotent on $\img(P_i)$ and invertible on $\ker(P_i)$.

We fix splittings for the filtrations, $S_i\colon\gr(E_i)\to E_i$, and consider the differential operators 
$$
L_i\colon\Gamma^\infty(\gr(E_i))\to\Gamma^\infty(E_i),\qquad
L_i:=P_iS_i\tilde P_i+(\id-P_i)S_i(\id-\tilde P_i).
$$
In view of Lemma~\ref{L:EP}(c), $L_i$ is an invertible differential operator of graded Heisenberg order at most zero such that $\gr(L_i)=\id$ and $L_i^{-1}P_iL_i=\tilde P_i$.
Moreover, $L^{-1}_i\Box_iL_i$ is a differential operator of graded Heisenberg order at most zero such that $\gr(L_i^{-1}\Box_iL_i)=\tilde\Box_i$.
Furthermore, $L^{-1}_i\Box_iL_i$ preserves the decomposition \eqref{E:decogrE}, its restriction to $\Gamma^\infty(\img(\tilde P_i))$ is nilpotent and its restriction to $\Gamma^\infty(\ker(\tilde P_i))$ is invertible.

Conjugating the original sequence \eqref{E:EAE} by $L_i$, we obtain two sequences,
\begin{equation}\label{E:EDE}
\cdots\to\Gamma^\infty(\img(\tilde P_{i-1}))\xrightarrow{D_{i-1}}\Gamma^\infty(\img(\tilde P_i))\xrightarrow{D_i}\Gamma^\infty(\img(\tilde P_{i+1}))\to\cdots
\end{equation}
and
\begin{equation}\label{E:EBE}
\cdots\to\Gamma^\infty(\ker(\tilde P_{i-1}))\xrightarrow{B_{i-1}}\Gamma^\infty(\ker(\tilde P_i))\xrightarrow{B_i}\Gamma^\infty(\ker(\tilde P_{i+1}))\to\cdots,
\end{equation}
where 
\begin{equation}\label{E:defDB}
D_i:=\tilde P_{i+1}L_{i+1}^{-1}A_iL_i|_{\Gamma^\infty(\img(\tilde P_i))}
\qquad\text{and}\qquad
B_i:=(\id-\tilde P_{i+1})L_{i+1}^{-1}A_iL_i|_{\Gamma^\infty(\ker(\tilde P_i))}
\end{equation}
are differential operators of graded Heisenberg order at most $k_i$.
This generalizes a construction for the de Rham complex due to Rumin, see \cite[Theorem~2.6]{R01} and \cite[Theorem~1]{R99}.

\begin{proposition}\label{P:DB}
In this situation the following hold true:
\begin{enumerate}[(a)]
\item
If $\tilde\sigma^{k_i}_x(A_i)\tilde\sigma^{k_{i-1}}_x(A_{i-1})=0$ for all $i$ and $x$, then 
$$
\tilde\sigma^{k_i}_x(L^{-1}_{i+1}A_iL_i)=\tilde\sigma^{k_i}_x(D_i)\oplus\tilde\sigma^{k_i}_x(B_i).
$$

\item 
If the sequence \eqref{E:EAE} is graded Rockland, then so are the sequences \eqref{E:EDE} and \eqref{E:EBE}.

\item
If $A_iA_{i-1}=0$ for all $i$, then the operator $L_{i+1}^{-1}A_iL_i$ decouples,
$$
L^{-1}_{i+1}A_iL_i=D_i\oplus B_i,
$$ 
and we have $D_iD_{i-1}=0$, $B_iB_{i-1}=0$.
In this situation, $G_i\colon\Gamma^\infty(\ker(\tilde P_i))\to\Gamma^\infty(\ker(\tilde P_i))$,
$$
G_i:=B_{i-1}(\id-\tilde P_{i-1})\tilde\delta_i\tilde\Box_i^{-1}
+(\id-\tilde P_i)\tilde\delta_{i+1}\tilde\Box_{i+1}^{-1}\tilde A_i,
$$ 
is an invertible differential operator of graded Heisenberg order at most zero with $\gr(G_i)=\id$ that conjugates the complex \eqref{E:EBE} into an acyclic tensorial complex, namely, 
$$
G_{i+1}^{-1}B_iG_i=\tilde A_i|_{\Gamma^\infty(\ker(\tilde P_i))}.
$$
Moreover, the restriction $L_i\colon\Gamma^\infty(\img(\tilde P_i))\to\Gamma^\infty(E_i)$ provides a chain map, that is, $A_iL_i|_{\Gamma^\infty(\img(\tilde P_i))}=L_{i+1}D_i$, which induces an isomorphism between the cohomologies of \eqref{E:EDE} and \eqref{E:EAE}.
More precisely, $\pi_i:=\tilde P_iL_i^{-1}\colon\Gamma^\infty(E_i)\to\Gamma^\infty(\img(\tilde P_i))$, is a chain map, $D_i\pi_i=\pi_{i+1}A_i$, which is an inverse of $L_i$ up to homotopy, i.e., $\pi_iL_i|_{\Gamma^\infty(\img(\tilde P_i))}=\id$ and\/ $\id-L_i\pi_i=A_{i-1}h_i+h_{i+1}A_i$ where $h_i\colon\Gamma^\infty(E_i)\to\Gamma^\infty(E_{i-1})$ is a differential operator of graded Heisenberg order at most $-k_{i-1}$ given by $h_i:=L_{i-1}G_{i-1}(\id-\tilde P_{i-1})\tilde\delta_i\tilde\Box_i^{-1}G_i^{-1}(\id-\tilde P_i)L_i^{-1}$.
\end{enumerate}
\end{proposition}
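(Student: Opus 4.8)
The plan is to prove (a) by a symbol computation, deduce (b) as a formal consequence of (a), and obtain (c) by upgrading the argument of (a) to the level of operators and then unwinding the explicit formulas.

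\emph{Part (a).} I would work throughout with the conjugated operators $L_{i+1}^{-1}A_iL_i\in\tDO^{k_i}(\gr(E_i),\gr(E_{i+1}))$ and $\Box_i':=L_i^{-1}\Box_iL_i\in\tDO^0(\gr(E_i))$. By Lemma~\ref{L:EP}(c) the operator $\Box_i'$ preserves the decomposition \eqref{E:decogrE}, is nilpotent on $\Gamma^\infty(\img(\tilde P_i))$ and invertible on $\Gamma^\infty(\ker(\tilde P_i))$; hence $\tilde\sigma^0_x(\Box_i')$ is block diagonal with respect to $\gr(E_{i,x})=\img(\tilde P_{i,x})\oplus\ker(\tilde P_{i,x})$, its $\img$-block being nilpotent and its $\ker$-block invertible (invertibility of the latter, in the relevant degree-zero component of $\mathcal U(\mathfrak t_xM)\otimes\eend(\ker(\tilde P_{i,x}))$, coming from Lemma~\ref{L:inv} applied to $\Box_i'|_{\Gamma^\infty(\ker(\tilde P_i))}$). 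Since by definition $D_i$ and $B_i$ are the $\img$- and $\ker$-blocks of $L_{i+1}^{-1}A_iL_i$, assertion (a) amounts to the vanishing of the two off-diagonal blocks of $\tilde\sigma^{k_i}_x(L_{i+1}^{-1}A_iL_i)$. The crux is the elementary identity $A_i\Box_i-\Box_{i+1}A_i=A_iA_{i-1}\delta_i-\delta_{i+2}A_{i+1}A_i$ coming from \eqref{E:EboxE}; under the hypothesis, $\tilde\sigma^{k_i+k_{i-1}}_x(A_iA_{i-1})=\tilde\sigma^{k_i}_x(A_i)\tilde\sigma^{k_{i-1}}_x(A_{i-1})=0$, so this difference has graded Heisenberg order at most $k_i-1$, whence $\tilde\sigma^{k_i}_x(L_{i+1}^{-1}A_iL_i)$ intertwines $\tilde\sigma^0_x(\Box_i')$ and $\tilde\sigma^0_x(\Box_{i+1}')$. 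Writing this intertwining relation blockwise and composing it repeatedly with a high power of the appropriate block of $\tilde\sigma^0_x(\Box_i')$ — which kills one off-diagonal block because the nilpotent block has vanishing high power, and kills the other because the invertible block remains invertible — forces both off-diagonal blocks to be zero. This blockwise vanishing is the one genuinely delicate step; everything afterwards is bookkeeping.

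\emph{Part (b).} If \eqref{E:EAE} is graded Rockland then for every $x$ and every non-trivial irreducible unitary representation $\pi$ of $\mathcal T_xM$ the sequence $\bar\pi(\tilde\sigma^{k_\bullet}_x(A_\bullet))$ on $\mathcal H_\infty\otimes\gr(E_{\bullet,x})$ is weakly exact, in particular a complex. Running the argument of (a) verbatim with $\bar\pi(\tilde\sigma^{\bullet}_x(-))$ in place of $\tilde\sigma^{\bullet}_x(-)$ — the only input being that consecutive symbols compose to zero — yields the decomposition $\bar\pi(\tilde\sigma^{k_i}_x(L_{i+1}^{-1}A_iL_i))=\bar\pi(\tilde\sigma^{k_i}_x(D_i))\oplus\bar\pi(\tilde\sigma^{k_i}_x(B_i))$. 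Since $\gr(L_i)=\id$, the operators $\bar\pi(\tilde\sigma^0_x(L_i))$ are invertible on $\mathcal H_\infty\otimes\gr(E_{i,x})$ with inverse $\bar\pi(\tilde\sigma^0_x(L_i^{-1}))$, so the symbol sequence of $L_{\bullet+1}^{-1}A_\bullet L_\bullet$ is conjugate to that of $A_\bullet$ and hence weakly exact; being the direct sum of the symbol sequences of $D_\bullet$ and of $B_\bullet$, and a direct sum of sequences being weakly exact precisely when each summand is, the sequences \eqref{E:EDE} and \eqref{E:EBE} are graded Rockland.

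\emph{Part (c).} When $A_iA_{i-1}=0$ the identity of the first paragraph becomes $A_i\Box_i=\Box_{i+1}A_i$ at the operator level, so $L_{i+1}^{-1}A_iL_i$ intertwines $\Box_i'$ and $\Box_{i+1}'$ as differential operators; the same block-and-iteration argument, now with genuine operators and using the nilpotency and invertibility of $\Box_i'$ on the two pieces of \eqref{E:decogrE}, gives $L_{i+1}^{-1}A_iL_i=D_i\oplus B_i$, whence $D_iD_{i-1}\oplus B_iB_{i-1}=L_{i+1}^{-1}A_iA_{i-1}L_{i-1}=0$. For the operator $G_i$ (in which $\tilde\Box_i^{-1}$ denotes the inverse of $\tilde\Box_i$ on $\ker(\tilde P_i)$) one first checks $\gr(G_i)=\id$ by a short computation using $\gr(B_{i-1})=\tilde A_{i-1}|_{\ker(\tilde P_{i-1})}$, the relation $\tilde A_i\tilde\Box_i^{-1}=\tilde\Box_{i+1}^{-1}\tilde A_i$ on $\ker(\tilde P_i)$, and the identity $\tilde\Box_i|_{\ker(\tilde P_i)}=\tilde A_{i-1}(\id-\tilde P_{i-1})\tilde\delta_i+(\id-\tilde P_i)\tilde\delta_{i+1}\tilde A_i$ obtained by restricting $\tilde\Box_i=\tilde A_{i-1}\tilde\delta_i+\tilde\delta_{i+1}\tilde A_i$ to $\ker(\tilde P_i)$ (note $\tilde A_i\tilde A_{i-1}=0$, so the $\tilde A_\bullet$ preserve \eqref{E:decogrE}); then Lemma~\ref{L:inv} shows $G_i$ is invertible with inverse of graded Heisenberg order at most zero. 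The conjugation $G_{i+1}^{-1}B_iG_i=\tilde A_i|_{\Gamma^\infty(\ker(\tilde P_i))}$ then follows by expanding $B_iG_i$ and $G_{i+1}(\tilde A_i|_{\ker(\tilde P_i)})$ from the definition of $G_\bullet$ and using $B_iB_{i-1}=0$ together with $\tilde A_{i+1}\tilde A_i=0$, both sides collapsing to $B_i(\id-\tilde P_i)\tilde\delta_{i+1}\tilde\Box_{i+1}^{-1}\tilde A_i$. Next, $s_i:=(\id-\tilde P_{i-1})\tilde\delta_i\tilde\Box_i^{-1}$ contracts the tensorial complex $\tilde A_\bullet|_{\ker(\tilde P_\bullet)}$, i.e.\ $\tilde A_{i-1}s_i+s_{i+1}\tilde A_i=\id$ on $\ker(\tilde P_i)$ (again by the displayed identity for $\tilde\Box_i|_{\ker(\tilde P_i)}$), so $\tilde s_i:=G_{i-1}s_iG_i^{-1}$ is a contracting homotopy for the complex $B_\bullet$. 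Finally, conjugating $L_{\bullet+1}^{-1}A_\bullet L_\bullet=D_\bullet\oplus B_\bullet$ and $B_{i-1}\tilde s_i+\tilde s_{i+1}B_i=\id$ by the $L_\bullet$ yields at once the chain-map identities for $\pi_i=\tilde P_iL_i^{-1}$ (that $A_iL_i|_{\Gamma^\infty(\img(\tilde P_i))}=L_{i+1}D_i$, $D_i\pi_i=\pi_{i+1}A_i$, and $\pi_iL_i|_{\Gamma^\infty(\img(\tilde P_i))}=\id$) together with the homotopy $\id-L_i\pi_i=A_{i-1}h_i+h_{i+1}A_i$, $h_i=L_{i-1}\tilde s_i(\id-\tilde P_i)L_i^{-1}$; these relations exhibit $\pi_\bullet$ and $L_\bullet|_{\Gamma^\infty(\img(\tilde P_\bullet))}$ as mutually inverse up to homotopy, hence as inducing an isomorphism on cohomology.
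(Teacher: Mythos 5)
Your proof is correct and follows essentially the same route as the paper: in each part the decoupling is extracted from the fact that $A_i$ intertwines $\Box_i$ and $\Box_{i+1}$ (exactly, resp.\ up to lower graded order) as soon as $A_iA_{i-1}=0$ holds at the operator resp.\ symbol level, and the explicit $G_i$, chain maps and homotopies are then unwound from the definitions. The only mechanical difference is that the paper converts the intertwining into block-diagonality via the projector relation $P_{i+1}A_i=A_iP_i$ coming from the contour-integral formula \eqref{E:P}, whereas you iterate powers of the nilpotent and invertible blocks of $L_i^{-1}\Box_iL_i$ directly; your verifications of $\gr(G_i)=\id$, of $B_iG_i=G_{i+1}\tilde A_i$, and of the homotopy identities correctly fill in what the paper dismisses as ``straight forward''.
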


\begin{proof}
To show part (c) suppose $A_iA_{i-1}=0$ for all $i$.
Then $\Box_{i+1}A_i=A_i\Box_i$, see \eqref{E:EboxE}, and thus $P_{i+1}A_i=A_iP_i$, see \eqref{E:P}.
Using the relation $L_i^{-1}P_iL_i=\tilde P_i$ from Lemma~\ref{L:EP}(c), we obtain $\tilde P_{i+1}(L_{i+1}^{-1}A_iL_i)=(L_{i+1}^{-1}A_iL_i)\tilde P_i$, hence $\tilde P_{i+1}(L_{i+1}^{-1}A_iL_i)(\id-\tilde P_i)=0=(\id-\tilde P_{i+1})(L_{i+1}^{-1}A_iL_i)\tilde P_i$,
$$
L^{-1}_{i+1}A_iL_i=\tilde P_{i+1}(L^{-1}_{i+1}A_iL_i)\tilde P_i
+(\id-\tilde P_{i+1})(L^{-1}_{i+1}A_iL_i)(\id-\tilde P_i),
$$ 
and thus $L^{-1}_{i+1}A_iL=D_i\oplus B_i$, cf.\ \eqref{E:defDB}.
Clearly, $A_iA_{i-1}=0$ implies $D_iD_{i-1}=0$, $B_iB_{i-1}=0$, and $B_iG_i=G_{i+1}\tilde A_i$.
Clearly, $\gr(G_i)=\id$ and, thus, $G_i$ is invertible according to Lemma~\ref{L:inv}.
The remaining assertions in (c) are now straight forward.

To show part (a) suppose $\tilde\sigma^{k_i}_x(A_i)\tilde\sigma^{k_{i-1}}_x(A_{i-1})=0$ for all $i$ and all $x\in M$.
Using the multiplicativity of the graded Heisenberg principal symbol, see \eqref{E:tsAB}, we conclude $\tilde\sigma^0_x(\Box_{i+1})\tilde\sigma^{k_i}_x(A_i)=\tilde\sigma_x^{k_i}(A_i)\tilde\sigma^0_x(\Box_i)$, see \eqref{E:EboxE}, and thus $\tilde\sigma^0_x(P_{i+1})\tilde\sigma^{k_i}_x(A_i)=\tilde\sigma^{k_i}_x(A_i)\tilde\sigma^0_x(P_i)$, see \eqref{E:P}.
Combining this with $L_i^{-1}P_iL_i=\tilde P_i$ from Lemma~\ref{L:EP}(c), we obtain $\tilde\sigma_x^{k_i}\bigl(\tilde P_{i+1}(L_{i+1}^{-1}A_iL_i)\bigr)=\tilde\sigma^{k_i}_x\bigl((L_{i+1}^{-1}A_iL_i)\tilde P_i\bigr)$, hence 
$\tilde\sigma^{k_i}_x\bigl(\tilde P_{i+1}(L_{i+1}^{-1}A_iL_i)(\id-\tilde P_i)\bigr)=0=\tilde\sigma^{k_i}_x\bigl((\id-\tilde P_{i+1})(L_{i+1}^{-1}A_iL_i)\tilde P_i\bigr)$,
$$
\tilde\sigma^{k_i}_x(L^{-1}_{i+1}A_iL_i)
=\tilde\sigma^{k_i}_x\bigl(\tilde P_{i+1}(L^{-1}_{i+1}A_iL_i)\tilde P_i\bigr)
+\tilde\sigma^{k_i}_x\bigl((\id-\tilde P_{i+1})(L^{-1}_{i+1}A_iL_i)(\id-\tilde P_i)\bigr),
$$ 
and thus $\tilde\sigma^{k_i}_x(L^{-1}_{i+1}A_iL_i)=\tilde\sigma^{k_i}_x(D_i)\oplus\tilde\sigma^{k_i}_x(B_i)$.

To see (b) suppose the sequence \eqref{E:EAE} is graded Rockland, that is, the graded Heisenberg principal symbol sequences $\sigma^{k_i}_x(A_i)$ are weakly exact in every non-trivial irreducible unitary representation of $\mathcal T_xM$.
Clearly, the conjugated sequence $\tilde\sigma^{k_i}_x(L_{i+1}^{-1}A_iL_i)=\tilde\sigma^0_x(L_{i+1})^{-1}\tilde\sigma^{k_i}_x(A_i)\tilde\sigma^0_x(L_i)$ has the same property.
Note that $\tilde\sigma^{k_i}_x(A_i)\tilde\sigma^{k_{i-1}}_x(A_{i-1})=0$ since this relation holds true in every non-trivial irreducible unitary representation of $\mathcal T_xM$.
Hence part (a), permits us to conclude that the graded Heisenberg principal symbol sequences $\tilde\sigma^{k_i}_x(D_i)$ and $\tilde\sigma^{k_i}_x(B_i)$ are weakly exact in every non-trivial irreducible unitary representation of $\mathcal T_xM$ too.
\end{proof}

\subsection{Splitting operators}\label{SS:splitting-operators}

The sequences \eqref{E:EDE} and \eqref{E:EBE} constructed above depend on the operators $A_i$ and $\delta_i$, see \eqref{E:EAE} and \eqref{E:EdelE} but also on the splittings $S_i\colon\gr(E_i)\to E_i$. 
We will now specialize to a situation in which one can construct a variant of the sequence \eqref{E:EDE} which does not depend on the splittings $S_i$, but only on $A_i$ and $\delta_i$.
The sequences obtain in this way, see Proposition~\ref{P:D}, generalize the curved BGG sequences \cite{CSS01,CD01} discussed in Section~\ref{SS:BGG}.

We continue to use the notation from the preceding sections.
In particular, $E_i$ are filtered vector bundles over a filtered manifold $M$, and we consider a sequence,
\begin{equation}\label{E:EAE2}
\cdots\to\Gamma^\infty(E_{i-1})\xrightarrow{A_{i-1}}\Gamma^\infty(E_i)\xrightarrow{A_i}\Gamma^\infty(E_{i+1})\to\cdots,
\end{equation}
where $A_i$ is a differential operator of graded Heisenberg order at most $k_i$.

\begin{definition}[Codifferential of Kostant type]\label{D:Kdelta}
A sequence of vector bundle homomorphisms,
\begin{equation}\label{E:admEdelE}
\cdots\leftarrow E_{i-1}\xleftarrow{\delta_i}E_i\xleftarrow{\delta_{i+1}}E_{i+1}\leftarrow\cdots,
\end{equation}
will be called a \emph{codifferential of Kostant type} for the sequence \eqref{E:EAE2} if it has the following properties:
\begin{enumerate}[(i)]
\item
$\delta_i\delta_{i+1}=0$ for all $i$.
\item
$\delta_i$ maps the filtration space $E_i^p$ into $E_{i-1}^{p+k_{i-1}}$.\footnote{In particular, $\delta_i$ is of graded Heisenberg order at most $-{k_{i-1}}$.}
\item
There exist splittings of the filtrations, $S_i\colon\gr(E_i)\to E_i$, such that $\tilde\delta_i=S_{i-1}^{-1}\delta_iS_i$.
\footnote{Only the existence of such splittings is required, $S_i$ is not part of the data.}
\item
$\tilde\delta_{i,x}\tilde P_{i,x}=0$ for each $x\in M$.
\end{enumerate}
Here $\tilde A_i\colon\gr_*(E_i)\to\gr_{*-k_i}(E_{i+1})$, 
$\tilde\delta_i\colon\gr_*(E_i)\to\gr_{*+k_{i-1}}(E_{i-1})$, and
$\tilde\Box_i\colon\gr_*(E_i)\to\gr_*(E_i)$
denote the vector bundle homomorphisms induced by $A_i$, $\delta_i$ and $\Box_i=A_{i-1}\delta_i+\delta_{i+1}A_i$ on the associated graded vector bundles, respectively, see Remark~\ref{R:grsigma}.
Moreover, $\tilde P_{i,x}\colon E_{i,x}\to E_{i,x}$ denotes the spectral projection onto the generalized zero eigenspace of $\tilde\Box_{i,x}$.
\end{definition}

\begin{lemma}\label{L:L}
Consider a sequence of vector bundle homomorphisms $\delta_i$ as in \eqref{E:admEdelE} which is a codifferential of Kostant type for the sequence \eqref{E:EAE2}, see Definition~\ref{D:Kdelta}, and assume that the rank of $\delta_{x,i}$ is locally constant in $x$, for each $i$.
Then the following hold true:

\begin{enumerate}[(a)]
\item
The rank of $\tilde P_{i,x}$ is locally constant in $x$, these families provide smooth vector bundle projectors, $\tilde P_i\colon\gr(E_i)\to\gr(E_i)$, such that $\tilde\delta_i\tilde\delta_{i+1}=0$, $\tilde\Box_{i-1}\tilde\delta_i=\tilde\delta_i\tilde\Box_i$, $\tilde P_i\tilde\Box_i=\tilde\Box_i\tilde P_i$, $\tilde P_{i-1}\tilde\delta_i=\tilde\delta_i\tilde P_i=0$, and we have a decomposition of graded  vector bundles,
\begin{equation}\label{E:kertddeco}
\ker(\tilde\delta_i)=\img(\tilde P_i)\oplus\img(\tilde\delta_{i+1}).
\end{equation}

\item
Let $P_i\colon\Gamma^\infty(E_i)\to\Gamma^\infty(E_i)$ denote the unique filtration-preserving differential operator such that $P_i^2=P_i$, $P_i\Box_i=\Box_iP_i$ and $\gr(P_i)=\tilde P_i$, see Lemma~\ref{L:EP}(b).
Then $\Box_{i-1}\delta_i=\delta_i\Box_i$, $P_{i-1}\delta_i=\delta_iP_i=0$, and we obtain a decomposition of filtered spaces,
\begin{equation}\label{E:kertdeco}
\Gamma^\infty(\ker(\delta_i))=\img(P_i)\oplus\Gamma^\infty(\img(\delta_{i+1})).
\end{equation}
Moreover, the quotient bundle $\mathcal H_i:=\ker(\delta_i)/\img(\delta_{i+1})$ is a filtered vector bundle and $P_i$ factors to a differential operator of graded Heisenberg order at most zero,
$$
\bar L_i\colon\Gamma^\infty(\mathcal H_i)\xrightarrow\cong\img(P_i)\subseteq\Gamma^\infty(E_i),
$$
which is inverse to the restriction of the canonical projection $\bar\pi_i\colon\ker(\delta_i)\to\mathcal H_i$, that is, $\bar\pi_i\bar L_i=\id$ and $\bar L_i\bar\pi_i=P_i$.

\item
Assume, moreover, $\tilde A_{i+1}\tilde A_i=0$ for all $i$.
Then $\tilde\Box_{i+1}\tilde A_i=\tilde A_i\tilde\Box_i$, $\tilde P_{i+1}\tilde A_i=\tilde A_i\tilde P_i$, $\tilde\delta_{i+1}\tilde A_i\tilde P_i=0$, and 
\begin{equation}\label{E:kertbox}
\img(\tilde P_i)=\ker(\tilde\Box_i)
=\ker(\tilde\delta_i)\cap\ker(\tilde\delta_{i+1}\tilde A_i).
\end{equation}
Moreover, $\delta_{i+1}A_iP_i=0$, and 
\begin{equation}\label{E:kerbox}
\img(P_i)=\ker(\Box_i)=\ker(\delta_i)\cap\ker(\delta_{i+1}A_i).
\end{equation}
\end{enumerate}
\end{lemma}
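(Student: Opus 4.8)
The plan is to prove Lemma~\ref{L:L} by reducing everything to the algebraic facts about the symbols $\tilde\delta_i$, $\tilde A_i$, $\tilde\Box_i$, and then lifting those facts through the contour-integral construction of $P_i$ from Lemma~\ref{L:EP}(b). First I would establish part~(a). Since $\delta_i$ has locally constant rank, the associated graded $\tilde\delta_i=S_{i-1}^{-1}\delta_iS_i$ (for the special splittings provided by Definition~\ref{D:Kdelta}(iii)) is a vector bundle homomorphism of locally constant rank, so $\ker(\tilde\delta_i)$ and $\img(\tilde\delta_{i+1})$ are smooth subbundles. Property~(i) gives $\tilde\delta_i\tilde\delta_{i+1}=0$, and $\tilde\Box_i=\tilde A_{i-1}\tilde\delta_i+\tilde\delta_{i+1}\tilde A_i$ together with $\tilde\delta_i\tilde\delta_{i+1}=0$ yields $\tilde\Box_{i-1}\tilde\delta_i=\tilde\delta_i\tilde\delta_{i+1}\tilde A_i=\tilde\delta_i\tilde\Box_i$. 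Property~(iv), $\tilde\delta_i\tilde P_i=0$, combined with $\tilde P_{i-1}$ being a polynomial in $\tilde\Box_{i-1}$ that kills its non-generalized-zero part, gives $\tilde P_{i-1}\tilde\delta_i=\tilde\delta_i\tilde P_i=0$ (the first equality because $\tilde\Box_{i-1}\tilde\delta_i=\tilde\delta_i\tilde\Box_i$ intertwines the spectral projections). The decomposition $\ker(\tilde\delta_i)=\img(\tilde P_i)\oplus\img(\tilde\delta_{i+1})$ is then the standard Hodge-theoretic argument: $\img(\tilde\delta_{i+1})\subseteq\ker(\tilde\delta_i)$ and is $\tilde\Box_i$-invariant with $\tilde\Box_i$ invertible on it (it sits inside $\ker(\tilde P_i)$ since $\tilde P_i\tilde\delta_{i+1}=0$), while $\img(\tilde P_i)\subseteq\ker(\tilde\delta_i)$, and a dimension/rank count finishes it — one checks $\ker(\tilde\delta_i)=\img(\tilde P_i)\oplus(\ker(\tilde P_i)\cap\ker(\tilde\delta_i))$ and identifies the last summand with $\img(\tilde\delta_{i+1})$ using that $\tilde\Box_i$ restricted there is an isomorphism onto itself realized by $\tilde A_{i-1}\tilde\delta_i+\tilde\delta_{i+1}\tilde A_i$.

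Next I would prove part~(b). From $\delta_i\delta_{i+1}=0$ we get $\Box_{i-1}\delta_i=(A_{i-2}\delta_{i-1}+\delta_iA_{i-1})\delta_i=\delta_iA_{i-1}\delta_i=\delta_i(A_{i-1}\delta_i+\delta_{i+1}A_i)=\delta_i\Box_i$ (using $\delta_i\delta_{i+1}=0$ again to insert the missing term). Since $P_i$ is, locally, the contour integral $\frac1{2\pi\ii}\oint(z-\Box_i)^{-1}dz$ and $\delta_i$ intertwines $\Box_i$ with $\Box_{i-1}$, it also intertwines the resolvents: $(z-\Box_{i-1})^{-1}\delta_i=\delta_i(z-\Box_i)^{-1}$ as differential operators (this uses Lemma~\ref{L:inv} to make sense of the inverses as operators of graded order zero, and the intertwining propagates through the finite Neumann series). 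Hence $P_{i-1}\delta_i=\delta_iP_i$. To get $P_{i-1}\delta_i=0$ I would argue that $\gr$ of $P_{i-1}\delta_i$ equals $\tilde P_{i-1}\tilde\delta_i=0$ from part~(a), and then note that $P_{i-1}\delta_i$ is filtration preserving with vanishing associated graded in every degree, hence is already the zero operator — more carefully, $P_{i-1}\delta_i$ maps $E_i^p$ into $E_{i-1}^{p+k_{i-1}}$ and its symbol in each bidegree is a component of $\tilde P_{i-1}\tilde\delta_i$, which vanishes, so iterating the filtration argument it is $0$. Similarly $\delta_iP_i=0$. The decomposition \eqref{E:kertdeco} then follows: $\Gamma^\infty(\ker(\delta_i))$ contains $\img(P_i)$ (since $\delta_iP_i=0$) and $\Gamma^\infty(\img(\delta_{i+1}))$, these are complementary because on $\img(\delta_{i+1})$ the operator $\Box_i$ is invertible (by Lemma~\ref{L:EP}(b), as $\img(\delta_{i+1})\subseteq\ker(P_i)$ thanks to $P_i\delta_{i+1}=0$) while $\Box_i$ is nilpotent on $\img(P_i)$, and $\Gamma^\infty(\ker(\delta_i))=\img(P_i)\oplus(\ker(P_i)\cap\ker(\delta_i))$ with the last summand equal to $\Gamma^\infty(\img(\delta_{i+1}))$ by the same Hodge argument as in~(a) applied at the level of sections. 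The quotient $\mathcal H_i$ is a filtered vector bundle because $\ker(\delta_i)$ and $\img(\delta_{i+1})$ are smooth subbundles of locally constant rank; $P_i$ kills $\img(\delta_{i+1})$ and restricts to the identity on a complement, so it factors through $\mathcal H_i$ giving $\bar L_i$ with $\bar\pi_i\bar L_i=\id$ and $\bar L_i\bar\pi_i=P_i$, and $\bar L_i$ inherits graded Heisenberg order at most zero from $P_i$.

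For part~(c) I would assume $\tilde A_{i+1}\tilde A_i=0$. Then $\tilde\Box_{i+1}\tilde A_i=(\tilde A_i\tilde\delta_{i+1}+\tilde\delta_{i+2}\tilde A_{i+1})\tilde A_i=\tilde A_i\tilde\delta_{i+1}\tilde A_i=\tilde A_i(\tilde\delta_{i+1}\tilde A_i+\tilde A_{i-1}\tilde\delta_i)=\tilde A_i\tilde\Box_i$ (inserting $\tilde A_{i-1}\tilde\delta_i$ costs nothing since $\tilde A_i\tilde A_{i-1}=0$), whence $\tilde P_{i+1}\tilde A_i=\tilde A_i\tilde P_i$ by spectral calculus, and $\tilde\delta_{i+1}\tilde A_i\tilde P_i=\tilde\delta_{i+1}\tilde P_{i+1}\tilde A_i=0$ by~(a). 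For \eqref{E:kertbox}: $\img(\tilde P_i)=\ker(\tilde\Box_i)$ because on $\img(\tilde P_i)$ the nilpotent operator $\tilde\Box_i=\tilde A_{i-1}\tilde\delta_i+\tilde\delta_{i+1}\tilde A_i$ has image inside $\img(\tilde\delta_i)\oplus\img(\tilde\delta_{i+1})$; more directly I would show $\ker(\tilde\Box_i)\cap\img(\tilde P_i)$ is both equal to $\img(\tilde P_i)$ (since on $\img(\tilde P_i)$, $\langle\tilde\Box_i v,v\rangle$ with respect to a suitable inner product splits as $|\tilde\delta_i(\cdot)|^2+|\tilde A_i(\cdot)|^2$-type expression — but cleaner is: $\tilde\Box_i v=0 \Rightarrow \tilde\delta_i v=0$ and $\tilde A_i v\in\ker\tilde\delta_{i+1}\cap\img\tilde A_i$, forcing via the Hodge decomposition that $v\in\img\tilde P_i$, and conversely nilpotence plus $\tilde P_i$-invariance is not enough so one genuinely uses the codifferential structure). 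Then $\ker(\tilde\Box_i)=\ker(\tilde\delta_i)\cap\ker(\tilde\delta_{i+1}\tilde A_i)$: the inclusion $\subseteq$ is immediate from $\tilde\delta_i\tilde P_i=0$ and $\tilde\delta_{i+1}\tilde A_i\tilde P_i=0$; for $\supseteq$, if $\tilde\delta_i v=0$ and $\tilde\delta_{i+1}\tilde A_i v=0$ then $\tilde\Box_i v=\tilde\delta_{i+1}\tilde A_i v=0$. Lifting to operators: $\Box_iP_i=0$ combined with $\delta_iP_i=0$ from~(b) gives $\delta_{i+1}A_iP_i=\Box_iP_i-A_{i-1}\delta_iP_i=0$, and $\img(P_i)=\ker(\Box_i)$ follows from Lemma~\ref{L:EP}(b) (nilpotent on $\img(P_i)$, invertible on $\ker(P_i)$, so $\ker(\Box_i)\subseteq\img(P_i)$, and $\ker(\Box_i)\supseteq\img(P_i)$ needs $\Box_i P_i = 0$ which I would get from the symbol identity $\tilde\Box_i\tilde P_i$ being nilpotent of the right structure — actually from $\img(\tilde P_i)=\ker(\tilde\Box_i)$ in~(a)'s refinement, lifted via the filtration argument exactly as for $P_{i-1}\delta_i=0$). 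Finally $\ker(\Box_i)=\ker(\delta_i)\cap\ker(\delta_{i+1}A_i)$ by the same two-sided argument as in the symbol case.

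The main obstacle I anticipate is the lifting step — transferring the algebraic identities among $\tilde\delta_i,\tilde A_i,\tilde\Box_i,\tilde P_i$ to operator identities among $\delta_i,A_i,\Box_i,P_i$. The intertwining relations like $P_{i-1}\delta_i=\delta_iP_i$ come for free from the contour-integral formula, but the \emph{vanishing} statements ($P_{i-1}\delta_i=0$, $\delta_iP_i=0$, $\delta_{i+1}A_iP_i=0$, $\Box_iP_i=0$) require the observation that a filtration-preserving differential operator whose associated graded vanishes in every bidegree must itself vanish — which needs the absence of negative-order differential operators (used already in Remark~\ref{R:grsigma}) plus an induction on the filtration length. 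I would isolate this as a small auxiliary observation and apply it uniformly. The Hodge-type decompositions \eqref{E:kertddeco}, \eqref{E:kertdeco}, \eqref{E:kertbox}, \eqref{E:kerbox} are then routine once the projector identities are in place, using that $\Box_i$ (resp.\ $\tilde\Box_i$) is invertible off $\img(P_i)$ (resp.\ $\img(\tilde P_i)$) and that $\delta_{i+1}$ maps into $\ker(P_i)$.
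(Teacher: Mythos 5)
Your part (a) and the intertwining relations ($\Box_{i-1}\delta_i=\delta_i\Box_i$ and $P_{i-1}\delta_i=\delta_iP_i$ via the resolvent/contour integral) are correct and match the paper. The genuine gap is precisely the ``lifting step'' you flag as your main obstacle: the auxiliary observation you propose to isolate --- that a filtration preserving differential operator whose associated graded vanishes must itself vanish --- is false. The operator $L_i-\id$ of Lemma~\ref{L:EP}(c) is a counterexample: $\gr(L_i)=\id$, so $\gr(L_i-\id)=0$, yet $L_i\neq\id$ in general. Knowing $\gr(P_{i-1}\delta_i)=\tilde P_{i-1}\tilde\delta_i=0$ only says that $P_{i-1}\delta_i$ raises the filtration degree by one more than expected; to ``iterate'' you would need the next associated graded to vanish as well, and that is \emph{not} a component of $\tilde P_{i-1}\tilde\delta_i$ --- it is built from the off-diagonal, positive Heisenberg order pieces of $P_{i-1}$ (equivalently, the non-tensorial part of $\tilde\sigma^0(P_{i-1})$), over which you have no control. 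The same defect undermines $\delta_iP_i=0$, $\Box_iP_i=0$ and $\delta_{i+1}A_iP_i=0$ in your part (c).

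The paper closes this gap with an exact conjugation rather than a symbol computation, and this is where Definition~\ref{D:Kdelta}(iii) --- which your vanishing argument never actually invokes --- is essential. Choosing the splittings $S_i$ with $\delta_i=S_{i-1}\tilde\delta_iS_i^{-1}$ and forming $L_i=P_iS_i\tilde P_i+(\id-P_i)S_i(\id-\tilde P_i)$ as in \eqref{E:Li4}, one checks, using only the intertwining $\delta_iP_i=P_{i-1}\delta_i$ and the fiber wise identities $\tilde P_{i-1}\tilde\delta_i=\tilde\delta_i\tilde P_i=0$ from (a), that $\delta_iL_i=L_{i-1}\tilde\delta_i$ holds \emph{exactly}, while Lemma~\ref{L:EP}(c) gives $L_i^{-1}P_iL_i=\tilde P_i$ exactly; hence $\delta_iP_i=L_{i-1}\tilde\delta_i\tilde P_iL_i^{-1}=0$ as an operator identity, with no lifting argument. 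For part (c) the paper then computes $\delta_{i+1}A_iP_i=\Box_iP_i=P_i\Box_iP_i=P_i\delta_{i+1}A_iP_i+P_iA_{i-1}\delta_iP_i=0$, using $P_i\delta_{i+1}=0$ and $\delta_iP_i=0$; note that your route takes $\Box_iP_i=0$ as an input to be lifted from symbols, whereas it is in fact a consequence obtained this way. Your Hodge-type decompositions are fine once the projector identities are in place.
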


\begin{proof}
In view of $\tilde\delta_{i-1,x}\tilde\delta_{i,x}=0$ and $\tilde\delta_{i,x}\tilde P_{i,x}=0$ we also have $\tilde\Box_{i-1,x}\tilde\delta_{i,x}=\tilde\delta_{i,x}\tilde\Box_{i,x}$ and $\tilde P_{i-1,x}\tilde\delta_{i,x}=\tilde\delta_{i,x}\tilde P_{i,x}=0$.
Moreover, since $\tilde A_{i-1,x}\tilde\delta_{i,x}+\tilde\delta_{i+1,x}\tilde A_{i,x}=\tilde\Box_{i,x}$ is invertible on $\ker(\tilde P_{i,x})$, it is straight forward to establish the finite dimensional decomposition $\ker(\tilde\delta_{i,x})=\img(\tilde P_{i,x})\oplus\img(\tilde\delta_{i+1,x})$ for each $x\in M$.
Since $\tilde\delta_{i,x}$ is assumed to have locally constant rank, the same must be true for $\tilde P_{i,x}$.
Hence, $\tilde P_i$ is a smooth vector bundle homomorphism, and \eqref{E:kertddeco} is a decomposition of smooth vector bundles.
The remaining assertions in (a) are obvious.

Clearly, $\delta_i\Box_i=\Box_{i-1}\delta_i$ and $\delta_iP_i=P_{i-1}\delta_i$, see \eqref{E:EboxE} and \eqref{E:P}.
To show $\delta_iP_i=0$, we consider the operator 
\begin{equation}\label{E:Li4}
L_i\colon\Gamma^\infty(\gr(E_i))\to\Gamma^\infty(E_i),\qquad
L_i=P_iS_i\tilde P_i+(\id-P_i)S_i(\id-\tilde P_i),
\end{equation}
cf.~\eqref{E:tL} in Lemma~\ref{L:EP}, where the splittings $S_i$ are as in Definition~\ref{D:Kdelta}(iii).
Then $L_{i-1}^{-1}\delta_iL_i=\tilde\delta_i$ and $L^{-1}_iP_iL_i=\tilde P_i$.
The assertion $\delta_iP_i=0$ thus follows from $\tilde\delta_i\tilde P_i=0$, see Definition~\ref{D:Kdelta}(iv).
Since $A_{i-1}\delta_i+\delta_{i+1}A_i=\Box_i$ is invertible on $\ker(P_i)$, it is now straight forward to establish the decomposition \eqref{E:kertdeco}.
Clearly, $\img(\delta_{i+1})$, $\ker(\delta_i)$ and $\mathcal H_i=\ker(\delta_i)/\img(\delta_{i+1})$ are smooth vector bundles.
Moreover, $P_i$ factorizes to a linear map $\bar L_i\colon\Gamma^\infty(\mathcal H_i)\to\img(P_i)$ such that $\bar L_i\bar\pi_i=P_i$ and $\bar\pi_i\bar L_i=\id$.
To see that $\bar L_i$ is a differential operator of graded Heisenberg order at most zero, it suffices to observe that $\bar L_i$ can be written (locally) as the composition of $P_i$ with a filtration-preserving vector bundle homomorphism $\mathcal H_i\to\ker(\delta_i)$.

If $\tilde A_i\tilde A_{i-1}=0$, then the assertions $\tilde\Box_{i+1}\tilde A_i=\tilde A_i\tilde\Box_i$, $\tilde P_{i+1}\tilde A_i=\tilde A_i\tilde P_i$, and $\tilde\delta_{i+1}\tilde A_i\tilde P_i=0$, as well as the equalities in \eqref{E:kertbox} are now obvious.
Using $\Box_i=\delta_{i+1}A_i+A_{i-1}\delta_i$, $\delta_iP_i=0=P_{i-1}\delta_i$, $\Box_iP_i=P_i\Box_i$ and $P^2_i=P_i$, we obtain $\delta_{i+1}A_iP_i=\Box_iP_i=P_i\Box_iP_i=0$, whence the equalities in \eqref{E:kerbox}.
The remaining assertions follow at once.
\end{proof}

\begin{proposition}\label{P:D}
Let $E_i$ be filtered vector bundles over a filtered manifold $M$, and consider a sequence of differential operators,
\begin{equation}\label{E:EAE3}
\cdots\to\Gamma^\infty(E_{i-1})\xrightarrow{A_{i-1}}\Gamma^\infty(E_i)\xrightarrow{A_i}\Gamma^\infty(E_{i+1})\to\cdots,
\end{equation}
such that $A_i$ is of graded Heisenberg order at most $k_i$ and $\tilde A_i\tilde A_{i-1}=0$ for all $i$.
Moreover, let 
$$
\cdots\leftarrow E_{i-1}\xleftarrow{\delta_i}E_i\xleftarrow{\delta_{i+1}}E_{i+1}\leftarrow\cdots
$$
be a codifferential of Kostant type for the sequence \eqref{E:EAE3}, see Definition~\ref{D:Kdelta}.
Assume that each $\delta_i$ has locally constant rank, and let $\bar\pi_i\colon\ker(\delta_i)\to\mathcal H_i:=\ker(\delta_i)/\img(\delta_{i+1})$ denote the canonical vector bundle projection.
Then the following hold true:

\begin{enumerate}[(a)]
\item
There exists a unique differential operator $\bar L_i\colon\Gamma^\infty(\mathcal H_i)\to\Gamma^\infty(E_i)$ such that $\delta_i\bar L_i=0$, $\bar\pi_i\bar L_i=\id$, and $\delta_{i+1}A_i\bar L_i=0$.
Moreover, $\bar L_i$ is of graded Heisenberg order at most zero, and the differential operator 
\begin{equation}\label{E:defbarD}
\bar D_i\colon\Gamma^\infty(\mathcal H_i)\to\Gamma^\infty(\mathcal H_{i+1}),\qquad\bar D_i:=\bar\pi_{i+1}A_i\bar L_i,
\end{equation}
is of graded Heisenberg order at most $k_i$.

\item
If the sequence \eqref{E:EAE3} is graded Rockland, then so is the sequence:
\begin{equation}\label{E:D}
\cdots\to\Gamma^\infty(\mathcal H_{i-1})\xrightarrow{\bar D_{i-1}}\Gamma^\infty(\mathcal H_i)\xrightarrow{\bar D_i}\Gamma^\infty(\mathcal H_{i+1})\to\cdots.
\end{equation}

\item
If $A_iA_{i-1}=0$, then $\bar D_i\bar D_{i-1}=0$, and $\bar L_i\colon\Gamma^\infty(\mathcal H_i)\to\Gamma^\infty(E_i)$ is a chain map, $A_i\bar L_i=\bar L_{i+1}\bar D_i$, inducing an isomorphism between the cohomologies of \eqref{E:D} and \eqref{E:EAE3}.

\item
There exist invertible differential operators, $V_i\colon\Gamma^\infty(\img(\tilde P_i))\to\Gamma^\infty(\mathcal H_i)$ with inverse $V_i^{-1}\colon\Gamma^\infty(\mathcal H_i)\to\Gamma^\infty(\img(\tilde P_i))$, both of graded Heisenberg order at most zero, such that 
$$
V_{i+1}^{-1}\bar D_iV_i=D_i,
$$
where $D_i\colon\Gamma^\infty(\img(\tilde P_i))\to\Gamma^\infty(\img(\tilde P_{i+1}))$ denotes the operator considered in Section~\ref{SS:DP}, see \eqref{E:defDB}, associated with splittings $S_i$ as in Definition~\ref{D:Kdelta}(iii).
\end{enumerate}
\end{proposition}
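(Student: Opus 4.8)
The plan is to obtain all four assertions from Lemma~\ref{L:L} and Proposition~\ref{P:DB}, the crucial step being the identification of $\bar D_i$ with the operator $D_i$ of Section~\ref{SS:DP} up to conjugation by invertible differential operators of graded Heisenberg order zero.

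For part~(a) I would take $\bar L_i$ to be the differential operator of graded Heisenberg order at most zero furnished by Lemma~\ref{L:L}(b), which already satisfies $\bar\pi_i\bar L_i=\id$ and $\bar L_i\bar\pi_i=P_i$; since $\img(\bar L_i)=\img(P_i)$ we have $P_i\bar L_i=\bar L_i$, hence $\delta_i\bar L_i=\delta_iP_i\bar L_i=0$ and $\delta_{i+1}A_i\bar L_i=\delta_{i+1}A_iP_i\bar L_i=0$ by Lemma~\ref{L:L}(b) and~(c). Uniqueness is then immediate: if $\bar L_i'$ satisfies the same three identities, the difference $\psi:=\bar L_i'-\bar L_i$ has $\delta_i\psi=0$, $\bar\pi_i\psi=0$ and $\delta_{i+1}A_i\psi=0$, so $\Box_i\psi=(A_{i-1}\delta_i+\delta_{i+1}A_i)\psi=0$, which forces $\img(\psi)\subseteq\ker(\Box_i)=\img(P_i)$ by~\eqref{E:kerbox}, and there $\bar\pi_i$ is injective (being inverse to $\bar L_i$), so $\psi=0$. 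In particular $\bar L_i$, and hence $\bar D_i=\bar\pi_{i+1}A_i\bar L_i$, is independent of the auxiliary splittings. The order claims follow by composition, noting that $A_i\bar L_i$ takes values in $\Gamma^\infty(\ker(\delta_{i+1}))$, on which $\bar\pi_{i+1}$ is filtration preserving.

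For part~(d), the technical heart, I would set $V_i:=\bar\pi_iL_i|_{\Gamma^\infty(\img(\tilde P_i))}$ and $V_i^{-1}:=L_i^{-1}\bar L_i$, where $L_i$ is the operator~\eqref{E:Li4} built from splittings as in Definition~\ref{D:Kdelta}(iii); both are differential operators of graded Heisenberg order at most zero, and $\bar\pi_i\bar L_i=\id$, $\bar L_i\bar\pi_i=P_i$ together with $L_i^{-1}P_iL_i=\tilde P_i$ (Lemma~\ref{L:EP}(c)) show that they are mutually inverse and that $\bar L_i=L_iV_i^{-1}$. Substituting the definitions and using $\bar L_{i+1}\bar\pi_{i+1}=P_{i+1}$ on $\ker(\delta_{i+1})$ (where $A_i\bar L_i$ takes values), $\bar L_i\bar\pi_i=P_i$ on $\img(P_i)$, $L_{i+1}^{-1}P_{i+1}=\tilde P_{i+1}L_{i+1}^{-1}$, and $P_iL_i|_{\Gamma^\infty(\img(\tilde P_i))}=L_i|_{\Gamma^\infty(\img(\tilde P_i))}$, one computes $V_{i+1}^{-1}\bar D_iV_i=L_{i+1}^{-1}P_{i+1}A_iP_iL_i|_{\Gamma^\infty(\img(\tilde P_i))}=\tilde P_{i+1}L_{i+1}^{-1}A_iL_i|_{\Gamma^\infty(\img(\tilde P_i))}=D_i$, the operator of~\eqref{E:defDB}. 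The only thing to watch here is that every composition be taken on the right subspace ($\ker(\delta_i)$, $\ker(\delta_{i+1})$, $\img(\tilde P_i)$, etc.), which is where the previously established identities hold; I expect this domain bookkeeping, rather than any new idea, to be the main obstacle.

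Parts~(b) and~(c) then follow formally. Since $\tilde A_i\tilde A_{i-1}=0$ and the ranks of the $\tilde P_i$ are locally constant by Lemma~\ref{L:L}(a), Proposition~\ref{P:DB}(b) shows that if the sequence of the $A_i$ is graded Rockland then so is the sequence~\eqref{E:EDE} of the $D_i$; as $V_i$ is invertible with inverse of graded Heisenberg order at most zero, $\tilde\sigma^0_x(V_i)$ is invertible, so by~\eqref{E:tsAB} the symbol sequence $\pi(\tilde\sigma^{k_i}_x(\bar D_i))$ is conjugate, through isomorphisms, to $\pi(\tilde\sigma^{k_i}_x(D_i))$ for every non-trivial irreducible unitary $\pi$, whence $\bar D_i$ is graded Rockland, proving~(b). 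If moreover $A_iA_{i-1}=0$, then $\Box_{i+1}A_i=A_i\Box_i$ by~\eqref{E:EboxE}, hence $P_{i+1}A_i=A_iP_i$ by~\eqref{E:P}, and with $P_i\bar L_i=\bar L_i$ this gives $\bar L_{i+1}\bar D_i=P_{i+1}A_i\bar L_i=A_iP_i\bar L_i=A_i\bar L_i$, so $\bar L_i$ is a chain map; moreover $\bar D_i\bar D_{i-1}=V_{i+1}(D_iD_{i-1})V_{i-1}^{-1}=0$ since $D_iD_{i-1}=0$ by Proposition~\ref{P:DB}(c); finally, $V_i$ is a chain isomorphism from~\eqref{E:EDE} onto~\eqref{E:D} and $L_i$ induces a cohomology isomorphism from~\eqref{E:EDE} onto the sequence of the $A_i$ by Proposition~\ref{P:DB}(c), so the chain map $\bar L_i=L_iV_i^{-1}$ induces a cohomology isomorphism, completing~(c).
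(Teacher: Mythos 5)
Your proposal is correct and follows essentially the same route as the paper: part~(a) is extracted from Lemma~\ref{L:L}(b)\&(c), part~(d) is obtained by conjugating with $V_i=\bar\pi_iL_i|_{\Gamma^\infty(\img(\tilde P_i))}$ and $V_i^{-1}=L_i^{-1}\bar L_i$, and parts~(b) and~(c) then follow from Proposition~\ref{P:DB}(b)\&(c). You merely supply the domain bookkeeping and the uniqueness argument that the paper leaves implicit, and these details check out.
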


\begin{proof}
Part (a) follows immediately from Lemma~\ref{L:L}(b)\&(c).
Recall the differential operator $L_i$ associated with the splittings $S_i$, see \eqref{E:Li4}.
The differential operator $V_i\colon\Gamma^\infty(\img(\tilde P_i))\to\Gamma^\infty(\mathcal H_i)$, $V_i:=\bar\pi_iL_i|_{\img(\tilde P_i)}$ has graded Heisenberg order at most zero and so does its inverse, $V_i^{-1}\colon\Gamma^\infty(\mathcal H_i)\to\Gamma^\infty(\img(\tilde P_i))$, $V_i^{-1}=L_i^{-1}\bar L_i$.
Clearly, $V_{i+1}^{-1}\bar D_iV_i=D_i$, whence (d).
Combining this with Proposition~\ref{P:DB}(b)\&(c), we obtain the statements in (b) in (c).
\end{proof}

The operators $\bar L_i$ in Proposition~\ref{P:D}(a) above are direct generalizations of the well known splitting operators in parabolic geometry, see \cite[Theorem~2.4]{CS12}.
The operators $\bar D_i$ generalize the BGG operators.

\begin{remark}
If the filtration on each $\mathcal H_i$ is trivial, then \eqref{E:D} is a Rockland sequence in the ungraded sense of Definition~\ref{def.Hypo-seq}.
\end{remark}

\begin{remark}
For the Kostant type codifferential $\delta_i=0$ all statements in Proposition~\ref{P:D} are trivially true, $\bar\pi_i=\id=\bar L_i$. 
\end{remark}

\begin{remark}\label{R:Kdeltabound}
If $\tilde A_i\tilde A_{i-1}=0$, then the rank of a Kostant type codifferential is bounded by
\begin{equation}\label{E:Kdeltaesti}
\sum_i\rank(\delta_{i,x})\leq\sum_i\rank(\tilde A_{i,x}).
\end{equation}
Indeed, from $\tilde\delta_i\tilde\delta_{i+1}=0$ and $\tilde\delta_i\tilde P_i=0$ we get $\ker(\tilde\delta_{i,x})=\img(\tilde P_{i,x})\oplus\img(\tilde\delta_{i+1,x})$ and, consequently, $\ker(\delta_{i,x})/\img(\delta_{i+1,x})\cong\ker(\tilde\delta_{i,x})/\img(\tilde\delta_{i+1,x})\cong\img(\tilde P_{i,x})$.
Moreover, $\tilde A_{i,x}$ preserves the decomposition $\gr(E_i)=\img(\tilde P_{i,x})\oplus\ker(\tilde P_{i,x})$ and its restriction to $\ker(\tilde P_{i,x})$ is acyclic since $\tilde\Box_{i,x}$ is invertible on $\ker(\tilde P_{i,x})$.
We conclude 
\begin{equation}\label{E:dimHH}
\dim\bigl(\ker(\tilde A_{i,x})/\img(\tilde A_{i-1,x})\bigr)
\leq\dim\bigl(\ker(\delta_{i,x})/\img(\delta_{i+1,x})\bigr)
\end{equation}
for all $i$.
To show \eqref{E:Kdeltaesti} it thus remains to recall that the equation $\dim(C)=2\rank(d)+\dim(\ker(d)/\img(d))$ holds for every finite dimensional vector space $C$ and every linear map $d\colon C\to C$ satisfying $d^2=0$.
We have equality in \eqref{E:Kdeltaesti} if and only if we have equality in \eqref{E:dimHH} for all $i$.
In this case the codifferential is of maximal rank and $\mathcal H_{x,i}\cong\ker(\tilde A_{i,x})/\img(\tilde A_{i-1,x})$.
\end{remark}

\begin{remark}\label{R:Kdeltaexi}
If $\tilde A_i\tilde A_{i-1}=0$, then there exists a Kostant type codifferential of maximal rank.
To construct such a codifferential, fix fiber-wise graded Hermitian inner products on the graded vector bundles $\gr(E_i)$, let $\tilde A_i^*\colon\gr_*(E_{i+1})\to\gr_{*-k_i}(E_i)$ denote the fiber-wise adjoint of $\tilde A_i\colon\gr_*(E_i)\to\gr_{*+k_i}(E_{i+1})$, and consider $\delta_i:=S_{i-1}\tilde A_{i-1}^*S_i^{-1}$ where $S_i\colon\gr(E_i)\to E_i$ are some splittings for the filtrations.
In this case $\tilde\Box_i=\tilde A_i^*\tilde A_i+\tilde A_{i-1}\tilde A_{i-1}^*$, hence $\img(\tilde P_{i,x})=\ker(\tilde\Box_{i,x})=\ker(\tilde A_{i,x})\cap\ker(\tilde\delta_{i,x})$ 
and 
$$
\gr(E_{i,x})=\img(\tilde A_{i-1,x})\oplus\img(\tilde P_{i,x})\oplus\img(\tilde\delta_{i+1,x})
$$
for each $x\in M$.
We conclude that $\delta$ is a Kostant type codifferential of maximal rank, see Definition~\ref{D:Kdelta} and Remark~\ref{R:Kdeltabound}.
If the dimension of $\ker(\tilde A_{i,x})/\img(\tilde A_{i-1,x})$ is locally constant for all $i$, then $\delta_{i,x}$ has locally constant rank for all $i$, hence $\delta_i$ meets the assumptions in Proposition~\ref{P:D}, we obtain a sequence of operators $\bar D_i$ as in \eqref{E:D}, acting between sections of the vector bundles $\mathcal H_i\cong\ker(\tilde A_i)/\img(\tilde A_{i-1})$, and this is a graded Rockland sequence provided the original sequence $A_i$ was.
\end{remark}

\subsection{Linear connections on filtered manifolds}\label{SS:linearconnections}

In this section we consider a linear connection on a filtered vector bundle over a filtered manifold and its extension to bundle valued differential forms.
We will show that this induces a graded Rockland sequence in the sense of Definition~\ref{D:graded_hypoelliptic_seq} provided the linear connection is filtration-preserving and its curvature is contained in filtration degree one, see Proposition~\ref{P:hypo} below.

Suppose $E$ is a filtered vector bundle over a filtered manifold $M$.
We consider the induced filtration on the vector bundles $\Lambda^kT^*M\otimes E$.
To be explicit, $\psi\in\Lambda^kT^*_xM\otimes E_x$ is in filtration degree $p$ iff, for all tangent vectors $X_i\in T^{p_i}_xM$, we have $\psi(X_1,\dotsc,X_k)\in E_x^{p+p_1+\cdots+p_k}$.
Let us introduce the following notation for the associated graded vector bundle:
\begin{equation}\label{E:CkME}
C^k(M;E):=
\gr(\Lambda^kT^*M\otimes E)=\Lambda^k\mathfrak t^*M\otimes\gr(E).
\end{equation}
We will denote the grading by $C^k(M;E)=\bigoplus_pC^k(M;E)_p$.

Suppose $\nabla$ is a linear connection on $E$ such that
$\nabla_X\psi\in\Gamma^\infty(E^{p+q})$ for all $X\in\Gamma^\infty(T^pM)$ and $\psi\in\Gamma^\infty(E^q)$.
In other words, $\nabla\colon\Gamma^\infty(E)\to\Omega^1(M;E)$, is assumed to be filtration-preserving.
It is easy to see that filtration-preserving connections always exist, the space of all such connections is affine over the space of filtration-preserving vector bundle homomorphisms $E\to T^*M\otimes E$.
The Leibniz rule implies that the induced operator on the associated graded bundles, $\omega:=\gr(\nabla)\colon\Gamma^\infty(\gr(E))\to\Gamma^\infty(\gr(T^*M\otimes E))$, is tensorial, i.e.\
$$
\omega\in\Gamma^\infty\bigl(C^1(M;\eend(E))_0\bigr)
$$
where $C^1(M;\eend(E))=\gr(T^*M\otimes\eend(E))=\mathfrak t^*M\otimes\eend(\gr(E))$ with $0$-th grading component $C^1(M;\eend(E))_0=\bigoplus_{p,q}(\mathfrak t^pM)^*\otimes\hom(\gr_q(E),\gr_{p+q}(E))$.
Let 
\begin{equation}\label{E:dnabla}
d^\nabla\colon\Omega^*(M;E)\to\Omega^{*+1}(M;E)
\end{equation}
denote the usual extension of $\nabla$ characterized by the Leibniz rule,
\begin{equation}\label{E:leibn}
d^\nabla(\alpha\wedge\psi)=d\alpha\wedge\psi+(-1)^k\alpha\wedge d^\nabla\psi,
\end{equation}
for all $\alpha\in\Omega^k(M)$ and all $\psi\in\Omega^*(M;E)$.
Recall the explicit formula
\begin{multline}\label{E:dnablaf}
(d^\nabla\psi)(X_0,\dotsc,X_k)
=\sum_{i=0}^k(-1)^i\nabla_{X_i}\psi(X_0,\dotsc,\hat i,\dotsc,X_k)
\\+\sum_{0\leq i<j\leq k}(-1)^{i+j}\psi([X_i,X_j],X_0,\dotsc,\hat i,\dotsc,\hat j,\dotsc,X_k)
\end{multline}
for $\psi\in\Omega^k(M;E)$ and vector fields $X_0,\dotsc,X_k$.

\begin{lemma}\label{L:symb}
Let $\nabla\colon\Gamma^\infty(E)\to\Omega^1(M;E)$ be a filtration-preserving linear connection on a filtered vector bundle $E$ over a filtered manifold $M$.
Then the extension $d^\nabla\colon\Omega^k(M;E)\to\Omega^{k+1}(M;E)$ is a differential operator of graded Heisenberg order at most zero.
Moreover, the graded principal Heisenberg symbol at $x\in M$ fits into the following commutative diagram:
$$
\xymatrix{
C^\infty\bigl(\mathcal T_xM,C_x^k(M;E)\bigr)\ar@{=}[r]\ar[d]^-{\tilde\sigma^0_x(d^\nabla)}
&\Omega^k\bigl(\mathcal T_xM;\gr(E_x)\bigr)\ar[d]^-{d+\omega_x\wedge-}
\\
C^\infty\bigl(\mathcal T_xM,C_x^{k+1}(M;E)\bigr)\ar@{=}[r]
&\Omega^{k+1}\bigl(\mathcal T_xM;\gr(E_x)\bigr)
}
$$
Here the horizontal identifications are obtained by tensorizing the identity on $\gr(E_x)$ with the identification $C^\infty(\mathcal T_xM,\Lambda^k\mathfrak t_x^*M)=\Omega^k(\mathcal T_xM)$ induced by left trivialization of the tangent bundle of the group $\mathcal T_xM$.
Moreover, $\omega_x\in C^1_x(M;\eend(E))_0$ is considered as a left invariant $\eend(\gr(E_x))$-valued $1$-form on $\mathcal T_xM$.
\end{lemma}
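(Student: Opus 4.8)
The statement has two parts: that $d^\nabla$ has graded Heisenberg order at most zero, and the identification of its graded principal Heisenberg symbol. For the first part, the plan is to fix a splitting $S$ of the filtration on $E$, which induces splittings $S_k$ on the bundles $\Lambda^kT^*M\otimes E$, and to reduce to a local coordinate/frame computation. Locally, $d^\nabla$ can be written in terms of the exterior derivative $d$ (which on a filtered manifold raises filtration degree of forms by at most zero, since the Lie bracket terms in \eqref{E:dnablaf} are compatible with the filtration on $TM$) and the connection $1$-form, which is filtration preserving by hypothesis. Concretely, using \eqref{E:dnablaf} and \eqref{E:leibn}, one sees that each component $(S_{k+1}^{-1}d^\nabla S_k)_{q,p}$ is built from vector fields $X_i\in\Gamma^\infty(T^{p_i}M)$ acting by $\nabla_{X_i}$ together with bracket terms $[X_i,X_j]\in\Gamma^\infty(T^{p_i+p_j}M)$; a bookkeeping of filtration degrees, using that $\nabla$ is filtration preserving and the induced filtration on $\Lambda^kT^*M\otimes E$ as spelled out just before \eqref{E:CkME}, shows that this component has Heisenberg order at most $q-p$. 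Hence $d^\nabla\in\tilde\DO^0$.

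For the symbol computation, I would argue as follows. Because the graded Heisenberg principal symbol is independent of the chosen splittings, I may compute $\tilde\sigma^0_x(d^\nabla)$ using any convenient splitting and any convenient frame near $x$. Decompose $d^\nabla$ through a splitting as $d^\nabla = d^{\nabla_0} + (\text{lower order / degree-raising terms})$, where $\nabla_0$ is the "model" connection on $\gr(E)$ with connection form $\omega$. The key observation is that the graded symbol only sees: (i) the principal part of $d$ along the filtration, which is exactly the Chevalley--Eilenberg-type differential on $\Lambda^*\mathfrak t_x^*M$ induced by the Levi bracket of the osculating algebra — this is where the bracket terms in \eqref{E:dnablaf} contribute, and under left-trivialization of $\mathcal T_xM$ this is precisely the de Rham differential $d$ on left-invariant forms on the group $\mathcal T_xM$; and (ii) the degree-zero part of $\omega$, acting by wedging, which survives to the symbol as $\omega_x\wedge-$. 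I would make this precise by invoking the characterization of the (ungraded) Heisenberg principal symbol via \eqref{E:snablaX} and multiplicativity \eqref{E:sABAt}: writing $d^\nabla$ locally as $\sum_\alpha \varepsilon^\alpha\wedge\nabla_{X_\alpha} + (\text{zeroth order})$ for a suitable adapted coframe, each $\sigma^{\bullet}(\nabla_{X_\alpha})$ is $[X_\alpha]\otimes\id$, and assembling these gives exactly $d + \omega_x\wedge-$ on $\Omega^*(\mathcal T_xM;\gr(E_x))$ under the stated identifications.

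The main obstacle I anticipate is the careful identification of the "principal part of $d$" with the de Rham differential on the osculating group. This requires matching the bracket terms $\psi([X_i,X_j],\dots)$ in \eqref{E:dnablaf} — where $[X_i,X_j]$ is the honest Lie bracket of vector fields — with the Levi bracket $\mathfrak t^pM\otimes\mathfrak t^qM\to\mathfrak t^{p+q}M$ that defines the osculating algebra structure, and then recognizing that for a graded nilpotent Lie algebra $\mathfrak t_xM$ the Chevalley--Eilenberg differential on $\Lambda^*\mathfrak t_x^*M$ corresponds under left-trivialization to $d$ on left-invariant forms on $\mathcal T_xM$. The filtration-degree bookkeeping guarantees that only the Levi bracket (the "leading part" of $[X_i,X_j]$ modulo higher filtration) contributes to the degree-$0$ symbol, while the genuinely higher-order corrections to the bracket, together with the curvature-type and non-degree-zero parts of $\omega$, land in $\tilde\DO^{-1}$ and drop out. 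I would organize this step by choosing a local frame adapted to the filtration (sections of $T^{p}M$ projecting to a frame of $\mathfrak t^pM$), so that the structure functions reduce, at $x$ and modulo higher filtration, exactly to the structure constants of $\mathfrak t_xM$; the commutativity of the diagram then falls out of \eqref{E:dnablaf} evaluated in this frame.
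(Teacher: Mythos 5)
Your proposal is correct, and it is essentially the direct computation that the paper's proof opens with in one sentence (``this follows readily from \eqref{E:dnablaf} and \eqref{E:snablaX}''): a degree bookkeeping in an adapted frame, plus the identification of the bracket terms of \eqref{E:dnablaf} with the Levi bracket and hence of the leading part of $d$ with the Chevalley--Eilenberg differential of $\mathfrak t_xM$, which under left trivialization is the de~Rham differential on $\mathcal T_xM$. You correctly isolate the one genuinely delicate point (only the Levi bracket survives to the degree-zero symbol; the higher-filtration corrections to $[X_i,X_j]$, and the non-degree-zero parts of the connection form, land in $\tilde\DO^{-1}$), and the resulting symbol $\sum_jX_j\otimes e_{\alpha^j}+\sum_j1\otimes e_{d\alpha^j}i_{X_j}+1\otimes\omega_x$ is exactly $d+\omega_x\wedge-$ as claimed.

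The paper's written-out argument, however, takes a different and frame-free route that is worth knowing. It observes that the space of filtration preserving connections on $E$ is affine over the filtration preserving bundle maps $A\colon E\to T^*M\otimes E$, and that replacing $\nabla$ by $\nabla+A$ replaces $d^\nabla$ by $d^\nabla+A\wedge-$ and $\omega$ by $\omega+\gr(A)$, so the assertion holds for $\nabla$ iff it holds for $\nabla+A$. Since the statement is local, one may therefore assume $\nabla$ is the trivial connection on a trivial bundle, and by compatibility with direct sums one reduces to the trivial line bundle, i.e.\ to the scalar de~Rham differential; the Leibniz rule then reduces the verification to $k=0,1$, where it is immediate from \eqref{E:snablaX}. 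What this buys is the complete elimination of the adapted-frame bookkeeping and of the explicit matching of structure functions with the structure constants of $\mathfrak t_xM$ in all form degrees: that matching only has to be checked on functions and one-forms. Your approach buys instead an explicit closed formula for the symbol in a frame, which the paper in fact needs later (it is recorded as \eqref{E:ChE} in the proof of Proposition~\ref{P:hypo}), so the two computations are complementary rather than redundant.
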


\begin{proof}
This follows readily from  \eqref{E:dnablaf} and \eqref{E:snablaX}.
Alternatively, this can be understood as follows.
Let $A\colon E\to T^*M\otimes E$ be a filtration-preserving vector bundle homomorphism. 
Then $\nabla+A$ is another filtration-preserving linear connection on $E$, and $\omega^{\nabla+A}=\omega^\nabla+\tilde A$ where $\tilde A=\gr(A)\colon\gr(E)\to\gr(T^*M\otimes E)$ denotes the associate graded vector bundle homomorphism induced by $A$.
Moreover, $d^{\nabla+A}=d^\nabla+A\wedge-$ and thus $\tilde\sigma^0_x(d^{\nabla+A})=\tilde\sigma^0_x(d^\nabla)+\tilde A_x\wedge-$.
We conclude that the statement of the lemma holds for $\nabla$ iff it holds for $\nabla+A$.
Since this statement is local, and since the space of filtration-preserving linear connections on $E$ is affine over the space of filtration-preserving vector bundle homomorphisms $E\to T^*M\otimes E$, we may w.l.o.g.\ assume that $\nabla$ is the trivial connection on a trivial bundle $E=M\times E_0$.
By compatibility with direct sums, it suffices to consider the trivial line bundle $E=M\times\C$, that is, we may assume $d^\nabla=d$, the de~Rham differential on $\Omega^*(M)$.
In view of the Leibniz rule, it suffices to show that $d\colon\Omega^k(M)\to\Omega^{k+1}(M)$ has graded Heisenberg order at most zero and $\tilde\sigma^0_x(d)=d$, for $k=0,1$.
This, however, can readily be checked.
\end{proof}

Using the Leibniz rule (or Lemma~\ref{L:symb} above), one readily checks that the differential operator \eqref{E:dnabla} is filtration-preserving.
Moreover, the operator induced on the associated graded is tensorial, given by a vector bundle homomorphism of bidegree $(1,0)$,
\begin{equation}\label{E:delomega}
\partial^\omega\colon C^k(M;E)_p\to C^{k+1}(M;E)_p,\qquad
\partial^\omega:=\gr(d^\nabla),
\end{equation}
which can be characterized as the unique extension of $C^0(M;E)\xrightarrow\omega C^1(M;E)$ satisfying the Leibniz rule
\begin{equation}\label{E:leibdel}
\partial^\omega(\alpha\wedge\psi)
=\partial\alpha\wedge\psi+(-1)^k\alpha\wedge \partial^\omega\psi,
\end{equation}
for all $\alpha\in\Lambda^k\mathfrak t^*_xM$ and $\psi\in C_x^*(M;E)$.
Here $\partial\colon\Lambda^k\mathfrak t^*M\to\Lambda^{k+1}\mathfrak t^*M$ denotes the fiber-wise Chevalley--Eilenberg differential.
More explicitly, we have
\begin{equation}\label{E:delomega2}
	\partial^\omega=\partial\otimes\id_E+\omega\wedge-.
\end{equation}

\begin{lemma}\label{L:curv}
For each $x\in M$ the following are equivalent:
\begin{enumerate}[(a)]
\item\label{L:curc:a} The curvature $F^\nabla_x\in\Omega_x^2(M;\eend(E))$ is contained in filtration degree one, that is, for all $X_i\in T_x^{p_i}M$ and $\psi\in E^p_x$ we have $F_x^\nabla(X_1,X_2)\psi\in E^{p+p_1+p_2+1}_x$.
\item\label{L:curc:b} $\omega_x\colon\mathfrak t_xM\to\eend(\gr(E_x))$ provides a graded representation of the graded nilpotent Lie algebra $\mathfrak t_xM$ on the graded vector space $\gr(E_x)$.
\item\label{L:curc:c} $(\partial_x^\omega)^2=0$.
\item\label{L:curc:d} $\tilde\sigma^0_x(d^\nabla)^2=0$.
\end{enumerate}
In this case, the Chevalley--Eilenberg differential of the Lie algebra $\mathfrak t_xM$ with coefficients in the representation $\gr(E_x)$ coincides with \eqref{E:delomega} at the point $x$.
\end{lemma}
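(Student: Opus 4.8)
The plan is to route all four equivalences through the description of the graded Heisenberg principal symbol of $d^\nabla$ given in Lemma~\ref{L:symb}, combined with the identity $(d^\nabla)^2=F^\nabla\wedge-$. By Lemma~\ref{L:symb} and the multiplicativity \eqref{E:tsAB} we have $\tilde\sigma^0_x\bigl((d^\nabla)^2\bigr)=\tilde\sigma^0_x(d^\nabla)^2$, and under the left trivialization of $T\mathcal T_xM$ the symbol $\tilde\sigma^0_x(d^\nabla)$ is the operator $d+\omega_x\wedge-$ on $\Omega^*(\mathcal T_xM;\gr(E_x))$, whose restriction to left invariant forms is precisely $\partial^\omega_x$ acting on $C^*_x(M;E)=\Lambda^*\mathfrak t^*_xM\otimes\gr(E_x)$, see \eqref{E:delomega}. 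Hence $\tilde\sigma^0_x(d^\nabla)^2=0$ if and only if $(\partial^\omega_x)^2=0$, which settles \itemref{L:curc:c}$\Leftrightarrow$\itemref{L:curc:d}. On the other hand $(d^\nabla)^2=F^\nabla\wedge-$ is tensorial, and --- since $d^\nabla$, hence $(d^\nabla)^2$, is filtration preserving --- of graded Heisenberg order at most zero, so by Remark~\ref{R:grsigma} its graded Heisenberg principal symbol is wedging by the image of $F^\nabla_x$ in the degree zero part $\gr_0\bigl(\Lambda^2T_x^*M\otimes\eend(E_x)\bigr)$. This image vanishes exactly when $F^\nabla_x$ has filtration degree at least one, which, as $F^\nabla_x$ automatically has filtration degree at least zero, is condition \itemref{L:curc:a}. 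Combining the two statements yields \itemref{L:curc:a}$\Leftrightarrow$\itemref{L:curc:c}.

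It remains to treat \itemref{L:curc:b}, which I would handle together with the last assertion. By its characterization \eqref{E:delomega}, \eqref{E:leibdel}, the vector bundle homomorphism $\partial^\omega_x$ is the Chevalley--Eilenberg type operator on $\Lambda^*\mathfrak t^*_xM\otimes\gr(E_x)$ built from the grading preserving linear map $\omega_x\colon\mathfrak t_xM\to\eend(\gr(E_x))$: it agrees with the fiber wise Chevalley--Eilenberg differential $\partial$ on pure form degrees, it sends $v\in\gr(E_x)$ to the $1$-form $X\mapsto\omega_x(X)v$, and it is a graded derivation over the differential graded algebra $(\Lambda^*\mathfrak t^*_xM,\partial)$; by induction on the form degree these three properties determine it uniquely. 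A routine computation then identifies $(\partial^\omega_x)^2$ with wedging by the ``curvature'' $\partial\omega_x+\omega_x\wedge\omega_x\in\Lambda^2\mathfrak t^*_xM\otimes\eend(\gr(E_x))$, whose contraction against $X\wedge Y$ equals $[\omega_x(X),\omega_x(Y)]-\omega_x([X,Y])$ up to the sign convention for $\partial$. Thus $(\partial^\omega_x)^2=0$ if and only if $\omega_x$ is a Lie algebra homomorphism into $\eend(\gr(E_x))$ with its commutator bracket, and since $\omega_x$ already respects the gradings this is exactly \itemref{L:curc:b}. When \itemref{L:curc:b} holds, $\omega_x$ is a genuine graded representation, and the uniqueness characterization above shows that $\partial^\omega_x$ coincides with the Chevalley--Eilenberg differential of $\mathfrak t_xM$ with coefficients in $(\gr(E_x),\omega_x)$, which is the final assertion of the lemma.

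I do not anticipate a single deep point; the main obstacle is the careful bookkeeping of the several induced filtrations. Specifically, one must verify that the associated graded of the tensorial operator $F^\nabla\wedge-$ really is wedging by the degree zero part of $\gr(F^\nabla_x)$ and that the vanishing of this part matches the precise index shift in \itemref{L:curc:a}; that the multiplicativity \eqref{E:tsAB} legitimately computes $\tilde\sigma^0_x\bigl((d^\nabla)^2\bigr)$; and that the symbol $d+\omega_x\wedge-$ of Lemma~\ref{L:symb} restricts to $\partial^\omega_x$ on left invariant sections with signs matching \eqref{E:leibn} and \eqref{E:leibdel}. Each of these is routine given the groundwork already in place, but the argument must be carried out with attention to the degree conventions.
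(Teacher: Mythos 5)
Your proposal is correct and follows essentially the same route as the paper: the paper's proof consists precisely of the chain $\tilde\sigma^0_x(d^\nabla)^2=\tilde\sigma^0_x\bigl((d^\nabla)^2\bigr)=\gr_x\bigl((d^\nabla)^2\bigr)=\gr_x(d^\nabla)^2=(\partial^\omega_x)^2$ obtained from \eqref{E:tsAB}, Remark~\ref{R:grsigma}, and the tensoriality of $(d^\nabla)^2=F^\nabla\wedge-$, giving \itemref{L:curc:a}$\Leftrightarrow$\itemref{L:curc:c}$\Leftrightarrow$\itemref{L:curc:d}, while \itemref{L:curc:b}$\Leftrightarrow$\itemref{L:curc:c} and the final identification with the Chevalley--Eilenberg differential are dismissed as well known. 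You merely fill in the standard curvature computation $(\partial^\omega_x)^2=(\partial\omega_x+\omega_x\wedge\omega_x)\wedge-$ for the latter and detour through Lemma~\ref{L:symb} for \itemref{L:curc:c}$\Leftrightarrow$\itemref{L:curc:d}, which is a harmless variation.
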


\begin{proof}
Recall that $(d^\nabla)^2=F^\nabla\wedge-$ is tensorial.
Using using \eqref{E:tsAB} and Remark~\ref{R:grsigma}, we obtain
$$
\tilde\sigma^0_x(d^\nabla)^2
=\tilde\sigma_x^0((d^\nabla)^2)
=\gr_x((d^\nabla)^2)
=\gr_x(d^\nabla)^2
=(\partial_x^\omega)^2,
$$
whence the equivalences \itemref{L:curc:a}$\Leftrightarrow$\itemref{L:curc:c}$\Leftrightarrow$\itemref{L:curc:d}.
To see the equivalence \itemref{L:curc:b}$\Leftrightarrow$\itemref{L:curc:c}, we note that \eqref{E:leibdel} and $\partial^2=0$ give $(\partial_x^\omega)^2(\alpha\wedge\psi)=\alpha\wedge(\partial_x^\omega)^2\psi$.
Furthermore, using \eqref{E:delomega2} one readily shows
$$
\bigl((\partial_x^\omega)^2\phi\bigr)(X,Y)=\bigl(\omega(X)\omega(Y)-\omega(Y)\omega(X)-\omega([X,Y])\bigr)\phi,
$$
for $\phi\in E_x=C^0_x(M;E)$ and $X,Y\in\mathfrak t_xM$.
\end{proof}

For flat connections (on nilpotent Lie groups) the following is due to Rumin, see \cite[Theorem~5.2]{R01} and \cite[Theorem~3]{R99}.

\begin{proposition}\label{P:hypo}
Let $\nabla\colon\Gamma^\infty(E)\to\Omega^1(M;E)$ be a filtration-preserving linear connection on a filtered vector bundle $E$ over a filtered manifold $M$.
If the curvature of $\nabla$ is contained in filtration degree one, see Lemma~\ref{L:curv}, then 
\begin{equation}\label{E:dnablak}
\cdots\to\Omega^{k-1}(M;E)\xrightarrow{d^\nabla}\Omega^k(M;E)
\xrightarrow{d^\nabla}\Omega^{k+1}(M;E)\to\cdots
\end{equation}
is a graded Rockland sequence.
\end{proposition}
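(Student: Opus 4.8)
The plan is to verify the defining condition of a graded Rockland sequence (Definition~\ref{D:graded_hypoelliptic_seq}) by unwinding everything at the level of graded principal symbols. By Lemma~\ref{L:symb} the operators $d^\nabla$ are of graded Heisenberg order at most zero, and at $x\in M$ their graded principal symbol $\tilde\sigma^0_x(d^\nabla)$ is, under left trivialization, the operator $d+\omega_x\wedge-$ on $\Omega^\bullet(\mathcal T_xM;\gr(E_x))$. Because the curvature of $\nabla$ is contained in filtration degree one, Lemma~\ref{L:curv} identifies $\omega_x$ with a graded representation of $\mathfrak t_xM$ on $\gr(E_x)$, identifies $d+\omega_x\wedge-$ with the Chevalley--Eilenberg differential of $\mathfrak t_xM$ with coefficients in $\gr(E_x)$, and in particular yields $\tilde\sigma^0_x(d^\nabla)^2=0$. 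Fixing a non-trivial irreducible unitary representation $\pi\colon\mathcal T_xM\to U(\mathcal H)$, the first real step is to track the left invariant operator $\tilde\sigma^0_x(d^\nabla)$ through $\pi$ and recognize $\pi(\tilde\sigma^0_x(d^\nabla))$, acting on $\mathcal H_\infty\otimes\Lambda^\bullet\mathfrak t_x^*M\otimes\gr(E_x)=\mathcal H_\infty\otimes C^\bullet_x(M;E)$, as the Chevalley--Eilenberg differential of $\mathfrak t_xM$ with coefficients in the module $\mathcal H_\infty\otimes\gr(E_x)$, where $\mathfrak t_xM$ acts diagonally through the derived representation of $\pi$ on $\mathcal H_\infty$ and through $\omega_x$ on $\gr(E_x)$. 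It then suffices to show this complex is exact (which is stronger than the weak exactness demanded in Definition~\ref{D:graded_hypoelliptic_seq}).

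Next I would reduce the coefficients to $\mathcal H_\infty$ alone. Since $\mathfrak t_xM=\bigoplus_{p<0}\mathfrak t^p_xM$ is negatively graded and $\omega_x$ is graded, every operator $\omega_x(X)$ strictly lowers the grading of $\gr(E_x)=\bigoplus_q\gr_q(E_x)$; hence the subspaces $\bigoplus_{q\leq k}\gr_q(E_x)$ form an exhaustive filtration of $\gr(E_x)$ by $\mathfrak t_xM$-submodules whose successive quotients $\gr_q(E_x)$ carry the trivial action. Tensoring this filtration with $\mathcal H_\infty$ (an exact operation over $\C$) and running the associated long exact sequences in Lie algebra cohomology reduces the vanishing of $H^\bullet(\mathfrak t_xM;\mathcal H_\infty\otimes\gr(E_x))$ to the vanishing of $H^\bullet(\mathfrak t_xM;\mathcal H_\infty)$, i.e.\ to the case of a trivially filtered line bundle.

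The substantive input, which I would establish by induction on $\dim\mathfrak g$, is that $H^\bullet(\mathfrak g;\mathcal H_\infty)=0$ for every finite dimensional nilpotent Lie algebra $\mathfrak g$ and the space of smooth vectors $\mathcal H_\infty$ of any non-trivial irreducible unitary representation of the corresponding simply connected nilpotent Lie group. The case $\dim\mathfrak g=1$ is immediate, since then $\pi(X)$ is a non-zero scalar. For the inductive step let $\mathfrak z\neq0$ be the center of $\mathfrak g$; by Schur's lemma $\pi$ acts on $\exp\mathfrak z$ by a character. If this central character is non-trivial, choose $Z_0\in\mathfrak z$ with $\pi(Z_0)=c\cdot\id$ for some scalar $c\neq0$; since $Z_0$ is central the coadjoint action $\ad^*_{Z_0}$ on $\Lambda^\bullet\mathfrak g^*$ vanishes, so the Cartan homotopy formula $\iota_{Z_0}\partial+\partial\iota_{Z_0}=\mathcal L_{Z_0}=c\cdot\id$ on the Chevalley--Eilenberg complex $\Lambda^\bullet\mathfrak g^*\otimes\mathcal H_\infty$ exhibits $c^{-1}\iota_{Z_0}$ as a contracting homotopy, so the complex is acyclic. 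If the central character is trivial, then $\pi(\mathfrak z)=0$ and $\pi$ descends to a non-trivial irreducible unitary representation of $\exp(\mathfrak g/\mathfrak z)$; the Hochschild--Serre spectral sequence of the ideal $\mathfrak z\trianglelefteq\mathfrak g$ then has second page $H^p\bigl(\mathfrak g/\mathfrak z;H^q(\mathfrak z;\mathcal H_\infty)\bigr)\cong H^p\bigl(\mathfrak g/\mathfrak z;\mathcal H_\infty\bigr)\otimes\Lambda^q\mathfrak z^*$, which vanishes by the inductive hypothesis, whence $H^\bullet(\mathfrak g;\mathcal H_\infty)=0$.

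Combining the three steps shows that $\pi(\tilde\sigma^0_x(d^\nabla))$ is an acyclic complex for every $x\in M$ and every non-trivial irreducible unitary $\pi$, which is precisely the condition that \eqref{E:dnablak} be a graded Rockland sequence. I expect the main obstacle to lie in the cohomology vanishing of the third paragraph --- this is where the representation theory of nilpotent Lie groups genuinely enters, and it is the essential mechanism behind Theorem~\ref{thmB} --- whereas the identification in the first paragraph, though delicate in bookkeeping (left trivialization, order of tensor factors, Chevalley--Eilenberg sign conventions), is routine, and the reduction in the second paragraph is a formal consequence of the grading.
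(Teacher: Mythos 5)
Your proposal is correct and follows essentially the same route as the paper: identify the graded principal symbol complex with the Chevalley--Eilenberg complex of $\mathfrak t_xM$ with coefficients in $\mathcal H_\infty\otimes\gr(E_x)$ (Lemmas~\ref{L:symb} and \ref{L:curv}), reduce by a filtration of the coefficients to $H^*(\mathfrak t_xM;\mathcal H_\infty)=0$, and prove that vanishing by induction on $\dim\goe$ via the center and the Hochschild--Serre spectral sequence (Lemma~\ref{L:Hnoe}). Your two deviations are only cosmetic: you filter $\gr(E_x)$ by its grading rather than by a trivial one-dimensional submodule, and in the non-trivial central character case you replace the spectral-sequence step by the contracting homotopy $c^{-1}\iota_{Z_0}$ from the Cartan formula, which amounts to the same computation as the paper's observation that $H^*(\zoe;\mathcal H_\infty)=0$.
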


\begin{proof}
Fix $x\in M$, consider the nilpotent Lie group $G:=\mathcal T_xM$ with Lie algebra $\goe:=\mathfrak t_xM$ and the $\goe$-module $V:=\gr(E_x)$, see Lemma~\ref{L:curv}.
Hence, $C_x^k(M;E)=\Lambda^k\goe^*\otimes V$.
According to Lemma~\ref{L:symb}, the Heisenberg principal symbol sequence
$$
\cdots\to C^\infty(G,\Lambda^k\goe^*\otimes V)\xrightarrow{\tilde\sigma_x^0(d^\nabla)}
C^\infty(G,\Lambda^{k+1}\goe^*\otimes V)\to\cdots
$$
is isomorphic to the Chevalley--Eilenberg complex of the Lie algebra $\goe$ with values in the $\goe$-representation $C^\infty(G)\otimes V$.
More explicitly, if $X_j$ is a basis of $\goe$ and $\alpha^j$ denotes the dual basis of $\goe^*$ then, in $\mathcal U(\goe)\otimes\hom\bigl(\Lambda^k\goe^*\otimes V,\Lambda^{k+1}\goe^*\otimes V\bigr)$, we have
\begin{equation}\label{E:ChE}
\tilde\sigma_x^0(d^\nabla)=\sum_jX_j\otimes e_{\alpha^j}+\sum_j1\otimes e_{d\alpha^j}i_{X_j}+1\otimes\omega_x
\end{equation}
Here $e_{\alpha^j}\in\hom\bigl(\Lambda^k\goe^*\otimes V,\Lambda^{k+1}\goe^*\otimes V\bigr)$ denotes the exterior product with $\alpha^j\in\goe^*$; $i_{X_j}\in\hom\bigl(\Lambda^k\goe^*\otimes V,\Lambda^{k-1}\goe^*\otimes V\bigr)$ denotes the contraction with $X_j\in\goe$; $e_{d\alpha^j}\in\hom\bigl(\Lambda^{k-1}\goe^*\otimes V,\Lambda^{k+1}\goe^*\otimes V\bigr)$ denotes the exterior product with $d\alpha^j\in\Lambda^2\goe^*$; and $\omega_x\in\hom\bigl(\Lambda^k\goe^*\otimes V,\Lambda^{k+1}\goe^*\otimes V\bigr)$ denotes the exterior product with the representation $\omega_x\colon\goe\to\eend(V)$.

Suppose $\pi\colon G\to U(\mathcal H)$ is a non-trivial irreducible unitary representation on a Hilbert space $\mathcal H$, and let $\mathcal H_\infty$ denote the space of smooth vectors.
Using \eqref{E:ChE} one readily checks, see Section~\ref{SS:paraDO}, that the sequence
\begin{equation*}\label{E:tbse}
\cdots\to\mathcal H_\infty\otimes\Lambda^k\goe^*\otimes V\xrightarrow{\pi(\tilde\sigma^0_x(d^\nabla))}\mathcal H_\infty\otimes\Lambda^{k+1}\goe^*\otimes V\to\cdots
\end{equation*}
is isomorphic to the Chevalley--Eilenberg complex of the Lie algebra $\goe$ with values in the $\goe$-representation $\mathcal H_\infty\otimes V$.
Consequently, it remains to show that the Lie algebra cohomology of $\goe$ with coefficients in the $\goe$-module $\mathcal H_\infty\otimes V$ vanishes, that is, $H^*(\goe;\mathcal H_\infty\otimes V)=0$.

Since $\goe$ is nilpotent, there exists a $1$-dimensional subspace $W\subseteq V$ on which $\goe$ acts trivially.
The corresponding short exact sequence of $\goe$-modules, $0\to W\to V\to V/W\to0$, yields a short exact sequence of $\goe$-modules $0\to\mathcal H_\infty\to\mathcal H_\infty\otimes  V\to\mathcal H_\infty\otimes  V/W\to0$ which, in turn, gives rise to a long exact sequence:
$$
\cdots\to H^q(\goe;\mathcal H_\infty)\to H^q(\goe;\mathcal H_\infty\otimes V)\to H^q(\goe;\mathcal H_\infty\otimes V/W)\xrightarrow\partial H^{q+1}(\goe;\mathcal H_\infty)\to\cdots
$$
Hence, by induction on the dimension of $V$, it suffices to show $H^q(\goe;\mathcal H_\infty)=0$, for all $q$.
The statement thus follows from Lemma~\ref{L:Hnoe} below.
\end{proof}

\begin{lemma}\label{L:Hnoe}
Consider a non-trivial irreducible unitary representation of a finite dimensional simply connected nilpotent Lie group $G$ on a Hilbert space $\mathcal H$, and the associated representation of the corresponding Lie algebra $\goe$ on the space of smooth vectors $\mathcal H_\infty$.
Then the Lie algebra cohomology of $\goe$ with coefficients in $\mathcal H_\infty$ is trivial, that is, $H^*(\goe;\mathcal H_\infty)=0$.
\end{lemma}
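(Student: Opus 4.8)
The plan is to argue by induction on $\dim\goe$, the decisive tool being the center $\zoe$ of $\goe$. Since $\pi$ is irreducible, Schur's lemma shows that $\pi$ restricted to the central subgroup acts by scalars of modulus one; differentiating, for every $Z\in\zoe$ there is a real number $\lambda_Z$ with $\pi(Z)=\mathbf i\lambda_Z\cdot\id$ on $\mathcal H_\infty$. The argument then splits according to whether or not the center acts non-trivially.

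Suppose first that $\lambda_Z\neq0$ for some $Z\in\zoe$. In this case I would show $H^*(\goe;\mathcal H_\infty)=0$ directly, without invoking the inductive hypothesis. On the Chevalley--Eilenberg complex $C^*(\goe;\mathcal H_\infty)=\Lambda^*\goe^*\otimes\mathcal H_\infty$ one has the Cartan homotopy formula $L_Z=i_Z\circ d+d\circ i_Z$, where $i_Z$ denotes contraction with $Z$ and $L_Z$ the Lie derivative. As $Z$ is central, $\ad_Z=0$, so $L_Z$ acts trivially on the $\Lambda^*\goe^*$ factor and by $\pi(Z)=\mathbf i\lambda_Z$ on the coefficient factor; hence $L_Z=\mathbf i\lambda_Z\cdot\id$ on the entire complex. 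Therefore every cocycle $\eta$ satisfies $\mathbf i\lambda_Z\,\eta=L_Z\eta=d(i_Z\eta)$, so it is a coboundary and $H^*(\goe;\mathcal H_\infty)=0$. Note that the base case $\dim\goe=1$ is subsumed here, since then $\zoe=\goe$ must act non-trivially for $\pi$ to be non-trivial.

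If instead $\lambda_Z=0$ for every $Z\in\zoe$, I would pick some $0\neq Z\in\zoe$. Then $\pi$ is trivial on the closed connected central subgroup $\exp(\R Z)$, and hence descends to a representation $\bar\pi$ of $\bar G:=G/\exp(\R Z)$ on the same Hilbert space $\mathcal H$. The group $\bar G$ is again finite dimensional, simply connected, and nilpotent; $\bar\pi$ is irreducible (it has the same invariant subspaces as $\pi$) and non-trivial (otherwise $\pi$ itself would be trivial); and a routine argument using that $G\to\bar G$ is a surjective submersion identifies the $\bar\pi$-smooth vectors with $\mathcal H_\infty$, as a module over $\bar\goe:=\goe/\R Z$. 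Now I would invoke the Hochschild--Serre spectral sequence for the central ideal $\R Z\subseteq\goe$. Since $\R Z$ acts trivially on $\mathcal H_\infty$ and $[\goe,\R Z]=0$, this sequence has $E_2^{p,q}=H^p(\bar\goe;\mathcal H_\infty)$ for $q\in\{0,1\}$ and $E_2^{p,q}=0$ otherwise. The inductive hypothesis, applied to $\bar\pi$, forces all these groups to vanish, whence $H^*(\goe;\mathcal H_\infty)=0$.

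The two points requiring a little care are the identification of the $\bar\pi$-smooth vectors with $\mathcal H_\infty$ under the quotient $G\to\bar G$, and the observation that the Hochschild--Serre spectral sequence invoked here is the purely algebraic one, applied with $\mathcal H_\infty$ regarded merely as an abstract $\goe$-module; neither introduces any analytic difficulty beyond what has already been established. Otherwise this is the classical computation of the Lie algebra cohomology of a nilpotent Lie algebra with coefficients in the smooth vectors of a non-trivial irreducible unitary representation, and I expect the bookkeeping in the reduction step to be the only mildly tedious part.
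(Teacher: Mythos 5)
Your proof is correct and follows essentially the same route as the paper's: induction on $\dim\goe$, a case split according to whether the center acts non-trivially (using the same fact that irreducibility forces the center to act by scalars), and, in the trivially-acting case, descent to the quotient group together with the Hochschild--Serre spectral sequence and the inductive hypothesis. The only difference is cosmetic: where the center acts by a non-zero scalar you kill the cohomology directly via the Cartan homotopy $L_Z=i_Z\circ d+d\circ i_Z$, whereas the paper reaches the same conclusion by noting that $H^*(\zoe;\mathcal H_\infty)=0$ collapses the spectral sequence --- both arguments rest on the invertibility of $\pi(Z)$ on $\mathcal H_\infty$.
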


\begin{proof}
The proof proceeds by induction on the dimension of $\goe$.
Since $\goe$ is nilpotent, there exists a 1-dimensional central subalgebra $\zoe\subseteq\goe$.
Recall that there is a Hochschild--Serre spectral sequence \cite{HS53} converging to $H^*(\goe;\mathcal H_\infty)$ with $E_2$-term 
$$
E_2^{p,q}\cong H^p\bigl(\goe/\zoe;H^q(\zoe;\mathcal H_\infty)\bigr).
$$
Since the representation of $G$ on the Hilbert space $\mathcal H$ is irreducible, $\zoe$ acts by scalars on $\mathcal H_\infty$, see \cite[Theorem~5 in Appendix~V]{K04}.
If this action is non-trivial, then $H^*(\zoe;\mathcal H_\infty)=0$ and, consequently, the $E_2$-term vanishes.
We may thus assume that the action of $\zoe$ on $\mathcal H_\infty$ is trivial. 
Hence, $H^*(\zoe;\mathcal H_\infty)=H^0(\zoe;\mathcal H_\infty)\oplus H^1(\zoe;\mathcal H_\infty)\cong\mathcal H_\infty\oplus\mathcal H_\infty$ as $\goe/\zoe$-modules.

Consider the closed connected central subgroup $Z:=\exp(\zoe)$ in $G$, and note that $G/Z$ is a simply connected nilpotent Lie group with Lie algebra $\goe/\zoe$.
Since $\mathcal H_\infty$ is dense in $\mathcal H$, the subgroup $Z$ acts trivially on $\mathcal H$, see \cite[Theorem~4 in Appendix~V]{K04}.
Hence the representation of $G$ factors through a representation of $G/Z$ on $\mathcal H$.
Clearly, this is a non-trivial irreducible unitary representation of $G/Z$ whose space of smooth vectors coincides with $\mathcal H_\infty$.
Hence, $H^*(\goe/\zoe;\mathcal H_\infty)=0$, by induction.
We conclude $H^p(\goe/\zoe;H^*(\zoe;\mathcal H_\infty))\cong H^p(\goe/\zoe;\mathcal H_\infty\oplus\mathcal H_\infty)=H^p(\goe/\zoe;\mathcal H_\infty)\oplus H^p(\goe/\zoe;\mathcal H_\infty)=0$.
Thus the $E_2$-term vanishes, and the proof is complete.
\end{proof}

\subsection{Subcomplexes of the de~Rham complex}\label{SS:subdeRham}

In this section we apply the observations and constructions from Sections~\ref{SS:DP} and \ref{SS:splitting-operators} to the de~Rham sequence associated with a filtration-preserving linear connection.
This directly leads to the main result of this section, see Theorem~\ref{T:D} and Corollary~\ref{C:D} below.
In the subsequent Section~\ref{SS:BGG} we will apply this to the curved BGG sequences in parabolic geometry which appear as a special case of the sequences considered here.

Let $\nabla$ be a filtration-preserving linear connection on a filtered vector bundle $E$ over a filtered manifold $M$, and consider its extension to $E$-valued differential forms characterized by the Leibniz rule in \eqref{E:leibn},
\begin{equation}\label{E:Tdnabla}
\cdots\to\Omega^{k-1}(M;E)\xrightarrow{d^\nabla_{k-1}}\Omega^k(M;E)\xrightarrow{d^\nabla_k}\Omega^{k+1}(M;E)\to\cdots.
\end{equation}
Consider a sequence of differential operators
\begin{equation}\label{E:Tdelta}
\cdots\leftarrow\Omega^{k-1}(M;E)\xleftarrow{\delta_k}\Omega^k(M;E)\xleftarrow{\delta_{k+1}}\Omega^{k+1}(M;E)\leftarrow\cdots
\end{equation}
which are of graded Heisenberg order at most zero.
Then the differential operator
$$
\Box_k\colon\Omega^k(M;E)\to\Omega^k(M;E),\qquad
\Box_k:=d^\nabla_{k-1}\delta_k+\delta_{k+1}d^\nabla_k,
$$
is of graded Heisenberg order at most zero.
Let 
\begin{align*}
\partial_k^\omega&\colon C^k(M;E)\to C^{k+1}(M;E),&
\partial_k^\omega&:=\gr(d^\nabla_k),
\\
\tilde\delta_k&\colon C^k(M;E)\to C^{k-1}(M;E),&
\tilde\delta_k&:=\gr(\delta_k),
\\
\tilde\Box_k&\colon C^k(M;E)\to C^k(M;E),&
\tilde\Box_k&:=\gr(\Box_k)=\partial^\omega_{k-1}\tilde\delta_k+\tilde\delta_{k+1}\partial^\omega_k,
\end{align*}
denote the associated graded vector bundle homomorphisms, see Remark~\ref{R:grsigma}, \eqref{E:CkME}, and \eqref{E:delomega}.

For each $x\in M$, let $\tilde P_{k,x}\colon C_x^k(M;E)\to C_x^k(M;E)$ denote the spectral projection onto the generalized zero eigenspace of $\tilde\Box_{x,k}\colon C_x^k(M;E)\to C^k_x(M;E)$, the restriction of $\tilde\Box_k$ to the fiber over $x$.
Assume that the rank of $\tilde P_{k,x}$ is locally constant in $x$ for each $k$.
Then, see Lemma~\ref{L:EP}(a), $\tilde P_k\colon C^k(M;E)\to C^k(M;E)$ is a smooth vector bundle homomorphism,
$\tilde P_k^2=\tilde P_k$, $\tilde P_k\tilde\Box_k=\tilde\Box_k\tilde P_k$,  and we obtain a decomposition of graded vector bundles,
\begin{equation}\label{E:tpdeco2}
C^k(M;E)=\img(\tilde P_k)\oplus\ker(\tilde P_k),
\end{equation}
which is invariant under $\tilde\Box_k$.
Moreover, $\tilde\Box_k$ is nilpotent on $\img(\tilde P_k)$ and invertible on $\ker(\tilde P_k)$.

According to Lemma~\ref{L:EP}(b), there exists a unique filtration-preserving differential operator
$$
P_k\colon\Omega^k(M;E)\to\Omega^k(M;E)
$$ 
such that $P_k^2=P_k$, $P_k\Box_k=\Box_kP_k$, and $\gr(P_k)=\tilde P_k$.
This operator $P_k$ has graded Heisenberg order at most zero and provides a decomposition of filtered vector spaces, 
\begin{equation}\label{E:Pdeco2}
\Omega^k(M;E)=\img(P_k)\oplus\ker(P_k),
\end{equation}
invariant under $\Box_k$ and such that $\Box_k$ is nilpotent on $\img(P_k)$ and invertible on $\ker(P_k)$.
If
$$
C^k(M,E)=\gr(\Lambda^kT^*M\otimes E)\xrightarrow{S_k}\Lambda^kT^*M\otimes E,
$$ 
are splittings of the filtrations, then according to Lemma~\ref{L:EP}(c)
\begin{equation}\label{E:tL2}
L_k\colon\Gamma^\infty(C^k(M;E))\to\Omega^k(M;E),\qquad
L_k:=P_kS_k\tilde P_k+(\id-P_k)S_k(\id-\tilde P_k),
\end{equation}
is an invertible differential operator of graded Heisenberg order at most zero such that $\gr(L_k)=\id$ and $L^{-1}_kP_kL_k=\tilde P_k$.
Hence, $L_k$ induces filtration-preserving isomorphisms
$$
L_k\colon\Gamma^\infty(\img(\tilde P_k))\xrightarrow\cong\img(P_k)
\qquad\text{and}\qquad
L_k\colon\Gamma^\infty(\ker(\tilde P_k))\xrightarrow\cong\ker(P_k).
$$
Moreover, $L^{-1}_k\Box_kL_k$ is a differential operator of graded Heisenberg order at most zero preserving the decomposition \eqref{E:tpdeco2} and satisfying $\gr(L^{-1}_k\Box_kL_k)=\tilde\Box_k$.
Putting 
\begin{equation}\label{E:defDB2}
D_k:=\tilde P_{k+1}L_{k+1}^{-1}d^\nabla_kL_k|_{\Gamma^\infty(\img(\tilde P_k))}
\qquad\text{and}\qquad
B_k:=(\id-\tilde P_{k+1})L_{k+1}^{-1}d^\nabla_kL_k|_{\Gamma^\infty(\ker(\tilde P_k))}
\end{equation}
we obtain two sequences of differential operators,
\begin{equation}\label{E:TEDE}
\cdots\to\Gamma^\infty(\img(\tilde P_{k-1}))\xrightarrow{D_{k-1}}\Gamma^\infty(\img(\tilde P_k))\xrightarrow{D_k}\Gamma^\infty(\img(\tilde P_{k+1}))\to\cdots
\end{equation}
and
\begin{equation}\label{E:TEBE}
\cdots\to\Gamma^\infty(\ker(\tilde P_{k-1}))\xrightarrow{B_{k-1}}\Gamma^\infty(\ker(\tilde P_k))\xrightarrow{B_k}\Gamma^\infty(\ker(\tilde P_{k+1}))\to\cdots,
\end{equation}
all of which have graded Heisenberg order at most zero.

Combining Proposition~\ref{P:DB} with Proposition~\ref{P:hypo}, we immediately obtain:

\begin{theorem}\label{T:D}
In this situation the following hold true:
\begin{enumerate}[(a)]
\item
If the curvature of $\nabla$ is contained in filtration degree one, cf.\ Lemma~\ref{L:curv}, then \eqref{E:TEDE} and \eqref{E:TEBE} are both graded Rockland sequences.
\item
If the curvature of\/ $\nabla$ vanishes, then the operator $L_{k+1}^{-1}d^\nabla_kL_k$ decouples,
$$
L^{-1}_{k+1}d^\nabla_kL_k=D_k\oplus B_k,
$$ 
and we have $D_kD_{k-1}=0$ as well as $B_kB_{k-1}=0$ for all $k$.
In this situation, $G_k\colon\Gamma^\infty(\ker(\tilde P_k))\to\Gamma^\infty(\ker(\tilde P_k))$,
$$
G_k:=B_{k-1}(\id-\tilde P_{k-1})\tilde\delta_k\tilde\Box_k^{-1}
+(\id-\tilde P_k)\tilde\delta_{k+1}\tilde\Box_{k+1}^{-1}\partial^\omega_k,
$$ 
is an invertible differential operator of graded Heisenberg order at most zero with $\gr(G_k)=\id$ that conjugates the complex \eqref{E:TEBE} into an acyclic tensorial complex, namely, 
$$
G_{k+1}^{-1}B_kG_k=\partial^\omega_k|_{\Gamma^\infty(\ker(\tilde P_k))}.
$$
Moreover, the restriction $L_k\colon\Gamma^\infty(\img(\tilde P_k))\to\Omega^k(M;E)$ provides a chain map, $d^\nabla_kL_k|_{\Gamma^\infty(\img(\tilde P_k))}=L_{k+1}D_k$, inducing an isomorphism between the cohomologies of \eqref{E:TEDE} and \eqref{E:Tdnabla}.
More precisely, $\pi_k:=\tilde P_kL_k^{-1}\colon\Omega^k(M;E)\to\Gamma^\infty(\img(\tilde P_k))$, is a chain map, $D_k\pi_k=\pi_{k+1}d^\nabla_k$, which is an inverse of $L_k$ up to homotopy, i.e., $\pi_kL_k|_{\Gamma^\infty(\img(\tilde P_k))}=\id$ and $\id-L_k\pi_k=d^\nabla_{k-1}h_k+h_{k+1}d^\nabla_k$ where the homotopy $h_k\colon\Omega^k(M;E)\to\Omega^{k-1}(M;E)$ is a differential operator of graded Heisenberg order at most zero given by $h_k:=L_{k-1}G_{k-1}(\id-\tilde P_k)\tilde\delta_k\tilde\Box_k^{-1}G_k^{-1}(\id-\tilde P_k)L_k^{-1}$.
\end{enumerate}
\end{theorem}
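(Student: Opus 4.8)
The plan is to deduce the theorem from Proposition~\ref{P:DB} by specializing the construction of Section~\ref{SS:DP} to the de~Rham sequence. Concretely, I would take $E_i:=\Lambda^iT^*M\otimes E$, $A_i:=d^\nabla_i$, $k_i:=0$ for all $i$, and let $\delta_i$ be the operators from \eqref{E:Tdelta}. The first step is to invoke Lemma~\ref{L:symb}, which guarantees that each $d^\nabla_k$ is a differential operator of graded Heisenberg order at most zero, so that the present situation is literally an instance of the abstract setup of Section~\ref{SS:DP}; consequently the operators $\Box_k$, $\tilde P_k$, $P_k$, $L_k$, $D_k$ and $B_k$ defined in this section coincide with the corresponding objects produced by the general construction, the standing local-constancy assumption on $\rank(\tilde P_{k,x})$ is precisely the hypothesis needed to apply Lemma~\ref{L:EP} and Proposition~\ref{P:DB}, and under the tautological identifications $\tilde A_i=\gr(d^\nabla_i)=\partial^\omega_i$ and $\tilde\delta_i=\gr(\delta_i)$ parts (a) and (b) of the theorem become exactly the conclusions of Proposition~\ref{P:DB}(b) and Proposition~\ref{P:DB}(c), with every occurrence of $\tilde A_i$ rewritten as $\partial^\omega_i$.

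For part (a) I would argue as follows. If the curvature of $\nabla$ is contained in filtration degree one, then Lemma~\ref{L:curv} gives $\tilde\sigma^0_x(d^\nabla)^2=(\partial^\omega_x)^2=0$ at every $x\in M$, so the hypothesis $\tilde\sigma^{k_i}_x(A_i)\tilde\sigma^{k_{i-1}}_x(A_{i-1})=0$ of Proposition~\ref{P:DB}(a) holds; and Proposition~\ref{P:hypo} asserts precisely that the de~Rham sequence \eqref{E:Tdnabla} is a graded Rockland sequence. Proposition~\ref{P:DB}(b) then immediately yields that \eqref{E:TEDE} and \eqref{E:TEBE} are graded Rockland sequences, which is the content of (a). (One could even skip the appeal to Lemma~\ref{L:curv}, since $(\partial^\omega_x)^2=0$ is already forced by the graded Rockland property, exactly as in the proof of Proposition~\ref{P:DB}(b).)

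For part (b) I would first note that vanishing curvature gives $(d^\nabla)^2=F^\nabla\wedge-=0$, hence $d^\nabla_kd^\nabla_{k-1}=0$, i.e.\ the hypothesis $A_iA_{i-1}=0$ of Proposition~\ref{P:DB}(c); reading off that proposition with the identifications above then delivers all assertions of (b) verbatim — the decoupling $L^{-1}_{k+1}d^\nabla_kL_k=D_k\oplus B_k$, the relations $D_kD_{k-1}=0=B_kB_{k-1}$, the fact that the displayed $G_k$ (which is the operator $G_i$ of Proposition~\ref{P:DB}(c) once $\tilde A_i$ is written as $\partial^\omega_k$) is an invertible differential operator of graded Heisenberg order at most zero with $\gr(G_k)=\id$ satisfying $G_{k+1}^{-1}B_kG_k=\partial^\omega_k|_{\Gamma^\infty(\ker(\tilde P_k))}$, and the chain-map and homotopy statements for $L_k$, $\pi_k=\tilde P_kL_k^{-1}$ and $h_k$ together with the induced isomorphism between the cohomologies of \eqref{E:TEDE} and \eqref{E:Tdnabla}. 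In short, the theorem is a corollary, so there is no serious obstacle: the only work is the bookkeeping just described — checking that Lemma~\ref{L:symb} makes the de~Rham sequence fit the abstract framework, that the two curvature hypotheses translate respectively into the symbol-composition and complex conditions required by Proposition~\ref{P:DB}, and that the substitution $\tilde A_i\leftrightarrow\partial^\omega_i$ carries the abstract formulas for $G_i$, $h_i$, $\pi_i$ into the formulas of the statement. The one genuinely non-trivial input, the vanishing of the Lie algebra cohomology of a nilpotent Lie algebra with coefficients in the smooth vectors of a non-trivial irreducible unitary representation, has already been isolated and proved in Lemma~\ref{L:Hnoe} and used to establish Proposition~\ref{P:hypo}, so nothing new is needed here.
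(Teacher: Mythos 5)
Your proposal is correct and is exactly the paper's argument: the paper states Theorem~\ref{T:D} with the single line ``Combining Proposition~\ref{P:DB} with Proposition~\ref{P:hypo}, we immediately obtain,'' and your bookkeeping (Lemma~\ref{L:symb} to fit the de~Rham sequence into the abstract framework, Lemma~\ref{L:curv} and Proposition~\ref{P:hypo} to supply the hypotheses of Proposition~\ref{P:DB}(b) for part (a), and $A_iA_{i-1}=0$ plus the substitution $\tilde A_i\leftrightarrow\partial^\omega_i$ for part (b)) is precisely the intended derivation.
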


For flat connections the preceding theorem is due to Rumin, see \cite[Theorems~2.6 and 5.2]{R01} or \cite[Theorems~1 and 3]{R99}.

Combining Proposition~\ref{P:D} with Proposition~\ref{P:hypo} we immediately obtain:

\begin{corollary}\label{C:D}
Let $\nabla$ be a filtration-preserving linear connection on a filtered vector bundle $E$ over a filtered manifold $M$ whose curvature is contained in filtration degree one, and consider its extension to $E$-valued differential forms characterized by the Leibniz rule, see~\eqref{E:leibn},
\begin{equation}\label{E:Cdnabla}
\cdots\to\Omega^{k-1}(M;E)\xrightarrow{d^\nabla}\Omega^k(M;E)\xrightarrow{d^\nabla}\Omega^{k+1}(M;E)\to\cdots.
\end{equation}
Moreover, consider a codifferential of Kostant type, see Definition~\ref{D:Kdelta},
$$
\cdots\leftarrow\Lambda^{k-1}T^*M\otimes E\xleftarrow{\delta_i}\Lambda^kT^*M\otimes E\xleftarrow{\delta_{i+1}}\Lambda^{k+1}T^*M\otimes E\leftarrow\cdots,
$$
which has locally constant rank, and let $\bar\pi_k\colon\ker(\delta_k)\to\mathcal H_k:=\ker(\delta_k)/\img(\delta_{k+1})$ denote the canonical vector bundle projection.
Then the following hold true:
\begin{enumerate}[(a)]
\item
There exists a unique differential operator $\bar L_k\colon\Gamma^\infty(\mathcal H_k)\to\Omega^k(M;E)$ such that $\delta_k\bar L_k=0$, $\bar\pi_k\bar L_k=\id$, and $\delta_{k+1}d^\nabla_k\bar L_k=0$.
Moreover, $\bar L_k$ is of graded Heisenberg order at most zero.
\item
The differential operator
$$
\bar D_k\colon\Gamma^\infty(\mathcal H_k)\to\Gamma^\infty(\mathcal H_{k+1}),\qquad\bar D_k:=\bar\pi_{k+1}d^\nabla_kL_k,
$$ 
is of graded Heisenberg order at most zero, and 
\begin{equation}\label{E:bD}
\cdots\to\Gamma^\infty(\mathcal H_{k-1})\xrightarrow{\bar D_{k-1}}\Gamma^\infty(\mathcal H_k)\xrightarrow{\bar D_k}\Gamma^\infty(\mathcal H_{k+1})\to\cdots
\end{equation}
is graded Rockland sequence.
\item
If $\nabla$ has vanishing curvature, then $\bar D_k\bar D_{k-1}=0$, and $\bar L_k\colon\Gamma^\infty(\mathcal H_k)\to\Omega^k(M;E)$ provides a chain map, $d^\nabla_k\bar L_k=\bar L_{k+1}\bar D_k$, inducing an isomorphism between the cohomologies of \eqref{E:bD} and \eqref{E:Cdnabla}.
\item
There exist invertible differential operators, $V_i\colon\Gamma^\infty(\img(\tilde P_i))\to\Gamma^\infty(\mathcal H_i)$ with inverse $V_i^{-1}\colon\Gamma^\infty(\mathcal H_i)\to\Gamma^\infty(\img(\tilde P_i))$, both of graded Heisenberg order at most zero, such that 
$$
V_{i+1}^{-1}\bar D_iV_i=D_i,
$$ 
where $D_i\colon\Gamma^\infty(\img(\tilde P_i))\to\Gamma^\infty(\img(\tilde P_{i+1}))$ denotes the operator considered in Theorem~\ref{T:D}, see \eqref{E:defDB2}, corresponding to splittings $S_k$ as in Definition~\ref{D:Kdelta}(iii).
\end{enumerate}
\end{corollary}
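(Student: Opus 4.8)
The plan is to recognize the de~Rham sequence \eqref{E:Cdnabla} as a special instance of the abstract setup of Proposition~\ref{P:D} and to feed in Proposition~\ref{P:hypo} for the Rockland property. First I would put $E_k:=\Lambda^kT^*M\otimes E$, equipped with the induced filtration from Section~\ref{SS:linearconnections}, so that $\gr(E_k)=C^k(M;E)=\Lambda^k\mathfrak t^*M\otimes\gr(E)$, and take $A_k:=d^\nabla_k$ with $k_k:=0$. By Lemma~\ref{L:symb}, $d^\nabla_k$ is a differential operator of graded Heisenberg order at most zero with $\tilde A_k=\gr(d^\nabla_k)=\partial^\omega_k$. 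Since the curvature of $\nabla$ is contained in filtration degree one, Lemma~\ref{L:curv} gives $(\partial^\omega_x)^2=0$ at every point, i.e.\ $\tilde A_k\tilde A_{k-1}=0$ for all $k$, so the hypotheses of Proposition~\ref{P:D} on the sequence of operators $A_k$ are satisfied. The codifferential $\delta$ is, by assumption, of Kostant type for \eqref{E:Cdnabla} and has locally constant rank, which is exactly the remaining input Proposition~\ref{P:D} requires.

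With these identifications, parts (a), (c) and (d) of Corollary~\ref{C:D} follow verbatim from the corresponding parts of Proposition~\ref{P:D}. Part~(a) produces the unique splitting operator $\bar L_k$ of graded Heisenberg order at most zero characterized by $\delta_k\bar L_k=0$, $\bar\pi_k\bar L_k=\id$ and $\delta_{k+1}d^\nabla_k\bar L_k=0$, and hence $\bar D_k:=\bar\pi_{k+1}d^\nabla_k\bar L_k$ of graded Heisenberg order at most zero. Part~(d) supplies the invertible operators $V_i=\bar\pi_iL_i|_{\Gamma^\infty(\img(\tilde P_i))}$ with inverse $V_i^{-1}=L_i^{-1}\bar L_i$ satisfying $V_{i+1}^{-1}\bar D_iV_i=D_i$; here one only has to note that the operator $D_i$ of \eqref{E:defDB} coincides with the one in \eqref{E:defDB2}, since both are the restriction of $\tilde P_{i+1}L_{i+1}^{-1}A_iL_i$ with $A_i=d^\nabla_i$. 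Part~(c) is the case $A_kA_{k-1}=0$, which by the standard identity $(d^\nabla)^2=F^\nabla\wedge-$ holds precisely when $\nabla$ is flat, and then Proposition~\ref{P:D}(c) gives $\bar D_k\bar D_{k-1}=0$ together with the chain map $\bar L_k$ inducing the cohomology isomorphism. For part~(b) I would additionally invoke Proposition~\ref{P:hypo}: the curvature assumption makes \eqref{E:Cdnabla} a graded Rockland sequence, and Proposition~\ref{P:D}(b) transfers this to the sequence $\bar D_k$ of \eqref{E:bD}.

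The only point demanding genuine care — though it is hardly an obstacle — is keeping track of the two distinct roles played by the curvature hypothesis: ``curvature in filtration degree one'' is used once via Lemma~\ref{L:curv} to guarantee $\tilde A_k\tilde A_{k-1}=0$, which is needed even to form the projectors $\tilde P_k$ and to invoke Proposition~\ref{P:D}, and once via Proposition~\ref{P:hypo} to guarantee that \eqref{E:Cdnabla} is graded Rockland; the stronger assumption of flatness intervenes only in part~(c). Beyond this bookkeeping, the proof is a direct application of Propositions~\ref{P:D} and \ref{P:hypo} to the de~Rham sequence.
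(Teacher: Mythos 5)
Your proposal is correct and follows exactly the route the paper takes: the paper's entire proof is the one-line remark ``Combining Proposition~\ref{P:D} with Proposition~\ref{P:hypo} we immediately obtain'' the corollary, and your write-up simply spells out that combination (taking $E_k=\Lambda^kT^*M\otimes E$, $A_k=d^\nabla_k$ of graded order zero via Lemma~\ref{L:symb}, $\tilde A_k\tilde A_{k-1}=0$ via Lemma~\ref{L:curv}, and the Rockland property of \eqref{E:Cdnabla} via Proposition~\ref{P:hypo}). Nothing is missing.
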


The operators $\bar L_k$ and $\bar D_k$ in Corollary~\ref{C:D} are direct generalizations of the splitting operators and the curved BGG operators in parabolic geometry, respectively.
This aspect will be addressed in Section~\ref{SS:BGG} below.
The differential projector $P_k$ generalizes the operator obtained by composing (5.1) with (5.2) in \cite{CD01}.

Detailed explanations of how Rumin's complex \cite{R90,R94} appears among the sequences \eqref{E:bD} considered in Corollary~\ref{C:D} can be found in \cite[Example~4.21]{DH17}.
Hypoellipticity of this sequence has been established by Rumin, see \cite[Section~3]{R94}.
Let us mention that Rumin and Seshadri \cite{RS12} have introduced an analytic torsion based on hypoelliptic Laplacians associated with Rumin's complex.
We expect that their construction can be be extended to all (ungraded) sequences appearing in Corollary~\ref{C:D}(b).

We conclude this section with a 4-dimensional geometry for which the sequence in Theorem~\ref{T:D}(b) turns out to be graded Rockland but not Rockland in the ungraded sense.

\begin{example}[Engel structures]\label{Ex:Engel}
Recall that an Engel structure \cite{P16} on a smooth 4-manifold $M$ is a smooth rank two distribution $T^{-1}M\subseteq TM$ with growth vector $(2,3,4)$.
More explicitly, Lie brackets of sections of $T^{-1}M$ generate a rank three bundle $T^{-2}M$, and triple brackets of sections of $T^{-1}M$ generate all of $TM$.
Hence, $M$ is a filtered manifold,
$$
TM=T^{-3}M\supseteq T^{-2}M\supseteq T^{-1}M\supseteq T^0M=0.
$$
One readily checks that the bundle of osculating algebras is locally trivial with typical fiber isomorphic to the graded nilpotent Lie algebra $\goe=\goe_{-3}\oplus\goe_{-2}\oplus\goe_{-1}$ with non-trivial brackets 
$$
[X_1,X_2]=X_3,\qquad [X_1,X_3]=X_4,\qquad [X_2,X_3]=0,
$$ 
where $X_1,X_2$ is a basis of $\goe_{-1}$, $X_3$ is a basis of $\goe_{-2}$ and $X_4$ is a basis of $\goe_{-3}$.
If $G$ denotes the simply connected nilpotent Lie group with Lie algebra $\goe$, then the left invariant distribution corresponding to $\goe_{-1}$ provides an Engel structure on $G$.
Locally, every Engel structure is diffeomorphic to this left invariant structure.
According to a result of Vogel \cite{V09}, every closed parallelizable smooth 4-manifold admits an (orientable) Engel structure.

We will exhibit explicit formulas for the graded Heisenberg principal symbol of the operators $D_k$ in Theorem~\ref{T:D} corresponding to the de~Rham complex, that is, the trivial flat line bundle $E$. 
We work on $M=G$ equipped with the left invariant Engel structure mentioned before.
We use the left invariant splitting for $S\colon\gr(\Lambda^kT^*G)\to\Lambda^kT^*G$ provided by the decomposition $\goe=\goe_{-3}\oplus\goe_{-2}\oplus\goe_{-1}$.
Moreover, we consider the left invariant codifferential $\delta_k\colon\Lambda^kT^*G\to\Lambda^{k-1}T^*G$ which, at the identity, is dual to the Chevalley--Eilenberg differential $\partial_{k-1}\colon\Lambda^{k-1}\goe^*\to\Lambda^k\goe^*$ with respect to the basis $X_1,\dotsc,X_4$. 
Clearly, this is a Kostant type codifferential of maximal rank.
In this situation, Theorem~\ref{T:D}(b) provides a graded Rockland complex of left invariant operators, cf.\ \cite{BEGN19} and \cite[Section~2.3]{R01}:
\begin{equation}\label{E:DEngel}
C^\infty(G)\xrightarrow{D_0}
C^\infty(G)^2\xrightarrow{D_1}
\begin{array}{c}C^\infty(G)\\\oplus\\ C^\infty(G)\end{array}\xrightarrow{D_2}
C^\infty(G)^2\xrightarrow{D_3}
C^\infty(G).
\end{equation}
Using matrices with entries in the universal enveloping algebra of $\goe$, and using the notation $X_{i_1\dotsc i_k}=X_{i_1}\cdots X_{i_k}$, these operators can be expressed as:
$$
D_0=\left(\begin{array}{c}X_1\\X_2\end{array}\right)
$$

$$
D_1=\left(\begin{array}{cc}-X_{22}&X_{12}-2X_3\\\hline-X_4-X_{112}-X_{13}&X_{111}\end{array}\right)
$$

$$
D_2=\left(\begin{array}{c|c}X_{111}&-X_3-X_{12}\\ 3X_4-3X_{13}+X_{112}&-X_{22}\end{array}\right)
$$

$$
D_3=\left(\begin{array}{cc}-X_2&X_1\end{array}\right)
$$
A detailed verification of these claims can be found in \cite[Appendix~A]{DH17}.
The direct sum symbol in the sequence \eqref{E:DEngel} indicates that its filtration is non-trivial.
Correspondingly, the operators $D_1$ and $D_2$ are not homogeneous, whence the lines in their matrices separating different degrees.
Evidently, the sequence \eqref{E:DEngel} fails to be Rockland in the ungraded sense.
\end{example}

\subsection{BGG sequences}\label{SS:BGG}

Every regular parabolic geometry has an underlying filtered manifold $M$ whose bundle of osculating Lie algebras is locally trivial.
The Cartan connection induces a linear connection $\nabla$ on every associated tractor bundle $E$ over $M$.
This linear connection is filtration-preserving and its curvature is contained in filtration degree one.
Moreover, Kostant's codifferential provides a vector bundle homomorphism $\delta\colon\Lambda^kT^*M\otimes E\to\Lambda^{k-1}T^*M\otimes E$, satisfying the assumptions in Corollary~\ref{C:D}.
In this situation, the operator $\bar L$ in Corollary~\ref{C:D}(a) coincides with the splitting operator in \cite[Theorem~2.4]{CS12}, see also \cite{CSS01,CD01}.
Furthermore, the sequence in Corollary~\ref{C:D}(b) reduces to the  ``torsion free BGG sequence'' in \cite[Section~5]{CD01} and coincides with the sequence considered in \cite[Section~2.4]{CS12}.
For torsion free parabolic geometries this coincides with the original curved BGG sequence constructed by \v Cap, Slov\'ak and Sou\v cek in \cite{CSS01}.
From Corollary~\ref{C:D}(b) we conclude that these BGG operators have graded Heisenberg order at most zero and form a graded Rockland sequence, see Corollary~\ref{C:BGG} below.

Let us point out that there is a normalization condition for the curvature of the Cartan connection such that normal regular parabolic geometries can equivalently be described by underlying geometric structures, see Theorem~3.1.14, Section~3.1.16, and the historical remarks at the end of Section~3 in \cite{CS09}.
For a large class of parabolic geometries this underlying geometric structure consists merely of the underlying filtered manifold, \cite[Proposition~4.3.1]{CS09}.
These provide intriguing classes of filtered manifolds to which one can associate graded Rockland sequences of differential operators in a natural way.

In the remaining part of this section we will, for the reader's convenience, briefly recall basic facts on parabolic geometries and provided detailed references supporting the claims made above.
We closely follow the presentation in \cite{CS09}, \cite{CSS01} and \cite[Section~2]{CS12}.

Consider a $|k|$-graded semisimple Lie algebra
\begin{equation}\label{E:grading}
\goe=\goe_{-k}\oplus\cdots\oplus\goe_{-1}\oplus\goe_0\oplus\goe_1\oplus\cdots\oplus\goe_k.
\end{equation}
More precisely, $[\goe_i,\goe_j]\subseteq\goe_{i+j}$ for all $i,j$, and the subalgebra $\goe_-:=\goe_{-k}\oplus\cdots\oplus\goe_{-1}$ is generated by $\goe_{-1}$, see \cite[Definition~3.1.2]{CS09}.
Consider the filtration 
\begin{equation}\label{E:filt}
\goe=\goe^{-k}\supseteq\goe^{-k+1}\supseteq\cdots\supseteq\goe^{-1}\supseteq\goe^0\supseteq\goe^1\supseteq\cdots\supseteq\goe^{k-1}\supseteq\goe^k
\end{equation}
where $\goe^i:=\goe_i\oplus\cdots\oplus\goe_k$.
The subalgebras $\goe_0$ and $\poe:=\goe^0=\goe_0\oplus\cdots\oplus\goe_k$ can be characterized as grading and filtration-preserving subalgebras, respectively.
More precisely, $\goe_0=\{X\in\goe\mid\forall i:\ad_X(\goe_i)\subseteq\goe_i\}$ and $\poe=\{X\in\goe\mid\forall i:\ad_X(\goe^i)\subseteq\goe^i\}$, see\cite[Lemma~3.1.3(1)]{CS09}.
Also note that $\poe_+:=\goe^1=\goe_1\oplus\cdots\oplus\goe_k$ is a nilpotent ideal in $\poe$.

Let $G$ be a not necessarily connected Lie group with Lie algebra $\goe$.
Then
\begin{equation}\label{E:Pt}
\{g\in G\mid\forall i:\Ad_g(\goe^i)\subseteq\goe^i\}
\end{equation}
is a closed subgroup of $G$ with Lie algebra $\poe$, see \cite[Lemma~3.1.3(2)]{CS09}.
Let $P$ be a parabolic subgroup of $G$ corresponding to the $|k|$-grading \eqref{E:grading}, i.e.\ a subgroup between \eqref{E:Pt}
and its connected component, see \cite[Definition~3.1.3]{CS09}. Hence, $P$ has Lie algebra $\poe$, and
the corresponding Levi subgroup,
$$
G_0:=\{g\in P\mid\forall i:\Ad_g(\goe_i)\subseteq\goe_i\},
$$
has Lie algebra $\goe_0$. According to \cite[Theorem~3.1.3]{CS09} we have a diffeomorphism
\begin{equation}\label{E:PG0iso}
G_0\times\poe_+\cong P,\qquad (g,X)\mapsto g\exp(X).
\end{equation}
In particular, $P_+:=\exp(\poe_+)$ is a closed normal nilpotent subgroup of $P$, and the inclusion $G_0\subseteq P$ induces a canonical isomorphism $G_0=P/P_+$.
Note that $P_+$ acts trivially on the associated graded, $\gr(\goe)$, of the filtration \eqref{E:filt}. 
In particular, $\gr(\goe/\poe)$ can be considered as a representation of $P/P_+=G_0$. 
The inclusion $\goe_-\subseteq\goe$ induces a canonical isomorphism of $G_0$-modules,
\begin{equation}\label{E:grgp}
\goe_-=\gr(\goe/\poe).
\end{equation}

A parabolic geometry of type $(G,P)$ consists of a principal $P$-bundle $p\colon\mathcal G\to M$ and a Cartan connection $\omega\in\Omega^1(\mathcal G;\goe)$, see \cite[Definition~3.1.4 and Section~1.5]{CS09}.
Hence, the $\goe$-valued $1$-form $\omega$ provides a $P$-equivariant trivialization of the tangent bundle $T\mathcal G$, that is, $\omega_u\colon T_u\mathcal G\to\goe$ is a linear isomorphism for each $u\in\mathcal G$, and $(r^g)^*\omega=\Ad_{g^{-1}}\omega$ for all $g\in P$,
where $r^g$ denotes the principal right action of $g\in P$ on $\mathcal G$.
Moreover, $\omega$ reproduces the generators of the right $P$-action, i.e.\ for all $X\in\poe$, we have $\omega(\zeta_X)=X$ where $\zeta_X:=\frac d{dt}|_0r^{\exp(tX)}$ denotes the fundamental vector field.
The prototypical example of a parabolic geometry of type $(G,P)$ is its flat model, that is, the generalized flag variety $G/P$ with the canonical projection $G\to G/P$ and the Maurer--Cartan form on $G$.

Using the Cartan connection, the filtration \eqref{E:filt} provides a filtration of the tangent bundle $T\mathcal G$ by $P$-invariant subbundles, 
$$
T\mathcal G=T^{-k}\mathcal G\supseteq T^{-k+1}\mathcal G\supseteq\cdots\supseteq T^k\mathcal G,
$$
where $T^i_u\mathcal G:=\omega_u^{-1}(\goe^i)$ for $u\in\mathcal G$. 
Note that $T^0\mathcal G$ coincides with the vertical bundle of the projection $p\colon\mathcal G\to M$.
Hence there exists a unique filtration of $TM$ by subbundles,
\begin{equation}\label{E:TMfilt}
TM=T^{-k}M\supseteq T^{-k+1}M\supseteq\cdots\supseteq T^{-1}M,
\end{equation}
such that $(Tp)^{-1}(T^iM)=T^i\mathcal G$.
The Cartan connection induces an isomorphism
\begin{equation}\label{E:TMgp}
TM\cong\mathcal G\times_P(\goe/\poe)
\end{equation}
intertwining the filtration \eqref{E:TMfilt} with the filtration induced from \eqref{E:filt}.
Using \eqref{E:grgp} for the associated graded we obtain an isomorphism of vector bundles,
\begin{equation}\label{E:grTM}
\gr(TM)\cong\mathcal G_0\times_{G_0}\goe_-.
\end{equation}
Here $\mathcal G_0:=\mathcal G/P_+$ is considered as a principal $G_0$-bundle over $M$.

We assume that the Cartan connection $\omega$ is regular, see \cite[Definition~3.1.7]{CS09}.
Hence, the filtration on $TM$, see~\eqref{E:TMfilt}, turns $M$ into a filtered manifold, and the corresponding Levi bracket $\gr(TM)\otimes\gr(TM)\to\gr(TM)$ induced by the Lie bracket of vector fields coincides with the algebraic bracket induced by the Lie bracket $\goe_-\otimes\goe_-\to\goe_-$ via \eqref{E:grTM}.
In other words, using the notation from Section~\ref{SS:DO}, the Cartan connection of a regular parabolic geometry provides an isomorphism of bundles of graded nilpotent Lie algebras
\begin{equation}\label{E:tMG0}
\mathfrak tM\cong\mathcal G_0\times_{G_0}\goe_-.
\end{equation}

Recall that the Cartan connection induces a principal connection on $\mathcal P\times_PG$.
More precisely, there exists a unique principal connection on the principal $G$-bundle $\mathcal G\times_PG\to M$ which restricts to the Cartan connection $\omega$ along the inclusion 
$\mathcal G\subseteq\mathcal G\times_PG$, see \cite[Theorem~1.5.6]{CS09}.
Consequently, for every finite dimensional $G$-representation $\mathbb E$, the Cartan connection induces a linear connection $\nabla$ on the associated tractor bundle 
$$
E:=\mathcal G\times_P\mathbb E=(\mathcal G\times_PG)\times_G\mathbb E.
$$
Recall that $\mathbb E$ admits a grading, $\mathbb E=\mathbb E_{-l}\oplus\cdots\oplus\mathbb E_l$, which is compatible with the grading of $\goe$, i.e.\ $X\cdot v\in\mathbb E_{i+j}$ for all $X\in\goe_i$ and $X\in\mathbb E_j$.
Indeed, there exists a unique grading element in $\goe$ which acts by multiplication with $j$ on the component $\goe_j$, see \cite[Proposition~3.1.2(1)]{CS09}, and the eigenspaces of its action on $\mathbb E$ provide the desired decomposition.
The grading of $\mathbb E$ is $G_0$-invariant since the uniqueness of the the grading element implies that it is stabilized by $G_0$.
Hence, the associated filtration $\mathbb E^i:=\bigoplus_{j\geq i}\mathbb E_j$ is $P$-invariant, see \eqref{E:PG0iso}.
Moreover, $P_+$ acts trivially on the associated graded, and $\gr(\mathbb E)=\mathbb E$ as representations of $P/P_+=G_0$.
The $P$-invariant filtration of $\mathbb E$ induces a filtration of $E$ by subbundles $E^i:=\mathcal G\times_P\mathbb E^i$.
Clearly, the linear connection $\nabla$ is filtration-preserving, that is, for all $X\in\Gamma^\infty(T^pM)$ and $\psi\in\Gamma^\infty(E^q)$ we have $\nabla_X\psi\in\Gamma^\infty(E^{p+q})$.
Since the Cartan connection is assumed to be regular, its curvature $F^\nabla\in\Omega^2(M;\eend(E))$ is contained in filtration degree one, that is, for all $X_i\in\Gamma^\infty(T^{p_i}M)$ and $\psi\in\Gamma^\infty(E^q)$ we have $F^\nabla(X_1,X_2)\psi\in\Gamma^\infty(E^{p_1+p_2+q+1})$, see \cite[Corollary~3.1.8(2) and Theorem~3.1.22(3)]{CS09}.
The isomorphism \eqref{E:TMgp} induces an isomorphism
\begin{equation}\label{E:forms}
\Lambda^kT^*M\otimes E\cong\mathcal G\times_P(\Lambda^k(\goe/\poe)^*\otimes\mathbb E)
\end{equation}
which intertwines the filtration on $\Lambda^kT^*M\otimes E$ with the one induced from the filtration on $\Lambda^k(\goe/\poe)^*\otimes\mathbb E$.
Moreover, \eqref{E:grgp} provides an isomorphism of $G_0$-modules,
\begin{equation}\label{E:123}
\gr\bigl(\Lambda^k(\goe/\poe)^*\otimes\mathbb E\bigr)=C^k(\goe_-;\mathbb E),
\end{equation}
where $C^k(\goe_-;\mathbb E):=\Lambda^k\goe_-^*\otimes\mathbb E$. 
Hence, \eqref{E:forms} induces an isomorphism
\begin{equation}\label{E:grforms}
\gr\bigl(\Lambda^kT^*M\otimes E\bigr)
\cong\mathcal G_0\times_{G_0}C^k(\goe_-;\mathbb E).
\end{equation}
The extension $d^\nabla\colon\Omega^k(M;E)\to\Omega^{k+1}(M;E)$ characterized by the Leibniz rule, see \eqref{E:leibn}, is filtration-preserving, and via \eqref{E:grforms} we have
\begin{equation}\label{E:grd}
\gr(d^\nabla)=\mathcal G_0\times_{G_0}\partial_{\goe_-}
\end{equation}
where $\partial_{\goe_-}\colon C^k(\goe_-;\mathbb E)\to C^{k+1}(\goe_-;\mathbb E)$ denotes the differential in the standard complex computing Lie algebra cohomology $H^*(\goe_-;\mathbb E)$, see Lemma~\ref{L:curv}.

Let $\delta_{\poe_+}\colon\Lambda^k\poe_+\otimes\mathbb E\to\Lambda^{k-1}\poe_+\otimes\mathbb E$ denote the differential in the standard complex computing the Lie algebra homology $H_*(\poe_+;\mathbb E)$ with coefficients in $\mathbb E$. 
Since $\delta_{\poe_+}$ is $P$-equivariant, and since the Killing form provides an isomorphism of $P$-modules $(\goe/\poe)^*\cong\poe_+$, the differential $\delta_{\poe_+}$ dualizes to a $P$-equivariant map
\begin{equation}\label{E:del*gp}
\delta_{\goe/\poe}\colon\Lambda^k(\goe/\poe)^*\otimes\mathbb E\to\Lambda^{k-1}(\goe/\poe)^*\otimes\mathbb E.
\end{equation}
Via the identification \eqref{E:forms}, it gives rise to a vector bundle homomorphism,
\begin{equation}\label{E:deltaVV}
\delta\colon\Lambda^kT^*M\otimes E\to\Lambda^{k-1}T^*M\otimes E,\qquad\delta:=\mathcal G\times_P\delta_{\goe/\poe}.
\end{equation}
In the literature \cite{CSS01,CS12,CS09} this homomorphism is often denoted $\partial^*$.
Clearly, $\delta^2=0$. 
Moreover, $\delta$ is filtration-preserving and via \eqref{E:grforms}
\begin{equation}\label{E:grdel}
\gr(\delta)=\mathcal G_0\times_{G_0}\delta_{\goe_-},
\end{equation}
where $\delta_{\goe_-}\colon C^k(\goe_-;\mathbb E)\to C^{k-1}(\goe_-;\mathbb E)$ is obtained from \eqref{E:del*gp} by passing to the associated graded and using the identification \eqref{E:123}, that is, $\delta_{\goe_-}=\gr(\delta_{\goe/\poe})$.
Actually, $\delta_{\goe/\poe}$ is grading preserving, hence $\delta_{\goe_-}=\delta_{\goe/\poe}$ via the isomorphism of $G_0$-modules 
\begin{equation}\label{E:g-gp}
\Lambda^k\goe_-^*\otimes\mathbb E=\Lambda^k(\goe/\poe)^*\otimes\mathbb E
\end{equation}
induced by the identification $\goe_-=\goe/\poe$.

Recall that a Weyl structure is a $G_0$-equivariant section of the principal $P_+$-bundle $\mathcal G\to\mathcal G_0=\mathcal G/P_+$, see \cite[Definition~5.1.1]{CS09}.
Global Weyl structures always exist, see \cite[Proposition~5.1.1]{CS09}. Moreover, the (contractible) space of sections of the bundle of groups $\mathcal G_0\times_{G_0}P_+$
acts free and transitively on the space of Weyl structures.
Using the isomorphism of $G_0$-modules \eqref{E:g-gp}, every Weyl structure $\mathcal G_0\to\mathcal G$ induces a filtration-preserving isomorphism of vector bundles,
\begin{equation*}\label{E:sigma}
\mathcal G_0\times_{G_0}C^k(\goe_-;\mathbb E)\xrightarrow\cong\mathcal G\times_P\bigl((\Lambda^k(\goe/\poe)^*\otimes\mathbb E\bigr),
\end{equation*}
inducing the identity on the associated graded, see \eqref{E:123}.
Via \eqref{E:forms} and \eqref{E:grforms} this corresponds to a splitting of the filtration
$$
S\colon\gr(\Lambda^kT^*M\otimes E)\to\Lambda^kT^*M\otimes E
$$
satisfying $\delta\circ S=S\circ\gr(\delta)$.

Kostant \cite{K61} observed that $\delta_{\goe_-}$ and $\partial_{\goe_-}$ are adjoint with respect to positive definite inner products on the spaces $C^k(\goe_-;\mathbb E)$, see \cite[Proposition~3.1.1]{CS09}.
Hence the Laplacian
$$
\Box_{\goe_-}\colon C^*(\goe_-;\mathbb E)\to C^*(\goe_-;\mathbb E),\qquad
\Box_{\goe_-}:=\delta_{\goe_-}\circ\partial_{\goe_-}+\partial_{\goe_-}\circ\delta_{\goe_-},
$$
gives rise to a finite dimensional Hodge decomposition
\begin{equation}\label{E:fdhodge}
C^*(\goe_-;\mathbb E)=\img(\delta_{\goe_-})\oplus\ker(\Box_{\goe_-})\oplus\img(\partial_{\goe_-}).
\end{equation}
Using \eqref{E:grd} and \eqref{E:grdel} we see that $\Box\colon\Omega^*(M;E)\to\Omega^*(M;E)$, $\Box=\delta\circ d^\nabla+d^\nabla\circ\delta$, is filtration-preserving, and via the identification \eqref{E:grforms} we have
\begin{equation}\label{E:grBox}
\gr(\Box)=\mathcal G_0\times_{G_0}\Box_{\goe_-}.
\end{equation}
For the fiber-wise projection $\tilde P\colon\gr(\Lambda^kT^*M\otimes E)\to\gr(\Lambda^kT^*M\otimes E)$ onto the (generalized) zero eigenspace of $\gr(\Box)$, we obtain, via \eqref{E:grforms},
$$
\tilde P=\mathcal G_0\times_{G_0}P_{\goe_-}
$$
where $P_{\goe_-}\colon C^*(\goe_-;\mathbb E)\to C^*(\goe_-;\mathbb E)$ denotes the projection onto $\ker(\Box_{\goe_-})$ along the decomposition \eqref{E:fdhodge}.
In particular, $\gr(\delta)\circ\tilde P=0$.

From the discussion above we conclude that the homomorphism $\delta$, see \eqref{E:deltaVV}, is a Kostant type codifferential of maximal rank for the linear connection $\nabla$ on the tractor bundle $E$, see Definition~\ref{D:Kdelta} and Remark~\ref{R:Kdeltabound}.
In this situation Corollary~\ref{C:D}(a) reduces to the statement in \cite[Theorem~2.4]{CS12}, see also \cite{CSS01,CD01}.
Since $P_+$ acts trivially on $H_*(\poe_+;\mathbb E)$ the $P$-action on $H_*(\poe_+;\mathbb E)$ factors to an action by $P/P_+=G_0$ and we have canonical identifications:
$$
\mathcal H_*
=\mathcal G_0\times_{G_0}H_*(\poe_+;\mathbb E)
=\mathcal G_0\times_{G_0}\ker(\Box_{\goe_-})
=\mathcal G_0\times_{G_0}H^*(\goe_-;\mathbb E).
$$
The corresponding sequence of differential operators in Corollary~\ref{C:D}(b)
coincides with one version of (curved) BGG sequences that can be found in the literature.
This is called ``torsion free BGG sequence'' in \cite[Section~5]{CD01} and coincides with the sequence constructed in \cite[Section~2.4]{CS12}.
For torsion free parabolic geometries, see \cite[Section~1.5.7]{CS09}, this coincides with the original curved BGG sequence constructed by \v Cap, Slov\'ak and Sou\v cek in \cite{CSS01}.
From Corollary~\ref{C:D}(b) we thus obtain

\begin{corollary}\label{C:BGG}
The (torsion free) BGG sequence associated to a regular parabolic geometry of type $(G,P)$ and a finite dimensional $G$-representation is a graded Rockland sequence of differential operators which have graded Heisenberg order at most zero.
\end{corollary}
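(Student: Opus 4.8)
The plan is to show that Corollary~\ref{C:BGG} is an immediate consequence of the general machinery developed in Section~\ref{SS:subdeRham}, specifically Corollary~\ref{C:D}, once one has verified that the geometric data of a regular parabolic geometry of type $(G,P)$ and a finite dimensional $G$-representation $\mathbb E$ fit exactly into the hypotheses of that corollary. The three ingredients needed by Corollary~\ref{C:D} are: a filtered vector bundle $E$ over a filtered manifold $M$, a filtration preserving linear connection $\nabla$ on $E$ whose curvature is contained in filtration degree one (in the sense of Lemma~\ref{L:curv}), and a codifferential of Kostant type $\delta$ (in the sense of Definition~\ref{D:Kdelta}) which has locally constant rank. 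All of this has essentially been assembled in the body of Section~\ref{SS:BGG} above, so the proof is mostly a matter of citing the relevant facts in the right order.

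First I would recall that the Cartan connection of a regular parabolic geometry endows the underlying manifold $M$ with a filtration \eqref{E:TMfilt} making it a filtered manifold with locally trivial bundle of osculating algebras, via the isomorphism \eqref{E:tMG0}. Next, for the chosen $G$-representation $\mathbb E$, the tractor bundle $E=\mathcal G\times_P\mathbb E$ carries a $P$-invariant filtration coming from the grading of $\mathbb E$ compatible with that of $\goe$, and the tractor connection $\nabla$ induced by the Cartan connection is filtration preserving. Regularity of the Cartan connection gives, by \cite[Corollary~3.1.8(2) and Theorem~3.1.22(3)]{CS09}, that the curvature $F^\nabla$ is contained in filtration degree one, so Lemma~\ref{L:curv} applies. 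Then I would invoke the identity \eqref{E:grd}, which identifies $\gr(d^\nabla)$ with the Lie algebra cohomology differential $\partial_{\goe_-}$ associated to the $G_0$-representation $\mathbb E$; in particular the de~Rham sequence \eqref{E:Cdnabla} satisfies $\tilde A_k\tilde A_{k-1}=0$.

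For the codifferential I would use Kostant's $\delta=\partial^*$ of \eqref{E:deltaVV}. Property (i) of Definition~\ref{D:Kdelta}, $\delta^2=0$, is immediate since $\delta_{\goe/\poe}^2=0$. Property (ii), that $\delta$ lowers filtration degree by at most the appropriate amount, follows because $\delta_{\goe/\poe}$ is in fact grading preserving, hence filtration preserving. Property (iii), the existence of a splitting $S$ with $\tilde\delta=S^{-1}\delta S$, is furnished by any Weyl structure, which exists globally by \cite[Proposition~5.1.1]{CS09}; the displayed relation $\delta\circ S=S\circ\gr(\delta)$ recorded above is precisely this. Property (iv), $\tilde\delta\circ\tilde P=0$, follows from Kostant's Hodge decomposition \eqref{E:fdhodge} together with \eqref{E:grBox}, which give $\gr(\delta)\circ\tilde P=0$. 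Since the Lie algebra homology $H_*(\poe_+;\mathbb E)$ has a dimension depending only on $\mathbb E$ and is independent of the point, the rank of $\delta$ is (locally) constant, so the assumption in Corollary~\ref{C:D} that $\delta$ has locally constant rank is met. One also notes from Remark~\ref{R:Kdeltabound} that this $\delta$ is of maximal rank, whence $\mathcal H_k=\mathcal G_0\times_{G_0}H^*(\goe_-;\mathbb E)$, the usual BGG bundles.

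With all the hypotheses of Corollary~\ref{C:D} verified, parts (a) and (b) of that corollary apply verbatim: the splitting operator $\bar L_k$ and the BGG operators $\bar D_k$ are differential operators of graded Heisenberg order at most zero, and the sequence \eqref{E:bD} is a graded Rockland sequence. Finally I would remark that in this setting $\bar L_k$ coincides with the splitting operator of \cite[Theorem~2.4]{CS12} and the sequence $\bar D_k$ is the torsion free curved BGG sequence of \cite[Section~5]{CD01}, which for torsion free parabolic geometries reduces to the original construction of {\v C}ap--Slov{\'a}k--Sou{\v c}ek \cite{CSS01}; this identification is already spelled out in the text preceding the corollary and only requires matching the construction of $P_k$ with the composition of (5.1) and (5.2) in \cite{CD01}. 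I do not anticipate a genuine obstacle here; the only point requiring care is bookkeeping with the filtration conventions and checking that the compatibility of the grading elements makes all of \eqref{E:grforms}, \eqref{E:grd}, and \eqref{E:grdel} $G_0$-equivariant, so that passing to associated bundles over $M$ is legitimate.
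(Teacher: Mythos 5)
Your proposal is correct and follows essentially the same route as the paper: Section~\ref{SS:BGG} verifies exactly these facts (regularity gives curvature in filtration degree one, the Kostant codifferential $\partial^*$ is a Kostant type codifferential of maximal, constant rank via Weyl structures and Kostant's Hodge decomposition) and then derives the corollary directly from Corollary~\ref{C:D}(b). No gaps.
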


Kostant's version of the Bott--Borel--Weil theorem permits us to effectively compute the homologies $H_k(\poe_+;\mathbb E)$ as modules over $\goe_0$.
More precisely, $H_k(\poe_+;\mathbb E)$ decomposes as a direct sum of irreducible $\goe_0$-modules whose dominant weights can be read off the Hasse diagram of $\poe$, see \cite{K61} or \cite[Theorem~3.3.5 and Proposition~3.3.6]{CS09}.
Moreover, the grading element acts as a scalar on each of these irreducible components which can easily be computed too, see \cite[Section~3.2.12]{CS09}.
Consequently, representation theory permits us to determine the decomposition according to the grading $H_k(\poe_+;\mathbb E)=\bigoplus_pH_k(\poe_+;\mathbb E)_p$.
Decomposing the BGG operators accordingly, $\bar D_k=\sum_{p,q}(\bar D_k)_{qp}$, with
\begin{equation}\label{E:Dkqp}
\Gamma^\infty\bigl(\mathcal G_0\times_{G_0}H_k(\poe_+;\mathbb E)_p\bigr)
\xrightarrow{(\bar D_k)_{qp}}
\Gamma^\infty\bigl(\mathcal G_0\times_{G_0}H_{k+1}(\poe_+;\mathbb E)_q\bigr),
\end{equation}
one part of Corollary~\ref{C:BGG} asserts that the differential operator in \eqref{E:Dkqp} is of Heisenberg order at most $q-p$, a statement which appears to be well known.
Using a frame $(\mathcal G_0)_x\cong G_0$ at $x\in M$, the Heisenberg principal symbol of the operator \eqref{E:Dkqp} at $x$ can be regarded as a left invariant differential operator which is homogeneous of order $q-p$,
$$
C^\infty\bigl(\mathcal T_xM,H_k(\poe_+;\mathbb E)_p\bigr)\xrightarrow{\sigma^{q-p}_x((\bar D_k)_{qp})}C^\infty\bigl(\mathcal T_xM,H_{k+1}(\poe_+;\mathbb E)_q\bigr).
$$
The second part of Corollary~\ref{C:BGG} asserts that these Heisenberg principal symbols combine to form a sequence of left invariant differential operators
\begin{equation}\label{E:tsBGG}
\cdots\to C^\infty\bigl(\mathcal T_xM,H_k(\poe_+;\mathbb E)\bigr)
\xrightarrow{\tilde\sigma^0_x(\bar D_k)}
C^\infty\bigl(\mathcal T_xM,H_{k+1}(\poe_+;\mathbb E)\bigr)\to\cdots
\end{equation}
where $\tilde\sigma_x^0(\bar D_k)=\sum_{p,q}\sigma^{q-p}_x((\bar D_k)_{qp})$, which is Rockland in the sense that it becomes exact in every non-trivial irreducible unitary representation of $\mathcal T_xM$.
Up to the isomorphism $\mathcal T_xM\cong G_-:=\exp(\goe_-)$ provided by the frame, the graded Heisenberg principal symbol of a BGG operator in \eqref{E:tsBGG} coincides with the corresponding BGG operator on the flat model $G/P$ restricted along the local diffeomorphism $G_-\to G/P$ obtained form the inclusion $G_-\subseteq G$.

If the homology $H_k(\poe_+;\mathbb E)$ is concentrated in a single degree for each $k$, that is, if there exist numbers $p_k$ such that $H_k(\poe_+;\mathbb E)=H_k(\poe_+;\mathbb E)_{p_k}$, then the corresponding BGG operator $\bar D_k\colon\Gamma^\infty(\mathcal H_k)\to\Gamma^\infty(\mathcal H_{k+1})$ is of Heisenberg order at most $p_{k+1}-p_k$ and the BGG sequence is Rockland in the ungraded sense, see Definition~\ref{def.Hypo-seq}.
If, moreover, $p_{k+1}-p_k\geq1$, then the analytic results established in the preceding sections are applicable, see, in particular, Corollaries~\ref{C:RShypo}, \ref{C:Hodge}, \ref{C:regrockseq}, and \ref{C:HsHodge-seq}.
Below we will discuss a classical example of this type.

\begin{example}[Generic rank two distributions in dimension five]\label{Ex:BGG235}
Let $M$ be a 5-manifold equipped with a rank two distribution of Cartan type, $T^{-1}M\subseteq TM$, see \cite{C10,BH93,S08}.
Hence, $T^{-1}M$ is a rank two subbundle of $TM$ with growth vector $(2,3,5)$, that is, Lie brackets of sections of $T^{-1}M$ span a rank three subbundle $T^{-2}M$ of $TM$ and triple brackets of sections of $T^{-1}M$ span all of $TM$.
These geometric structures are also known as generic rank two distributions in dimension five, see \cite{S08,CS09a}.
The topological obstructions to global existence of such a distribution are well understood in the orientable case, see \cite[Theorem~1]{DH16}.
On open 5-manifolds, Gromov's h-principle is applicable and establishes existence, once the topological requirements are met, see \cite[Theorem~2]{DH16}.
It is unclear, however, if there are further geometric obstructions on closed 5-manifolds.
Whether rank two distributions of Cartan type also abide by an h-principle on closed manifolds, appears to be an intriguing open question and is a major motivation for our investigation of hypoelliptic sequences.

In his celebrated paper \cite{C10} Cartan has shown that, up to isomorphism, there exists a unique regular normal parabolic geometry of type $(G,P)$ on $M$ with underlying filtration:
$$
TM=T^{-3}M\supseteq T^{-2}M\supseteq T^{-1}M\supseteq T^0M=0.
$$
Here $G$ denotes the split real form of the exceptional Lie group $G_2$ and $P$ denotes the maximal parabolic subgroup corresponding to the shorter simple root.
Hence, every finite dimensional representation $\mathbb E$ of $G$ gives rise to a curved BGG sequence on $M$,
\begin{equation}\label{E:BGG235}
\Gamma^\infty(\mathcal H_0)\xrightarrow{\bar D_0}
\Gamma^\infty(\mathcal H_1)\xrightarrow{\bar D_1}
\Gamma^\infty(\mathcal H_2)\xrightarrow{\bar D_2}
\Gamma^\infty(\mathcal H_3)\xrightarrow{\bar D_3}
\Gamma^\infty(\mathcal H_4)\xrightarrow{\bar D_4}
\Gamma^\infty(\mathcal H_5),
\end{equation}
where $\mathcal H_k:=\mathcal G_0\times_{G_0}H_k(\poe_+;\mathbb E)$.
We label the longer simple root of the $G_2$ root system by $\alpha_1$ and let $\alpha_2$ denote the shorter simple root.
Hence, $2\alpha_1+3\alpha_2$ is the highest root, and the corresponding fundamental weights are $\lambda_1=2\alpha_1+3\alpha_2$ and $\lambda_2=\alpha_1+2\alpha_2$.
Suppose $\mathbb E$ is the irreducible complex representation with highest weight $a\lambda_1+b\lambda_2$ where $a,b\in\N_0$.
Then $H_k(\poe_+;\mathbb E)$ is an irreducible complex module of $\goe_0\subseteq\goe_0^\C\cong\mathfrak g\mathfrak l_2(\C)$ which can readily be determined by working out the Hasse diagram of $\poe^\C$, see \cite[Section 3.2.16]{CS09}, and using Kostant's version of the Bott--Borel--Weil theorem, see \cite[Theorem~3.3.5 and Proposition~3.3.6]{CS09}.
Denoting the highest weight of $H_k(\poe_+;\mathbb E)$ by $a_k\lambda_1+b_k\lambda_2$, we obtain the first three columns in the following table:
\footnote{As formulated in \cite[Theorem~3.3.5]{CS09}, Kostant's version of the Bott--Borel--Weil theorem computes the cohomology $H^k(\poe_+;\mathbb E)$.
Using the following facts, this permits working out the homology $H_k(\poe_+;\mathbb E)$ as well:
$H^k(\poe_+;\mathbb E^*)\cong H_k(\poe_+;\mathbb E)^*$ as $\goe_0$-modules; 
$\mathbb E\cong\mathbb E^*$ as $\goe$-modules; 
If $W$ is an irreducible $\goe_0$ module with highest weight $a\lambda_1+b\lambda_2$, then $W^*$ is an irreducible $\goe_0$ module with highest weight $a'\lambda_1+b'\lambda_2$, where $a'=a$ and $b'=-3a-b$.}
\begin{equation}\label{E:G2table}
\begin{array}{r||r|r||r|r}
k & a_k & b_k & \dim H_k(\poe_+;\mathbb E) & p_k
\\\hline\hline
0 & a      & -3a-b    & a+1    & -3a-2b  
\\
1 & a+b+1  & -3a-2b-1 & a+b+2  & -3a-b+1 
\\
2 & 2a+b+2 & -3a-2b-1 & 2a+b+3 & -b+4
\\
3 & 2a+b+2 & -3a-b    & 2a+b+3 & b+6
\\
4 & a+b+1  & -b+3     & a+b+2  & 3a+b+9
\\
5 & a      & b+5      &  a+1   & 3a+2b+10
\end{array}
\end{equation}
The grading element acts by multiplication with the scalar $p_k=3a_k+2b_k$ on $H_k(\poe_+;\mathbb E)$, see \cite[Section~3.2.12]{CS09}, whence the last column.
Moreover, the highest weight of $H_k(\poe_+;\mathbb E)$ considered as $\mathfrak s\mathfrak l_2(\C)$-module is $a_k$ times the fundamental weight of $\mathfrak s\mathfrak l_2(\C)$, hence $\dim H_k(\poe_+;\mathbb E)=a_k+1$, whence the remaining column.
Since $\bar D_k$ is of Heisenberg order $p_{k+1}-p_k$, we conclude that $\bar D_0$ and $\bar D_4$ are of Heisenberg order $b+1$; $\bar D_1$ and $\bar D_3$ are of Heisenberg order $3(a+1)$; and $\bar D_2$ is of Heisenberg order $2(b+1)$.
According to Corollary~\ref{C:BGG} the sequence \eqref{E:BGG235} is Rockland in the ungraded sense, see Definition~\ref{def.Hypo-seq}.
In particular, the differential operators
$\bar D_0^*\bar D_0$, 
$(\bar D_0\bar D_0^*)^{3(a+1)}+(\bar D_1^*\bar D_1)^{b+1}$, 
$(\bar D_1\bar D_1^*)^{2(b+1)}+(\bar D_2^*\bar D_2)^{3(a+1)}$,
$(\bar D_2\bar D_2^*)^{3(a+1)}+(\bar D_3^*\bar D_3)^{2(b+1)}$,
$(\bar D_3\bar D_3^*)^{b+1}+(\bar D_4^*\bar D_4)^{3(a+1)}$, and
$\bar D_4\bar D_4^*$ 
are all hypoelliptic and maximal hypoelliptic estimates are available, see Lemma~\ref{L:rockseq}, Theorem~\ref{T:Rockland}, as well as, Corollaries~\ref{C:PsiinvA}, \ref{C:reg}, and \ref{C:HsHodge}.
Here $\bar D_k^*$ denotes the formal adjoint of $\bar D_k$ with respect to any fiber-wise Hermitian metrics on the vector bundles $\mathcal H_k$ and any volume density on $M$.

Let us finally put down explicit formulas for the Heisenberg principal symbol of the BGG operators corresponding to the trivial representation $\mathbb E$.
We consider these operators as left invariant differential operators on the simply connected nilpotent Lie group $G_-$.
According to the discussion above, this  BGG sequence has the form
\begin{equation}\label{E:BGG235dR}
C^\infty(G_-)\xrightarrow{D_0}
C^\infty(G_-)^2\xrightarrow{D_1}
C^\infty(G_-)^3\xrightarrow{D_2}
C^\infty(G_-)^3\xrightarrow{D_3}
C^\infty(G_-)^2\xrightarrow{D_4}
C^\infty(G_-)
\end{equation}
where $D_0$ and $D_4$ are homogeneous of degree $1$; $D_1$ and $D_3$ are homogeneous of degree $3$; and $D_2$ is homogeneous of degree $2$, see also \cite{BEGN19}.
Using matrices with entries in the universal enveloping algebra of $\goe_-$ these operators can be expressed as:

$$
D_0=\left(\begin{array}{c}
X_1\\
X_2
\end{array}\right)
$$

$$
D_1=\left(\begin{array}{cc}
-X_4-X_{112}-X_{13}&X_{111}\\
-X_5-X_{122}&X_{112}-2X_{13}\\
-X_{222}&X_{122}-3X_{23}
\end{array}\right)
$$

$$
D_2=\left(\begin{array}{ccc}
-X_{12}-X_3&X_{11}& 0\\         
-X_{22}&-3X_3&  X_{11}\\ 
0&        -X_{22}&X_{12}-2X_3\\
\end{array}\right)
$$

$$
D_3=\left(\begin{array}{ccc}
X_{122}+X_{23}-2X_5&-X_{112}+X_4&X_{111}\\
X_{222}&-X_{122}+2X_{32}&X_{112}-3X_{13}+3X_4
\end{array}\right)
$$

$$
D_4=\left(\begin{array}{cc}
-X_2&X_1
\end{array}\right)
$$
Here $X_5,X_4|X_3|X_2,X_1$ is a graded basis of $\goe_-=\goe_{-3}\oplus\goe_{-2}\oplus\goe_{-1}$ such that
$$
[X_1,X_2]=X_3,\quad[X_1,X_3]=X_4,\quad[X_2,X_3]=X_5.
$$
The vertical bars above indicate that $X_1,X_2$ is a basis of $\goe_{-1}$, $X_3$ is a basis of $\goe_{-2}$, and $X_4,X_5$ is a basis of $\goe_{-3}$.
Moreover, we use the notation $X_{i_1\dotsc i_k}=X_{i_1}\cdots X_{i_k}$.
These formulas are derived in \cite[Appendix~B]{DH17}.
\end{example}

\section{Graded hypoelliptic analysis}\label{S:grRockland}

In this section we adapt the analysis discussed in Section~\ref{S:PDO} to the filtered setup required to deal with the sequences constructed in Section~\ref{S:grhesequences}.
Everything generalizes effortlessly, but one bit: Formal adjoints of graded (pseudo)differential operators are in general only available if the underlying manifold is closed.
This is related to the fact that we can construct invertible $\Lambda_s\in\Psi^s(E)$ with $\Lambda_s^{-1}\in\Psi^{-s}(E)$ only on closed manifolds, see Lemma~\ref{L:Lambda} and \eqref{E:Asharp} below.

\subsection{Graded pseudodifferential operators}\label{SS:gradedhypo}

The concept of graded Heisenberg order for differential operators introduced in Section~\ref{SS:fVBDO} can be generalized to pseudodifferential operators in a straight forward manner as follows.
Let $E$ and $F$ be two filtered vector bundles over a filtered manifold $M$, suppose $A\in\mathcal O(E,F)$, and let $s$ be a complex number.
Choose splittings of the filtrations, $S_E\colon\gr(E)\to E$ and $S_F\colon\gr(F)\to F$ and decompose the operator accordingly, $S_F^{-1}AS_E=\sum_{q,p}(S_F^{-1}AS_E)_{q,p}$, where $(S_F^{-1}AS_E)_{q,p}\in\mathcal O(\gr_p(E),\gr_q(F))$.
We say $A$ has \emph{graded Heisenberg order} $s$ if $(S_F^{-1}AS_E)_{q,p}\in\Psi^{s+q-p}(\gr_p(E),\gr_q(F))$ for all $p$ and $q$.
We let $\tilde\Psi^s(E,F)$ denote the space of pseudodifferential operators of graded Heisenberg order $s$.
One readily checks that this space does not depend on the choice of splittings $S_E$ and $S_F$.

Let us define the \emph{space of principal cosymbols of graded order $s$} by
$$
\tilde\Sigma^s(E,F)
:=\left\{k\in\frac{\mathcal K(\mathcal TM;\gr(E),\gr(F))}{\mathcal K^\infty(\mathcal TM;\gr(E),\gr(F))}:\textrm{$(\delta_\lambda)_*k=\lambda^s\delta_\lambda^Fk\delta^E_{1/\lambda}$ for all $\lambda>0$}\right\},
$$
where $\delta^E_\lambda\in\Aut(\gr(E))$ denotes the automorphism given by multiplication with $\lambda^p$ on the grading component $\gr_p(E)$.
These are essentially homogeneous kernels in a graded sense, taking the grading on $\gr(E)$ and $\gr(F)$ into account.
They can be canonically identified with matrices of ordinary principal cosymbols,
\begin{equation}\label{E:tSigma}
\tilde\Sigma^s(E,F)=\bigoplus_{q,p}\Sigma^{s+q-p}(\gr_p(E),\gr_q(F)).
\end{equation}

For $A\in\tilde\Psi^s(E,F)$ we define the \emph{graded Heisenberg principal cosymbol} $\tilde\sigma^s(A)\in\tilde\Sigma^s(E,F)$ by $\tilde\sigma^s(A):=\sum_{p,q}\sigma^{s+q-p}\bigl((S_F^{-1}AS_E)_{q,p}\bigr)$ where $\sigma^{s+q-p}((S_F^{-1}AS_E)_{q,p})\in\Sigma^s(\gr_p(E),\gr_q(F))$ are the Heisenberg principal symbols of the components.
One readily checks that the graded principal Heisenberg cosymbol is independent of the choice of splittings $S_E$ and $S_F$.
From Proposition~\ref{P:Psi}\itemref{P:Psi:symbsequ} we immediately obtain a short exact sequence:
$$
0\to\tilde\Psi^{s-1}(E,F)\to\tilde\Psi^s(E,F)\xrightarrow{\tilde\sigma^s}\tilde\Sigma^s(E,F)\to0.
$$
If $A\in\tilde\Psi^s(E,F)$ and $B\in\tilde\Psi^r(F,G)$, then $BA\in\tilde\Psi^{r+s}(E,G)$ and
\begin{equation}\label{E:grsigmaAB}
\tilde\sigma^{r+s}(BA)=\tilde\sigma^r(B)\tilde\sigma^s(A),
\end{equation}
provided at least one operator is properly supported.
Moreover, $A^t\in\tilde\Psi^s(F',E')$ and
\begin{equation}\label{E:grsigmaAt}
\tilde\sigma^s(A^t)=\tilde\sigma^s(A)^t.
\end{equation}
These two properties follow immediately from the corresponding statements in the ungraded case, see Proposition~\ref{P:Psi}\itemref{P:Psi:mult}\&\itemref{P:Psi:trans}.
Recall that the bundle $E'=E^*\otimes|\Lambda|_{M}$ is equipped with the dual filtration as explained in Section~\ref{SS:fVBDO}.

For trivially filtered vector bundles these concepts clearly reduce to the ungraded case discussed in Section~\ref{SS:calculus}.
Moreover, for differential operators we recover the graded Heisenberg order and graded Heisenberg symbol from Section~\ref{SS:fVBDO}.
More precisely, for every non-negative integer $k$, we have $\DO(E,F)\cap\tilde\Psi^k(E,F)=\widetilde{\DO}^k(E,F)$, and the graded Heisenberg principal symbol from Section~\ref{SS:fVBDO} coincides with principal Heisenberg cosymbol introduced in this section via the canonical inclusion
\begin{equation}\label{E:incUSgr}
\bigl(\mathcal U(\mathfrak tM)\otimes\hom(\gr(E),\gr(F))\bigr)_{-k}\subseteq\tilde\Sigma^k(E,F),
\end{equation}
see Proposition~\ref{P:Psi}\itemref{P:Psi:DO} and \eqref{E:incUS}.

\begin{lemma}\label{L:grLambda}
Let $E$ be a filtered vector bundle over a filtered manifold $M$, and let $\mathbf E$ denote the same vector bundle equipped with the trivial filtration, $\mathbf E=\mathbf E^0\supseteq\mathbf E^1=0$.
For every complex number $s$ there exist $\tilde\Lambda_s\in\tilde\Psi_\prop^s(E,\mathbf E)$ and $\tilde\Lambda_s'\in\tilde\Psi^{-s}_\prop(\mathbf E,E)$ such that $\tilde\Lambda_s\tilde\Lambda_s'-\id$ and $\tilde\Lambda_s'\tilde\Lambda_s-\id$ are both smoothing operators.
Moreover, these operators may be chosen such that $\tilde\Lambda_s\colon\Gamma^{-\infty}_c(E)\to\Gamma^{-\infty}_c(\mathbf E)$ and $\tilde\Lambda_s'\colon\Gamma^{-\infty}_c(\mathbf E)\to\Gamma^{-\infty}_c(E)$ are injective.
On a closed manifold these operators may even be chosen such that $\tilde\Lambda_s\tilde\Lambda_s'=\id$ and $\tilde\Lambda_s'\tilde\Lambda_s=\id$.
\end{lemma}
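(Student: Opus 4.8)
The plan is to reduce Lemma~\ref{L:grLambda} to the ungraded construction of invertible operators, Lemma~\ref{L:Lambda}, applied fibrewise to the graded pieces. Choose a splitting of the filtration, $S_E\colon\gr(E)\to E$, which identifies $E$ with $\gr(E)=\bigoplus_p\gr_p(E)$ as a \emph{graded} bundle and simultaneously identifies $\mathbf E$ with $\gr(E)$ as a \emph{trivially filtered} bundle; the point of the two identifications is that they differ by the degree-shifting automorphism $\delta^E_\lambda$, which is precisely the discrepancy recorded in the definition of $\tilde\Sigma^s(E,\mathbf E)$. Under this splitting an operator in $\tilde\Psi^s(E,\mathbf E)$ becomes a block-diagonal matrix whose $(p,p)$ entry lies in $\Psi^{s+p}(\gr_p(E))$ while off-diagonal entries are lower order, so it suffices to produce, for each $p$, an invertible-mod-smoothing operator $\Lambda^{(p)}\in\Psi^{s+p}_\prop(\gr_p(E))$ together with $\Lambda'^{(p)}\in\Psi^{-(s+p)}_\prop(\gr_p(E))$ acting injectively on $\Gamma^{-\infty}_c$, exactly as furnished by Lemma~\ref{L:Lambda} (with the complex parameter $s+p$). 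Assembling $\tilde\Lambda_s:=S_{\mathbf E}^{-1}\bigl(\bigoplus_p\Lambda^{(p)}\bigr)S_E$ and $\tilde\Lambda_s':=S_E^{-1}\bigl(\bigoplus_p\Lambda'^{(p)}\bigr)S_{\mathbf E}$, one checks directly from \eqref{E:grsigmaAB} that these have graded Heisenberg order $s$ and $-s$ respectively: each block $\Lambda^{(p)}$ has ordinary order $s+p$, and the convention $(S_{\mathbf E}^{-1}A S_E)_{p,p}\in\Psi^{s+p-0}$ on the trivially filtered side is matched to $(S_E^{-1}A'S_{\mathbf E})_{0,p}\in\Psi^{-s+0-p}$ on the other side, so the bookkeeping of degree shifts is consistent.

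The relations $\tilde\Lambda_s\tilde\Lambda_s'-\id$ and $\tilde\Lambda_s'\tilde\Lambda_s-\id$ being smoothing then follow blockwise from $\Lambda^{(p)}\Lambda'^{(p)}-\id$ and $\Lambda'^{(p)}\Lambda^{(p)}-\id$ being smoothing (Lemma~\ref{L:Lambda}), since the direct sum of smoothing operators is smoothing and conjugation by the fixed vector bundle isomorphisms $S_E$, $S_{\mathbf E}$ preserves $\mathcal O^{-\infty}$; note that $\id$ here is the identity on $E$, resp.\ on $\mathbf E$, which corresponds under the splittings to the identity on $\gr(E)=\bigoplus_p\gr_p(E)$, so there is no mismatch. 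Injectivity of $\tilde\Lambda_s$ on $\Gamma^{-\infty}_c(E)$ and of $\tilde\Lambda_s'$ on $\Gamma^{-\infty}_c(\mathbf E)$ is immediate: a block-diagonal operator is injective on distributional sections iff each diagonal block is, and each $\Lambda^{(p)}$, $\Lambda'^{(p)}$ was chosen injective by Lemma~\ref{L:Lambda}. For the closed case, Lemma~\ref{L:Lambda} provides $\Lambda^{(p)}\in\Psi^{s+p}(\gr_p(E))$ genuinely invertible with inverse $\Lambda'^{(p)}=(\Lambda^{(p)})^{-1}\in\Psi^{-(s+p)}(\gr_p(E))$; the assembled operators then satisfy $\tilde\Lambda_s\tilde\Lambda_s'=\id$ and $\tilde\Lambda_s'\tilde\Lambda_s=\id$ on the nose.

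The only genuinely non-routine point is verifying that the assembled operators land in $\tilde\Psi^{\pm s}$ with the expected graded order, i.e.\ that the off-diagonal blocks introduced by writing everything through a single splitting do not spoil the order count. This is handled by observing that the construction is block-\emph{diagonal} with respect to the grading of $\gr(E)$, so in fact the off-diagonal blocks of $S_{\mathbf E}^{-1}\tilde\Lambda_s S_E$ vanish identically; the independence of $\tilde\Psi^s(E,\mathbf E)$ from the choice of splitting (established in Section~\ref{SS:gradedhypo}) then gives membership in $\tilde\Psi^s(E,\mathbf E)$ for the original, unsplit operator. I do not anticipate any obstacle beyond this bookkeeping, since every substantive analytic input—existence of invertible-mod-smoothing operators of prescribed order, their behaviour under composition, and the closed-manifold improvement to genuine invertibility—is already supplied by Lemma~\ref{L:Lambda} and Proposition~\ref{P:Psi}.
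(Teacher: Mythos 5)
Your overall strategy is exactly the paper's: fix a splitting $S\colon\gr(E)\to E$, apply Lemma~\ref{L:Lambda} on each grading component $\gr_p(E)$ with a suitably shifted order, and assemble the blocks; the smoothing, injectivity, and closed-manifold claims then all follow blockwise as you say. However, the one step you single out as the only non-routine point --- the order bookkeeping --- is carried out with source and target degrees interchanged, and this breaks the conclusion. By the definition in Section~\ref{SS:gradedhypo}, $A$ has graded Heisenberg order $s$ when $(S_F^{-1}AS_E)_{q,p}\in\Psi^{s+q-p}(\gr_p(E),\gr_q(F))$, with $p$ the \emph{source} degree and $q$ the \emph{target} degree. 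For $\tilde\Lambda_s\colon E\to\mathbf E$ the target is trivially filtered, so $q=0$ and the block on $\gr_p(E)$ must have ordinary Heisenberg order $s+0-p=s-p$, not $s+p$ as you take it; this is also forced by the isomorphism $\tilde H^s_\loc(E)\cong\bigoplus_pH^{s-p}_\loc(\gr_p(E))$ recorded in Section~\ref{SS:grsobolev}, which is precisely what the lemma is designed to produce. With your choice $\Lambda^{(p)}\in\Psi^{s+p}$, the $(0,p)$-block would have to lie in $\Psi^{s-p}$ but sits in $\Psi^{s+p}\not\subseteq\Psi^{s-p}$ whenever $p>0$, so the assembled operator is not in $\tilde\Psi^s(E,\mathbf E)$ for any filtration with a component in positive degree (e.g.\ the bundles $\Lambda^kT^*M\otimes E$ that the lemma is ultimately applied to); similarly $\tilde\Lambda_s'\notin\tilde\Psi^{-s}(\mathbf E,E)$. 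Your two sign errors cancel in the composites, so $\tilde\Lambda_s\tilde\Lambda_s'-\id$ and $\tilde\Lambda_s'\tilde\Lambda_s-\id$ would still be smoothing, but the operators themselves do not have the orders the lemma asserts, which is the whole point of the statement.

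The repair is immediate: take $\Lambda^{(p)}=\Lambda_{s-p}\in\Psi^{s-p}_\prop(\gr_p(E))$ and $\Lambda'^{(p)}=\Lambda'_{s-p}\in\Psi^{-(s-p)}_\prop(\gr_p(E))$ from Lemma~\ref{L:Lambda}; everything else in your argument then goes through verbatim and coincides with the paper's proof, which sets $\tilde\Lambda_s=S\bigl(\bigoplus_p\Lambda_{s-p}\bigr)S^{-1}$. (A cosmetic remark: since $S_{\mathbf E}=\id$, your formula $S_{\mathbf E}^{-1}\bigl(\bigoplus_p\Lambda^{(p)}\bigr)S_E$ has $S_E$ on the wrong side --- its domain is $\gr(E)$, not $E$ --- so the inner factor should be $S_E^{-1}$.)
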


\begin{proof}
Choose a splitting of the filtration, $S\colon\gr(E)\to E$.
Let $\Lambda_{s-p}\in\Psi^{s-p}_\prop(\gr_p(E))$ and $\Lambda_{s-p}'\in\Psi^{-(s-p)}_\prop(\gr_p(E))$ be as in Lemma~\ref{L:Lambda}.
Then the operators
$$
\tilde\Lambda_s:=S\bigl(\textstyle\bigoplus_p\Lambda_{s-p}\bigr)S^{-1}
\qquad\text{and}\qquad
\tilde\Lambda_s':=S\bigl(\textstyle\bigoplus_p\Lambda'_{s-p}\bigr)S^{-1}
$$
have the desired properties.
\end{proof}

\subsection{Graded Heisenberg Sobolev scale}\label{SS:grsobolev}

Let $E$ be a filtered vector bundle over a filtered manifold $M$.
For each real number $s$ we let $\tilde H^s_\loc(E)$ denote the space of all distributional sections $\psi\in\Gamma^{-\infty}(E)$ such that $A\psi\in L^2_\loc(\mathbf F)$ for all $A\in\tilde\Psi^s_\prop(E,\mathbf F)$ and all trivially filtered vector bundles $\mathbf F$ over $M$, that is, $\mathbf F=\mathbf F^0\supseteq\mathbf F^1=0$.
We equip $\tilde H^s_\loc(E)$ with the coarsest topology such that $A\colon\tilde H^s_\loc(E)\to L^2_\loc(\mathbf F)$ is continuous for all such $A\in\tilde\Psi^s_\prop(E,\mathbf F)$.
Similarly, we let $\tilde H^s_c(E)$ denote the space of all compactly supported distributional sections $\psi\in\Gamma_c^{-\infty}(E)$ such that $A\psi\in L^2_\loc(\mathbf F)$ for all $A\in\tilde\Psi^s(E,\mathbf F)$ and all trivially filtered vector bundles $\mathbf F$ over $M$.
We equip $\tilde H^s_c(E)$ with the coarsest topology such that $A\colon\tilde H^s_c(E)\to L^2_\loc(\mathbf F)$ is continuous for all such $A\in\tilde\Psi^s(E,\mathbf F)$.
We will refer to these spaces as \emph{graded Heisenberg Sobolev spaces}.
Any splitting of the filtration on $E$ gives rise to non-canonical topological isomorphisms
$$
\tilde H^s_\loc(E)\cong\bigoplus_p H^{s-p}_\loc\bigl(\gr_p(E)\bigr)
\qquad\text{and}\qquad
\tilde H^s_c(E)\cong\bigoplus_pH^{s-p}_c\bigl(\gr_p(E)\bigr).
$$

Generalizing Proposition~\ref{P:Hs}\itemref{P:Hs:locfilt}\&\itemref{P:Hs:cfilt}, we have continuous inclusions
$$
\Gamma^\infty(E)\subseteq\tilde H^{s_2}_\loc(E)\subseteq\tilde H^{s_1}_\loc(E)\subseteq\Gamma^{-\infty}(E)
$$
and
$$
\Gamma^\infty_c(E)\subseteq\tilde H^{s_2}_c(E)\subseteq\tilde H^{s_1}_c(E)\subseteq\Gamma_c^{-\infty}(E)
$$
for all real numbers $s_1\leq s_2$.
If $F$ is another filtered vector bundle, then each $A\in\tilde\Psi^k(E,F)$ induces continuous operators $A\colon\tilde H^s_c(E)\to\tilde H_\loc^{s-\Re(k)}(F)$ for all real $s$, cf.\ Proposition~\ref{P:Hs}\itemref{P:Hs:operators}.
As in Proposition~\ref{P:Hs}\itemref{P:Hs:pairing}, the canonical pairing $\Gamma^\infty_c(E')\times\Gamma^\infty(E)\to\C$ extends to a pairing
$$
\tilde H^{-s}_c(E')\times\tilde H^s_\loc(E)\to\C
$$
inducing linear bijections $\tilde H^s_\loc(E)^*=\tilde H^{-s}_c(E')$ and $\tilde H^{-s}_c(E')^*=\tilde H^s_\loc(E)$.
If, moreover, $M$ is closed, then $\tilde H^s_c(E)=\tilde H^s_\loc(E)$ is a Hilbert space we denote by $\tilde H^s(E)$, and the pairing induces an isomorphism of Hilbert spaces, $\tilde H^s(E)^*=\tilde H^{-s}(E')$.
This can all be proved as in Proposition~\ref{P:Hs} using Lemma~\ref{L:grLambda}.

Suppose $M$ is closed.
Fix a smooth volume density on $M$ and a smooth fiber-wise Hermitian metric $h$ on $\mathbf E$.
Moreover, let $s$ be a real number, choose invertible $\tilde\Lambda_s\in\tilde\Psi^s(E,\mathbf E)$ with inverse $\tilde\Lambda_s^{-1}\in\tilde\Psi^{-s}(\mathbf E,E)$, see Lemma~\ref{L:grLambda}, and consider the associated Hermitian inner product, cf.~\eqref{E:llrr},
\begin{equation}\label{E:grHSllrr}
\llangle\psi_1,\psi_2\rrangle_{\tilde H^s(E)}
:=\llangle\tilde\Lambda_s\psi_1,\tilde\Lambda_s\psi_2\rrangle_{L^2(\mathbf E)}
=\langle\tilde\Lambda_s^t(h\otimes dx)\tilde\Lambda_s\psi_1,\psi_2\rangle
\end{equation}
where $\psi_1,\psi_2\in\Gamma^\infty(E)$.
In the expression on the right hand side $h\otimes dx\colon\bar{\mathbf E}\to\mathbf E'$ is considered as a vector bundle isomorphism, $\tilde\Lambda_s^t\in\tilde\Psi^s(\mathbf E',E')$, and $\langle-,-\rangle$ denotes the canonical pairing for sections of $E$.
The sesquilinear form in~\eqref{E:grHSllrr} extends to an inner product generating the Hilbert space topology on the graded Heisenberg Sobolev space $\tilde H^s(E)$.

With respect to inner products on $\tilde H^{s_1}(E)$ and $\tilde H^{s_2}(F)$ as above, every $A\in\tilde\Psi^k(E,F)$ admits a formal adjoint, $A^\sharp\in\tilde\Psi^{\bar k+2(s_2-s_1)}(F,E)$ such that
\begin{equation}\label{E:Asharpllrr}
\llangle A^\sharp\phi,\psi\rrangle_{\tilde H^{s_1}(E)}=\llangle\phi,A\psi\rrangle_{\tilde H^{s_2}(F)}
\end{equation}
for all $\psi\in\Gamma^\infty(E)$ and $\phi\in\Gamma^\infty(F)$.
Indeed,
\begin{equation}\label{E:Asharp}
A^\sharp
=\tilde\Lambda_{E,s_1}^{-1}(\tilde\Lambda_{F,s_2}A\tilde\Lambda_{E,s_1}^{-1})^*\tilde\Lambda_{F,s_2}
=(\tilde\Lambda_{E,s_1}^t(h_E\otimes dx)\tilde\Lambda_{E,s_1})^{-1}\,A^t\,\tilde\Lambda_{F,s_2}^t(h_F\otimes dx)\tilde\Lambda_{F,s_2}.
\end{equation}
In the first expression the star denotes the adjoint of $\tilde\Lambda_{F,s_2}A\tilde\Lambda_{E,s_1}^{-1}\in\Psi^{k+s_2-s_1}(\mathbf E,\mathbf F)$ with respect to the $L^2$ inner products associated with the fiber-wise Hermitian metrics $h_E$ and $h_F$ and the volume density $dx$.
One readily verifies:
\begin{equation}\label{E:BAsharp}
(BA)^\sharp=A^\sharp B^\sharp
\qquad\text{and}\qquad
(A^\sharp)^\sharp=A.
\end{equation}

\subsection{Graded Rockland operators}

Let $E$ and $F$ be filtered vector bundles over a filtered manifold $M$.
To formulate the graded Rockland condition for operators in $\tilde\Psi^s(E,F)$, we begin by extending the definition of $\bar\pi(a)$ to graded cosymbols $a\in\tilde\Sigma^s_x(E,F)$ at $x\in M$, where $\pi\colon\mathcal T_xM\to U(\mathcal H)$ is a non-trivial irreducible unitary representation of the osculating group:
Write $a=\sum_{p,q}a_{p,q}$ according to the decomposition \eqref{E:tSigma} with $a_{q,p}\in\Sigma^{s+q-p}_x(\gr_p(E),\gr_q(F))$, and put $\bar\pi(a):=\sum_{p,q}\bar\pi(a_{q,p})$ where $\bar\pi(a_{q,p})$ denotes the unbounded operator from $\mathcal H\otimes\gr_p(E_x)$ to $\mathcal H\otimes\gr_q(F_x)$ described in Section~\ref{SS:para}.
Hence, $\bar\pi(a)$ is an unbounded operator form $\mathcal H\otimes\gr(E_x)$ to $\mathcal H\otimes\gr(F_x)$.
Moreover, the subspace $\mathcal H_\infty\otimes\gr(E_x)$ is contained in the domain of definition and mapped into $\mathcal H_\infty\otimes\gr(F_x)$.
From \eqref{E:piab} we immediately obtain 
\begin{equation}\label{E:grpiab}
\bar\pi(ba)=\bar\pi(b)\bar\pi(a)
\end{equation}
for all $a\in\tilde\Sigma^s_x(E,F)$ and $b\in\tilde\Sigma^{s'}_x(F,G)$.
For trivially filtered vector bundles, this clearly specializes to the definition in Section~\ref{SS:para}.
If $k$ is a non-negative integer and, see~\eqref{E:incUSgr}, $a\in\bigl(\mathcal U(\mathfrak t_xM)\otimes\hom(\gr(E_x),\gr(F_x))\bigr)_{-k}\subseteq\tilde\Sigma^k_x(E,F)$ then, on $\mathcal H_\infty\otimes\gr(E_x)$, the operator $\bar\pi(a)$ coincides with $\pi(a)$ considered in Section~\ref{SS:fVBDO}, cf.\ Definition~\ref{D:graded_hypoelliptic_seq}.

Generalizing Definition~\ref{D:Rockland} to the graded situation we have:

\begin{definition}[Graded Rockland condition]\label{D:graded-Rockland}
Let $E$ and $F$ be filtered vector bundles over a filtered manifold $M$.
A graded principal cosymbol $a\in\tilde\Sigma_x^s(E,F)$ at $x\in M$ is said to satisfy the \emph{graded Rockland condition} if, for every non-trivial irreducible unitary representation $\pi\colon\mathcal T_xM\to U(\mathcal H)$, the unbounded operator $\bar\pi(a)$ is injective on $\mathcal H_\infty\otimes\gr(E_x)$.
An operator $A\in\tilde\Psi^s(E,F)$ is said to satisfy the \emph{graded Rockland condition} if its graded principal cosymbol, $\tilde\sigma^s_x(A)\in\tilde\Sigma^s_x(E,F)$, satisfies the graded Rockland condition at each point $x\in M$.
\end{definition}

We obtain the following generalization of Theorem~\ref{T:Rockland} and Corollary~\ref{C:reg}.

\begin{corollary}[Left parametrix and graded regularity]\label{C:graded-regularity}
Let $E$ and $F$ be two filtered vector bundles over a filtered manifold $M$, let $k$ be a complex number, and suppose $A\in\tilde\Psi^k(E,F)$ satisfies the graded Rockland condition.
Then there exists a properly supported left parametrix $B\in\tilde\Psi^{-k}_\prop(F,E)$ such that $BA-\id$ is a smoothing operator.
In particular, $A$ is hypoelliptic. 
More precisely, if $\psi\in\Gamma^{-\infty}_c(E)$ and $A\psi\in\tilde H^{r-\Re(k)}_\loc(F)$, then $\psi\in\tilde H^r_c(E)$.
If, moreover, $M$ is closed, then $\ker(A)$ is a finite dimensional subspace of\/ $\Gamma^\infty(E)$, and for every $r'\leq r$ there exists a constant $C=C_{A,r,r'}\geq0$ such that the maximal graded hypoelliptic estimate
\begin{equation}\label{E:grmhesti}
\|\psi\|_{\tilde H^r(E)}\leq C\left(\|\psi\|_{\tilde H^{r'}(E)}+\|A\psi\|_{\tilde H^{r-\Re(k)}(F)}\right)
\end{equation}
holds for all $\psi\in\tilde H^r(E)$.
Here we are using any norms generating the Hilbert space topologies on the corresponding graded Heisenberg Sobolev spaces.
Moreover, if $Q$ denotes the orthogonal projection, with respect to an inner product of the form \eqref{E:grHSllrr}, onto the (finite dimensional) subspace $\ker(A)\subseteq\Gamma^\infty(E)$, then there exists a constant $C=C_{A,r,s}\geq0$ such that the maximal graded hypoelliptic estimate
\begin{equation}\label{E:grmheestiQ}
\|\psi\|_{\tilde H^r(E)}\leq C\left(\|Q\psi\|+\|A\psi\|_{\tilde H^{r-\Re(k)}(F)}\right)
\end{equation}
holds for all $\psi\in\tilde H^r(E)$. Here $\|-\|$ denotes any norm on $\ker(A)$.
\end{corollary}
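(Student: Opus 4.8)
The plan is to reduce the graded statement to the corresponding ungraded results already established in Section~\ref{S:PDO}, exactly as the whole graded framework in this section has been designed to allow. First I would use a splitting of the filtrations to identify the operator $A\in\tilde\Psi^k(E,F)$ with the matrix $(A_{qp})$ of ordinary pseudodifferential operators $A_{qp}\in\Psi^{k+q-p}(\gr_p(E),\gr_q(F))$, and observe that the graded Rockland condition on $A$ at $x$, by definition of $\bar\pi$ on graded cosymbols and the identity \eqref{E:grpiab}, is precisely the (matrix) Rockland condition of Definition~\ref{D:Rockland} for the single ordinary cosymbol $\tilde\sigma^k_x(A)\in\Sigma^{k}_x$ of the operator $\tilde\Lambda_{F,0}\,A\,\tilde\Lambda_{E,0}'$ between the associated trivially filtered bundles $\mathbf E,\mathbf F$ — more cleanly, one applies the $\tilde\Lambda$'s of Lemma~\ref{L:grLambda} to turn $A$ into an honest element of $\Psi^{k}(\mathbf E,\mathbf F)$ satisfying the ordinary Rockland condition. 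Then Theorem~\ref{T:Rockland} supplies a properly supported left parametrix in $\Psi^{-k}_\prop(\mathbf F,\mathbf E)$, and conjugating back by the $\tilde\Lambda$'s (which are invertible mod smoothing) produces $B\in\tilde\Psi^{-k}_\prop(F,E)$ with $BA-\id$ smoothing. Hypoellipticity and, over a closed $M$, finite-dimensionality of $\ker(A)\subseteq\Gamma^\infty(E)$ follow verbatim as in the proof of Corollary~\ref{C:hypo}, using that a properly supported smoothing operator restricts to a compact operator on $L^2$ via Arzel\`a--Ascoli.

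Next I would establish the graded regularity statement: if $\psi\in\Gamma^{-\infty}_c(E)$ and $A\psi\in\tilde H^{r-\Re(k)}_\loc(F)$, then writing $\psi=BA\psi-(BA-\id)\psi$ and using that $B\in\tilde\Psi^{-k}_\prop(F,E)$ maps $\tilde H^{r-\Re(k)}_\loc(F)$ continuously into $\tilde H^{r}_\loc(E)$ while $BA-\id$ is smoothing (hence maps $\Gamma^{-\infty}_c(E)$ into $\Gamma^\infty(E)\subseteq\tilde H^r_\loc(E)$), one concludes $\psi\in\tilde H^r_c(E)$, taking supports into account. This is the graded analogue of the first part of Corollary~\ref{C:reg}, and the mapping properties of graded pseudodifferential operators on the graded Heisenberg Sobolev scale recorded in Section~\ref{SS:grsobolev} are exactly what is needed.

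For the maximal graded hypoelliptic estimate \eqref{E:grmhesti} on a closed $M$: fix a left parametrix $B\in\tilde\Psi^{-k}_\prop(F,E)$ as above and set $R:=BA-\id\in\mathcal O^{-\infty}(E)$. For $\psi\in\tilde H^r(E)$ one has $\psi=BA\psi-R\psi$, so
\begin{equation*}
\|\psi\|_{\tilde H^r(E)}\le\|B\|\,\|A\psi\|_{\tilde H^{r-\Re(k)}(F)}+\|R\psi\|_{\tilde H^r(E)},
\end{equation*}
where $\|B\|$ is the norm of the bounded operator $B\colon\tilde H^{r-\Re(k)}(F)\to\tilde H^r(E)$ from the closed-manifold case of the mapping property in Section~\ref{SS:grsobolev}. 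Since $R$ is smoothing, $R\colon\tilde H^{r'}(E)\to\tilde H^r(E)$ is bounded for any $r'\le r$ (indeed $R\colon\Gamma^{-\infty}(E)\to\Gamma^\infty(E)$), which gives $\|R\psi\|_{\tilde H^r(E)}\le C'\|\psi\|_{\tilde H^{r'}(E)}$ and hence \eqref{E:grmhesti}. For \eqref{E:grmheestiQ} I would, as in the proof of Corollary~\ref{C:reg}, pass to $A^\sharp A$ where $A^\sharp$ is the graded Sobolev adjoint of \eqref{E:Asharp} (available since $M$ is closed); using \eqref{E:grpiab} and \eqref{E:grsigmaAt} one checks $A^\sharp A$ satisfies the graded Rockland condition with $\ker(A^\sharp A)=\ker(A)$, so Corollary~\ref{C:PsiinvA} applied in the graded setting (via Lemma~\ref{L:grLambda}) makes $A^\sharp A+Q$ invertible with inverse in $\tilde\Psi^{-2\Re(k)}$, and then $\id=Q+(A^\sharp A+Q)^{-1}A^\sharp A$ yields \eqref{E:grmheestiQ} after estimating each term via the Sobolev mapping properties and the finite-dimensionality (hence norm-equivalence) on $\ker(A)$.

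The one point requiring care — and the main obstacle — is the construction and use of the graded Sobolev adjoint $A^\sharp$: unlike the ungraded case, a formal adjoint of a \emph{graded} operator with respect to an $L^2$ inner product need not again be graded, which is why \eqref{E:Asharp} uses the operators $\tilde\Lambda_s$ and is only available on closed manifolds. I would make explicit that $\tilde\Lambda_{E,0}^t(h_E\otimes dx)\tilde\Lambda_{E,0}$ is an invertible graded operator of graded order $0$ (its graded principal cosymbol is the fiber-wise Hermitian form, hence pointwise injective in every representation, so Corollary~\ref{C:PsiinvA} applies), that $A^t\in\tilde\Psi^k(F',E')$ by \eqref{E:grsigmaAt}, and therefore that the right-hand side of \eqref{E:Asharp} indeed lies in $\tilde\Psi^{\bar k}(F,E)$ and satisfies \eqref{E:Asharpllrr} and \eqref{E:BAsharp}; everything else then parallels the ungraded arguments word for word.
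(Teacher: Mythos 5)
Your proposal is correct and follows essentially the same route as the paper: reduce to the ungraded case by conjugating with the operators $\tilde\Lambda$ of Lemma~\ref{L:grLambda}, apply Theorem~\ref{T:Rockland} to obtain the parametrix, and derive the regularity and the estimate \eqref{E:grmhesti} from the mapping properties of $B$ on the graded Sobolev scale together with the smoothing remainder. For \eqref{E:grmheestiQ} you inline the invertibility of $A^\sharp A+Q$ (via the graded version of Corollary~\ref{C:PsiinvA}) where the paper simply cites Corollary~\ref{C:graded-Hodge}, but the underlying argument is identical.
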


\begin{proof}
Let $\mathbf E$ and $\mathbf F$ denote the vector bundles $E$ and $F$ equipped with the trivial filtrations, respectively, that is to say, $\mathbf E=\mathbf E^0\supseteq\mathbf E^1=0$ and $\mathbf F=\mathbf F^0\supseteq\mathbf F^1=0$.
According to Lemma~\ref{L:grLambda} there exist $\tilde\Lambda_E\in\tilde\Psi^0(E,\mathbf E)$, $\tilde\Lambda_E'\in\tilde\Psi^0(\mathbf E,E)$, $\tilde\Lambda_F\in\tilde\Psi^0(F,\mathbf F)$, $\tilde\Lambda_F'\in\tilde\Psi^0(\mathbf F,F)$ such that $\tilde\Lambda_E\tilde\Lambda_E'-\id$, $\tilde\Lambda_E'\tilde\Lambda_E-\id$, $\tilde\Lambda_F\tilde\Lambda_F'-\id$, and $\tilde\Lambda_F'\tilde\Lambda_F-\id$ are all smoothing operators.
Then $\mathbf A:=\tilde\Lambda_FA\tilde\Lambda_E'\in\Psi^k(\mathbf E,\mathbf F)$ has Heisenberg order $k$ in the ungraded sense, and
$$
\sigma_x^k(\mathbf A)=\tilde\sigma^k_x(\mathbf A)=\tilde\sigma^0_x(\tilde\Lambda_F)\tilde\sigma^k_x(A)\tilde\sigma^0_x(\tilde\Lambda_E'),
$$
see \eqref{E:grsigmaAB}.
Since $A$ satisfies the graded Rockland condition, and since $\tilde\sigma^0_x(\tilde\Lambda_F)$ and $\tilde\sigma^0_x(\tilde\Lambda_E')$ are invertible with inverses $\tilde\sigma^0_x(\tilde\Lambda_F')$ and $\tilde\sigma^0_x(\tilde\Lambda_E)$, respectively, we conclude that $\mathbf A$ satisfies the (ungraded) Rockland condition, see \eqref{E:grpiab}.
Hence, by Theorem~\ref{T:Rockland}, there exists a left parametrix $\mathbf B\in\Psi^{-k}_\prop(\mathbf F,\mathbf E)$ such that $\mathbf B\mathbf A-\id$ is a smoothing operator.
Putting $B:=\tilde\Lambda_E'\mathbf B\tilde\Lambda_F\in\tilde\Psi^{-k}(F,E)$ and using the fact that $\tilde\Lambda_E'\tilde\Lambda_E-\id$ is a smoothing operator, we see that $BA-\id$ is a smoothing operator.
Hence, $B$ is the desired left parametrix.
The hypoellipticity statements follow immediately from the pseudolocality of $B$ and the mapping property $B\colon\tilde H_\loc^{r-\Re(k)}(F)\to\tilde H^r_\loc(E)$.
Assume $M$ closed.
As in Corollary~\ref{C:hypo} we see that $\ker(A)$ is a finite dimensional subspace of $\Gamma^\infty(E)$.
For the maximal graded hypoelliptic estimate \eqref{E:grmhesti} we use boundedness of $B\colon\tilde H^{r-\Re(k)}(F)\to\tilde H^r(E)$ and the fact that smoothing operators induce bounded operators $\tilde H^{r'}(E)\to\tilde H^r(E)$.
To see the other hypoelliptic estimate, we consider the formal adjoint $A^\sharp\in\tilde\Psi^{\bar k}(F,E)$ with respect to inner products of the form \eqref{E:grHSllrr}, see \eqref{E:Asharp} with $s_1=s_2=s$.
Clearly, $A^\sharp A\in\tilde\Psi^{2\Re(k)}(E)$ satisfies the graded Rockland condition and $\ker(A^\sharp A)=\ker(A)$.
Hence, Corollary~\ref{C:graded-Hodge} below implies that $A^\sharp A+Q$ is invertible with inverse $(A^\sharp A+Q)^{-1}\in\tilde\Psi^{-2\Re(k)}(E)$.
Thus, $B':=(A^\sharp A+Q)^{-1}A^\sharp\in\tilde\Psi^{-k}(F,E)$ is a parametrix such that $B'A=\id-Q$, whence \eqref{E:grmheestiQ}.
\end{proof}

In view of Corollary~\ref{C:graded-regularity}, the graded Rockland condition implies Rumin's C-C ellipticity, cf.~\cite[Definition~5.1]{R01} or \cite[Section~2]{R99}.

We obtain the following generalization of Corollaries~\ref{C:smooth-Hodge-decomposition}, \ref{C:PsiinvA}, and \ref{C:HsHodge}.
A Hodge decomposition for the Rumin complex on an equiregular C-C manifold has been established in \cite[Proposition~3.6]{R00}.

\begin{corollary}[Graded Hodge decomposition]\label{C:graded-Hodge}
Let $E$ be a filtered vector bundle over a closed filtered manifold $M$.
Suppose $A\in\tilde\Psi^k(E)$ satisfies the graded Rockland condition and is formally selfadjoint, $A^\sharp=A$, with respect to a graded Sobolev inner product of the form~\eqref{E:grHSllrr}.
Moreover, let $Q$ denotes the orthogonal projection onto the (finite dimensional) subspace $\ker(A)\subseteq\Gamma^\infty(E)$ with respect to the inner product \eqref{E:grHSllrr}.
Then $A+Q$ is invertible with inverse $(A+Q)^{-1}\in\tilde\Psi^{-k}(E)$.
Consequently, we have topological isomorphisms and Hodge type decompositions:
\begin{align*}
A+Q\colon\Gamma^\infty(E)&\xrightarrow\cong\Gamma^\infty(E)&\Gamma^\infty(E)&=\ker(A)\oplus A(\Gamma^\infty(E))\\
A+Q\colon\tilde H^r(E)&\xrightarrow\cong\tilde H^{r-\Re(k)}(E)&\tilde H^{r-\Re(k)}(E)&=\ker(A)\oplus A(\tilde H^r(E))\\
A+Q\colon\Gamma^{-\infty}(E)&\xrightarrow\cong\Gamma^{-\infty}(E)&\Gamma^{-\infty}(E)&=\ker(A)\oplus A(\Gamma^{-\infty}(E))
\end{align*}
\end{corollary}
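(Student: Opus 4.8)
The plan is to adapt the arguments proving Corollary~\ref{C:smooth-Hodge-decomposition} and Corollary~\ref{C:PsiinvA} to the graded setting, with the ordinary Heisenberg calculus replaced by the graded calculus of Section~\ref{SS:gradedhypo} and the classical Sobolev scale replaced by the graded Heisenberg Sobolev scale of Section~\ref{SS:grsobolev}. First I would invoke Corollary~\ref{C:graded-regularity} to obtain a properly supported left parametrix $B\in\tilde\Psi^{-k}_\prop(E)$ with $BA-\id$ a smoothing operator. Taking formal adjoints with respect to the inner product~\eqref{E:grHSllrr} and using $A^\sharp=A$ together with~\eqref{E:BAsharp} shows that $AB^\sharp-\id$ is smoothing; here one uses that $\sharp$ maps $\tilde\Psi^{-k}(E)$ to $\tilde\Psi^{-k}(E)$ and preserves both proper support and the ideal of smoothing operators, which is exactly what formula~\eqref{E:Asharp} provides. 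Hence $B^\sharp$ is a right parametrix, so $B$ and $B^\sharp$ differ by a smoothing operator and $P:=\tfrac12(B+B^\sharp)\in\tilde\Psi^{-k}(E)$ is a formally selfadjoint parametrix, $P^\sharp=P$, with $PA-\id$ and $AP-\id$ both smoothing. As in Corollary~\ref{C:hypo}, the relation $AP-\id\in\mathcal O^{-\infty}$ forces $P$ to be hypoelliptic with $\ker(P)$ a finite dimensional subspace of $\Gamma^\infty(E)$; adding the orthogonal projection onto $\ker(P)$ (a selfadjoint smoothing operator) to $P$ I may assume $\ker(P)=0$ while keeping all the above properties.

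Next I would record the elementary facts: $Q$ is a smoothing operator with $Q^\sharp=Q$; the operator $A+Q\in\tilde\Psi^k(E)$ has the same graded Heisenberg principal cosymbol as $A$ and therefore satisfies the graded Rockland condition, in particular is hypoelliptic; and $QA=0=AQ$, since $A^\sharp=A$ gives $\llangle A\psi,\phi\rrangle=0$ for all $\psi\in\Gamma^\infty(E)$ and $\phi\in\ker(A)$, whence $QA=0$, while $AQ=0$ because $\img(Q)=\ker(A)$. Using the hypoellipticity of $A+Q$ and $Q^\sharp=Q$, a short computation (as in the proof of Corollary~\ref{C:smhe}) shows $\ker(A+Q)=0$. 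Then I would set $G:=(A+Q)P\in\tilde\Psi^0(E)$ and note that $G-\id=(AP-\id)+QP$ is smoothing, so on the closed manifold $M$ the operator $G$ induces a Fredholm operator of index zero on every graded Heisenberg Sobolev space $\tilde H^r(E)$; here one uses that smoothing operators are compact on $\tilde H^r(E)$, which follows from compactness of the inclusion $\Gamma^\infty(E)\subseteq\tilde H^r(E)$, itself immediate from the non-canonical isomorphism $\tilde H^r(E)\cong\bigoplus_pH^{r-p}(\gr_p(E))$ and Proposition~\ref{P:Hs}\itemref{P:Hs:compact}. Since $G$ is injective (because $\ker(A+Q)=0$ and $\ker(P)=0$, kernel elements being smooth by hypoellipticity), $G$ is invertible on each $\tilde H^r(E)$; the inverses being compatible, $G$ is invertible on $\Gamma^\infty(E)=\bigcap_r\tilde H^r(E)$ and on $\Gamma^{-\infty}(E)=\bigcup_r\tilde H^r(E)$, and $G^{-1}-\id=-(G-\id)G^{-1}$ is smoothing, so $G^{-1}\in\tilde\Psi^0(E)$. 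Consequently $(A+Q)^{-1}=PG^{-1}\in\tilde\Psi^{-k}(E)$, a genuine two-sided inverse (surjectivity of $A+Q$ from $(A+Q)PG^{-1}=\id$, injectivity already noted).

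Granting this, the three displayed topological isomorphisms follow at once from the mapping properties of graded pseudodifferential operators on a closed manifold (Section~\ref{SS:grsobolev}), applied to $A+Q\in\tilde\Psi^k(E)$ and $(A+Q)^{-1}\in\tilde\Psi^{-k}(E)$. For the Hodge decompositions I would use that $Q$ commutes with $A+Q$ (as $QA=0=AQ$), hence with $(A+Q)^{-1}$: for $\psi$ in any of the three spaces, writing $\phi:=(A+Q)^{-1}\psi$ and $\psi=(A+Q)\phi=A\phi+Q\phi$ exhibits $\psi$ as a sum of an element of $A(\cdot)$ and an element of $\img(Q)=\ker(A)$ in the relevant space, and directness follows from $QA=0$ together with $\img(Q)=\ker(A)$, exactly as in Corollary~\ref{C:smooth-Hodge-decomposition}.

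The whole argument is essentially bookkeeping, so there is no deep obstacle; the points needing care are precisely (i) that $\sharp$-adjoints stay inside the graded calculus and preserve smoothing operators and proper support — the content of~\eqref{E:Asharp}; (ii) that the graded Heisenberg Sobolev scale inherits the compactness, duality and Sobolev-embedding properties of the ungraded one, which is clear from the isomorphism $\tilde H^r(E)\cong\bigoplus_pH^{r-p}(\gr_p(E))$ and Proposition~\ref{P:Hs}; and (iii) tracking Heisenberg orders carefully so that the inverse lands in $\tilde\Psi^{-k}(E)$ rather than merely in $\mathcal O(E)$, which is where the step $G^{-1}\in\tilde\Psi^0(E)$ matters.
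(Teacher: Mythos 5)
Your argument is correct, but it is not the route the paper takes. The paper's proof is a one-step reduction to the ungraded case: since $A^\sharp=\tilde\Lambda_s^{-1}(\tilde\Lambda_sA\tilde\Lambda_s^{-1})^*\tilde\Lambda_s$ by~\eqref{E:Asharp}, the hypothesis $A^\sharp=A$ says exactly that $\mathbf A:=\tilde\Lambda_sA\tilde\Lambda_s^{-1}\in\Psi^k(\mathbf E)$ is formally selfadjoint for the $L^2$ inner product on the trivially filtered bundle $\mathbf E$; moreover $\mathbf A$ satisfies the ungraded Rockland condition because $\tilde\sigma^s_x(\tilde\Lambda_s)$ is invertible, see~\eqref{E:grsigmaAB} and \eqref{E:grpiab}. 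One then quotes Corollary~\ref{C:PsiinvA} for $\mathbf A$, observes that $Q=\tilde\Lambda_s^{-1}\mathbf Q\tilde\Lambda_s$, and conjugates back to get $(A+Q)^{-1}=\tilde\Lambda_s^{-1}(\mathbf A+\mathbf Q)^{-1}\tilde\Lambda_s\in\tilde\Psi^{-k}(E)$. You instead re-run the entire parametrix--symmetrization--Fredholm machinery of Corollaries~\ref{C:smooth-Hodge-decomposition} and \ref{C:PsiinvA} inside the graded calculus: left parametrix from Corollary~\ref{C:graded-regularity}, $P=\tfrac12(B+B^\sharp)$, $G=(A+Q)P$, index-zero Fredholm argument on the scale $\tilde H^r(E)$. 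All the ingredients you invoke do hold in the graded setting (stability of $\tilde\Psi^\bullet$ and of smoothing operators under $\sharp$, compactness of smoothing operators on $\tilde H^r(E)$, $\Gamma^{\pm\infty}(E)$ as intersection/union of the graded scale), so your proof goes through; what it costs is duplicating several pages of analysis that the conjugation trick makes unnecessary, and what it buys is a self-contained argument that never leaves the graded calculus. The one point worth tightening is your appeal to ``the proof of Corollary~\ref{C:smhe}'' for $\ker(A+Q)=0$: the relevant computation is rather $Q(A+Q)\psi=Q\psi$ (using $QA=0$) and $A\psi=-Q\psi$, giving $\psi\in\ker(A)\cap\ker(A)^\perp=0$, which you essentially have but attribute to the wrong corollary.
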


\begin{proof}
Let $\mathbf E$ denote the vector bundle $E$ equipped with the trivial filtration, $\mathbf E=\mathbf E^0\supseteq\mathbf E^1=0$.
Recall that $A^\sharp=\tilde\Lambda_s^{-1}(\tilde\Lambda_sA\tilde\Lambda_s^{-1})^*\tilde\Lambda_s$, see~\eqref{E:Asharp}.
Hence, the assumption $A^\sharp=A$ implies that $\mathbf A:=\tilde\Lambda_sA\tilde\Lambda_s^{-1}\in\Psi^k(\mathbf E)$ is formally selfadjoint with respect to the $L^2$ inner product~\eqref{E:llrr}, that is, $\mathbf A^*=\mathbf A$.
Moreover, $\mathbf A$ satisfies the (ungraded) Rockland condition for $\tilde\sigma^s_x(\tilde\Lambda_s)$ is invertible with inverse $\tilde\sigma^{-s}_x(\tilde\Lambda_s^{-1})$, see~\eqref{E:grsigmaAB}.
Hence, according to Corollary~\ref{C:PsiinvA}, $\mathbf A+\mathbf Q\in\Psi^k(\mathbf E)$ is invertible with inverse $(\mathbf A+\mathbf Q)^{-1}\in\Psi^{-k}(\mathbf E)$, where $\mathbf Q\in\mathcal O^{-\infty}(\mathbf E)$ denotes the orthogonal projection onto $\ker(\mathbf A)$, a finite dimensional subspace of $\Gamma^\infty(\mathbf E)$.
Note that $Q:=\tilde\Lambda_s^{-1}\mathbf Q\tilde\Lambda_s\in\mathcal O^{-\infty}(E)$ is the orthogonal projection onto $\ker(A)$ with respect to the inner product \eqref{E:grHSllrr}.
Conjugating with $\tilde\Lambda_s$, we conclude that $A+Q\in\tilde\Psi^k(E)$ is invertible with inverse $(A+Q)^{-1}=\tilde\Lambda_s^{-1}(\mathbf A+\mathbf Q)^{-1}\tilde\Lambda_s\in\tilde\Psi^{-k}(E)$.
The remaining assertions follow at once.
\end{proof}

We have the following generalization of Corollary~\ref{C:Fredholm}:

\begin{corollary}[Fredholm operators and index]\label{C:graded-Fredholm}
Let $E$ and $F$ be a filtered vector bundles over a closed filtered manifold $M$.
Suppose $A\in\tilde\Psi^k(E,F)$ is such that $A$ and $A^t$ both satisfy the graded Rockland condition.
Then, for every real number $r$, we have an induced Fredholm operator $A\colon\tilde H^r(E)\to\tilde H^{r-\Re(k)}(F)$ whose index is independent of $r$ and can be expressed as
$$
\ind(A)=\dim\ker A-\dim\ker A^t.
$$
This index depends only on the graded Heisenberg principal cosymbol $\tilde\sigma^k(A)\in\tilde\Sigma^k(E,F)$.
\end{corollary}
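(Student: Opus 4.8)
The plan is to reduce this graded Fredholm statement, just like all the preceding graded corollaries, to its ungraded counterpart (Corollary~\ref{C:Fredholm}) by conjugating with the invertible-modulo-smoothing operators provided by Lemma~\ref{L:grLambda}. Let $\mathbf E$ and $\mathbf F$ denote the vector bundles $E$ and $F$ equipped with the trivial filtrations, and fix $\tilde\Lambda_E\in\tilde\Psi^0(E,\mathbf E)$, $\tilde\Lambda_E'\in\tilde\Psi^0(\mathbf E,E)$, $\tilde\Lambda_F\in\tilde\Psi^0(F,\mathbf F)$, $\tilde\Lambda_F'\in\tilde\Psi^0(\mathbf F,F)$ as in that lemma, so that $\tilde\Lambda_E\tilde\Lambda_E'-\id$, $\tilde\Lambda_E'\tilde\Lambda_E-\id$, $\tilde\Lambda_F\tilde\Lambda_F'-\id$, $\tilde\Lambda_F'\tilde\Lambda_F-\id$ are all smoothing. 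First I would form $\mathbf A:=\tilde\Lambda_FA\tilde\Lambda_E'\in\Psi^k(\mathbf E,\mathbf F)$; since $\tilde\sigma^0_x(\tilde\Lambda_F)$ and $\tilde\sigma^0_x(\tilde\Lambda_E')$ are invertible (with inverses $\tilde\sigma^0_x(\tilde\Lambda_F')$, $\tilde\sigma^0_x(\tilde\Lambda_E)$), the multiplicativity \eqref{E:grsigmaAB} and \eqref{E:grpiab} show that $\mathbf A$ satisfies the ungraded Rockland condition, exactly as in the proof of Corollary~\ref{C:graded-regularity}. For $\mathbf A^t$ one argues the same way, using that $(\tilde\Lambda_FA\tilde\Lambda_E')^t=(\tilde\Lambda_E')^t A^t\tilde\Lambda_F^t$ together with \eqref{E:grsigmaAt} and the hypothesis on $A^t$; alternatively one invokes the equivalence of (a) and (b)/(c) in Remark~\ref{R:AtRock}. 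Thus $\mathbf A$ and $\mathbf A^t$ both satisfy the Rockland condition, and Corollary~\ref{C:Fredholm} applies to $\mathbf A$.

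Next I would transport the conclusion back. By Proposition~\ref{P:Hs}\itemref{P:Hs:operators} and the fact that the $\tilde\Lambda$'s and the ordinary $\Lambda_s$'s are topological isomorphisms onto classical Sobolev spaces modulo smoothing, $\tilde\Lambda_E\colon\tilde H^r(E)\to H^r(\mathbf E)$ and $\tilde\Lambda_F\colon\tilde H^{r-\Re(k)}(F)\to H^{r-\Re(k)}(\mathbf F)$ are isomorphisms (on a closed manifold; here one uses the closed-manifold clause of Lemma~\ref{L:grLambda} giving $\tilde\Lambda_E\tilde\Lambda_E'=\id=\tilde\Lambda_E'\tilde\Lambda_E$, etc.). Since $A=\tilde\Lambda_F'\mathbf A\tilde\Lambda_E$ modulo a smoothing operator, and smoothing operators are compact on all the Sobolev spaces involved (Proposition~\ref{P:Hs}\itemref{P:Hs:compact}, which carries over to the graded spaces as noted in Section~\ref{SS:grsobolev}), the operator $A\colon\tilde H^r(E)\to\tilde H^{r-\Re(k)}(F)$ differs by a compact operator from a composition of isomorphisms with the Fredholm operator $\mathbf A$, hence is itself Fredholm with the same index, independent of $r$. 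Alternatively, and perhaps more cleanly, I would invoke Remark~\ref{R:AAtRock} directly in the graded calculus—which is available since $\tilde\Psi$ inherits multiplicativity, transposition, and asymptotic completeness—to produce a two-sided parametrix $B\in\tilde\Psi^{-k}_\prop(F,E)$ with $BA-\id$ and $AB-\id$ smoothing, so that $B$ is an inverse of $A$ modulo compact operators on the graded Heisenberg Sobolev scale, giving Fredholmness at once.

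For the index formula, regularity (Corollary~\ref{C:graded-regularity}) shows $\ker(A)\subseteq\Gamma^\infty(E)$ and $\ker(A^t)\subseteq\Gamma^\infty(E')$ are finite dimensional and independent of $r$; the duality pairing of Section~\ref{SS:grsobolev} ($\tilde H^r(E)^*=\tilde H^{-r}(E')$, with $\langle A^t\phi,\psi\rangle=\langle\phi,A\psi\rangle$) identifies $\coker(A)$ with $\ker(A^t)$, yielding $\ind(A)=\dim\ker(A)-\dim\ker(A^t)$. Finally, if $A'\in\tilde\Psi^k(E,F)$ has the same graded principal cosymbol, then $A'-A\in\tilde\Psi^{k-1}(E,F)$ by the short exact sequence in Section~\ref{SS:gradedhypo}, hence induces a compact operator $\tilde H^r(E)\to\tilde H^{r-\Re(k)}(F)$ by the mapping properties together with compactness of the inclusion $\tilde H^{r-\Re(k)+1}\hookrightarrow\tilde H^{r-\Re(k)}$, so $\ind(A')=\ind(A)$; this is the graded analogue of the last paragraph of the proof of Corollary~\ref{C:Fredholm}. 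I do not anticipate a genuine obstacle here: the only point requiring a little care is making sure that the graded Heisenberg Sobolev scale really does enjoy the compactness of inclusions and the duality used above, but these were already asserted to carry over verbatim in Section~\ref{SS:grsobolev}, so the argument is a routine transcription of the ungraded case via the conjugation trick used throughout Section~\ref{S:grRockland}.
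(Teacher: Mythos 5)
Your proposal is correct and matches the paper's argument: the paper obtains a two-sided parametrix $B\in\tilde\Psi^{-k}(F,E)$ from Corollary~\ref{C:graded-regularity} applied to $A$ and $A^t$ (cf.\ Remark~\ref{R:AAtRock}) and then proceeds exactly as in the proof of Corollary~\ref{C:Fredholm}, which is precisely the ``alternative, more cleanly'' route you describe. Your primary route via conjugation with the $\tilde\Lambda$'s is just a routine repackaging of the same reduction, so there is nothing substantively different here.
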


\begin{proof}
Using Corollary~\ref{C:graded-regularity} we obtain a parametrix $B\in\tilde\Psi^{-k}(F,E)$ such that $BA-\id$ and $AB-\id$ are both smoothing operators, cf.\ Remark~\ref{R:AAtRock}.
We may now proceed exactly as in the proof of Corollary~\ref{C:Fredholm}.
\end{proof}

\subsection{Graded Rockland sequences}\label{SS:gRs}

Throughout this section we assume $M$ to be a closed filtered manifold.
Suppose $E_i$ are filtered vector bundles over $M$, and consider a sequence 
\begin{equation}\label{E:grRseq}
\cdots\to\Gamma^\infty(E_{i-1})\xrightarrow{A_{i-1}}\Gamma^\infty(E_i)\xrightarrow{A_i}\Gamma^\infty(E_{i+1})\to\cdots
\end{equation}
where $A_i\in\tilde\Psi^{k_i}(E_i,E_{i+1})$ for some complex numbers $k_i$.
Generalizing Definition~\ref{D:graded_hypoelliptic_seq} for sequences of differential operators, we make the following

\begin{definition}[Graded Rockland sequence]\label{D:gRs}
A sequence of operators as above is said to be a \emph{graded Rockland sequence} if, for every $x\in M$ and every non-trivial irreducible unitary representation $\pi\colon\mathcal T_xM\to U(\mathcal H)$, the sequence
$$
\cdots\to\mathcal H_\infty\otimes\gr(E_{{i-1,x}})\xrightarrow{\bar\pi(\tilde\sigma^{k_{i-1}}_x(A_{i-1}))}
\mathcal H_\infty\otimes\gr(E_{i,x})\xrightarrow{\bar\pi(\tilde\sigma^{k_i}_x(A_i))}
\mathcal H_\infty\otimes\gr(E_{i+1,x})\to\cdots
$$
is weakly exact, that is, the image of each arrow is contained and dense in the kernel of the subsequent arrow.
Here $\mathcal H_\infty$ denotes the subspace of smooth vectors in $\mathcal H$.
\end{definition}

Suppose the sequence in \eqref{E:grRseq} is a Rockland sequence.
Fix real numbers $s_i$ such that $\Re(k_i)+s_{i+1}-s_i$ is independent of $i$, and put
\begin{equation}\label{E:kappa}
\kappa:=\Re(k_i)+s_{i+1}-s_i.
\end{equation}
Let $\mathbf E_i$ denote the vector bundle $E_i$ considered as a trivially filtered bundle, that is, equipped with the filtration $\mathbf E_i=\mathbf E_i^0\supseteq\mathbf E^1_i=0$.
Fix a smooth volume density on $M$ as well as smooth fiber-wise Hermitian inner products $h_i$ on $\mathbf E_i$ and let $\llangle-,-\rrangle_{L^2(\mathbf E_i)}$ denote the associated $L^2$ inner product on sections of $\mathbf E_i$, see \eqref{E:llrr}.
Moreover, choose invertible $\tilde\Lambda_i\in\tilde\Psi^{s_i}(E_i,\mathbf E_i)$ with $\tilde\Lambda_i^{-1}\in\tilde\Psi^{-s_i}(\mathbf E_i,E_i)$, see Lemma~\ref{L:grLambda}, and consider the associated graded Sobolev inner product on sections of $E_i$,
\begin{equation}\label{E:grHSllrri}
\llangle\psi_1,\psi_2\rrangle_{\tilde H^{s_i}(E_i)}
:=\llangle\tilde\Lambda_i\psi_1,\tilde\Lambda_i\psi\rrangle_{L^2(\mathbf E_i)}
=\langle\tilde\Lambda_i^t(h_i\otimes dx)\tilde\Lambda_i\psi_1,\psi_2\rangle
\end{equation}
where $\psi_1,\psi_2\in\Gamma^\infty(E_i)$.
Let $A_i^\sharp\in\tilde\Psi^{2\kappa-k_i}(E_{i+1},E_i)$, 
$$
A_i^\sharp
=\tilde\Lambda_i^{-1}(\tilde\Lambda_{i+1}A_i\tilde\Lambda_i^{-1})^*\tilde\Lambda_{i+1}
=(\tilde\Lambda_i^t(h_i\otimes dx)\tilde\Lambda_i)^{-1}\,A^t_i\,\tilde\Lambda_{i+1}^t(h_{i+1}\otimes dx)\tilde\Lambda_{i+1}
$$
denote the formal adjoint of $A_i$ with respect to these inner products, that is, 
$$
\llangle A_i^\sharp\phi,\psi\rrangle_{\tilde H^{s_i}(E_i)}=\llangle\phi,A_i\psi\rrangle_{\tilde H^{s_{i+1}}(E_{i+1})}
$$ 
for all $\psi\in\Gamma^\infty(E_i)$ and $\phi\in\Gamma^\infty(E_{i+1})$.
Let us finally consider the Laplace type operators 
$$
B_i:=A_{i-1}A_{i-1}^\sharp+A_i^\sharp A_i.
$$
Note that $B_i=B_i^\sharp\in\tilde\Psi^{2\kappa}(E_i)$, see \eqref{E:kappa} and \eqref{E:BAsharp}.

\begin{lemma}\label{L:Delta}
The operator $B_i$ satisfies the graded Rockland condition.
\end{lemma}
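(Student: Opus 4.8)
The plan is to mimic the proof of Lemma~\ref{L:rockseq}, working now with the graded Heisenberg principal cosymbol and the Sobolev-adjoint $A_i^\sharp$ in place of the $L^2$-adjoint. Fix $x\in M$ and a non-trivial irreducible unitary representation $\pi\colon\mathcal T_xM\to U(\mathcal H)$, and equip $\mathcal H_\infty\otimes\gr(E_{i,x})$ with the Hermitian inner product induced from the scalar product on $\mathcal H$ and a suitable inner product on $\gr(E_{i,x})$. First I would record, using the multiplicativity \eqref{E:grsigmaAB} and the transposition/adjunction behaviour \eqref{E:grsigmaAt} together with the explicit formula \eqref{E:Asharp} for $A_i^\sharp$ (with $s_1=s_i$, $s_2=s_{i+1}$), that the graded principal cosymbol of $A_i^\sharp$ is the formal adjoint, in the appropriate graded sense, of the graded principal cosymbol of $A_i$; more precisely $\bar\pi(\tilde\sigma_x^{2\kappa-k_i}(A_i^\sharp))$ is the adjoint of $\bar\pi(\tilde\sigma_x^{k_i}(A_i))$ on $\mathcal H_\infty$ up to the invertible self-adjoint positive conjugating operators coming from $\tilde\Lambda_i^t(h_i\otimes dx)\tilde\Lambda_i$ in the representation. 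Since those conjugating operators are invertible on $\mathcal H_\infty\otimes\gr(E_{i,x})$, replacing the inner products by the ones they induce makes $\bar\pi(\tilde\sigma_x(A_i^\sharp))$ literally the adjoint of $\bar\pi(\tilde\sigma_x(A_i))$; this is where \eqref{E:grpiab} and \eqref{E:pia*} get used. I will from now on abbreviate $B_i:=\bar\pi(\tilde\sigma_x^{k_i}(A_i))\colon\mathcal H_\infty\otimes\gr(E_{i,x})\to\mathcal H_\infty\otimes\gr(E_{i+1,x})$, so that $B_i^*=\bar\pi(\tilde\sigma_x^{2\kappa-k_i}(A_i^\sharp))$ after this change of inner product.

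Next I would compute $\bar\pi(\tilde\sigma^{2\kappa}_x(B_i))=B_{i-1}B_{i-1}^*+B_i^*B_i$ using \eqref{E:grsigmaAB} and the definition $B_i=A_{i-1}A_{i-1}^\sharp+A_i^\sharp A_i$. Then the standard positivity argument, exactly as in Lemma~\ref{L:rockseq}, gives
\begin{align*}
\ker\bigl(\bar\pi(\tilde\sigma_x^{2\kappa}(B_i))\bigr)
&=\ker\bigl(B_{i-1}B_{i-1}^*\bigr)\cap\ker\bigl(B_i^*B_i\bigr)\\
&=\ker(B_{i-1}^*)\cap\ker(B_i),
\end{align*}
where the equality $\ker(B_{i-1}B_{i-1}^*)=\ker(B_{i-1}^*)$ and $\ker(B_i^*B_i)=\ker(B_i)$ hold by positive semidefiniteness of these operators on the pre-Hilbert space $\mathcal H_\infty\otimes\gr(E_{i,x})$. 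Since the sequence \eqref{E:grRseq} is a graded Rockland sequence, the image of $B_{i-1}$ is dense in $\ker(B_i)$, hence $\ker(B_i)\perp\ker(B_{i-1}^*)$, forcing $\ker(B_i)\cap\ker(B_{i-1}^*)=0$. Therefore $\ker(\bar\pi(\tilde\sigma_x^{2\kappa}(B_i)))=0$, i.e.\ $\bar\pi(\tilde\sigma_x^{2\kappa}(B_i))$ is injective on $\mathcal H_\infty\otimes\gr(E_{i,x})$; as $x$ and $\pi$ were arbitrary, $B_i$ satisfies the graded Rockland condition.

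The main obstacle I anticipate is the bookkeeping in the first step: $A_i^\sharp$ is built from $A_i^t$ conjugated by the (in general non-tensorial, pseudodifferential) operators $\tilde\Lambda_i$ and the Hermitian bundle metrics, so one must check carefully that passing to graded principal cosymbols turns this into an honest adjunction in each representation, and that the adjoint is being taken with respect to an inner product on $\mathcal H_\infty\otimes\gr(E_{i,x})$ for which the positivity identities $\ker(T^*T)=\ker(T)$ are valid. Once it is granted that after a harmless change of inner product $B_i^\ast$ is the representation-theoretic adjoint of $B_i$ on the common pre-Hilbert space, the rest is the verbatim positivity computation of Lemma~\ref{L:rockseq}. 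One technical point worth noting: the operators $B_i$, $B_i^*$ only map $\mathcal H_\infty$ to itself, but that suffices, since the Rockland condition and all the kernels above are taken on $\mathcal H_\infty$; no closure is needed for the injectivity conclusion.
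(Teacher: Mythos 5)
Your proposal is correct and is essentially the paper's argument: the paper conjugates $B_i$ by $\tilde\Lambda_i$ to write $\tilde\Lambda_iB_i\tilde\Lambda_i^{-1}=\mathbf A_i^*\mathbf A_i+\mathbf A_{i-1}\mathbf A_{i-1}^*$ with $\mathbf A_i=\tilde\Lambda_{i+1}A_i\tilde\Lambda_i^{-1}$ an ungraded Rockland sequence, and then invokes the positivity computation of Lemma~\ref{L:rockseq}, which is exactly your twist-the-inner-product argument viewed on the other side of the conjugation. The bookkeeping you flag as the main obstacle is precisely what the reduction to the trivially filtered bundles $\mathbf E_i$ via Lemma~\ref{L:grLambda} and \eqref{E:grpiab} handles, so no gap.
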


\begin{proof}
Note that $\mathbf B_i:=\tilde\Lambda_iB_i\tilde\Lambda_i^{-1}\in\Psi^{2\kappa}(\mathbf E_i)$ is of the form $\mathbf B_i=\mathbf A_i^*\mathbf A_i+\mathbf A_{i-1}\mathbf A_{i-1}^*$ where $\mathbf A_i:=\tilde\Lambda_{i+1}A_i\tilde\Lambda_i^{-1}\in\Psi^{k_i+s_{i+1}-s_i}(\mathbf E_i,\mathbf E_{i+1})$.
Using \eqref{E:piab}, \eqref{E:pia*}, and Remark~\ref{R:Psi:adjoint}, we obtain
\begin{multline*}
\bar\pi(\sigma^{2\kappa}_x(\mathbf B_i))
=\bar\pi(\sigma^{k_{i-1}+s_i-s_{i-1}}_x(\mathbf A_{i-1}))\bar\pi(\sigma^{k_{i-1}+s_i-s_{i-1}}_x(\mathbf A_{i-1}))^*
\\+\bar\pi(\sigma^{k_i+s_{i+1}-s_i}_x(\mathbf A_i))^*\bar\pi(\sigma^{k_i+s_{i+1}-s_i}_x(\mathbf A_i)).
\end{multline*}
Since the operators $\mathbf A_i$ form an (ungraded) Rockland sequence, one readily concludes that $\mathbf B_i$ satisfies the (ungraded) Rockland condition, cf.\ the proof of Lemma~\ref{L:rockseq}.
Clearly, this implies that $B_i$ satisfies the graded Rockland condition, see \eqref{E:grpiab}.
\end{proof}

In view of Lemma~\ref{L:Delta}, Corollary~\ref{C:graded-Hodge} applies to each of the operators $B_i$ and we obtain the following generalization of Corollaries~\ref{C:RShypo}, \ref{C:Hodge}, \ref{C:regrockseq}, and \ref{C:HsHodge-seq}:

\begin{corollary}\label{C:grHodge}
The operator $(A_{i-1}^\sharp,A_i)$ is hypoelliptic. 
More precisely, if $\psi\in\Gamma^{-\infty}(E_i)$ is such that $A_{i-1}^\sharp\psi\in\tilde H^{r-2\kappa+\Re(k_{i-1})}(E_{i-1})$ and $A_i\psi\in\tilde H^{r-\Re(k_i)}(E_{i+1})$, then $\psi\in\tilde H^r(E_i)$.
Moreover, there exists a constant $C=C_{A_i,r}\geq0$ such that the maximal graded hypoelliptic estimate
\begin{equation*}
\|\psi\|_{\tilde H^r(E_i)}\leq C\left(\|A^\sharp_{i-1}\psi\|_{\tilde H^{r-2\kappa+\Re(k_{i-1})}(E_{i-1})}+\|Q_i\psi\|_{\ker(B_i)}+\|A_i\psi\|_{\tilde H^{r-\Re(k_i)}(E_{i+1})}\right)
\end{equation*}
holds for all $\psi\in\tilde H^r(E_i)$.
Here $r$ is any real number, $Q_i$ denotes the orthogonal projection onto the (finite dimensional) subspace $\ker(B_i)\subseteq\Gamma^\infty(E_i)$ with respect to the inner product~\eqref{E:grHSllrri}, $\|-\|_{\ker(B_i)}$ denotes any norm on $\ker(B_i)$, and $\|-\|_{\tilde H^r(E_i)}$ is any norm generating the Hilbert space topology on the graded Sobolev space $\tilde H^r(E_i)$. 
Furthermore,
$$
\ker(B_i)=\ker(A_{i-1}^\sharp|_{\Gamma^{-\infty}(E_i)})\cap\ker(A_i|_{\Gamma^{-\infty}(E_i)})
=\ker(A_{i-1}^\sharp|_{\Gamma^\infty(E_i)})\cap\ker(A_i|_{\Gamma^\infty(E_i)}).
$$
If, moreover, $A_iA_{i-1}=0$, then we have Hodge type decompositions
\begin{align*}
\Gamma^\infty(E_i)&=A_{i-1}(\Gamma^\infty(E_{i-1}))\oplus\ker(B_i)\oplus A_i^\sharp(\Gamma^\infty(E_{i+1}))\\
\tilde H^r(E_i)&=A_{i-1}\bigl(\tilde H^{r+\Re(k_{i-1})}(E_{i-1})\bigr)\oplus\ker(B_i)\oplus A_i^\sharp\bigl(\tilde H^{2\kappa-\Re(k_i)}(E_{i+1})\bigr)\\
\Gamma^{-\infty}(E_i)&=A_{i-1}(\Gamma^{-\infty}(E_{i-1}))\oplus\ker(B_i)\oplus A_i^\sharp(\Gamma^{-\infty}(E_{i+1}))
\end{align*}
as well as:
\begin{align*}
\ker(A_i|_{\Gamma^\infty(E_i)})&=A_{i-1}(\Gamma^\infty(E_{i-1}))\oplus\ker(B_i)\\
\ker(A_i|_{\tilde H^r(E_i)})&=A_{i-1}(\tilde H^{r+\Re(k_{i-1})}(E_{i-1}))\oplus\ker(B_i)\\
\ker(A_i|_{\Gamma^{-\infty}(E_i)})&=A_{i-1}(\Gamma^{-\infty}(E_{i-1}))\oplus\ker(B_i)
\end{align*}
In particular, every cohomology class admits a unique harmonic representative:
$$
\frac{\ker(A_i|_{\Gamma^{-\infty}(E_i)})}{\img(A_{i-1}|_{\Gamma^{-\infty}(E_{i-1})})}
=\frac{\ker(A_i|_{\Gamma^\infty(E_i)})}{\img(A_{i-1}|_{\Gamma^\infty(E_{i-1})})}
=\ker(B_i)
=\ker(A_{i-1}^\sharp)\cap\ker(A_i).
$$
\end{corollary}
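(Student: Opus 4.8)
The plan is to deduce everything from the single operator statement in Corollary~\ref{C:graded-Hodge} applied to the Laplace type operators $B_i$, exactly paralleling the proofs of Corollaries~\ref{C:RShypo}, \ref{C:Hodge}, \ref{C:regrockseq}, and \ref{C:HsHodge-seq} but now in the graded setting with the Sobolev adjoints $A_i^\sharp$. First I would record that by Lemma~\ref{L:Delta} each $B_i=A_{i-1}A_{i-1}^\sharp+A_i^\sharp A_i\in\tilde\Psi^{2\kappa}(E_i)$ satisfies the graded Rockland condition, and that it is formally selfadjoint with respect to the inner product~\eqref{E:grHSllrri} since $B_i^\sharp=B_i$ by~\eqref{E:BAsharp}. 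Hence Corollary~\ref{C:graded-Hodge} and Corollary~\ref{C:graded-regularity} apply verbatim to $A:=B_i$ with $k:=2\kappa$: this gives $\ker(B_i)$ finite dimensional, contained in $\Gamma^\infty(E_i)$, the maximal graded hypoelliptic estimates in the form stated there, and the three topological splittings $\mathcal V=\ker(B_i)\oplus B_i(\mathcal V)$ for $\mathcal V\in\{\Gamma^\infty(E_i),\tilde H^r(E_i),\Gamma^{-\infty}(E_i)\}$, noting that $(A+Q_i)^{-1}\in\tilde\Psi^{-2\kappa}(E_i)$ maps $\tilde H^{r-2\kappa}(E_i)$ to $\tilde H^r(E_i)$ so the middle line uses source space $\tilde H^{r}(E_i)$ and target $\tilde H^{r-2\kappa}(E_i)$; here one reconciles the exponents via~\eqref{E:kappa} so that $B_i:\tilde H^r(E_i)\to\tilde H^{r-2\kappa}(E_i)$, and $A_{i-1}^\sharp$ raises order to $\tilde H^{r+\Re(k_{i-1})}(E_{i-1})\xleftarrow{A_{i-1}}$ matches since $\Re(k_{i-1})-\kappa=s_{i-1}-s_i$, etc.

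Next I would extract the kernel identity. By positivity of $\llangle B_i\psi,\psi\rrangle_{\tilde H^{s_i}}=\|A_{i-1}^\sharp\psi\|^2+\|A_i\psi\|^2$ for $\psi\in\Gamma^\infty(E_i)$, one gets $\ker(B_i|_{\Gamma^\infty})=\ker(A_{i-1}^\sharp|_{\Gamma^\infty})\cap\ker(A_i|_{\Gamma^\infty})$; the extension to distributional sections follows because $B_i\psi=0$ forces $\psi\in\Gamma^\infty(E_i)$ by hypoellipticity (Corollary~\ref{C:graded-regularity}), so $\ker(B_i|_{\Gamma^{-\infty}})=\ker(B_i|_{\Gamma^\infty})$, and likewise $A_{i-1}^\sharp\psi$, $A_i\psi$ smooth $\Rightarrow$ $(A_{i-1}^\sharp,A_i)\psi$ smooth; for the hypoellipticity of $(A_{i-1}^\sharp,A_i)$ itself one observes that $A_{i-1}^\sharp\psi$ and $A_i\psi$ smooth implies $B_i\psi$ smooth (it is a differential/pseudodifferential combination of the two) hence $\psi$ smooth, and the refined Sobolev statement comes from applying the maximal hypoelliptic estimate for $B_i$ together with $B_i=A_{i-1}A_{i-1}^\sharp+A_i^\sharp A_i$ and the mapping properties of $A_{i-1}$, $A_i^\sharp$ between graded Heisenberg Sobolev spaces from Section~\ref{SS:grsobolev}.

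Then, assuming $A_iA_{i-1}=0$ (which implies $A_i^\sharp A_{i+1}^\sharp=0$ by~\eqref{E:BAsharp}), I would upgrade the $B_i$-Hodge splitting $\mathcal V=\ker(B_i)\oplus B_i(\mathcal V)$ to the three-term decomposition. Applying $B_i=A_{i-1}A_{i-1}^\sharp+A_i^\sharp A_i$ to $B_i(\mathcal V)$ gives $B_i(\mathcal V)\subseteq A_{i-1}(\mathcal V_{i-1})+A_i^\sharp(\mathcal V_{i+1})$; conversely $A_{i-1}(\mathcal V_{i-1})\oplus A_i^\sharp(\mathcal V_{i+1})\subseteq B_i(\mathcal V)$ because $A_{i-1}=B_i(A_{i-1}A_{i-1}^\sharp+Q_{i-1})^{-1}A_{i-1}$ modulo the harmonic part, using $A_iA_{i-1}=0$ to see $A_i^\sharp A_i A_{i-1}=0$ and the analogous relation for $A_i^\sharp$; the two summands are orthogonal since $\llangle A_{i-1}\alpha,A_i^\sharp\beta\rrangle=\llangle A_iA_{i-1}\alpha,\beta\rrangle=0$, and each is orthogonal to $\ker(B_i)=\ker(A_{i-1}^\sharp)\cap\ker(A_i)$. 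This yields $\mathcal V=A_{i-1}(\mathcal V_{i-1})\oplus\ker(B_i)\oplus A_i^\sharp(\mathcal V_{i+1})$ in all three flavours (with the appropriate Sobolev exponents dictated by~\eqref{E:kappa}), and intersecting with $\ker(A_i)$—using $A_iA_i^\sharp\beta=0\Rightarrow A_i^\sharp\beta\in\ker(A_i)\cap\img(A_i^\sharp)\perp\ker(A_i^\sharp)$ forces $A_i^\sharp\beta=0$—gives $\ker(A_i)=A_{i-1}(\mathcal V_{i-1})\oplus\ker(B_i)$, whence the uniqueness of harmonic representatives. The main obstacle I anticipate is bookkeeping: keeping the Sobolev orders consistent across the lines of the decomposition (the role of $\kappa$, $s_i$, and $\Re(k_i)$ in $A_{i-1}:\tilde H^{r+\Re(k_{i-1})}(E_{i-1})\to\tilde H^r(E_i)$ and $A_i^\sharp:\tilde H^{2\kappa-\Re(k_i)}(E_{i+1})\to\tilde H^r(E_i)$, which must be checked against $2\kappa-\Re(k_i)=\Re(k_i)+2(s_{i+1}-s_i)-\text{(correction)}$), and verifying that all the algebraic orthogonality and surjectivity manipulations that are transparent for the formally selfadjoint $L^2$ case in Corollary~\ref{C:Hodge} survive when the adjoint is the Sobolev adjoint $A_i^\sharp$ rather than the $L^2$ adjoint—but this is exactly the content of~\eqref{E:Asharpllrr} and~\eqref{E:BAsharp}, so no genuinely new difficulty arises.
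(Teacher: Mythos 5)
Your proposal is correct and follows essentially the same route as the paper, which derives Corollary~\ref{C:grHodge} immediately from Lemma~\ref{L:Delta} together with Corollaries~\ref{C:graded-Hodge} and \ref{C:graded-regularity} applied to $B_i$, in exact parallel with the ungraded Corollaries~\ref{C:RShypo}, \ref{C:Hodge}, \ref{C:regrockseq}, and \ref{C:HsHodge-seq}. The only blemishes are notational: the identity you want from \eqref{E:BAsharp} is $A_{i-1}^\sharp A_i^\sharp=(A_iA_{i-1})^\sharp=0$ rather than $A_i^\sharp A_{i+1}^\sharp=0$, and the inclusion $A_{i-1}(\mathcal V_{i-1})\subseteq\ker(B_i)\oplus B_i(\mathcal V)$ is obtained more cleanly from the orthogonality relations than from the garbled formula you wrote, but neither affects the argument.
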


\subsection*{Competing interests declaration}
The authors have no relevant financial or non-financial interests to disclose.

\subsection*{Data availability statement}
Data sharing not applicable to this article as no datasets were generated or analysed during the current study.

\subsection*{Acknowledgments}

S.~D.\ was supported by the Austrian Science Fund (FWF) grants P24420 and P28770.
The second author would like to express his gratitude to the Max Planck Institute for Mathematics in Bonn for the hospitality and financial support during an extended visit which enabled him to pursue this project.
Moreover, he gratefully acknowledges the support of the Austrian Science Fund (FWF) grant Y963, the START-Program of Michael Eichmair.
During the final stages of preparation the second author was supported by the Austrian Science Fund (FWF) grant P31663.
We thank anonymous referees for a careful reading of the manuscript and helpful suggestions regarding the presentation.
We thank Robert Yuncken for providing us with a copy of Melin's otherwise unavailable manuscript.
This version of the article has been accepted for publication, after peer review but is not the Version of Record and does not reflect post-acceptance improvements, or any corrections. 
The Version of Record is available online at: \url{http://dx.doi.org/10.1007/s10455-022-09870-0}

\bibliographystyle{abbrv}
  \bibliography{hypo}

\begin{thebibliography}{10}

\bibitem{AS68}
M.~F. Atiyah and I.~M. Singer.
\newblock The index of elliptic operators. {I}.
\newblock {\em Ann. of Math. (2)}, 87:484--530, 1968.

\bibitem{BE14}
P.~F. Baum and E.~van Erp.
\newblock {$K$}-homology and index theory on contact manifolds.
\newblock {\em Acta Math.}, 213(1):1--48, 2014.

\bibitem{BG88}
R.~Beals and P.~Greiner.
\newblock {\em Calculus on {H}eisenberg manifolds}, volume 119 of {\em Annals
  of Mathematics Studies}.
\newblock Princeton University Press, Princeton, NJ, 1988.

\bibitem{B00}
O.~Biquard.
\newblock M\'etriques d'{E}instein asymptotiquement sym\'etriques.
\newblock {\em Ast\'erisque}, (265):vi+109, 2000.

\bibitem{B74}
L.~Boutet~de Monvel.
\newblock Hypoelliptic operators with double characteristics and related
  pseudo-differential operators.
\newblock {\em Comm. Pure Appl. Math.}, 27:585--639, 1974.

\bibitem{BGH75}
L.~Boutet~de Monvel, A.~Grigis, and B.~Helffer.
\newblock Parametrixes d'op\'erateurs pseudo-diff\'erentiels \`a
  caract\'eristiques multiples.
\newblock pages 93--121. Ast\'erisque, No. 34--35, 1976.

\bibitem{B06}
R.~L. Bryant.
\newblock Conformal geometry and 3-plane fields on 6-manifolds.
\newblock In {\em {D}evelopments of {C}artan {G}eometry and {R}elated
  {M}athematical {P}roblems, {RIMS} {S}ymposium {P}roceedings}, volume 1502,
  pages 1--15. Kyoto University, 2006.
\newblock Preprint available at \url{https://arxiv.org/abs/math/0511110}.

\bibitem{BEGN19}
R.~L. {Bryant}, M.~G. {Eastwood}, A.~R. {Gover}, and K.~{Neusser}.
\newblock {Some differential complexes within and beyond parabolic geometry}.
\newblock In {\em Differential geometry and Tanaka theory. Differential system
  and hypersurface theory. Proceedings of the international conference, RIMS,
  Kyoto University, Japan, January, 24--28, 2011}, pages 13--40. Tokyo:
  Mathematical Society of Japan, 2019.

\bibitem{BH93}
R.~L. Bryant and L.~Hsu.
\newblock Rigidity of integral curves of rank {$2$} distributions.
\newblock {\em Invent. Math.}, 114(2):435--461, 1993.

\bibitem{CD01}
D.~M.~J. Calderbank and T.~Diemer.
\newblock Differential invariants and curved {B}ernstein-{G}elfand-{G}elfand
  sequences.
\newblock {\em J. Reine Angew. Math.}, 537:67--103, 2001.

\bibitem{CS09a}
A.~{\v{C}}ap and K.~Sagerschnig.
\newblock On {N}urowski's conformal structure associated to a generic rank two
  distribution in dimension five.
\newblock {\em J. Geom. Phys.}, 59(7):901--912, 2009.

\bibitem{CS09}
A.~{\v{C}}ap and J.~Slov{\'a}k.
\newblock {\em Parabolic geometries. {I}}, volume 154 of {\em Mathematical
  Surveys and Monographs}.
\newblock American Mathematical Society, Providence, RI, 2009.
\newblock Background and general theory.

\bibitem{CSS01}
A.~{\v{C}}ap, J.~Slov{\'a}k, and V.~Sou{\v{c}}ek.
\newblock Bernstein-{G}elfand-{G}elfand sequences.
\newblock {\em Ann. of Math. (2)}, 154(1):97--113, 2001.

\bibitem{CS12}
A.~{\v{C}}ap and V.~Sou{\v{c}}ek.
\newblock Subcomplexes in curved {BGG}-sequences.
\newblock {\em Math. Ann.}, 354(1):111--136, 2012.

\bibitem{CS15}
A.~{\v{C}}ap and V.~Sou\v{c}ek.
\newblock Relative {BGG} sequences; {II}. {BGG} machinery and invariant
  operators.
\newblock {\em Adv. Math.}, 320:1009--1062, 2017.

\bibitem{C10}
E.~Cartan.
\newblock Les syst\`emes de {P}faff, \`a cinq variables et les \'equations aux
  d\'eriv\'ees partielles du second ordre.
\newblock {\em Ann. Sci. \'Ecole Norm. Sup. (3)}, 27:109--192, 1910.

\bibitem{CP15}
W.~Choi and R.~Ponge.
\newblock Tangent maps and tangent groupoid for {C}arnot manifolds.
\newblock {\em Differential Geom. Appl.}, 62:136--183, 2019.

\bibitem{C88}
M.~Christ.
\newblock Inversion in some algebras of singular integral operators.
\newblock {\em Rev. Mat. Iberoamericana}, 4(2):219--225, 1988.

\bibitem{CGGP92}
M.~Christ, D.~Geller, P.~G{\l}owacki, and L.~Polin.
\newblock Pseudodifferential operators on groups with dilations.
\newblock {\em Duke Math. J.}, 68(1):31--65, 1992.

\bibitem{CCH16}
P.~Clare, T.~Crisp, and N.~Higson.
\newblock Parabolic induction and restriction via {$C^*$}-algebras and
  {H}ilbert {$C^*$}-modules.
\newblock {\em Compos. Math.}, 152(6):1286--1318, 2016.

\bibitem{C81}
A.~Connes.
\newblock An analogue of the {T}hom isomorphism for crossed products of a
  {$C^{\ast} $}-algebra by an action of {${\bf R}$}.
\newblock {\em Adv. in Math.}, 39(1):31--55, 1981.

\bibitem{C94}
A.~Connes.
\newblock {\em Noncommutative geometry}.
\newblock Academic Press, Inc., San Diego, CA, 1994.

\bibitem{CM95}
A.~Connes and H.~Moscovici.
\newblock The local index formula in noncommutative geometry.
\newblock {\em Geom. Funct. Anal.}, 5(2):174--243, 1995.

\bibitem{DH17}
S.~Dave and S.~Haller.
\newblock Graded hypoellipticity of {BGG} sequences, 2017.
\newblock Preprint available at \url{https://arxiv.org/abs/1705.01659v2}.

\bibitem{DH16}
S.~Dave and S.~Haller.
\newblock On 5-manifolds admitting rank two distributions of {C}artan type.
\newblock {\em Trans. Amer. Math. Soc.}, 371(7):4911--4929, 2019.

\bibitem{DH17b}
S.~Dave and S.~Haller.
\newblock The heat asymptotics on filtered manifolds.
\newblock {\em J. Geom. Anal.}, 30(1):337--389, 2020.

\bibitem{DS14}
C.~Debord and G.~Skandalis.
\newblock Adiabatic groupoid, crossed product by {$\mathbb{R}_+^\ast$} and
  pseudodifferential calculus.
\newblock {\em Adv. Math.}, 257:66--91, 2014.

\bibitem{D96}
J.~J. Duistermaat.
\newblock {\em Fourier integral operators}, volume 130 of {\em Progress in
  Mathematics}.
\newblock Birkh\"{a}user Boston, Inc., Boston, MA, 1996.

\bibitem{D75}
A.~S. Dynin.
\newblock Pseudodifferential operators on the {H}eisenberg group.
\newblock {\em Dokl. Akad. Nauk SSSR}, 225(6):1245--1248, 1975.

\bibitem{D76}
A.~S. Dynin.
\newblock An algebra of pseudodifferential operators on the {H}eisenberg
  groups. {S}ymbolic calculus.
\newblock {\em Dokl. Akad. Nauk SSSR}, 227(4):792--795, 1976.

\bibitem{EM00}
C.~Epstein and R.~Melrose.
\newblock The {H}eisenberg algebra, index theory and homology, 2000.
\newblock Preprint available at \url{http://www-math.mit.edu/~rbm/book.html}.

\bibitem{F75}
G.~B. Folland.
\newblock Subelliptic estimates and function spaces on nilpotent {L}ie groups.
\newblock {\em Ark. Mat.}, 13(2):161--207, 1975.

\bibitem{FS74}
G.~B. Folland and E.~M. Stein.
\newblock Estimates for the {$\bar \partial _{b}$} complex and analysis on the
  {H}eisenberg group.
\newblock {\em Comm. Pure Appl. Math.}, 27:429--522, 1974.

\bibitem{FS82}
G.~B. Folland and E.~M. Stein.
\newblock {\em Hardy spaces on homogeneous groups}, volume~28 of {\em
  Mathematical Notes}.
\newblock Princeton University Press, Princeton, N.J.; University of Tokyo
  Press, Tokyo, 1982.

\bibitem{G84}
P.~B. Gilkey.
\newblock {\em Invariance theory, the heat equation, and the {A}tiyah-{S}inger
  index theorem}, volume~11 of {\em Mathematics Lecture Series}.
\newblock Publish or Perish, Inc., Wilmington, DE, 1984.

\bibitem{G91}
P.~G{\l}owacki.
\newblock The {R}ockland condition for nondifferential convolution operators.
  {II}.
\newblock {\em Studia Math.}, 98(2):99--114, 1991.

\bibitem{GHV2}
W.~Greub, S.~Halperin, and R.~Vanstone.
\newblock {\em Connections, curvature, and cohomology. {V}ol. {II}: {L}ie
  groups, principal bundles, and characteristic classes}.
\newblock Academic Press [A subsidiary of Harcourt Brace Jovanovich,
  Publishers], New York-London, 1973.
\newblock Pure and Applied Mathematics, Vol. 47-II.

\bibitem{G76}
A.~Grigis.
\newblock Hypoellipticit\'e et param\'etrix pour des op\'erateurs
  pseudodiff\'erentiels \`a caract\'eristiques doubles.
\newblock pages 183--205. Ast\'erisque, No. 34--35, 1976.

\bibitem{G86}
M.~Gromov.
\newblock {\em Partial differential relations}, volume~9 of {\em Ergebnisse der
  Mathematik und ihrer Grenzgebiete (3) [Results in Mathematics and Related
  Areas (3)]}.
\newblock Springer-Verlag, Berlin, 1986.

\bibitem{G96}
M.~Gromov.
\newblock Carnot-{C}arath\'{e}odory spaces seen from within.
\newblock In {\em Sub-{R}iemannian geometry}, volume 144 of {\em Progr. Math.},
  pages 79--323. Birkh\"{a}user, Basel, 1996.

\bibitem{HH18}
A.~R. Haj Saeedi~Sadegh and N.~Higson.
\newblock Euler-like vector fields, deformation spaces and manifolds with
  filtered structure.
\newblock {\em Doc. Math.}, 23:293--325, 2018.

\bibitem{H21}
S.~Haller.
\newblock Analytic torsion of generic rank two distributions in dimension five.
\newblock {\em J. Geom. Anal.}, 32(10), 2022.
\newblock article number 248, e-pub ahead of print, DOI
  10.1007/s12220-022-00987-z.

\bibitem{HN79}
B.~Helffer and J.~Nourrigat.
\newblock Caracterisation des op\'erateurs hypoelliptiques homog\`enes
  invariants \`a gauche sur un groupe de {L}ie nilpotent gradu\'e.
\newblock {\em Comm. Partial Differential Equations}, 4(8):899--958, 1979.

\bibitem{HN85}
B.~Helffer and J.~Nourrigat.
\newblock {\em Hypoellipticit\'e maximale pour des op\'erateurs polyn\^omes de
  champs de vecteurs}, volume~58 of {\em Progress in Mathematics}.
\newblock Birkh\"auser Boston, Inc., Boston, MA, 1985.

\bibitem{H10}
N.~Higson.
\newblock The tangent groupoid and the index theorem.
\newblock In {\em Quanta of maths}, volume~11 of {\em Clay Math. Proc.}, pages
  241--256. Amer. Math. Soc., Providence, RI, 2010.

\bibitem{HS53}
G.~Hochschild and J.-P. Serre.
\newblock Cohomology of {L}ie algebras.
\newblock {\em Ann. of Math. (2)}, 57:591--603, 1953.

\bibitem{H67}
L.~H\"{o}rmander.
\newblock Hypoelliptic second order differential equations.
\newblock {\em Acta Math.}, 119:147--171, 1967.

\bibitem{JK95}
P.~Julg and G.~Kasparov.
\newblock Operator {$K$}-theory for the group {${\rm SU}(n,1)$}.
\newblock {\em J. Reine Angew. Math.}, 463:99--152, 1995.

\bibitem{K04}
A.~A. Kirillov.
\newblock {\em Lectures on the orbit method}, volume~64 of {\em Graduate
  Studies in Mathematics}.
\newblock American Mathematical Society, Providence, RI, 2004.

\bibitem{K65}
J.~J. Kohn.
\newblock Boundaries of complex manifolds.
\newblock In {\em Proc. {C}onf. {C}omplex {A}nalysis ({M}inneapolis, 1964)},
  pages 81--94. Springer, Berlin, 1965.

\bibitem{K61}
B.~Kostant.
\newblock Lie algebra cohomology and the generalized {B}orel-{W}eil theorem.
\newblock {\em Ann. of Math. (2)}, 74:329--387, 1961.

\bibitem{M82}
A.~Melin.
\newblock Lie filtrations and pseudo-differential operators, 1982.
\newblock Preprint.

\bibitem{M83}
A.~Melin.
\newblock Parametrix constructions for right invariant differential operators
  on nilpotent groups.
\newblock {\em Ann. Global Anal. Geom.}, 1(1):79--130, 1983.

\bibitem{M21}
O.~Mohsen.
\newblock On the deformation groupoid of the inhomogeneous pseudo-differential
  calculus.
\newblock {\em Bull. Lond. Math. Soc.}, 53(2):575--592, 2021.

\bibitem{M93}
T.~Morimoto.
\newblock Geometric structures on filtered manifolds.
\newblock {\em Hokkaido Math. J.}, 22(3):263--347, 1993.

\bibitem{M02}
T.~Morimoto.
\newblock Lie algebras, geometric structures and differential equations on
  filtered manifolds.
\newblock In {\em Lie groups, geometric structures and differential
  equations---one hundred years after {S}ophus {L}ie ({K}yoto/{N}ara, 1999)},
  volume~37 of {\em Adv. Stud. Pure Math.}, pages 205--252. Math. Soc. Japan,
  Tokyo, 2002.
\newblock Available at \url{http://www.mathbooks.org/aspm/}.

\bibitem{N09}
K.~Neusser.
\newblock Universal prolongation of linear partial differential equations on
  filtered manifolds.
\newblock {\em Arch. Math. (Brno)}, 45(4):289--300, 2009.

\bibitem{N10}
K.~Neusser.
\newblock {\em Weighted jet bundles and differential operators for parabolic
  geometries}.
\newblock PhD thesis, University of Vienna, Vienna, Austria, 2010.
\newblock Available at \url{http://othes.univie.ac.at/9768/}.

\bibitem{P08}
R.~S. Ponge.
\newblock Heisenberg calculus and spectral theory of hypoelliptic operators on
  {H}eisenberg manifolds.
\newblock {\em Mem. Amer. Math. Soc.}, 194(906):viii+ 134, 2008.

\bibitem{P16}
F.~Presas.
\newblock Non-integrable distributions and the {$h$}-principle.
\newblock {\em Eur. Math. Soc. Newsl.}, (99):18--26, 2016.

\bibitem{RS71}
D.~B. Ray and I.~M. Singer.
\newblock {$R$}-torsion and the {L}aplacian on {R}iemannian manifolds.
\newblock {\em Advances in Math.}, 7:145--210, 1971.

\bibitem{R78}
C.~Rockland.
\newblock Hypoellipticity on the {H}eisenberg group-representation-theoretic
  criteria.
\newblock {\em Trans. Amer. Math. Soc.}, 240:1--52, 1978.

\bibitem{RS76}
L.~P. Rothschild and E.~M. Stein.
\newblock Hypoelliptic differential operators and nilpotent groups.
\newblock {\em Acta Math.}, 137(3-4):247--320, 1976.

\bibitem{RLM14}
P.~C. Rouse, J.~M. Lescure, and B.~Monthubert.
\newblock A cohomological formula for the {A}tiyah-{P}atodi-{S}inger index on
  manifolds with boundary.
\newblock {\em J. Topol. Anal.}, 6(1):27--74, 2014.

\bibitem{R90}
M.~Rumin.
\newblock Un complexe de formes diff\'erentielles sur les vari\'et\'es de
  contact.
\newblock {\em C. R. Acad. Sci. Paris S\'er. I Math.}, 310(6):401--404, 1990.

\bibitem{R94}
M.~Rumin.
\newblock Formes diff\'erentielles sur les vari\'et\'es de contact.
\newblock {\em J. Differential Geom.}, 39(2):281--330, 1994.

\bibitem{R99}
M.~Rumin.
\newblock Differential geometry on {C}-{C} spaces and application to the
  {N}ovikov-{S}hubin numbers of nilpotent {L}ie groups.
\newblock {\em C. R. Acad. Sci. Paris S\'er. I Math.}, 329(11):985--990, 1999.

\bibitem{R00}
M.~Rumin.
\newblock Sub-{R}iemannian limit of the differential form spectrum of contact
  manifolds.
\newblock {\em Geom. Funct. Anal.}, 10(2):407--452, 2000.

\bibitem{R01}
M.~Rumin.
\newblock Around heat decay on forms and relations of nilpotent {L}ie groups.
\newblock In {\em S\'eminaire de {T}h\'eorie {S}pectrale et {G}\'eom\'etrie,
  {V}ol. 19, {A}nn\'ee 2000--2001}, volume~19 of {\em S\'emin. Th\'eor. Spectr.
  G\'eom.}, pages 123--164. Univ. Grenoble I, Saint-Martin-d'H\`eres, 2001.

\bibitem{RS12}
M.~Rumin and N.~Seshadri.
\newblock Analytic torsions on contact manifolds.
\newblock {\em Ann. Inst. Fourier (Grenoble)}, 62(2):727--782, 2012.

\bibitem{S08}
K.~Sagerschnig.
\newblock {\em Weyl structures for generic rank two distributions in dimension
  five}.
\newblock PhD thesis, University of Vienna, Vienna, Austria, 2008.
\newblock Available at \url{http://othes.univie.ac.at/2186/}.

\bibitem{S01}
M.~A. Shubin.
\newblock {\em Pseudodifferential operators and spectral theory}.
\newblock Springer-Verlag, Berlin, second edition, 2001.
\newblock Translated from the 1978 Russian original by Stig I. Andersson.

\bibitem{T84}
M.~E. Taylor.
\newblock Noncommutative microlocal analysis. {I}.
\newblock {\em Mem. Amer. Math. Soc.}, 52(313):iv+182, 1984.

\bibitem{T67}
F.~Tr\`eves.
\newblock {\em Topological vector spaces, distributions and kernels}.
\newblock Academic Press, New York-London, 1967.

\bibitem{E05}
E.~van Erp.
\newblock {\em The {A}tiyah--{S}inger formula for subelliptic operators on a
  contact manifold}.
\newblock PhD thesis, The Pennsylvania State University, 2005.

\bibitem{E10a}
E.~van Erp.
\newblock The {A}tiyah-{S}inger index formula for subelliptic operators on
  contact manifolds. {P}art {I}.
\newblock {\em Ann. of Math. (2)}, 171(3):1647--1681, 2010.

\bibitem{E10b}
E.~van Erp.
\newblock The {A}tiyah-{S}inger index formula for subelliptic operators on
  contact manifolds. {P}art {II}.
\newblock {\em Ann. of Math. (2)}, 171(3):1683--1706, 2010.

\bibitem{EY16}
E.~van Erp and R.~Yuncken.
\newblock On the tangent groupoid of a filtered manifold.
\newblock {\em Bull. London Math. Soc.}, 49:1000--1012, 2017.

\bibitem{EY15v5}
E.~van Erp and R.~Yuncken.
\newblock A groupoid approach to pseudodifferential calculi.
\newblock {\em J. Reine Angew. Math.}, 756:151--182, 2019.

\bibitem{V09}
T.~Vogel.
\newblock Existence of {E}ngel structures.
\newblock {\em Ann. of Math. (2)}, 169(1):79--137, 2009.

\bibitem{Y11}
R.~Yuncken.
\newblock The {B}ernstein-{G}elfand-{G}elfand complex and {K}asparov theory for
  {${\rm SL}(3,\mathbb{C})$}.
\newblock {\em Adv. Math.}, 226(2):1474--1512, 2011.

\end{thebibliography}

\end{document}